\documentclass[openany]{amsart}
\usepackage{amsmath,amssymb,amsfonts,dsfont}
\usepackage[all,arc,2cell,cmtip]{xy}
\usepackage{enumerate}
\usepackage[usenames,dvipsnames]{color}
\usepackage[normalem]{ulem}

\usepackage{mathrsfs}
\usepackage{color}
\usepackage{graphicx}
\UseAllTwocells  
\usepackage{tikz}

\include{scrload}

\usepackage{microtype}

\newlength{\storeparskip}
\setlength{\storeparskip}{\parskip}

\setlength{\parskip}{.8em}

\usepackage{hyperref}  
\hypersetup{%
  bookmarksnumbered=true,%
  bookmarks=true,%
  colorlinks=true,%
  linkcolor=blue,%
  citecolor=blue,%
  filecolor=blue,%
  menucolor=blue,%
  pagecolor=blue,%
  urlcolor=blue,%
  pdfnewwindow=true,%
  pdfstartview=FitBH}
  
\let\fullref\autoref
%
%
%
\def\makeautorefname#1#2{\expandafter\def\csname#1autorefname\endcsname{#2}}
%
%
\def\equationautorefname~#1\null{(#1)\null}
\makeautorefname{footnote}{footnote}%
\makeautorefname{item}{item}%
\makeautorefname{figure}{Figure}%
\makeautorefname{table}{Table}%
\makeautorefname{part}{Part}%
\makeautorefname{appendix}{Appendix}%
\makeautorefname{chapter}{Chapter}%
\makeautorefname{section}{Section}%
\makeautorefname{subsection}{Section}%
\makeautorefname{subsubsection}{Section}%
\makeautorefname{paragraph}{Paragraph}%
\makeautorefname{subparagraph}{Paragraph}%
\makeautorefname{theorem}{Theorem}%
\makeautorefname{thm}{Theorem}%
\makeautorefname{addm}{Addendum}%
\makeautorefname{mainthm}{Main theorem}%
\makeautorefname{introthm}{Theorem}%
\makeautorefname{introrem}{Remark}%
\makeautorefname{corollary}{Corollary}%
\makeautorefname{cor}{Corollary}%
\makeautorefname{lemma}{Lemma}%
\makeautorefname{lem}{Lemma}%
\makeautorefname{sublemma}{Sublemma}%
\makeautorefname{sublem}{Sublemma}%
\makeautorefname{subl}{Sublemma}%
\makeautorefname{prop}{Proposition}%
\makeautorefname{property}{Property}
\makeautorefname{pro}{Property}
\makeautorefname{sch}{Scholium}%
\makeautorefname{step}{Step}%
\makeautorefname{conject}{Conjecture}%
\makeautorefname{conj}{Conjecture}%
\makeautorefname{questn}{Question}
\makeautorefname{quest}{Question}
\makeautorefname{qn}{Question}
\makeautorefname{definition}{Definition}%
\makeautorefname{defn}{Definition}%
\makeautorefname{defi}{Definition}%
\makeautorefname{def}{Definition}%
\makeautorefname{dfn}{Definition}%
\makeautorefname{notation}{Notation}
\makeautorefname{notn}{Notation}
\makeautorefname{rem}{Remark}%
\makeautorefname{rems}{Remarks}%
\makeautorefname{rmk}{Remark}%
\makeautorefname{rk}{Remark}%
\makeautorefname{remarks}{Remarks}%
\makeautorefname{rems}{Remarks}%
\makeautorefname{rmks}{Remarks}%
\makeautorefname{rks}{Remarks}%
\makeautorefname{example}{Example}%
\makeautorefname{examp}{Example}%
\makeautorefname{exmp}{Example}%
\makeautorefname{exam}{Example}%
\makeautorefname{exa}{Example}%
\makeautorefname{axiom}{Axiom}%
\makeautorefname{axi}{Axiom}%
\makeautorefname{ax}{Axiom}%
\makeautorefname{case}{Case}%
\makeautorefname{claim}{Claim}%
\makeautorefname{clm}{Claim}%
\makeautorefname{assumpt}{Assumption}%
\makeautorefname{asses}{Assumptions}%
\makeautorefname{conclusion}{Conclusion}%
\makeautorefname{concl}{Conclusion}%
\makeautorefname{conc}{Conclusion}%
\makeautorefname{cond}{Condition}%
\makeautorefname{const}{Construction}%
\makeautorefname{con}{Construction}%
\makeautorefname{criterion}{Criterion}%
\makeautorefname{criter}{Criterion}%
\makeautorefname{crit}{Criterion}%
\makeautorefname{exercise}{Exercise}%
\makeautorefname{exer}{Exercise}%
\makeautorefname{exe}{Exercise}%
\makeautorefname{problem}{Problem}%
\makeautorefname{problm}{Problem}%
\makeautorefname{prob}{Problem}%
\makeautorefname{prob}{Problem}%
\makeautorefname{soln}{Solution}%
\makeautorefname{sol}{Solution}%
\makeautorefname{sum}{Summary}%
\makeautorefname{oper}{Operation}%
\makeautorefname{obs}{Observation}%
\makeautorefname{ob}{Observation}%
\makeautorefname{conv}{Convention}%
\makeautorefname{cvn}{Convention}%
\makeautorefname{warn}{Warning}%
\makeautorefname{note}{Note}%
\makeautorefname{fact}{Fact}%
\makeautorefname{ouch0}{Counterexample}%
%

\makeindex

\newtheorem{thm}{Theorem}[section]
\newtheorem{cor}{Corollary}[section]
\newtheorem{prop}{Proposition}[section]
\newtheorem{lem}{Lemma}[section]

\newtheorem{introthm}{Theorem}
\newtheorem*{introcor}{Corollary}

\theoremstyle{definition}

\newtheorem{defn}{Definition}[section]
\newtheorem{con}{Construction}[section]

\newtheorem{notn}{Notation}[section]

\newtheorem{assumpt}{Assumption}[section]

\newtheorem{rem}{Remark}[section]

\makeatletter
\let\c@cor=\c@thm
\let\c@prop=\c@thm
\let\c@lem=\c@thm
\let\c@conj=\c@thm
\let\c@defn=\c@thm
\let\c@notn=\c@thm
\let\c@exmp=\c@thm
\let\c@rem=\c@thm
\let\c@sch=\c@thm
\let\c@con=\c@thm
\let\c@assumpt=\c@thm
\let\c@equation\c@thm
\numberwithin{equation}{section}
\makeatother

\newcommand{\DAlg}{\sD\text{-}\bf{Alg_{ps}}}

\newcommand{\DGAlg}{\sD_G\text{-}\bf{Alg}_{ps}}

\newcommand{\DGA}{\sD_G\text{-}\bf{Alg}}
\newcommand{\AlgPs}{{\bf Alg_{ps}}}
\newcommand{\PAlg}{\PI\text{-}{\bf Alg}}
\newcommand{\FAlg}{\sF\text{-}{\bf Alg}}

\newcommand{\FGAlg}{\sF_G\text{-}{\bf Alg}}

\newcommand{\OAlg}{\oO\text{-}{\bf Alg_{ps}}}
\newcommand{\OAT}{\oO\text{-}{{\bf Alg}_{\bf ps}^{G\sT}}}
\newcommand{\OADT}{\oO\text{-}{{\bf Alg}_{\bf ps}^{G\sT}\!(\sD_G)}}

\newcommand{\psFAlg}{\sF\text{-}{\bf PsAlg}}
\newcommand{\FTop}{\sF_G\text{-}{\bf Top}}
\newcommand{\psFGAlg}{\sF_G\text{-}{\bf PsAlg}}
\newcommand{\CAlg}{\cC\text{-}{\bf{Alg}}}
\newcommand{\CBAlg}{\cC\text{-}{\bf{Alg_{ps\cB}}}}
\newcommand{\CPsAlg}{\cC\text{-}{\bf{PsAlg}}}
\newcommand{\DPsAlg}{\cD\text{-}{\bf{PsAlg}}}

\newcommand{\Top}{\mathbf{\sU}}
\newcommand{\Topp}{\mathbf{\sU_{\ast}}}
\newcommand{\St}{\mathrm{St}}
\newcommand{\Cat}{\mathbf{Cat}}
\newcommand{\Ass}{\mathbf{Assoc}}

\newcommand{\VCat}{\Cat(\sV)}
\newcommand{\CatV}{\ul{\Cat}(\sV)}

\newcommand{\VCatp}{\Cat(\sV_{\bpt})}
\newcommand{\CatVp}{\ul{\Cat}(\sV_{\bpt})}
\newcommand{\CatGVp}{\ul{\Cat}(G\sV_{\bpt})}

\newcommand{\GUCat}{\Cat(G\sU)}
\newcommand{\GUCatp}{\Cat(G\sU_{\bpt})}

\newcommand{\GVCat}{\Cat(G\sV)}
\newcommand{\GVCatp}{\Cat(G\sV_{\bpt})}
\newcommand{\GFT}{\sF_G\text{-}G\sT}

\newcommand{\GFU}{\sF_G\text{-}G\Topp}
\newcommand{\GDU}{\sD_G^{top}\text{-}G\Topp}
\newcommand{\GDT}{\sD_G^{top}\text{-}G\sT}
\newcommand{\GPT}{\PI_G\text{-}G\sT}
\newcommand{\SGD}{\bS_G^{\sD_G}}

\newcommand{\oursectionG}{\oursection_G}
\newcommand{\oursection}{\zeta}

\newcommand{\ob}{\mathbf{Ob}}
\newcommand{\mor}{\textbf{Mor}}

\newcommand{\GTop}{G\mathbf{\sU}}

\newcommand{\SpG}{\mathbf{Sp}_G}
\newcommand{\Sp}{\mathbf{Sp}}
\newcommand{\bOp}{\bO_+}
\newcommand{\bpt}{\ast} 

\newcommand{\Vptwo}{$\sV_{\bpt}$-$2$-category}

\newcommand{\Vtwofun}{$\sV$-$2$-functor}
\newcommand{\Vptwofun}{$\sV_{\bpt}$-$2$-functor}

\newcommand{\VtwoCats}{$\sV$-$2$-categories}


\newcommand{\bsma}{\, \overline\sma\, }

\newcommand{\ul}{\underline}
\newcommand{\Prod}{\prod\limits}

\newcommand{\bk}{\mathbf{k}}
\newcommand{\bm}{\mathbf{m}}
\newcommand{\bn}{\mathbf{n}}
\newcommand{\bp}{\mathbf{p}}
\newcommand{\bq}{\mathbf{q}}
\newcommand{\br}{\mathbf{r}}
\newcommand{\bs}{\mathbf{s}}
\newcommand{\id}{\mathrm{id}}
\newcommand{\iso}{\cong}     
\newcommand{\sma}{\wedge}    
\newcommand{\com}{\circ}     
\newcommand{\rtarr}{\longrightarrow}
\newcommand{\into}{\hookrightarrow}
\newcommand{\xrtarr}{\xrightarrow}
\newcommand{\squiggly}[2]{\xymatrix@1{#1 \ar@{~>}[r] & #2}}

\newenvironment{pf}{\begin{proof}}{\end{proof}}

\newcommand{\Mult}{\mathbf{Mult}}
\newcommand{\ourdefn}[1]{{\it #1}}

\DeclareMathAlphabet{\eus}{U}{eus}{m}{n}
\let\opsymbfont\mathcal
\let\catsymbfont\eus
\let\algsymbolfont\mathcal

\newcommand{\sD}{{\mathscr{D}}}
\newcommand{\sE}{\mathscr{E}} 
\newcommand{\sF}{\mathscr{F}}
\newcommand{\sI}{\mathscr{I}}

\newcommand{\sP}{\opsymbfont{P}}

\newcommand{\sT}{\mathscr{T}}
\newcommand{\sU}{\mathcal{U}}
\newcommand{\sV}{\mathcal{V}}
\newcommand{\sW}{\mathcal{W}}

\newcommand{\sY}{\mathscr{Y}}

\newcommand{\bE}{\mathbb{E}}
\newcommand{\bF}{\mathbb{F}}
\newcommand{\bJ}{\mathbb{J}}
\newcommand{\bK}{\mathbb{K}}
\newcommand{\bL}{\mathbb{L}}
\newcommand{\bN}{\mathbb{N}}
\newcommand{\bO}{\mathbb{O}}
\newcommand{\bP}{\mathbb{P}}
\newcommand{\bR}{\mathbb{R}}
\newcommand{\bS}{\mathbb{S}}
\newcommand{\bT}{\mathbb{T}}
\newcommand{\bU}{\mathbb{U}}
\newcommand{\bV}{\mathbb{V}}

\newcommand{\cA}{{\catsymbfont{A}}}
\newcommand{\cB}{{\catsymbfont{B}}}
\newcommand{\cC}{{\catsymbfont{C}}}
\newcommand{\cD}{{\catsymbfont{D}}}
\newcommand{\cE}{{\catsymbfont{E}}}
\newcommand{\cM}{{\catsymbfont{M}}}
\newcommand{\cN}{{\catsymbfont{N}}}

\newcommand{\oO}{{\opsymbfont{O}}}

\newcommand{\aA}{{\algsymbolfont{A}}}
\newcommand{\aB}{{\algsymbolfont{B}}}
\newcommand{\aC}{{\algsymbolfont{C}}}

\newcommand{\aW}{{\algsymbolfont{W}}}
\newcommand{\aX}{{\algsymbolfont{X}}}
\newcommand{\aY}{{\algsymbolfont{Y}}}
\newcommand{\aZ}{{\algsymbolfont{Z}}}

\newcommand{\al}{\alpha}
\newcommand{\be}{\beta}
\newcommand{\de}{\delta}
\newcommand{\DE}{\Delta}
\newcommand{\epz}{\varepsilon}
\newcommand{\et}{\eta}
\newcommand{\ga}{\gamma}
\newcommand{\io}{\iota}
\newcommand{\la}{\lambda}
\newcommand{\LA}{\Lambda}
\newcommand{\om}{\omega}
\newcommand{\pa}{\partial}   
\newcommand{\rh}{\rho}
\newcommand{\si}{\sigma}
\newcommand{\ph}{\phi}
\newcommand{\ps}{\psi}
\newcommand{\PI}{\Pi}
\newcommand{\SI}{\Sigma}
\newcommand{\ta}{\tau}
\newcommand{\tha}{\theta}
\newcommand{\THA}{\Theta}
\newcommand{\ze}{\zeta}

\newcommand{\siginv}{\upsilon}

\newcommand{\bzo}{\mathbf{0}}
\newcommand{\opair}{\circledast} 
\newcommand{\spair}{\otimes} 
\newcommand{\esma}{\sma} 
\newcommand{\smaD}{\circledast} 

\newcommand{\enGU}{\ul{G\Top}_*} 

\newcommand{\diag}[2]{\Delta^{\!\scriptscriptstyle #2}\!(#1)}

\newcommand{\oid}{\mathds{1}} 

\newcommand{\mybox}[1]{{\overline{ #1 \! }}}

\makeatletter
\let\c@equation\c@thm
\makeatother
\numberwithin{equation}{section}

\hyphenation{un-derived}
\hyphenation{mon-oid-al}
\hyphenation{bi-cat-e-gory}
\hyphenation{pa-ram-e-trized}
\hyphenation{cat-e-gor-ies}
\hyphenation{non-equi-var-i-ant}
\hyphenation{equi-var-i-ant}
\hyphenation{equi-var-i-ant}
\hyphenation{pseu-do-mor-phism}
\hyphenation{pseu-do-mor-phisms}
\hyphenation{O-sor-no}

\newcommand{\mb}[1]{\mathbf{#1}}

\bibliographystyle{plain}
 
\title[Multiplicative equivariant $K$-theory and the BPQ theorem]{Multiplicative equivariant $K$-theory and the Barratt-Priddy-Quillen theorem}                 
\begingroup%
\setlength{\parskip}{\storeparskip}
\author[B.~J. Guillou]{Bertrand J. Guillou}
\address{Department of Mathematics, University of Kentucky, Lexington, KY 40506}
\email{bertguillou@uky.edu}
\author[J.~P. May]{J. Peter May}
\address{Department of Mathematics, The University of Chicago, Chicago, IL 60637}
\email{may@math.uchicago.edu}
\author[M. Merling]{Mona Merling}
\address{Department of Mathematics, University of Pennsylvania, Philadelphia, PA 19104}
\email{mmerling@math.upenn.edu}
\author[A.~M. Osorno]{Ang\'elica M. Osorno}
\address{Department of Mathematics, Reed College, Portland, OR 97202}
\email{aosorno@reed.edu}
\endgroup%

\thanks{B.~J.~Guillou was partially supported  by Simons Collaboration Grant  No. 282316 and NSF grants DMS-1710379 and DMS-2003204. M.~Merling was partially supported by NSF grant DMS-1709461/1850644, a Simons AMS travel grant, and NSF CAREER grant DMS-1943925. A.~M.~Osorno was partially supported by the Simons
Collaboration Grant No. 359449, the Woodrow Wilson Career Enhancement
Fellowship, and NSF grant DMS-1709302.}

\keywords{$K$-theory, {multiplicative} equivariant infinite loop spaces, operads, {multicategories, multifunctors}}
\makeatletter
\@namedef{subjclassname@2020}{\textup{2020} Mathematics Subject Classification}
\makeatother
\subjclass[2020]{Primary  19D23, 19L47, 55P48; Secondary 18D20, 18D40, 18M65, 55P91, 55U40}

\begin{document}

\begin{abstract} 
We prove a multiplicative version of the equivariant Barratt-Priddy-Quillen theorem,  
starting from the additive version proven in \cite{GMPerm}.  The proof uses 
a multiplicative elaboration of an additive equivariant infinite loop space machine that manufactures orthogonal $G$-spectra from symmetric monoidal $G$-categories.  The new machine produces highly structured associative ring and module $G$-spectra from appropriate multiplicative input.  It relies on new operadic multicategories that are of considerable independent interest and are defined in a general, not necessarily equivariant or topological, context.   Most of our work is focused on constructing and comparing them.  We construct a multifunctor from the multicategory of symmetric monoidal $G$-categories to the multicategory of orthogonal $G$-spectra.  With this machinery in place, we prove that the equivariant BPQ theorem can be lifted to a multiplicative equivalence. That is the heart of what is needed for the presheaf reconstruction of the category of $G$-spectra in \cite{GM}. 
\end{abstract}

\maketitle

\begingroup%
\setlength{\parskip}{\storeparskip}
\tableofcontents
\endgroup%

\section{Introduction}
\label{sec:Intro}

We can view algebraic $K$-theory as a machine that takes as input a category with a structured additive  operation and produces a spectrum by \emph{group-completing} 
 the operation in a homotopy coherent way. The homotopy groups of this spectrum---the higher $K$-groups---are rich invariants which connect homotopy theory with number theory, algebraic geometry, and geometric topology. For example, the homotopy groups of the $K$-theory spectrum of the category of finitely generated projective $R$-modules for a ring $R$ are Quillen's higher $K$-groups of $R$, which are related to important problems and conjectures in number theory, especially when $R$ is a number ring.

Classically, there were two approaches for building the $K$-theory spectrum associated to a symmetric monoidal category: Segal's  approach based on $\Gamma$-spaces \cite{Seg}, and the operadic approach of \cite{BVbook, MayGeo, MayPerm}. These two infinite loop space machines were shown to be equivalent in \cite{MT, MayPerm2}. One fundamental problem in infinite loop space theory is to determine what structure on the input category ensures that its $K$-theory spectrum is a highly structured ring spectrum. If the input has a second, related, structured multiplicative operation, making it into a ``ring category", then a suitably multiplicative $K$-theory  machine should yield a ring spectrum.  The  study of multiplicative infinite loop space theory saw much development early on \cite{MayPair, maymultiplicative, MQR, woolfson1, woolfson2}.  A space level modernized survey is given in \cite{Rant1} and a modernized categorical treatment is given in \cite{Rant2}. A treatment of multiplicative infinite loop space theory that is structured around the use of multicategories and multifunctors is given in \cite{EM1}, and that has served for inspiration in this paper.

For a finite group $G$, the Segal infinite loop space machine has been generalized equivariantly by Shimakawa in \cite{Shim}, and the operadic infinite loop space machine has been generalized equivariantly by two of us in \cite{GMPerm} to build (genuine) orthogonal  $G$-spectra from categories with additive operations that are suitably equivariant.\footnote{A much earlier operadic machine with target Lewis-May $G$-spectra \cite{LMS} was developed by Hauschild, May, and Waner. It was never published, but is outlined by Costenoble and Waner \cite{CW}.}   These equivariant infinite loop space machines have been shown to be equivalent by three of us in \cite{MMO}. 

It is a natural question to ask what kind of structure on a $G$-category makes its $K$-theory into an equivariant ring spectrum, and this is not addressed in any of the papers just mentioned.  Nonequivariantly, the question can be answered without serious use of $2$-category theory, but we have not found such an answer equivariantly.  
The multiplicative structure at the categorical level is encoded via multilinear maps that are distributive up to coherent natural isomorphisms and is thus intrinsically $2$-categorical.  The very different but essentially combinatorial ways around this found nonequivariantly in \cite{EM1, Rant2}  do not appear to generalize equivariantly, or at least not easily.   Our work involves conceptual categorical processing of $2$-categorical input so that it feeds into  an equivariant version of the $1$-categorical Segal machine, whose multiplicative properties we have established in \cite{GMMO}.

An equivariant version of the Barratt-Priddy-Quillen theorem, which expresses the suspension $G$-spectrum of a $G$-space as the equivariant algebraic $K$-theory of a $G$-category, was proven in \cite{GMPerm} using the equivariant operadic machine. However, this equivalence does not a priori preserve the multiplicative structure coming from the smash product of based $G$-spaces. 
The main result of \cite{GM} relies on having a multiplicative equivariant $K$-theory machine starting at the level of $G$-categories that is compatible with the Barratt-Priddy-Quillen theorem, and we provide that in this paper.  An easier multiplicative version of the equivariant Barratt-Priddy-Quillen theorem is proven in  \cite[Theorem~6.7]{GMMO}), but that starts from categorical input that is quite different from the input needed in \cite{GM}.
 
We start with an equivariant $K$-theory machine  $\bK_G$ producing orthogonal  $G$-spectra from  structured $G$-categories, which we take to be algebras over a suitable operad $\oO$. 
In the nonequivariant case, the input would be permutative categories, which are algebras over the Barratt-Eccles operad.
Conceptually, we would like to extend $\bK_G$ to a monoidal functor from structured $G$-categories to orthogonal $G$-spectra. 
However, the ring $G$-categories that arise in nature are not the monoids for a monoidal structure on structured $G$-categories.
Rather, as in \cite{EM1, Lein} and elsewhere, we have a multicategory structure on structured $G$-categories. 
A multicategory structure on a category $\cC$ allows one to make sense of the notion of monoid in $\cC$ as well as module over a monoid. 
We will thus extend $\bK_G$ to a multifunctor, meaning that it is compatible with the multicategory structure.

We give some intuition for finding the structure on an operad that ensures that its category of algebras is a multicategory. We think of the operad $\oO$ as parametrizing addition. Now suppose that we want to define a multiplication that distributes over addition. Just as the product of integers $mn$ is the $n$-fold addition of the integer $m$, we can define a pairing $\oO(m)\times \oO(n)\rtarr \oO(mn)$ by repeating $n$ times the variable in $\oO(m)$  and then ``adding"   using the operad structure map. The diagram that we obtain when we compare this with the map $\oO(n)\times \oO(m)\rtarr \oO(nm)$ that we get by twisting in the source and using a reordering permutation in the target does not strictly commute in general. We define a \emph{pseudo-commutative} operad to be one for which this comparison diagram commutes up to natural isomorphism (see \autoref{pseudocom}), and we show that this condition allows us to define a multicategory structure on the category of $\oO$-algebras.  A key example is the permutativity operad $\sP_G$ of \cite [Definition 3.4]{AddCat1}; its algebras are the permutative $G$-categories and its pseudoalgebras are the symmetric monoidal $G$-categories \cite{AddCat1}. The operad $\sP_G$ is a categorical $E_\infty$ $G$-operad as defined in \cite[Definition 2.1]{GMPerm}, and when $G=e$ it is just the categorical $E_\infty$ Barratt-Eccles operad.

We write $G\Top$ for the category of $G$-spaces and $\Cat(G\Top)$ for the 2-category of categories internal to $G\Top$, as described in \autoref{NotnSectSub}.
Fix a chaotic (\autoref{chaotic})
$E_{\infty}$  $G$-operad $\oO$ in $\Cat(G\Top)$. We construct a multicategory $\Mult(\oO)$ whose underlying category is the category $\OAlg$ of $\oO$-algebras and pseudomorphisms. 
Writing $\SpG$ for the category of orthogonal $G$-spectra, we construct a functor 
\[ \bK_G\colon \OAlg \rtarr \SpG \]
that group completes the additive structure,
and most of the paper is devoted to establishing the following result, which appears as \autoref{KGMultiFun}.

\begin{introthm}\label{IntroMultiKG} 
Let $\oO$ be a chaotic $E_\infty$ $G$-operad in $\Cat(G\Top)$. 
Then the functor \mbox{$\bK_G\colon \OAlg \rtarr \SpG$} extends to a multifunctor.
\end{introthm}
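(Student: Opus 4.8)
The plan is to realize $\bK_G$ as a composite of multifunctors. Recall that $\bK_G$ is built as the composite
\[ \OAlg \xrtarr{\mathbf{J}} \sF_G\text{-}\Cat(\GTop) \rtarr \GFU \xrtarr{\bS_G} \SpG, \]
in which $\mathbf{J}$ is the categorical ``Segalification'' bar construction that turns an $\oO$-algebra into a strict special $\sF_G$-$G$-category (and which may itself be factored through intermediate diagram categories), the second functor is the levelwise classifying space, and $\bS_G$ is the equivariant Segal infinite loop space machine. Each of the three source categories carries a multicategory structure: $\OAlg$ carries $\Mult(\oO)$; the categories $\sF_G\text{-}\Cat(\GTop)$ of $\sF_G$-$G$-categories and $\GFU$ of $\sF_G$-$G$-spaces carry the multicategory structures underlying a Day-convolution smash product of $\sF_G$-objects; and $\SpG$ carries its usual smash-product multicategory structure. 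Since a composite of multifunctors is a multifunctor, and the underlying functor of the composite is then $\bK_G$ by construction, it suffices to promote each of the three functors to a multifunctor; that is \autoref{KGMultiFun}.

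The substantial step is $\mathbf{J}$ together with $\Mult(\oO)$. A chaotic $E_\infty$ $G$-operad is in particular pseudo-commutative in the sense of \autoref{pseudocom}: the pairing $\oO(m)\times\oO(n)\rtarr\oO(mn)$ and its twist agree up to the unique isomorphism supplied by the chaotic structure (\autoref{chaotic}), and the coherence diagrams commute automatically because every hom-category of $\oO(n)$ is chaotic. Hence $\OAlg$ is the multicategory $\Mult(\oO)$, in which a $k$-morphism $(\alg{A}_1,\dots,\alg{A}_k)\rtarr\alg{B}$ is a functor on the product that is a pseudomorphism of $\oO$-algebras in each variable separately, with coherence data controlled by the pseudo-commutativity. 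I would then define $\mathbf{J}$ on multi-hom-sets by feeding such a multilinear functor into the $k$-fold smash product of the Segalified $\sF_G$-$G$-categories: the operad-parametrized additions and the pseudo-commutativity isomorphisms assemble a map $\mathbf{J}\alg{A}_1\bsma\cdots\bsma\mathbf{J}\alg{A}_k\rtarr\mathbf{J}\alg{B}$ of $\sF_G$-$G$-categories. Naturality is formal; $\Sigma_k$-equivariance follows from that of the $\sF_G$-smash product together with the symmetry built into the pseudo-commutative structure; and compatibility with the multicategory composition $\gamma$ reduces, after unwinding, to the operad composition identities and the coherence axioms for pseudo-commutativity. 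The nullary and unary data go to identities.

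For the remaining two functors: the levelwise classifying space functor is $G$-equivariant and preserves the relevant products and smash products up to natural homeomorphism, hence is strong symmetric monoidal for these, and therefore sends the smash product of $\sF_G$-$G$-categories to that of $\sF_G$-$G$-spaces; it thus promotes to a multifunctor in the canonical way. The multifunctoriality of $\bS_G\colon\GFU\rtarr\SpG$ is exactly what is established in \cite{GMMO}. Composing the three multifunctors gives a multifunctor $\Mult(\oO)\rtarr\SpG$ whose underlying functor is $\bK_G$, which proves the theorem.

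The main obstacle is the coherence bookkeeping concentrated in the previous paragraph. Three things require genuine work: pinning down the multicategory $\Mult(\oO)$ from the pseudo-commutative structure and verifying the multicategory axioms; constructing the smash product of $\sF_G$-$G$-categories with its associativity and unit coherences so that it really is a multicategory; and---the crux---showing that the bar construction $\mathbf{J}$, which must simultaneously rectify pseudomorphisms and the pseudo-commutative multiplications to strict structure, is compatible with these smash products \emph{on the nose}, so that the multilinear comparison maps respect $\gamma$ strictly and not merely up to coherent isomorphism. This is where the bulk of the paper lies, and where the hypothesis that $\oO$ is \emph{chaotic} does the essential work: it collapses the higher coherence that would otherwise obstruct strict multifunctoriality. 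A separate and more routine point, needed for the homotopical comparison with the space-level machine but not for the bare multifunctor statement, is to keep the $G$-categories in sight suitably cofibrant so that the smash products compute homotopy smash products.
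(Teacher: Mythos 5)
Your top-level strategy---factor $\bK_G$ and compose multifunctors, taking the multifunctoriality of $\bS_G$ from \cite{GMMO} and handling $B$ by monoidality---is indeed the paper's strategy, but the single functor $\mathbf{J}$ you posit is exactly where the theorem lives, and your sketch of it is not a proof. The paper does not use a bar construction and does not produce a strict multilinear comparison in one step. Its route is
$\OAlg \xrtarr{\bR} \DAlg \xrtarr{\bP} \DGAlg \xrtarr{\oursectionG^*} \psFGAlg \xrtarr{\St} \FGAlg$,
and each arrow must be made a multifunctor separately: the pseudo-commutativity of the chaotic operad $\oO$ has to be transferred to a pseudo-commutative structure on the category of operators $\sD(\oO)$ (\autoref{multithm}) and on its prolongation $\sD_G$ (\autoref{multithm2}); the $E_\infty$ hypothesis---which you never invoke---is what guarantees the equivariant section $\oursectionG$ of $\xi_G$ (\autoref{TheSection}); and because $\oursectionG$ fails to be strictly functorial relative to $\PI_G$ (\autoref{warn}), pulling back along it lands only in \emph{weak} $\sF_G$-pseudoalgebras, which is why Power--Lack strictification enters and why its multifunctoriality (\autoref{StMulti})---resting on the fact that $\sma$ on $\sF_G$ is a strict $2$-functor, unlike $\smaD$ on $\sD_G$---is a genuine theorem rather than bookkeeping. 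None of this is recoverable from the assertion that chaoticity ``collapses the higher coherence''; chaoticity gives the unique pseudo-commutative structure and the coherence cells of $\oursectionG$, but the transfer through categories of operators and the strictification step still carry the bulk of the argument.

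Two of your specific claims would moreover fail as stated. First, $B$ is not strong symmetric monoidal on based $G$-categories: there is only a lax comparison $B\cC\sma B\cD\rtarr B(\cC\sma\cD)$ (\autoref{BLaxMon}); lax suffices to get a multifunctor, so this is repairable. Second, and more seriously, your claim that the rectified multilinear maps are $\SI_k$-equivariant is precisely what the paper shows to be false: the strictification multifunctor is not symmetric (\autoref{sec:nono}), and along this route one obtains only a non-symmetric multifunctor, which is all \autoref{IntroMultiKG} asserts. Dropping the equivariance claim costs you nothing for the statement at hand, but asserting it indicates that the coherence problem concentrated in the strictification step---the actual crux---has not been confronted.
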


We have the following direct corollary of \autoref{IntroMultiKG}.

\begin{introcor} If $\aA$ is a monoid
in $\OAlg$, then $\bK_G(\aA)$ is a  ring $G$-spectrum. If $\aB$ is an $\aA$-module in $\OAlg$, then $\bK_G(\aB)$ is a $\bK_G(\aA)$-module $G$-spectrum.
\end{introcor}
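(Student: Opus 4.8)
The plan is to deduce the corollary formally from \autoref{IntroMultiKG} using only general facts about multifunctors between multicategories. The key observation is that both ``monoid'' and ``module over a monoid'' are notions that make sense in any symmetric multicategory: a monoid $\aA$ in $\OAlg$ amounts to an object of $\OAlg$ together with a unit $1 \rtarr \bK_G$-nothing---more precisely, a nullary operation $u \in \Mult(\oO)(\,;\aA)$ and a binary operation $\mu \in \Mult(\oO)(\aA,\aA;\aA)$ satisfying the usual associativity and unit axioms expressed via the composition and symmetry of the multicategory; an $\aA$-module $\aB$ is an object together with an action $\la \in \Mult(\oO)(\aA,\aB;\aB)$ compatible with $\mu$ and $u$. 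Likewise, a ring $G$-spectrum and a module $G$-spectrum are precisely a monoid and a module in the symmetric multicategory underlying the symmetric monoidal category $\SpG$ of orthogonal $G$-spectra (with the smash product), since $\SpG$ is a closed symmetric monoidal category and associative monoids for its smash product are exactly associative ring $G$-spectra.

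First I would recall (or cite) the elementary fact that a multifunctor $F\colon \cM \rtarr \cN$ of symmetric multicategories carries monoids to monoids and modules to modules: given a monoid $(\aA, \mu, u)$ in $\cM$, the data $(F\aA, F\mu, Fu)$ is a monoid in $\cN$ because $F$ preserves multi-composition, identities, and the symmetric group actions, so every defining diagram for a monoid is sent to the corresponding diagram in $\cN$; the same argument applied to the module axioms shows $(F\aB, F\la)$ is an $F\aA$-module. Applying this with $F = \bK_G$, $\cM = \Mult(\oO)$, and $\cN$ the multicategory underlying $\SpG$, we conclude that $\bK_G(\aA)$ is a monoid in $\SpG$, i.e.\ a ring $G$-spectrum, and $\bK_G(\aB)$ is a $\bK_G(\aA)$-module in $\SpG$, i.e.\ a $\bK_G(\aA)$-module $G$-spectrum.

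The only genuine content to pin down is the identification of the target: one must check that the multicategory associated to $\SpG$ via its smash product is the one whose monoids and modules are ring and module $G$-spectra in the intended sense. This is standard---it is the multicategory canonically built from a symmetric monoidal category, whose $k$-ary operations $X_1,\dots,X_k \to Y$ are maps $X_1 \sma \cdots \sma X_k \rtarr Y$---and \autoref{IntroMultiKG} (equivalently \autoref{KGMultiFun}) is precisely the statement that $\bK_G$ is a multifunctor landing there. So the argument is: quote the abstract ``multifunctors preserve monoids and modules'' lemma, identify the multicategory of $\SpG$ with the one underlying the smash-product symmetric monoidal structure, and invoke \autoref{IntroMultiKG}. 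There is essentially no obstacle here beyond bookkeeping; all the real work lies in \autoref{IntroMultiKG} itself.
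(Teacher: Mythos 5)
Your argument is correct and is essentially the paper's own: the corollary is treated as immediate from \autoref{IntroMultiKG} together with the facts recalled in the paper's review of multicategories, namely that monoids and modules over them make sense in any multicategory, that these notions agree with the usual ones for the multicategory associated to the symmetric monoidal category $\SpG$, and that multifunctors preserve associative and unital algebraic structure. One caution: $\bK_G$ is explicitly \emph{not} a symmetric multifunctor, so you should not base the preservation lemma on $F$ respecting the symmetric group actions; the monoid and module axioms are written purely in terms of multicomposition and identities, which is why the underlying (non-symmetric) multifunctor suffices for associative ring and module $G$-spectra, while commutative (i.e.\ $E_\infty$) ring spectra would require the symmetry that $\bK_G$ lacks.
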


We warn the reader, however, that \autoref{IntroMultiKG} does not assert that $\bK_G$ is {\it symmetric}. In particular, we do not claim that our version of $\bK_G$ produces commutative ring $G$-spectra as output. Constructing a symmetric  equivariant $\bK$-theory multifunctor is an ongoing challenge. Our multifunctor $\bK_G$ is a composite of multifunctors all but one of which are symmetric, and we shall keep track of symmetry as we go along.   However, associative and unital multiplicative properties are all that are needed for the following result, which is the heart of what is needed in \cite{GM}. 
We prove the following theorem in \autoref{sec:BPQ}. Here we use that $G\sU$ embeds in $\Cat(G\Top)$, as recalled from \cite[Remark 1.8]{AddCat1} in \autoref{embed}.

\begin{introthm}[Multiplicative equivariant Barratt-Priddy-Quillen]
\label{IntroBPQ} 
Let $\oO$ be a topologically discrete chaotic $E_\infty$ $G$-operad in $\Cat(G\Top)$ and $\bOp$ the associated monad. 
There is a lax monoidal natural transformation 
\[ \alpha\colon  \Sigma^\infty_{G+}  {\longrightarrow} \bK_G \bOp\]
of functors $\GTop\rtarr \SpG$ such that $\al_X$ is a stable equivalence of orthogonal $G$-spectra for all input 
$G$-CW complexes $X$.
\end{introthm}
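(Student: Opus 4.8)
The plan is to build $\alpha$ as a composite of maps that are each manifestly lax monoidal, reducing the multiplicative Barratt-Priddy-Quillen statement to its already-established additive version together with a monoidality check at each stage. The key point is that $\bK_G$ is defined (as recalled in the discussion preceding \autoref{IntroMultiKG}) by first passing from $\oO$-algebras into the input of an equivariant $1$-categorical Segal-style machine, and then applying that machine; the multiplicative refinement of the Segal machine was established in \cite{GMMO}. So I would first recall from \cite{GMPerm} the additive comparison map $\Sigma^\infty_{G+} X \to \bK_G\bOp X$ — concretely, the suspension $G$-spectrum of $X$ is the $\bK_G$-theory of the free $\oO$-algebra $\bOp X$ on $X$, and the additive equivariant BPQ theorem of \cite{GMPerm} says this is a stable equivalence for $G$-CW complexes $X$. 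The content to add here is that the free-algebra functor $X \mapsto \bOp X$, with its standard lax (indeed strong, since $\oO$ is chaotic and topologically discrete) monoidal structure coming from $\bOp X \times \bOp Y \to \bOp(X\times Y)$ and the smash product, is lax monoidal $\GTop \to \OAlg$, and that $\bK_G$ is lax monoidal as a multifunctor — which is exactly the corollary to \autoref{IntroMultiKG} unpacked at the level of the underlying monoidal/lax structures. Composing, $\alpha$ is lax monoidal.

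Second, I would verify naturality and the lax monoidal axioms for the composite. Naturality of $\alpha$ in $X$ is inherited from naturality of the unit $\Sigma^\infty_{G+} \to \bK_G\bOp$ appearing in \cite{GMPerm} together with functoriality of $\bOp$ and $\bK_G$; the new input is the compatibility of the monoidality constraints. Here the diagram to check is that the constraint square for $\Sigma^\infty_{G+}$ (the lax monoidal structure on the equivariant suspension spectrum functor, coming from $\Sigma^\infty_{G+}X \wedge \Sigma^\infty_{G+}Y \to \Sigma^\infty_{G+}(X\times Y)$) maps, under $\alpha$, to the constraint square for $\bK_G\bOp$ built from the lax structure of $\bOp$ followed by that of $\bK_G$. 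Because $\bOp$ is topologically discrete and chaotic, the monoidal comparison $\bOp X \times \bOp Y \to \bOp(X\times Y)$ is built on the identity-on-objects chaotic structure and on the evident juxtaposition of operad elements, so this square reduces to a statement about the Segal machine's monoidality that is exactly what \cite{GMMO} provides; I would cite the relevant multiplicativity statement there rather than re-derive it. The unit constraint (that $\alpha$ respects the map from the sphere $G$-spectrum) is the special case $X = \ast$, where $\bOp\ast$ is the unit $\oO$-algebra and $\bK_G$ of it is the sphere $G$-spectrum up to the stated equivalence, again by the additive BPQ theorem.

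Third, the equivalence claim: $\alpha_X$ is a stable equivalence for all $G$-CW complexes $X$. This is precisely the additive equivariant BPQ theorem from \cite{GMPerm}, since the underlying map of $\alpha$ is the additive comparison map; passing to a lax monoidal refinement does not change the underlying natural transformation, only adds structure. So no new homotopical input is required beyond what is already in \cite{GMPerm}; one only observes that the map constructed here agrees with the one there. I would include a brief remark reconciling the two constructions — namely that both arise as the $\bK_G$-image (equivalently, image under the Segal machine) of the unit $X \to \bOp X$ into the free algebra, composed with the natural comparison between $\Sigma^\infty_{G+}$ and the machine applied to the free algebra on a point's worth of data.

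The main obstacle I anticipate is purely bookkeeping rather than conceptual: $\bK_G$ is a composite of several functors, only some of which are symmetric but all of which must be checked to be \emph{lax monoidal} (which is weaker, hence available), and one must assemble the monoidality constraints of each stage into a single constraint for the composite and then match it against the standard lax monoidal structure on $\Sigma^\infty_{G+}$. The genuinely delicate stage is the passage from $\oO$-algebras through $\Mult(\oO)$ into the Segal machine input, because that is where the $2$-categorical pseudo-commutativity data enters; but since \autoref{IntroMultiKG} already packages all of this into a multifunctor and the free algebra on a space is strictly a monoid-friendly object (the chaotic, topologically discrete hypotheses on $\oO$ kill the $2$-categorical subtleties on the free objects), the monoidality at that stage should follow formally from \autoref{IntroMultiKG} restricted along the free-algebra functor. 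I would therefore organize the proof so that \autoref{IntroMultiKG} and the additive BPQ theorem of \cite{GMPerm} do all the heavy lifting, and this proof consists of (i) identifying the composite defining $\alpha$, (ii) citing \cite{GMMO} and the corollary to \autoref{IntroMultiKG} for laxness, and (iii) invoking \cite{GMPerm} for the equivalence.
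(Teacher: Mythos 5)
There is a genuine gap in your treatment of the equivalence claim. You assert that the underlying map of $\alpha$ \emph{is} the additive comparison map of \cite{GMPerm}, so that the stable equivalence follows with ``no new homotopical input.'' But the additive BPQ theorem of \cite{GMPerm} produces an equivalence $\Sigma^\infty_{G+}X \rtarr \bE_G\bOp^{top}X$ into the \emph{operadic} machine, whereas the target of $\alpha$ here is $\bK_G\bOp X = \bS_G B \St_{\sF_G}\oursectionG^*\bR_G\bOp X$, i.e.\ the Segal machine applied to a strictified $\sF_G$-algebra obtained via the section $\oursectionG$. These are different $G$-spectra, and identifying them is the central homotopical content of the paper's proof: one needs the comparison of the operadic and category-of-operators machines and of the generalized operadic and Segal machines from \cite{MMO}, the zig-zag \autoref{ZigZagFive} of stable equivalences built from the strictification adjunction (\autoref{ConjOut1}), $\chi$ (\autoref{StrictEquiv}), $\ps$ (\autoref{zeta1}) and the comparison $\SGD\xi_G^*\rtarr\bS_G$, the levelwise nondegeneracy results (\autoref{gloat}, \autoref{OXgoodD}) that make these bar-construction machines homotopically meaningful on $\bOp X$, and finally a zeroth-space diagram chase showing that the resulting comparison diagram \autoref{home} commutes up to homotopy, so that $\alpha_X$ really does agree in the homotopy category with the \cite{GMPerm} equivalence. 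None of this is subsumed by citing \cite{GMPerm}, and your step (iii) as written would not go through. Relatedly, the construction of $\alpha$ itself is not imported from \cite{GMPerm}: it is built by adjunction from an explicit composite $X\cong BX\xrtarr{B\eta}B\bOp X = (B\bR_G\bOp X)(\mathbf 1)\xrtarr{Bi}(B\bT\bOp X)(\mathbf 1)\xrtarr{\nu}(\bS_GB\bT\bOp X)_0$ using the unit of the monad, the unit $i$ of the strictification adjunction, and the structural map $\nu$ of \autoref{segalmachine}, and the reconciliation of this map with the \cite{GMPerm} map is exactly what the diagram chase establishes.

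Your treatment of monoidality is closer to the paper's: the paper likewise obtains the monoidal constraint on $\bK_G\bOp$ from the multifunctors of \autoref{freeinput} and \autoref{KGMultiFun} (applied to $\id_{X\times Y}$ as a $2$-ary morphism) and then verifies the constraint square for $\alpha$ by an explicit adjoint zeroth-space diagram, rather than by a purely formal composition argument; your sketch would need to be fleshed out to that level, but the idea is aligned. One small correction: the pairing $\om\colon \bOp X\sma\bOp Y\rtarr\bOp(X\times Y)$ is not an isomorphism even for topologically discrete chaotic $\oO$ (elements of $\oO(n)\times_{\SI_n}(X\times Y)^n$ not of ``rectangular'' shape are not in its image), so $\bOp$ is lax but not strong monoidal; and recall from \autoref{ONotSym} that it is not even a symmetric multifunctor, which is why only lax (not lax symmetric) monoidality of $\alpha$ is claimed.
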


The main result of \cite{GM}  gives a Quillen equivalence between the category of orthogonal $G$-spectra and the category of ``spectral Mackey functors," i.e., spectrally enriched functors $G\mathscr{A}\rtarr \Sp$, where $G\mathscr{A}$ is a spectral version of the Burnside category.  The proof of that result rests on having a multiplicative machine $\bK_G$ which satisfies \autoref{IntroBPQ}.  Its construction was deferred to this paper.  An alternative $\infty$-categorical perspective on spectral Mackey functors as a model for $G$-spectra is given in \cite{BGS, niko, denis}.
Moreover, a version of the multiplicative equivariant Barratt-Priddy-Quillen theorem appears as \cite[Theorem~10.6]{BGS}. As the input for their machine differs from that of ours, a direct comparison of their result with ours would be nontrivial but worthwhile.

\begin{rem}\label{ComparingToInfinityCats} An illuminating $\infty$-category  treatment of multiplicative infinite loop space theory is given in \cite{GGN}. We briefly compare that approach to the theory here.   The input  with that approach is symmetric monoidal $\infty$-categories, which are $\infty$-categorical generalizations of Segal's special $\Gamma$-spaces. In this paper, as classically, the input is symmetric monoidal $1$-categories, but we need to work with the 2-category of such, in order to keep track of the multiplicative structure.
The focus of this paper is the passage from there to special $\Gamma$-categories,  while  the machine $\bS_G\com B$ from  special $\Gamma$-categories  to $\Omega$-$G$-spectra is taken as a black box.  We view the machine $\bS_G\com B$ as essentially formal.  Like the $\infty$-category machine,  it is symmetric monoidal, at least in the variant form given in \cite{GMMO}.  Philosophically, from the $\infty$-category point of view,  we are showing that, even equivariantly, the passage from symmetric monoidal 1-categories to symmetric monoidal $\infty$-categories preserves multiplicative structure, albeit with a loss of symmetry.
\end{rem}

\subsection{A road map}

The organization of this paper focuses on the multiplicative elaboration of the following diagram, which displays $\bK_G$ as the composite of a  sequence of functors.   

\begin{equation}\label{RoadMap}
\xymatrix{
 \OAlg \ar[d]_{\bR} \ar[rr]^{\bK_G}  & &   \SpG\\
 \DAlg \ar[d]_\bP & &  \FTop \ar[u]_{\bS_G}\\
 \DGAlg \ar[r]_-{\oursectionG^*} & \psFGAlg \ar[r]_-{\St} & \FGAlg \ar[u]_{B} \\ }
\end{equation}

The notation $\AlgPs$ denotes $2$-categories of {\em {strict}} algebras and {\em {pseudomorphisms}} between them.  Modulo just a bit of additional notational complexity, we can just as well replace the $2$-categories (-)-$\AlgPs$ in the left hand column by more general $2$-categories (-)-$\bf{PsAlg}$ of pseudoalgebras and pseudomorphisms, with no significant change in constructions or  proofs.  We shall often write $\bR_G$ for the composite $\bP\bR$ in the left column.

 After a few preliminaries setting up our categorical framework of operads and multicategories in \autoref{NotnSect}, multicategories with underlying categories of the form $\OAlg$ are defined in \autoref{sec:Oalg}.  Here $\oO$ is a chaotic operad or, a bit more generally, a pseudo-commutative operad.  We develop a general categorical framework that will specialize to an understanding of categories of operators over both finite sets and finite $G$-sets, together with their algebras and pseudoalgebras, in \autoref{sec:V2}.   We do this in a general framework that will later clarify some key distinctions. Multicategories with underlying categories of the form $\DAlg$ are defined in \autoref{sec:DAlg}, where $\sD$ is a category of operators over the category $\sF$ of finite sets.  Categories of operators were first introduced in \cite{MT}, where they mediated between the operadic and Segalic infinite loop space machines.  They play the same role here.

Taking $\sD$ to be the category of operators associated to a chaotic operad $\oO$, the functor $\bR$ is constructed as a multifunctor in \autoref{Multiop2}, but with a key proof deferred to \autoref{Rpf}.  All of this works in a general categorical context that a priori has nothing to do with either equivariance or topology.   A crucial technical point is that the ``pseudo-commutative pairing'' on an operad that we have already mentioned gives rise to an analogous ``pseudo-commutative pairing'' on its category of operators.   The term ``pseudo-commutative'' was first coined by Hyland and Power \cite{HP} in a monadic avatar of our categories of operators; it can be viewed as shorthand for ``pseudo-symmetric strict monoidal".

Working equivariantly, but in fact in a specialization of our general categorical framework, multicategories with underlying categories of the form $\DGAlg$, where $\sD_G$ is a category of operators over the category $\sF_G$ of finite $G$-sets, are defined in \autoref{sec:DG}. Taking $\sD_G$ to be the category of operators associated to a chaotic operad $\oO$, the multifunctor $\bP$ is also defined in that section.

The categories of operators $\sD$ and $\sD_G$ come with projections $\xi\colon \sD\rtarr \sF$ and $\xi_G\colon \sD_G\rtarr \sF_G$.   
Pulling back structure along these projections gives functors $\xi^*$ and $\xi_G^*$ that send $\sF$-algebras to $\sD$-algebras and $\sF_G$-algebras to $\sD_G$-algebras, and similarly for pseudoalgebras.  Taking full advantage of the equivariant context, we construct a section $\oursectionG \colon \sF_G\rtarr \sD_G$ to $\xi_G$ in
\autoref{sec:section}.  Pulling back along $\oursectionG$ gives the functor $\oursectionG^*$.  

However, since $\oursectionG$ does not preserve structure as strictly as one might hope, $\oursectionG^*$ takes strict algebras to pseudoalgebras.   As we explain in \autoref{sec:PowerLack},  $\St$ is a specialization of a general strictification functor due to Power and Lack \cite{Power, Lack} that rectifies the loss of strictness and lands us in the multicategory associated to the symmetric monoidal $2$-category $\FGAlg$ of strict $\sF_G$-algebras in categories internal to $G$-spaces and strict maps between them.  Specializing general theory developed in \cite{AddCat1, AddCat2}, we explain in \autoref{sec:StrictMult} how it extends to a multifunctor.

From here, $B$ is the standard classifying space functor and $\bS_G$ is the space level multiplicative equivariant infinite loop space machine of \cite{MMO}; $\FTop$ is the category of $\sF$-$G$-spaces and  $\SpG$ is the category of orthogonal $G$-spectra. We use these functors to complete the proof of \autoref{IntroMultiKG} in \autoref{sec:CattoTop}, and we combine our results here with results of \cite{MMO} and \cite{GMPerm} to prove \autoref{IntroBPQ} in \autoref{sec:BPQ}.

All of the multifunctors in \autoref{RoadMap} are symmetric except $\St$ and $\bS_G$.  We could equally well have used the slightly more elaborate but equivalent choice for $\bS_G$  constructed in \cite{GMMO}, which is symmetric.  However, although $\oursectionG^*$ is itself symmetric, loss of strict structure along it engenders the loss of symmetry of $\St$, as we  shall explain in \autoref{sec:nono}.

\begin{rem} We alert the reader to an alternative route to Theorems \ref{IntroMultiKG} and \ref{IntroBPQ} that  was found at the same time as the one presented here. It will be presented in \cite{MayToBe}.   It is illuminating, but it is more categorically intensive since it focuses on $2$-monads, which we have avoided here despite this being a paper that is  intrinsically all about them.  We will see in \cite{MayToBe} that the $k$-ary morphisms in our operadic multicategories are the pseudoalgebras over a $2$-monad $M_k$ and that the $M_k$  form a graded comonoid of $2$-monads.  Such structure also appears  in other multicategorical contexts. 

The alternative route uses a $2$-monadic reinterpretation of the vertical arrows in \autoref{RoadMap}, but it replaces the 
horizontal composite $\St\com \oursectionG^*$  by a multifunctor whose underlying map of $2$-categories is the composite  of Power-Lack strictification $\St\colon {\DGAlg} \rtarr \DGA$  and a derived variant of the left adjoint  
$\xi^G_*\colon {\DGA}\rtarr \FGAlg$  to the forgetful functor $\xi_G^{*}\colon \FGAlg\rtarr \DGA$. The section 
$\oursectionG^{*} \colon   {\DGA}\rtarr \psFGAlg$ is a categorical shortcut that avoids use of $\xi_*^G$, whose homotopical behavior is problematic.  The alternative route avoids any use of pseudoalgebras over $\sF$ or $\sF_G$, but we again lose symmetry, now due to the passage from $\xi^G_*$ to a homotopically well-behaved derived variant.   Conceivably, a more sophisticated derived variant might circumvent this.
\end{rem}

\subsection{Acknowledgements}  This project has taken a long rocky road, and we have many people to thank, too many to do justice to any of them.  We are happy to thank
Clark Barwick, Andrew Blumberg, Anna Marie Bohmann, David Gepner, Nick Gurski,  Mike Hill, Akhil Mathew, Niko Naumann, Thomas Nikolaus, Emily Riehl, David Roberts, Jonathan Rubin, Stefan Schwede, Michael Shulman,  and Dylan Wilson.   We apologize to anyone we may have forgotten.

\section{Preliminaries on operads and multicategories}\label{NotnSect} 

We begin here by introducing our categorical framework. We also recall the notions of operads, their algebras, and pseudomorphisms between those. Finally, we recall the notion of a multicategory.

\begin{notn}
Throughout the document, we will denote pseudomorphisms of various types (for example, see \autoref{OPseudoMap} or \autoref{Vpseudo}) by arrows $\squiggly{}{}$. 
\end{notn}

\subsection{$\sV$-categories}\label{NotnSectSub}

The categorical framework we begin with is the same as the one explained in more detail in \cite[Section 1]{AddCat1}, hence we shall be brief.  

\begin{assumpt}\label{ass1}
We let $\sV$ be a cartesian closed, bicomplete category.  
\end{assumpt}

The examples of primary interest are $\sV=\Top$ or $\sV = \GTop$, where $\Top$ is the 
category of (compactly generated weak Hausdorff) spaces and $\GTop$ is the category of $G$-spaces and $G$-maps for a finite group $G$.  The reader focused on topology is free to read $\sV$ as $\sU$, but nothing before \autoref{sec:section} (or after 
\autoref{sec:BPQ}) would change in any way.   We defer further discussion of the equivariant context to \autoref{sec:DG}.  

 As discussed in more detail in \cite[Section 1.1]{AddCat1}, we let $\VCat$ denote the 2-category of categories, functors, and natural transformations internal to $\sV$. We will refer to these as \ourdefn{$\sV$-categories}, \ourdefn{$\sV$-functors} , and \ourdefn{$\sV$-transformations}.
Thus any $\sV$-category $\cC$ consists of objects $\ob\cC$ and $\mor\cC$ in $\sV$, and source, target, identity, and composition structure maps, which are all required to be morphisms in $\sV$. 
For $\cC$ and $\cD$ in $\VCat$, a $\sV$-functor $\cC\rtarr \cD$ is given by morphisms $\ob\cC \rtarr \ob\cD$ and $\mor\cC \rtarr \mor\cD$ in $\sV$ that are suitably compatible with the internal category structure.  A $\sV$-transformation $\al$ between $\sV$-functors $F_1,F_2\colon \cC \rightrightarrows \cD$ is given by a morphism $\al\colon \ob\cC \rtarr \mor\cD$ in $\sV$ that makes the naturality diagrams commute.

Since $\sV$ is complete, so is $\VCat$. 
\begin{assumpt}\label{ass2}We assume that $\VCat$ is moreover 
cocomplete.
\end{assumpt}

This assumption holds if  either $\sV$ is locally presentable or if $\sV=\sU$  \cite[(3.24) and (3.25)]{StreetCosmoi}. For similar reasons, it is also true for $\sV=G\sU$.

\begin{rem}\label{VCatclosed}
We note that $\VCat$ is cartesian closed since $\sV$ is assumed to be cartesian closed \cite[Lemma 2.3.15]{Johnstone}.
\end{rem}

\begin{defn}\label{chaotic}
We say that a $\sV$-category $\cC$ is \ourdefn{chaotic}, 
or indiscrete, if  the source and target maps yield an isomorphism
$ {\mor}\cC \xrtarr{(S,T)} {\ob}\cC \times {\ob}\cC.$
\end{defn}

Chaotic $\sV$-categories and their properties are discussed in detail in \cite[\S1.2]{AddCat1}. 

\subsection{Based $\sV$-categories}\label{sec:based} 

Let $\bpt$ denote the terminal object of $\sV$.  A basepoint of an object $V$ of $\sV$ is a map $\bpt \rtarr V$ in $\sV$. Write $\sV_{\bpt}$ for the category of based objects of $\sV$ and based maps.  Let $\bpt$ also denote the  $\sV$-category whose object and morphism objects are both given by the object $\bpt$ of $\sV$.  

\begin{defn}\label{Catstar}
A \ourdefn{based $\sV$-category}, or {\em $\sV_*$-category}, is a category internal to $\sV_{\bpt}$. Equivalently, it is a category $\cC$ internal to $\sV$ equipped with a $\sV$-functor $\bpt\rtarr \cC$. Its structure maps source, target, identity, and composition must be in $\sV_{\bpt}$.
There are corresponding notions of based functors, called  {\em $\sV_{\bpt}$-functors}, namely $\sV$-functors compatible with basepoints, and based $\sV$-transformations, called 
{\em $\sV_{\bpt}$-transformations}, whose component morphisms $\ob\cC \rtarr \mor\cD$ are based.  As noted in \cite[Remark 1.6]{AddCat1}, the resulting $2$-category, here denoted $\VCatp$, can be identified with $\VCat_\ast$. 
\end{defn}

\begin{rem}\label{object}
For a $\sV$-category or $\sV_*$-category $\cC$, 
an {\em object} of $\cC$ will mean a functor $\ast \rtarr \cC$ or, equivalently, a morphism $\ast \rtarr \ob\cC$ in $\sV$.  {We warn the reader that we are using the term ``object" in a technical sense.  For example, when $\sV$ is the category of $G$-spaces, an object is a $G$-fixed point of the $G$-space  $\ob\cC$, hence $\cC$ may have no objects.}
\end{rem}

We can form the wedge and smash product of based $\sV$-categories $\cA$ and $\cB$ via the pushout diagrams 
\[ \xymatrix{ 
\bpt \ar[r] \ar[d] & \cA \ar@{-->} [d] \\
\cB \ar@{-->}[r] & \cA \vee \cB\\} \ \ \text{and} \ \ 
\xymatrix{\cA \vee \cB \ar[r] \ar[d] & \cA\times \cB \ar@{-->}[d] \\
\bpt \ar@{-->}[r] & \cA\sma \cB\\} \] 
just as for spaces. 
Since the objects functor $\ob\colon \VCat \rtarr \sV$ has both a left and a right adjoint and therefore preserves limits and colimits, it follows that 
\begin{equation}\label{ObCommutesSma}
\ob( \cA \sma \cB) \iso \ob(\cA) \sma \ob(\cB)
\end{equation}
for $\sV_\bpt$-categories $\cA$ and $\cB$. 

By the universal property of the smash product, a $\sV_\bpt$-functor 
$\cA\sma \cB\rtarr \cC$ corresponds to a   $\sV$-functor $\cA\times \cB\rtarr \cC$ whose restriction to $\ast\times \cB$ and $\cA \times \ast$ is the constant functor at the basepoint of $\cC$.
This will allow us to define maps from smash products by specifying basepoint conditions  on  $\sV$-functors defined on products.

Similarly, a $\sV_\bpt$-transformation
\[\xymatrix{\cA \sma \cB \rtwocell^F_G{\varphi} & \cC}\]
corresponds to a $\sV$-transformation of functors defined on $\cA \times \cB$ whose restriction to $\cA\times \ast$ and $\ast\times \cB$ is the identity.

\begin{rem}\label{VCatpclosed}
Our standing assumptions on $\sV$ imply that $\VCatp$ is closed symmetric monoidal with internal hom adjoint to $\sma$ (see \cite[Lemma 4.20]{EM2}, \cite[Construction 3.3.14]{Riehl}).\footnote{The associativity of $\sma$ is not formal from the universal property of the pushout and requires $\VCat$ to be closed.}
\end{rem}

We will use the symmetric monoidal structure to enrich categories over $\VCatp$ starting in \autoref{sec:V2}.

In our applications, categories often have disjoint base objects, and we write $\cA_+$ for the coproduct (disjoint union in the relevant examples) 
of $\bpt$ with an unbased $\sV$-category $\cA$.  Then
\[ \cA_+ \sma \cB_+ \iso (\cA\times \cB)_+ \]
(see \cite[Lemma 3.3.16]{Riehl}).

 \subsection{Operads in $\VCat$}\label{sec:Ops}

We will work throughout with a reduced operad $\oO$ in $\VCat$, \ourdefn{reduced} meaning that $\oO(0)$ is the trivial category $\bpt$. We will often assume that $\oO$ is chaotic, meaning that each $\sV$-category $\oO(n)$ is chaotic. We will use the notation
\[\gamma\colon \oO(k)\times \oO(j_1)\times\dots\times\oO(j_k)\rtarr \oO(j_1+\dots+j_k)\]
 for the operad structure $\sV$-functors and $\oid\colon \ast\rtarr \oO(1)$  for the unit object in $\oO(1)$.

\begin{defn}\label{Oalg}An $\oO$-algebra is an object $\aA$ in $\VCat$ equipped with action $\sV$-functors
\[ \tha(n)\colon \oO(n)\times \aA^n \rtarr \aA\]
that are appropriately $\SI_n$-equivariant, unital, and associative, as in \cite{MayGeo}.  Since $\oO$ is assumed to be reduced, the functor $\tha(0)\colon \bpt\rtarr \aA$ specifies a basepoint  $0 = 0_{\aA} \in \aA$.
\end{defn}

We will be using non-strict maps between $\oO$-algebras, called $\oO$-pseudomorphisms. The full definitions of these and of $\oO$-transformations between them are given in \cite{CG} and, with some minor  emendations, in  \cite[Definitions 2.23 and 2.24]{AddCat1}.  We shall not repeat details, but we remind the reader of the key features. 

\begin{defn}\label{OPseudoMap} Let $\aA$ and $\aB$ be $\oO$-algebras.  An \ourdefn{$\oO$-pseudomorphism} 
$\squiggly{\aA}{\aB}$ is a $\sV$-functor $F\colon \aA\rtarr \aB$ such that $F( 0_\aA) = 0_\aB$, 
together with invertible $\sV$-transformations $\pa_n$  
\[\xymatrix{   \oO(n)\times \aA^n \ar[d]_{\tha(n)} \ar[r]^{\id\times F^n}  \drtwocell<\omit>{<0> \, \pa_n} & \oO(n)\times \aB^n \ar[d]^{\tha(n)} \\
     \aA \ar[r]_-{F} & \aB } \]
for $n\geq 0$ such that 
$\pa_0$ and the restriction of $\pa_1$ along $\oid\times \id\colon \aA \cong \bpt\times \aA \rtarr \oO(1)\times \aA$ are  identity $\sV$-transformations and such that the appropriate equality of associativity pasting diagrams relating the $\pa_n$ 
to the structure maps of the operad holds (see \cite[Definition 2.23]{AddCat1}).
It is a (strict) $\oO$-map if the $\pa_n$ are identity $\sV$-transformations.
\end{defn}

\begin{defn}\label{Otran} An \ourdefn{$\oO$-transformation} between $\oO$-pseudomorphisms $E$ and $F$ 
is a $\sV$-transformation $\om\colon E \Longrightarrow F$ such that the equality 
\[\xymatrixrowsep{1.3cm}\xymatrix{  
 \oO(n)\times \aA^n \ar[d]_{\tha(n)} \ar[r]^{\id\times E^n}  \drtwocell<\omit>{<-1.5> \, \pa_n} & \oO(n)\times \aB^n \ar[d]^{\tha(n)}  \\
     \aA \rtwocell^E_F{\om} & \aB }          
     \ \ \ \ 
\raisebox{-6ex}{=}
\ \ \ \ 
\xymatrixrowsep{1.3cm}\xymatrixcolsep{1.8cm}\xymatrix{
 \oO(n)\times \aA^n \ar[d]_{\tha(n)} \rtwocell^{\id\times E^n}_{\id\times F^n}{\qquad \id\times \om^n}  \drtwocell<\omit>{<1.5> \, \pa_n} & \oO(n)\times \aB^n \ar[d]^{\tha(n)} \\
  \aA \ar[r]_F & \aB}\]
holds for all $n$.  We do {\em not} require the $\om$ to be invertible.
\end{defn}

\begin{notn}\label{notn:OAlg}
We will work throughout with the 2-category $\OAlg$ of $\oO$-algebras,
 $\oO$-pseudomorphisms, and  $\oO$-transformations. 
\end{notn}
 
There is a more general definition of $\oO$-pseudoalgebras, as defined in \cite{CG, AddCat1}, but we choose not to introduce it since it is not needed for the purposes of this paper.

\subsection{Review of multicategories}\label{Multicat}

We shall not repeat the complete definition of a multicategory given in such sources as \cite{EM1, Lein, Yau}.
A multicategory ${\cM}$ 
has a class $\ob(\cM)$ 
of objects and for each sequence 
$\ul{a} = \{a_1,\dots,a_k\}$ of objects, where $k\geq 0$,
 and each object $b$, it has a set of $k$-ary morphisms 
\[ {\cM}_k(\ul a;b) ={\cM}_k(a_1,\dots,a_k;b). \] 
A quintessential example is that of $k$-linear maps in the category of vector spaces, which is why $k$-ary morphisms in arbitrary multicategories are sometimes 
called $k$-linear maps, even when there is no linear structure in sight.

Throughout, we understand multicategories to be symmetric, 
so that the symmetric group $\SI_k$ acts from the right on the collection of $k$-ary morphisms via maps
\[ \si\colon {\cM}_k(a_1,\dots,a_k;b)\rtarr {\cM}_k(a_{\si(1)},\dots,a_{\si(k)};b). \] 
For each object $a$ there is an identity $1$-ary morphism $a\rtarr a$ 
and there are composition functions 
\begin{equation}\label{gamma} \ga\colon {\cM}_k(\ul b; c) \times {\cM}_{j_1}(\ul a_1;b_1)\times \dots \times {\cM}_{j_k}(\ul a_k;b_k)
\rtarr {\cM}_j(\{\ul{a}_1,\dots,\ul{a}_k\};c),
\end{equation}
where $\ul b$ is a $k$-tuple,  $\ul{a}_q$ for $1\leq q\leq k$ is a $j_q$-tuple,  and, with $j = j_1 + \dots + j_k$, 
$\{\ul{a}_1,\dots,\ul{a}_k\}$ is the $j$-tuple $\{a_{1,1}, \dots, a_{1,j_1}, \dots, a_{k,1},\dots, a_{k,j_k} \}$.

The $\ga$ are subject to direct generalizations of the associativity, identity, and equivariance properties required 
of an operad in \cite{MayGeo}. These properties are spelled out diagrammatically in \cite[2.1]{EM1}
and, with exceptional care, in \cite[Chapter 11]{Yau}.\footnote{The colored operads in \cite{Yau} are symmetric multicategories
with a set of objects, called colors, but the generalization to a class of objects is evident.}

All of our multicategories are enriched in $\Cat$, but since that is only used peripherally we will not go into detail.\footnote{In fact, they are enriched in $\VCat$ when $\sV$ is closed.}
A multicategory with one object is then the same thing as an operad in $\Cat$. 
Multicategories are often called colored operads, with objects thought of as colors.  The objects and $1$-ary morphisms of a multicategory ${\cM}$ specify its underlying category, which is often also denoted ${\cM}$ by abuse of notation.   

\begin{rem}\label{symmonmulti}
There is a canonical\footnote{There is a slight subtlety here.  It has been said that there is a choice of such multicategories depending on the chosen order of associating variables.  With
an unbiased operadic definition of a symmetric monoidal category, the specification of $\Mult(\aC)$ is unambiguous.} multicategory $\Mult(\aC)$ associated to a symmetric monoidal category $(\aC,\otimes)$. 
Its objects are those of $\aC$, and 
\[ \Mult_k(\aC)(a_1, \dots, a_k;b) = \aC(a_1\otimes \cdots \otimes a_k, b). \]
It has the evident symmetric group actions and units. In schematic  elementwise notation, using the notations of \autoref{gamma}, the composite of
a $k$-ary morphism $F\colon \ul{b} \rtarr c$ with $(E_1,\dots, E_k)$, where $E_r\colon \ul{a}_r\rtarr b_r$ is a $j_r$-ary morphism for $1\leq r\leq k$, is the
composite
\begin{equation}\label{gamma2}
\xymatrix@1{\bigotimes\limits_{1\leq r\leq k}\bigotimes\limits_{1\leq s\leq j_r}a_{r,s} \ar[r]^-{\otimes_r E_r} & \bigotimes\limits_{1\leq r\leq k} b_r \ar[r]^-{F} & c. \\} 
\end{equation}
This generalizes the example in vector spaces where $k$-linear maps correspond to maps out of the tensor product.   
\end{rem}

A morphism $\bF\colon \cM\rtarr \cN$ of multicategories, called a multifunctor, is a function
$\bF\colon \ob(\cM)\rtarr \ob(\cN)$ together with functions
\[ \bF\colon \cM_k(a_1,\dots,a_k;b) \rtarr \cN_k(\bF a_1,\dots,\bF a_k;\bF b)  \]
for all objects $a_i$ and $b$ such that $\bF(\id_a) = \id_{\bF(a)}$ and $\bF$ preserves composition.  If these functions are  $\SI_k$-equivariant, we say that $\bF$ is a \emph{symmetric multifunctor}.   A lax monoidal (resp. lax symmetric monoidal) functor between symmetric monoidal categories gives rise to a multifunctor (resp. symmetric multifunctor) between the corresponsing multicategories.

 Given a multicategory $\cM$, one can define  the notion of monoid in $\cM$ (see \cite[Example~2.1.11]{Lein} or \cite[\S14.2]{Yau}). This can be done
 using ``parameter multicategories'', so that a monoid in $\cM$ is given by a multifunctor out of the appropriate parameter multicategory into $\cM$.  One can similarly define the notion of module over a monoid (see \cite[Definition~2.5]{EM1}). 
 These notions agree with the usual ones when dealing with the multicategory associated to a symmetric monoidal category as in \autoref{symmonmulti}. A multifunctor preserves associative and unital algebraic structures, and a symmetric multifunctor moreover preserves commutative ones.

\section{The multicategory of $\oO$-algebras}
\label{sec:Oalg}

The goal of this section is to establish a multiplicative structure on the category $\OAlg$ of algebras over an operad. 
After some initial setup in \autoref{Operads}, we introduce the key concept of a pseudo-commutative operad in \autoref{BE}, following Corner and Gurski \cite{CG}. 
We then establish a multicategory $\Mult(\oO)$ for any pseudo-commutative operad $\oO$ in \autoref{Multiop},  following Hyland and Power \cite{HP}, and describe some variants in \autoref{variant}. 
Finally, we show that the free $\oO$-algebra functor extends to a multifunctor in \autoref{Free}.

\subsection{The intrinsic pairing of an operad}\label{Operads} 
Surprisingly, the following elementary structure implicit in the definition of an operad is central to our work.  It is present in any reduced operad $\oO$ in any cartesian monoidal category $\sW$.  

\begin{defn}\label{intrinsicpairing} The \ourdefn{intrinsic pairing} $\opair \colon (\oO, \oO) \rtarr \oO$ of an operad $\oO$ is given by the composites
\[ \xymatrix@1{\oO(j)\times \oO(k) \ar[r]^-{\id\times \DE^j} & \oO(j)\times \oO(k)^j \ar[r]^-{\ga} & \oO(jk),\\} \]
where $\ga$ is the structure map of the operad and $j\geq 0$ and $k\geq 0$. 
\end{defn}

Thinking of $\ga$ as specifying additive structure, the ``product" $\opair$ is taking seriously
that $jk = k+\cdots + k$.  Thus the intrinsic pairing is an operadic manifestation of the grade school lesson that 
multiplication is iterated addition.

\begin{rem} The intrinsic pairing is {\em not} a pairing of operads in the sense originally defined in \cite[1.4]{MayPair}.
For many operads occurring naturally in topology, such as the little cubes or Steiner operads, the intrinsic pairing appears to be of no real
interest.  However, as we shall see  in \fullref{Multiop}, it appears naturally when trying to construct a multicategory of algebras over an operad. 
\end{rem} 

\begin{prop}\label{intrinsicmon} Let $\oO$ be an operad in a cartesian monoidal category $\sW$. Then $\ul{\oO} = \coprod_{j\geq 0} \oO(j)$ is a monoid in $\sW$ with product $\opair$ and unit  the unit object $\oid\in \oO(1)$. It has a zero object  $\ast\in \oO(0)$.
\end{prop}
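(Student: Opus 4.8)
The plan is to verify directly that $\ul{\oO} = \coprod_{j\geq 0} \oO(j)$, equipped with the intrinsic pairing $\opair$ and the unit $\oid \in \oO(1)$, satisfies the monoid axioms. First I would unwind what the product $\opair$ does on the graded pieces: it sends $\oO(j)\times\oO(k)$ to $\oO(jk)$ via the composite $\ga\com(\id\times\DE^j)$. To check unitality, I would compute $\opair$ restricted to $\oO(1)\times\oO(k)$ with first input $\oid$: here $\DE^1$ is the identity, so the map is $\ga(\oid;-)\colon \oO(1)\times\oO(k)\to\oO(k)$, which is the identity by the left unit axiom for the operad. Dually, restricting $\opair$ to $\oO(j)\times\oO(1)$ with second input $\oid$ gives $\ga(-;\oid,\dots,\oid)\colon\oO(j)\times\oO(1)\to\oO(j)$, the identity by the right unit axiom. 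So $\oid$ is a two-sided unit.

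\textbf{Associativity.} This is the main point, and the one place where there is genuine content rather than bookkeeping. I need the two composites $\oO(i)\times\oO(j)\times\oO(k)\to\oO(ijk)$ obtained by parenthesizing as $(\,\cdot\,\opair\,\cdot\,)\opair\,\cdot\,$ and $\cdot\,\opair(\,\cdot\,\opair\,\cdot\,)$ to agree. Tracing through, the first is $\ga\com(\id\times\DE^{ij})\com(\ga\times\id)\com(\id\times\DE^i\times\id)$ and the second is $\ga\com(\id\times\DE^i)\com(\id\times(\ga\com(\id\times\DE^j))^i)$. The key is that both can be reduced, using the associativity axiom for $\ga$ (the pentagon-type compatibility of $\ga$ with itself) and the naturality/compatibility of the diagonals $\DE^n$ with $\ga$, to the single ``fully expanded'' map that takes $(c; d; e)$ to $\ga(c; \ga(d;e,\dots,e),\dots,\ga(d;e,\dots,e))$ with $i$ copies of the inner operation, which in turn expands via operadic associativity to $\ga(\ga(c;d,\dots,d); e,\dots,e)$ with $ij$ copies of $e$. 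The diagonal compatibility needed is the elementary identity $\DE^{jk} = (\text{shuffle})\com(\DE^j\times\cdots)$ type relation, i.e. that iterating diagonals is the same as a single big diagonal, which holds in any cartesian category; combined with the fact that $\ga$ is a map of $\sW$ it commutes appropriately with diagonals.

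\textbf{The zero object.} The claim that $\ast\in\oO(0)$ is a ``zero object'' means $\opair$ kills anything paired with it: for $c\in\oO(j)$, the element $\opair(c;\ast)\in\oO(j\cdot 0)=\oO(0)=\ast$ is forced (the target is terminal), and $\opair(\ast;c)\in\oO(0\cdot k)=\oO(0)=\ast$ likewise, so there is nothing to check beyond observing $j\cdot 0 = 0\cdot k = 0$ and $\oO(0)=\bpt$. (If one wants an absorbing-element statement with respect to a chosen basepoint structure rather than just the grading, it still follows since $\oO(0)=\bpt$ is terminal.)

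\textbf{Main obstacle.} The only step requiring care is the associativity verification: assembling the correct large commutative diagram whose cells are instances of the operad's $\ga$-associativity axiom together with naturality squares for the diagonals $\DE^n$ in the cartesian category $\sW$. This is purely diagrammatic and routine but genuinely multi-layered; everything else (unit, $\SI$-equivariance is not claimed here, the zero object) is immediate from the reduced operad axioms and the fact that $\oO(0)=\bpt$ is terminal. I would present the associativity check by first writing $\opair$ in the ``expand then apply $\ga$'' form, reducing both parenthesizations to the common expanded map via one invocation of operad associativity each, and then citing that the remaining comparison of diagonals is a standard identity in a cartesian monoidal category.
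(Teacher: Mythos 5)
Your proof is correct and follows essentially the same route as the paper: unitality from the operad unit axioms, associativity by reducing both parenthesizations (via the diagonal bookkeeping available in a cartesian category) to the special case of the operadic associativity diagram with repeated inputs, and the zero element from reducedness, i.e.\ $\oO(0)=\ast$.
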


\begin{proof} The unit properties of an operad are 
$\ga(\oid; x) = x$ and $\ga(x;\oid^j) = x$
for $x\in \oO(j)$. These say that $\oid$ is a unit for $\ul{\oO}$.  The associativity of the pairing
is an easy diagram chase from the following special case of the associativity diagram for $\ga$ in the definition of an operad. 
\[
\xymatrix{ \oO(j)\times \oO(k)^j \times \oO(\ell)^{jk} \ar[rr]^-{\ga\times \id} \ar[dd]_{\iso}  & &
\oO(jk)\times \oO(\ell)^{jk} \ar[dr]^{\ga} &\\
& & &\oO(jk\ell)\\
\oO(j)\times (\oO(k) \times \oO(\ell)^{k})^{j}  \ar[rr]_-{\id\times \ga^j} & &\oO(j)\times \oO(k\ell)^j \ar[ur]_{\ga}  }\]
Since $\oO$ is reduced, $\ast\in \oO(0)$ is a zero element.
\end{proof} 

Consider the category $\SI$ of sets $\ul{n}=\{1,\dots,n\}$ and isomorphisms. It is bipermutative under disjoint union and cartesian product. To be precise, the two monoidal structures, $\oplus$ and $\spair$, are given by  sum and product at the level of objects. To apply $\oplus$ to 
permutations $\si\in \SI_j$ and $\ta\in \SI_k$ and regard the result as a permutation of the 
$j+k$ letters $\ul{j+k} = \{1, \dots, j+k\}$, we are implicitly applying the evident isomorphism  
\[\ze_{j,k}\colon \ul{j+k} \rtarr \ul{j}\amalg \ul{k},\]
then taking the disjoint union of $\si$ and $\ta$, 
and then applying $\ze_{j,k}^{-1}$.  That is, $\si\oplus \ta$ is defined by the commutative diagram 
\begin{equation}\label{oplus}
\xymatrix{
\ul{j+k} \ar[r]^-{\si\oplus \ta} \ar[d]_{\ze_{j,k}} & \ul{j+k} \\
\ul{j}\amalg \ul{k} \ar[r]_-{\si\amalg \ta} & \ul{j}\amalg \ul{k}. \ar[u]_{\ze_{j,k}^{-1}}}
\end{equation}
Similarly, define 
\begin{equation}\label{lexico}
\la = \la_{j,k} \colon \ul{jk} \rtarr \ul{j}\times \ul{k}
\end{equation}
to be the order-preserving bijection, where $\ul{j}\times \ul{k}$ is ordered 
lexicographically. Then, 
$\mu\spair \nu$ is defined by the commutative diagram
\begin{equation}\label{otimes}
\xymatrix{
\ul{jk} \ar[r]^-{\mu\spair \nu} \ar[d]_{\la_{j,k}} & \ul{jk} \\
\ul{j}\times \ul{k} \ar[r]_-{\mu\times \nu} & \ul{j}\times \ul{k}. \ar[u]_{\la_{j,k}^{-1}}\\}
\end{equation}

Recall that the associative operad $\Ass$ is given by $\Ass(n)=\SI_n$. Then $\spair$ gives the intrinsic pairing of \autoref{intrinsicpairing} on $\Ass$. Moreover,
 if we think of the groups $\SI_j$ as categories
with a single object and thus think of $\Ass$ as an operad in $\Cat$, then $\ul{\Ass} = \SI$ and the monoidal structure of \autoref{intrinsicmon} is given by $\spair$.

In particular, $e_j\spair e_k = e_{jk}$. Since $\spair$ is a group homomorphism, it is equivariant 
in the sense that $\mu \si \spair \nu \ta = (\mu\spair \nu)(\si\spair \ta)$.  Clearly $e_1\spair \nu = \nu$ 
and $\mu\spair e_1 = \mu$. 

\begin{rem}\label{rem:opair-equivariant}
The equivariance formulas for an operad $\oO$ imply that the pairing on $\SI$ and the pairing on $\oO$ are compatible in the sense that for all $\si \in \SI_j$ and $\ta \in \SI_k$ the following diagram commutes.
\[
\xymatrix{
\oO(j) \times \oO(k) \ar[r]^-{\opair} \ar[d]_{\si \times \ta}
    & \oO(jk)
 \ar[d]^{\si\spair\ta}  \\
\oO(j) \times \oO(k) \ar[r]_-{\opair}       & \oO(jk)
 }
\]
\end{rem}

\begin{defn}\label{tau} Let  $\ta_{j,k}\in \SI_{jk}$ be the permutation specified by the composite
\[ \xymatrix@1{ 
\ul{jk} \ar[r]^-{\la_{j,k}} & \ul{j} \times \ul{k} 
\ar[r]^-{t} & \ul{k} \times \ul {j} \ar[r]^-{\la_{k,j}^{-1}} & \ul{kj} = \ul{jk} \\} \]
It reorders the set $\ul{j}\times\ul{k}$ from lexicographic ordering to reverse lexicographic ordering. 
Clearly $\ta_{j,k}^{-1} = \ta_{k,j}$ and $\ta_{1,n} = e_n = \ta_{n,1}$.
\end{defn}

The  $\tau_{i,j}$ are the symmetry isomorphisms
for $\spair$ in $\SI$. More precisely, for $\mu\in \SI_j$ and $\nu\in \SI_k$, we have the commutative diagram
\[ \xymatrix{
\ul{jk} \ar[d]_{\ta_{j,k}} \ar[r]^-{\la_{j,k}} & \ul{j} \times \ul{k} \ar[r]^-{\mu\times \nu}\ar[d]_{t} 
& \ul{j} \times \ul{k} \ar[d]^{t} \ar[r]^-{\la_{jk}^{-1}} & \ul{jk} \ar[d]^{\ta_{j,k}} \\
\ul{kj} \ar[r]_-{\la_{k,j}} & \ul k \times \ul j \ar[r]_{\nu\times \mu} 
& \ul k\times \ul j \ar[r]_-{\la_{k,j}^{-1}} & \ul{kj}.\\} \] 
That is,
\begin{equation}\label{ttt}
\ta_{j,k} (\mu\spair \nu) = (\nu\spair \mu) \ta_{j,k} \ \ 
\text{or equivalently} \ \ (\mu\spair \nu)\ta_{k,j} = \ta_{k,j}(\nu\spair \mu).
\end{equation}

\subsection{Pseudo-commutative operads}\label{BE}

Recall the permutativity operad $\sP$ 
from \cite[Definition 4.1]{GMPerm}. It is the chaotic categorification of $\Ass$, and as an operad in $\Cat$, its algebras are in one-to-one correspondence with permutative categories. The intrinsic pairing $\opair$ on $\sP$ is inherited from that of $\Ass$. 
Note that equation \autoref{ttt} implies that the diagram
\[ \xymatrix{
\sP(j)\times \sP(k) \ar[d]_{t} \ar[r]^-{\opair} & \sP(jk) \ar[d]^{\ta_{k,j}} \\
\sP(k)\times \sP(j) \ar[r]_-{\opair} & \sP(kj) } \]
does not in general commute. Rather, since $\sP(kj)$ is chaotic, there exists a natural isomorphism $\al_{j,k}\colon \ta_{k,j} \circ \opair \Longrightarrow \opair \circ t$. This is an example of a pseudo-commutative operad, as defined by Corner and Gurski \cite[\S4]{CG}.  

We summarize their definition here.  As usual, when defining categorical structures, coherence axioms are essential for completeness and rigor. However, they can be lengthy and may not make  for enjoyable reading.  To avoid disrupting  the flow of exposition, we generally defer their precise formulation to \autoref{sec:coh}.  In particular, we give the axioms needed to complete the following definition in \autoref{cohpseudo}.\footnote{The original definition of \cite{CG} requires some minor corrections that are given there.}

\begin{defn}\label{pseudocom} Let $\oO$ be an operad in $\VCat$. A \ourdefn{pseudo-commutative structure} on an operad $\oO$ is a 
collection of 
invertible $\sV$-transformations, one for each $(j,k)$, of the form
\begin{equation}\label{alphas}
\xymatrix{
\oO(j) \times \oO(k) \ar[r]^-{\opair} \ar[d]_{t} \drtwocell<\omit>{<0>\quad\ \  \alpha _{j,k}}  & \oO(jk)  \ar[d]^{\tau_{k,j}}  \\
\oO(k) \times \oO(j)  \ar[r]_-{\opair} & \oO(kj). \\}
\end{equation}
The $\al_{j,k}$ must satisfy coherence axioms for identity, symmetry, equivariance, and operadic compatibility that are specified and discussed in \autoref{cohpseudo}.
\end{defn}

If $\oO$ is chaotic, such transformations $\al$ always exist and all conditions are automatically satisfied \cite[\S1.2]{AddCat1}, thus giving the following result.
\begin{lem}(\cite[Corollary~4.9]{CG})\label{ChaoticPseudoCom} A chaotic operad has a unique pseudo-com\-mutative structure.
\end{lem}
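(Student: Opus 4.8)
The plan is to verify that a chaotic operad $\oO$ satisfies \autoref{pseudocom} in a way that is both existent and unique, invoking the general theory of chaotic $\sV$-categories from \cite[\S1.2]{AddCat1}. The key observation is that for any chaotic $\sV$-category $\cC$, the hom-object functor $\mor\cC \cong \ob\cC \times \ob\cC$ is completely determined by the object-object data, so that $\sV$-functors into $\cC$ are determined by their effect on objects, and there is a \emph{unique} invertible $\sV$-transformation between any two parallel $\sV$-functors into $\cC$ (since a $\sV$-transformation is a map $\ob(-) \to \mor\cC$, which by chaoticity is just a pair of maps to $\ob\cC$, the first prescribed by the source functor, the second by the target functor, and invertibility is automatic because every morphism in a chaotic category is an isomorphism).

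First I would observe that each $\oO(n)$ being chaotic means that both composites in the square \autoref{alphas} — namely $\ta_{k,j}\com\opair$ and $\opair\com t$ — are $\sV$-functors $\oO(j)\times\oO(k) \to \oO(kj)$, and $\oO(kj)$ is chaotic. By the universal property of chaotic categories recalled in \cite[\S1.2]{AddCat1}, there is one and only one invertible $\sV$-transformation $\al_{j,k}$ between them; concretely, its component at an object $(x,y)$ of $\oO(j)\times\oO(k)$ is the unique morphism $\ta_{k,j}(x\opair y) \to y\opair x$ in $\oO(kj)$. This simultaneously establishes existence (define $\al_{j,k}$ to be this transformation) and uniqueness (any pseudo-commutative structure must have its $\al_{j,k}$ equal to it, since there is no other candidate).

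Next I would dispatch the coherence axioms from \autoref{cohpseudo}. Each axiom asserts an equality of two pasting composites of $\sV$-transformations, all of which are parallel $\sV$-transformations between $\sV$-functors landing in some chaotic $\oO(m)$. Since there is at most one $\sV$-transformation between any fixed pair of parallel $\sV$-functors into a chaotic category, both sides of each axiom are automatically equal — there is nothing to check. (One should note in passing that the source and target functors on the two sides of each coherence diagram do agree, which follows from the equivariance and associativity formulas for the permutations $\ta_{j,k}$ established in \autoref{tau} and equations \autoref{ttt}, together with the operad axioms; but this is bookkeeping about $\sV$-functors, not about transformations.) This is exactly \cite[Corollary~4.9]{CG}.

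The main obstacle, such as it is, is purely expository rather than mathematical: one must be careful that the uniqueness-of-transformations argument is applied with the correct source and target $\sV$-functors on both sides of every coherence axiom, so that ``unique transformation between parallel functors'' is genuinely applicable. Once the relevant functors are matched up — which is forced by the definitions of $\opair$, $t$, and $\ta_{j,k}$ — the proof is immediate. I would therefore keep the proof short, citing \cite[\S1.2]{AddCat1} for the chaotic universal property and \cite[Corollary~4.9]{CG} for the conclusion, and remark only that existence and uniqueness both reduce to the fact that a chaotic target admits exactly one (necessarily invertible) transformation between any two parallel maps into it.
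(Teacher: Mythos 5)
Your proof is correct and follows the same route the paper takes: the paper's one-line justification before the lemma is precisely that chaoticity forces a unique (necessarily invertible) transformation between any parallel functors into a chaotic category, so the $\al_{j,k}$ exist and are unique and every coherence equation of 2-cells holds automatically, with the details delegated to \cite[\S1.2]{AddCat1} and \cite[Corollary~4.9]{CG}. Your added remark about checking that the boundary functors of each coherence diagram agree is a sensible fleshing-out of the same argument, not a different approach.
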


We will often write ``pseudo-commutative operad'' when we really mean ``operad equipped with a pseudo-commutative structure'', but there is no ambiguity when $\oO$ is chaotic, and we later prefer to specialize to chaotic operads.

\subsection{The multicategory of $\oO$-algebras}\label{Multiop} 
Hyland and Power \cite{HP} show that there is a multicategory of algebras over a pseudo-commutative monad, and Corner and Gurski show in \cite{CG} that the monad corresponding to a pseudo-commutative operad is pseudo-commutative in the sense of \cite{HP}. We follow these sources to describe the multicategory of algebras over a pseudo-commutative operad. 

For the pairings of $\oO$-algebras we want to consider maps $F\colon \aA \times \aB \rtarr \aC$ that preserve the algebra structure on each variable up to canonical isomorphism. For example, if $+$ denotes a binary operation in $\oO$, we need to make sense of a distributivity 
law of the general form
\[  F(a,b_1+ b_2)\cong F(a,b_1) + F(a,b_2). \]

Diagonal maps enter since $a$ appears 
once on the left and twice 
on the right. The definition contains a number of schematic
coherence diagrams to the effect that whenever two natural transformations have a chance to be equal
they are equal.  We shall explain the diagrams after giving the definition.  
The following maps $s_i$
play a key role.\footnote{The elementary maps $s_i$ correspond to the ``strengths'' $t_i$ in Hyland and Power 
\cite[p. 156]{HP}; in their categorical treatment, the existence of $t_i$ with suitable properties is an axiom on a given 2-monad, 
although they do make the strengths explicit in the case of permutative categories.}

\begin{notn}\label{strengths} Let $\aA_i$, $1\leq i\leq k$, be $\sV$-categories and let $n\geq 0$. Define $s_i$ to be the 
composite $\sV$-functor displayed in the diagram
\[ \scalebox{0.95}{
\xymatrix {
\aA_1\times \dots\times \aA_{i-1} \times \oO(n) \times \aA_i^n \times \aA_{i+1}\times \dots \times \aA_k 
\ar[r]^-{s_i} \ar[d]_t^\iso  & \oO(n) \times (\aA_1\times \cdots \times \aA_k)^n \\
\oO(n) \times \aA_1 \times \dots \times \aA_{i-1} \times \aA_i^n \times \aA_{i+1} \times \dots \times \aA_k. 
\ar[r]_-{\id \times \DE} & \oO(n) \times \aA_1^n\times \cdots \times \aA_k^n \ar[u]_{}^{\iso}\\
}} \]
Here $t$ is the evident transposition, $\DE$ is obtained by applying the diagonal maps 
$\aA_j\rtarr \aA_j^n$ for $j\neq i$, and the right hand isomorphism is obtained by transposing from a product 
of $n$th powers to an $n$th power of a product.
\end{notn}

\begin{defn}\label{MultiO} 
Let $\oO$ be a (reduced) pseudo-commutative operad in $\VCat$. 
We define the (symmetric) multicategory $\Mult(\oO)$ of $\oO$-algebras and pseudomorphisms. 
Its underlying 2-category is $\OAlg$, so its objects, morphisms, and $2$-cells are the $\oO$-algebras, the
$\oO$-pseudomorphisms (\autoref{OPseudoMap}), and the $\oO$-transformations (\autoref{Otran}).
Recall that since $\oO$ is reduced, all $\oO$-algebras are assigned basepoints. Its 0-ary morphisms are (unbased) maps $\ast \rtarr \aB$, that is, they correspond to a choice of object in $\aB$.
For $k>1$, its $k$-ary morphisms $(\aA_1,\dots,\aA_k) \rtarr \aB$
are the tuples $(F,\delta_i)$, where 
\begin{enumerate}[(a)]
\item
$F\colon \aA_1\sma \cdots \sma \aA_k \rtarr \aB$ is a 
 $\sV_*$-functor, which we may equally well express as a $\sV$-functor
$F\colon \aA_1\times \cdots \times \aA_k \rtarr \aB$
such that $F(a_1,\dots,a_k)$ is equal to $0_\aB$ if any object 
$a_i$ is $0_{\aA_i}$ and $F(f_1,\dots,f_k)$ is $\id_{0_\aB}$ if any $f_i$ is $\id_{0_{\aA_i}}$, and
\item the $\de_i$, $1\leq i\leq k$, are sequences of invertible $\sV$-transformations $\de_i(n)$ as indicated in the following diagram.
\begin{equation}\label{deltan}
 \xymatrix{
\oO(n)\times (\aA_1\times \cdots \times \aA_k)^n \ar[rr]^-{\id\times F^n} 
\ddrrtwocell<\omit>{<0>\quad  \  \delta_i(n)} 
& &  \oO(n)\times \aB^n  \ar[dd]^{\tha(n)} \\ 
\aA_1\times \cdots \times \oO(n)\times \aA_i^n \times \cdots \times \aA_k \quad \quad \quad 
\ar[u]^-{s_i} \ar[d]_{\id\times\tha(n)\times \id}  &  &\\
\aA_1\times \cdots \times \aA_k \ar[rr]_-{F} & &\aB \\}
\end{equation}
\end{enumerate}
The distributivity isomorphisms $\delta_i(n)$ must satisfy coherence axioms that are specified and discussed in \autoref{cohMultO}.

For $\si\in \SI_k$, the right action of $\SI_k$ on the $k$-ary morphisms of $\Mult(\oO)$ sends 
$(F,\delta_i)\colon (\aA_1,\dots ,\aA_k)\rtarr \aB$
to  
the composite
\begin{equation}\label{OObSym}
\xymatrix{
\aA_{\si(1)}\times \cdots \times \aA_{\si(k)} \ar[r]^-{\si} & \aA_1 \times \cdots\times \aA_k \ar[r]^-{F} & \aB,}
\end{equation}
where $\si$ denotes the reordering of terms given by $\si(a_{\si(1)},\dots,a_{\si(k)}) = (a_1,\dots,a_k).$
Permuting the indices, the $\de_i$ for $F\si$ are inherited from the $\de_i$ for $F$.  
Precisely, $\de_{\si^{-1}(i)}(n)$ for $F\si$ is induced from $\de_i(n)$ for $F$ by pasting the
defining diagram  \autoref{deltan} 
to the right of the following diagram, which is easily checked to be commutative.
\[
 \scalebox{0.9}{
 \xymatrix{
\oO(n)\times (\aA_{\si(1)}\times \cdots \times \aA_{\si(k)})^n \ar[rr]^-{\id\times \si^n} 
& &  \oO(n)\times (\aA_1\times \cdots \times \aA_k)^n \\
\aA_{\si(1)}\times \cdots\times \oO(n)\times \aA_{i}^n \times  \cdots  \times \aA_{\si(k)}
\ar[u]_-{s_{\si^{-1}(i)}} \ar[d]_{\id\times\tha(n)\times \id}  \ar[rr]^-{\si} & & 
\aA_1 \times \cdots \times \oO(n)\times \aA_i^n  \times \cdots  \times \aA_k
\ar[u]_-{s_i} \ar[d]^{\id\times\tha(n)\times \id} \\
\aA_{\si(1)}\times \cdots \times \aA_{\si(k)} \ar[rr]_-{\si} & & \aA_1\times \cdots \times \aA_k\\
}} \]
Since $i = \si\si^{-1}(i)$, the term $\oO(n)\times \aA_i^n$ appears in the $\si^{-1}(i)$th factor of 
the middle left term. Note that $(F\si)\ta = F(\si\ta)$, both mapping $\aA_{\si\ta(1)}\times\cdots\times \aA_{\si\ta(k)}$ to $\aB$.

The identity functor of $\aA$ gives the unit element $\id_\aA\in \Mult(\oO)(\aA;\aA)$.  With the notation for sequences from \autoref{Multicat}, the composition multiproduct 
\[\xymatrix{ \Mult(\oO)(\ul{\aB};\aC)\times \prod_{q=1}^k \Mult(\oO)(\ul{\aA}_q;\aB_q) \ar[r]^-{\ga} & \Mult(\oO)( \{\ul{\aA}_{1},\dots \ul{\aA}_k\};\aC) \\} \]
is given by 
\[ \ga(F;E_1,\dots,E_k) = F\com (E_1\sma \cdots \sma E_k). \]
We identify $\{(q,r)\}$, $1\leq q\leq k$ and $1\leq r\leq j_q$, with $\{1\leq i\leq j_1 + \cdots j_k\}$ by letting $(q,r)$ correspond to 
$i=j_1+\cdots + j_{q-1} + r$.   Then $\de_{q,r}(n)$ for the multicomposition is given by pasting the diagrams for $\de_q^F$ and $\de_r^{E_q}$.  We show this explicitly in the case of $(q,r)=(1,1)$.   The general case is shown similarly.  We use the notation $\mybox{\aA_q}$ for the product  $\aA_{q,1}\times \dots \times \aA_{q,j_q}$.

\begin{equation}\label{delta-comp}
 \scalebox{0.8}{\xymatrix{
 \oO(n)\times \left(\hspace{0.1em} \mybox{\aA_{1}}\times \cdots \times \overline{\aA_{k}} \hspace{0.1em} \right)^{\! n} \ar[rr]^-{\id\times (E_1  \times \dots \times E_k)^n} 
 & &  \oO(n)\times (\aB_{1} \times \cdots \times \aB_{k})^n \ar[rr]^-{\id \times F^n} 
 & & \oO(n)\times \aC^n \ar[ddd]^{\tha(n)}  
 \\ 
\oO(n)\times  \mybox{\aA_{1}}^{\,n}\times \mybox{\aA_{2}}\times \cdots \times \mybox{\aA_{k}} \ar[u]^{s_1}\ar[rr]^-{\id\times E_1^n \times E_2 \times \dots \times E_k} 
\ddrrtwocell<\omit>{<0>\qquad  \qquad  \delta_1^{E_1}(n)\times \id} 
& &  \oO(n)\times \aB_{1}^n \times \aB_2\times \cdots \times \aB_{k}  \ar[dd]^{\tha(n)\times \id }\ar[u]_{s_1} \ddrrtwocell<\omit>{<0>\qquad  \delta_1^{F}(n)} \\ 
\oO(n) \times \aA_{1,1}^n \times \aA_{1,2} \times \cdots  \times \aA_{k,j_k} \quad \quad \quad 
\ar[u]^-{s_1\times \id } \ar[d]_{\tha(n)\times \id}  &  &  & &
\\
\mybox{\aA_{1}}\times \cdots \times \mybox{\aA_{k}} \ar[rr]_-{E_1\times \dots \times E_k} & &\aB_1 \times \dots \times \aB_k \ar[rr]_-{F}& & \aC \\}}
\end{equation}

One can check that the $\delta$s satisfy the coherence axioms, and that this composition is associative, unital and respects equivariance. 
 Further verifications are needed to show that this all really does specify a multicategory. 
For example, the symmetry axiom  (ii) in \autoref{cohpseudo} is  used in the verification that $F\sigma$ 
satisfies the axioms when $F$ does. However, we omit further details.
We have translated the axioms
of Hyland and Power to our operadic setting. Their \cite[Proposition 18]{HP} applies to show
that $\Mult(\oO)$ is a multicategory enriched in the category $\Cat$ of small categories.
We learned the central role played by pseudo-commutativity  from them.
\end{defn}

\subsection{Variants of $\Mult(\oO)$ and comparisons}\label{variant} 
We remark that our definition of $\Mult(\oO)$ applies almost verbatim to define a multicategory of $\oO$-pseudoalgebras as defined in \cite{CG, AddCat1}. Pseudoalgebras  over an operad are defined by relaxing the  coherence diagrams for the operadic multiplication with the structure map of the algebra to only commute up to coherent natural isomorphisms. The only axioms in the definition of $\oO$-pseudomorphisms (listed in \autoref{cohMultO}) that would differ slightly
for  $\oO$-pseudoalgebras as opposed to $\oO$-algebras are \eqref{OperCompAxiom} and \eqref{DeiDejCommute}. In these conditions it wouldn't make sense to ask for equality of 2-cells as written, since the maps on the boundary of the 2-cells would not be equal, and instead one would need to paste them with the coherence isomorphisms from the definition of $\oO$-pseudoalgebras. 

When $\sV=\Top$ and $\oO$ is the permutativity operad $\sP$,
the multicategory $\Mult(\sP)$ 
is not quite the same as the multicategory of symmetric strict monoidal categories 
defined by Hyland and Power \cite{HP} and the multicategory of permutative categories defined by Elmendorf and Mandell \cite{EM1}. 
The difference is that we have taken our distributivity 2-cells $\de_i$ to be invertible. Neither \cite{HP} nor \cite{EM1} do so, and we have 
drawn our arrows in the direction used in \cite{HP}, which is opposite to the choice in \cite{EM1}.
This difference in the choice of direction of the 2-cells $\de_i$ would matter 
if we relaxed the isomorphism requirement. For example, the strengths $s_i$ of \autoref{strengths} 
would no longer be relevant with the opposite choice, so the definition in \cite{EM1} would no
longer be a specialization of \cite{HP} and would not be compatible with the conventions of Corner 
and Gurski \cite{CG} or with LaPlaza's classical coherence theory for symmetric bimonoidal categories
 \cite{La, La2}.  It  would therefore lead to some erroneous conclusions, as explained in \cite[Scholium 12.3]{Rant2}.  

The work of \cite{EM1} used the classical biased definition of permutative categories rather than 
its unbiased operadic equivalent, and that simplifies details when comparing operadic algebraic
structures to their classical biased equivalents.  
With our unbiased operadic reformulation, the equivariant generalization is immediate. For example, we can take
$\oO$ to be the categorical equivariant Barratt-Eccles operad
$\sP_G$ of \cite{GMPerm}
to obtain the multicategory $\Mult(\sP_G)$ of genuine permutative $G$-categories; genuine permutative and symmetric monoidal $G$-categories are defined to be  $\sP_G$-algebras and $\sP_G$-pseudoalgebras, respectively,  in \cite{GMPerm, AddCat1}. Our work also applies to the normed symmetric monoidal categories of Rubin \cite{RubinNSMC}, which are defined as algebras over an operad, but which also admit a biased definition.

\subsection{The free $\oO$-algebra multifunctor $\bOp$}\label{Free}
As explained in \cite[Section 4]{Rant1}, a  (reduced) operad $\oO$ in a category $\sW$ has two associated monads, $\bO$ defined on the ground category $\sW_*$ and $\bOp$ defined on the ground category $\sW$.   Their categories of algebras are isomorphic.  The first takes the basepoint as given and requires the basepoint built in by the operad action to agree with the given one, and it is defined using basepoint identifications.  The second just builds in the basepoint by the action.  The first is the one central to topology and is in principle more general.  For an unbased object $X\in\sW$, 
\begin{equation}\label{OplusX}
  \bOp X = \bO(X_+) =  \coprod_{j\geq 0} \oO(j) \times_{\SI_j} X^j.
\end{equation}
This is a based object with basepoint given by the inclusion of $\ast = \oO(0) \times X^0$.

Starting on the category level with $\sW = \VCat$, we prefer to avoid basepoint identifications and we therefore focus on $\bOp$.   When $\sV$ is $\Top$ or $\GTop$, applying the classifying space functor gives an operad $B\oO$, and we denote the associated monad by $\bOp^{top}$. If $\oO$ is $\SI$-free,  in the sense that $\SI_j$ acts freely on $\oO(j)$ for each $j$, we then have the basic commutation relation
\begin{equation}\label{BcommutesO}  B(\bOp \cC) \iso  \bOp^{top}(B \cC)  \end{equation} 
for $\cC\in \VCat$, and that is essential to our applications.

Thus fix a chaotic operad $\oO$ in $\VCat$ in this section.

\begin{defn}\label{omdefn} For $\sV$-categories $\cC$ and $\cD$, define a $\sV_*$-functor
\[ \om\colon \bOp \cC\sma \bOp \cD\rtarr \bOp(\cC\times \cD) \]
by passage to  orbits from the maps
\begin{equation}\label{maps}  
\xymatrix@1{
\oO(j)\times \cC^j \times \oO(k) \times \cD^k \ar[r]^{t} & \oO(j)\times \oO(k) \times \cC^j  \times \cD^k
\ar[r]^-{\opair\times \ell} & \oO(jk)\times (\cC\times \cD)^{jk}. \\} 
\end{equation}
The map $\ell$ here is defined using the lexicographic ordering $\la$ of \autoref{lexico}; explicitly, it is given by
\[ (c_1, \dots, c_j), (d_1, \dots, d_k)\mapsto (c_1, d_1), \dots (c_1, d_k), \dots \dots, (c_j, d_1), \dots, (c_j, d_k).\]
Since $\oO$ is reduced, the maps \autoref{maps}  factor through the smash product and, using the equivariance axiom for operad composition, they also pass to orbits with respect to symmetric group actions; therefore they induce a well-defined 
map $\om$.  
\end{defn}

\begin{prop}
\label{prop:alphaNatTran}
A pseudo-commutativity structure on $\oO$ induces  an invertible $\sV_\bpt$-transformation
\begin{equation}\label{pseudocommMonad} \xymatrix{
\bOp \cC \sma \bOp \cD \ar[r]^-\om  \ar[d]_{t} \drtwocell<\omit>{<0> \  \al} & \bOp(\cC\times \cD) \ar[d]^-{\bOp(t)} \\
\bOp \cD \sma \bOp \cC \ar[r]_-\om & \bOp(\cD\times \cC).
}\end{equation}
\end{prop}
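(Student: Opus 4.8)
The plan is to build $\al$ summandwise out of the pseudo-commutativity isomorphisms $\al_{j,k}$ of \eqref{alphas}; this is the operadic shadow of the fact that the monad $\bOp$ is pseudo-commutative. Recall $\bOp\cC\sma\bOp\cD = \coprod_{j,k\geq 0}\big(\oO(j)\times_{\SI_j}\cC^j\big)\sma\big(\oO(k)\times_{\SI_k}\cD^k\big)$, and that $\om$ arises by passage to orbits from the maps \eqref{maps}; thus on the $(j,k)$-summand, before orbits, $\om$ sends $(x,\vec c,y,\vec d)$ to $\big(\opair(x,y);\ell_{j,k}(\vec c,\vec d)\big)$ in $\oO(jk)\times(\cC\times\cD)^{jk}$. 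Chasing the two composites of \eqref{pseudocommMonad} over this summand, $\bOp(t)\circ\om$ sends $(x,\vec c,y,\vec d)$ to the orbit class of $\big(\opair(x,y);\, t^{jk}\ell_{j,k}(\vec c,\vec d)\big)$, where $t^{jk}$ is the coordinatewise twist $(\cC\times\cD)^{jk}\to(\cD\times\cC)^{jk}$, while $\om\circ t$ sends it to the orbit class of $\big(\opair(y,x);\ell_{k,j}(\vec d,\vec c)\big)$.

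First I would record the elementary combinatorial identity --- immediate from \eqref{lexico} and \autoref{tau}, modulo the conventions for the $\SI_n$-actions --- that the reordering permutation $\ta_{k,j}$ carries the tuple $t^{jk}\ell_{j,k}(\vec c,\vec d)$ to $\ell_{k,j}(\vec d,\vec c)$. Combined with the orbit relation in $\oO(kj)\times_{\SI_{kj}}(\cD\times\cC)^{kj}$, this rewrites the value of $\bOp(t)\circ\om$ on the $(j,k)$-summand as the orbit class of $\big((\ta_{k,j}\cdot\opair)(x,y);\,\ell_{k,j}(\vec d,\vec c)\big)$, carrying the same $(\cD\times\cC)^{kj}$-data as the value of $\om\circ t$. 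I would then take the $(j,k)$-component of $\al$ to be the image in $\bOp(\cD\times\cC)$ of the $\sV$-transformation obtained by pairing $\al_{j,k}\colon\ta_{k,j}\circ\opair\Rightarrow\opair\circ t$, whiskered along the projection $\oO(j)\times\cC^j\times\oO(k)\times\cD^k\to\oO(j)\times\oO(k)$, with the identity transformation on the $(\cD\times\cC)^{kj}$-valued coordinate. This component is invertible because $\al_{j,k}$ is, and is natural in $\cC$ and $\cD$ because each map entering its construction is; and since $\oO$ is reduced, the summands with $j=0$ or $k=0$ are constant at the basepoint, where $\al$ is the identity, so $\al$ will be a $\sV_{\bpt}$-transformation once it is well defined.

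The substantive point --- and the main obstacle --- is to verify that these summandwise transformations are $\SI_j\times\SI_k$-equivariant, so that they descend along the orbit projections $\oO(j)\times\cC^j\times\oO(k)\times\cD^k\to\big(\oO(j)\times_{\SI_j}\cC^j\big)\times\big(\oO(k)\times_{\SI_k}\cD^k\big)$ to a transformation defined on $\bOp\cC\sma\bOp\cD$; passage to the smash product itself is handled, exactly as for $\om$, by reducedness. This equivariance is precisely the equivariance coherence axiom on the pseudo-commutative structure, which relates $\al_{j,k}$ to the permutation $\si\spair\ta$ (compare \autoref{rem:opair-equivariant} and the axioms of \autoref{cohpseudo}); I would check it by writing, for a pair $(\si,\ta)\in\SI_j\times\SI_k$, the evident square of $\sV$-functors on $\oO(j)\times\cC^j\times\oO(k)\times\cD^k$ and pasting in that axiom coordinatewise. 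With this done, confirming that the assembled data really form a $2$-cell filling \eqref{pseudocommMonad}, rather than a mere family of componentwise isomorphisms, is formal. Finally, since $\oO$ is chaotic here, \autoref{ChaoticPseudoCom} makes the $\al_{j,k}$, hence $\al$, canonical, so the construction involves no choices.
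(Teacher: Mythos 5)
Your proposal is correct and follows essentially the same route as the paper: the 2-cell is assembled summandwise from the pseudo-commutativity isomorphisms $\al_{j,k}$ (after the lexicographic-reordering bookkeeping relating $\ta_{k,j}$, $t$, and $\ell$), and the one substantive verification is that these descend to $\SI_j\times\SI_k$-orbits, which is exactly the equivariance axiom \eqref{symmpseudocomequiv} of \autoref{cohpseudo} that the paper invokes. Your write-up simply makes explicit the element chase and the basepoint/reducedness remarks that the paper leaves as "straightforward to check."
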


\begin{pf}
By axiom \eqref{symmpseudocomequiv} from  \fullref{pseudocom},
the transformations $\alpha_{j,k}$ descend to orbits.
It is straightforward to check that they  define the claimed invertible $\sV_\bpt$-transformation.
\end{pf}

We now extend $\om$ to a binary morphism in $\Mult(\oO)$.
We need to define the transformations $\de_i(n)$ for $i=1,2$. Careful inspection shows that we can take $\de_1(n)$ to be the identity transformation. That is, we claim that the following diagram commutes.
\[ \scalebox{.95}{
\xymatrix{
\oO(n)\times (\bOp \cC\times \bOp \cD)^n \ar[rr]^-{\id\times {\om}^n} 
& &  \oO(n)\times \bOp(\cC \times \cD)^n  \ar[dd]^{\tha(n)} \\  
\oO(n)\times (\bOp \cC)^n \times  \bOp \cD 
\ar[u]^-{s_1} \ar[d]_{\tha(n)\times \id}  &  &\\
\bOp \cC \times \bOp \cD \ar[rr]_-{\om} & &\bOp(\cC\times \cD) \\}}
\]
In particular, it is important to notice that the variables in $\cC$ and $\cD$ are arranged lexicographically under either 
composite. The variables in the operad agree under the composite by iterated application 
of the associativity diagram for the structure maps $\ga$, as in \fullref{intrinsicmon}. 

We define $\de_2(n)$ as the following pasting diagram
\begin{equation}\label{yuckytwo}
\scalebox{0.7}{
\xymatrix @C=2em{
\oO(n)\times (\bOp \cC\times \bOp \cD)^n \ar[rrr]^-{\id\times {\om}^n} \ar[dr]^{\id\times t^n}
& \drtwocell<\omit>{<0>\qquad \  \id\times\al^n}  & &  \oO(n)\times \bOp(\cC\times  \cD)^n  \ar[dddd]^{\tha(n)}  \ar[dl]^{\id\times \bOp(t)^n}\\  
 & \oO(n)\times (\bOp \cD\times \bOp \cC)^n \ar[r]^(0.55){\id\times\om^n} & \oO(n) \times \bOp(\cD\times \cC)^n \ar[dd]^{\tha(n)}  \\
\bOp \cC\times  \oO(n)\times (\bOp \cD)^n \ar[uu]^-{s_2} \ar[dd]_{\id\times\tha(n)} \ar[r]^{t} &   \oO(n)\times (\bOp \cD)^n \times \bOp \cC \ar[u]^{s_1} \ar[d]_{\tha(n)\times \id}  &\\
 & \bOp \cD \times \bOp \cC \ar[r]^\om  \ar[dl]_t \drtwocell<\omit>{<0>\quad \  \al} 
& \bOp( \cD\times \cC) \ar[dr]^{\bOp(t)} \\
\bOp \cC\times \bOp  \cD \ar[rrr]_-{\om} & & &\bOp(\cC\times \cD), \\}
}
\end{equation}
where the inner pentagon is $\de_1=\id$. 

\begin{prop}
The axioms on $\al$ imply that $(\om, \de_1=\id, \de_2)$  satisfies  the conditions for a 2-ary morphism in $\Mult(\oO)$ 
given in \autoref{cohMultO}. 
\end{prop}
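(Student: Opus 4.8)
The plan is to verify the conditions of \autoref{cohMultO} for the tuple $(\om,\de_1=\id,\de_2)$ in turn: the normalization conditions $\de_i(0)=\id$ and $\de_i(1)=\id$, the $\SI_n$-equivariance of the $\de_i(n)$, the operadic-compatibility pasting equation \eqref{OperCompAxiom} for each $\de_i$, and the commutation equation \eqref{DeiDejCommute} relating $\de_1$ and $\de_2$. Since $\de_1$ is the identity, roughly half of these reduce to strict commutativity of diagrams of $1$-cells.

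First I would dispose of $\de_1=\id$. As observed after \autoref{deltan}, the square defining $\de_1(n)$ commutes on the nose: under either composite the $\cC$- and $\cD$-variables are arranged in the same lexicographic order (by the description of $\ell$ via \autoref{lexico}), and the $\oO$-variables are matched by iterated application of the associativity diagram for $\ga$, exactly as in the proof of \autoref{intrinsicmon}. Granting this, the normalization conditions for $\de_1$ are equalities of identity $2$-cells; the operadic-compatibility equation \eqref{OperCompAxiom} for $\de_1$ is again an equality of identity $2$-cells between $1$-cells that agree by a larger instance of the same iterated-associativity computation; and equivariance of $\de_1$ follows from \autoref{rem:opair-equivariant} together with the evident equivariance of the reordering $\ell$.

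Next I would treat $\de_2$. By \autoref{yuckytwo} it is a pasting assembled entirely from copies of the invertible $\sV_\bpt$-transformation $\al$ of \autoref{prop:alphaNatTran} (entering through the edges $t$ and $\bOp(t)$) together with the inner pentagon $\de_1=\id$, and $\al$ is itself induced on $\SI$-orbits from the operadic $2$-cells $\al_{j,k}$. Since pasting is functorial, each coherence condition on $\de_2$ unwinds to the corresponding condition on the $\al_{j,k}$, as follows. The normalization $\de_2(0)=\de_2(1)=\id$ comes from the identity axiom on $\al$, i.e.\ from $\al_{1,k}$ and $\al_{j,1}$ being identities, which holds because $\ta_{1,n}=e_n=\ta_{n,1}$ (\autoref{tau}). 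Equivariance of $\de_2$ comes from the equivariance axiom on $\al$ of \autoref{cohpseudo}, whose $1$-cell shadow is \autoref{rem:opair-equivariant}. The operadic-compatibility equation \eqref{OperCompAxiom} for $\de_2(n)$, once the boundary $1$-cells are identified by the same iterated-associativity reshuffle used for $\de_1$, is precisely the operadic-compatibility axiom on $\al$, transported to orbits using the axiom \eqref{symmpseudocomequiv} already invoked in the proof of \autoref{prop:alphaNatTran}. Finally, since $\de_1=\id$, the commutation equation \eqref{DeiDejCommute} between $\de_1$ and $\de_2$ degenerates to an identity among pasted copies of $\al$, which is an instance of axiom (ii), the symmetry axiom, of \autoref{cohpseudo}.

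The one real difficulty is bookkeeping rather than conceptual: one must track, through the large pasting diagram \autoref{yuckytwo}, all the reindexing bijections involved --- the $\la_{j,k}$ buried in $\ell$, the transpositions $t$ and $\ta_{k,j}$, the diagonals packaged into the strengths $s_i$ of \autoref{strengths}, and the iterated associativities of $\ga$ --- so as to confirm that the boundary $1$-cells on the two sides of each axiom genuinely coincide before one starts comparing $2$-cells. I would organize this by projecting each diagram onto its ``operadic core'', the subdiagram supported on a single factor $\oO(m)\times(\cC\times\cD)^{m}$ in which every $2$-cell is whiskered from an $\al_{j,k}$, verifying the required identity there (where it is one of the listed axioms of \autoref{cohpseudo}), and then noting that reintroducing the $\bOp$-structure and passing back to $\SI$-orbits changes nothing, by reducedness of $\oO$ and the equivariance axiom \eqref{symmpseudocomequiv}. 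Since $\oO$ is chaotic here, the $2$-cell axioms on $\al$ hold automatically by \autoref{ChaoticPseudoCom}, so that in practice only the $1$-cell matchings need checking; but the argument above is phrased so as to apply to any pseudo-commutative $\oO$.
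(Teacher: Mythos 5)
Your overall strategy --- check the normalization, equivariance, operadic-composition, and commutation axioms one by one, observing that $\de_1$ is a strictly commuting square and that every $2$-cell in $\de_2$ is whiskered from the pseudo-commutativity $\al$ --- is the same strategy the paper (tersely) follows. But your treatment of the hardest condition, \eqref{DeiDejCommute}, misidentifies the axiom that makes it work. You claim that, because $\de_1=\id$, the commutation of $\de_1$ and $\de_2$ ``degenerates to an identity among pasted copies of $\al$, which is an instance of axiom (ii), the symmetry axiom.'' That cannot be right: in \eqref{DeiDejCommute} the left-hand pasting involves the map $\mu$, i.e.\ an $m$-fold \emph{operadic composite} (via $\ga$) of copies of $\de_2(n)$, each of which is built from $\al$'s at lower operadic arities, while the right-hand pasting contains the single cell $\al_{m,n}$ together with one copy of $\de_2(n)$. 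Matching these requires knowing how $\al$ interacts with the operad structure maps $\ga$ --- i.e.\ exactly the operadic compatibility (``hexagon'') axiom \eqref{pseudocommhex1} of \autoref{cohpseudo}, summarized by $(\alpha_{x,z}\opair\id_y)\circ[(\id_x\opair\alpha_{y,z})D_{\ell,k_*}]=\alpha_{xy,z}$. The symmetry axiom (ii) only relates $\al_{j,k}$ to $\al_{k,j}$ at a fixed pair of arities and cannot produce relations between $\al$'s at different operadic levels, so your reduction of \eqref{DeiDejCommute} as stated would not go through; this is precisely the verification the paper flags as the difficult one and attributes to \eqref{pseudocommhex1}.

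Two smaller points. First, your justification that $\al_{1,k}$ and $\al_{j,1}$ are identities ``because $\ta_{1,n}=e_n=\ta_{n,1}$'' is not valid for a general pseudo-commutative operad: axiom (i) of \autoref{cohpseudo} only forces the component of $\al_{1,n}$ at $(\oid,y)$ to be the identity, which is what the operadic-identity condition on $\de_2(1)$ actually needs; the vanishing of $\ta$ does not by itself force $\al$ to vanish. Second, the closing shortcut ``since $\oO$ is chaotic, only the $1$-cell matchings need checking'' is not a substitute for the reduction: \autoref{ChaoticPseudoCom} makes the axioms \emph{on $\al$} automatic, but the equalities of $2$-cells demanded in \autoref{cohMultO} live in $\bOp(\cC\times\cD)$, which is not chaotic, so one still has to carry out the bookkeeping that identifies each side as a whiskering of the appropriate $\al$-axiom --- and for \eqref{DeiDejCommute} that appropriate axiom is \eqref{pseudocommhex1}, not (ii).
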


\begin{pf}
The most difficult axiom to verify is \eqref{DeiDejCommute}. 
The operadic compatibility condition on $\alpha$ (\autoref{pseudocom} \eqref{pseudocommhex1}) is central to this verification. 
We leave the details to the reader.
\end{pf}

For ease of notation we will denote this morphism by $\om$.
The following result follows easily from the definition of $\om$.

\begin{lem}\label{om-nat}
The pairing $\om$ is natural, in the sense that for all $\sV$-functors $F\colon \cA \rtarr \cC$, $H\colon \cB \rtarr \cD$, the diagram 
\[
\xymatrix{
(\bOp \cA , \bOp \cB) \ar[r]^-{\om} \ar[d]_{(\bOp F , \bOp H)} &\bOp(\cA \times \cB) \ar[d]^{\bOp(F\times H)}\\
(\bOp \cC , \bOp \cD) \ar[r]_-{\om} &\bOp(\cC \times \cD)
}
\]
in $\Mult(\oO)$ commutes.
\end{lem}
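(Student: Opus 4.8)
The plan is to unwind what it means for the square to commute in $\Mult(\oO)$. Writing $\ga$ for the multicategorical composition of \autoref{MultiO}, one must show that the two $2$-ary morphisms $\ga\bigl(\om;\bOp F,\bOp H\bigr)$ and $\ga\bigl(\bOp(F\times H);\om\bigr)$ from $(\bOp\cA,\bOp\cB)$ to $\bOp(\cC\times\cD)$ coincide, and two $2$-ary morphisms coincide exactly when their underlying $\sV_*$-functors agree and their distributivity $2$-cells $\de_1(n),\de_2(n)$ agree. The first thing I would record is that $\bOp F$, $\bOp H$, and $\bOp(F\times H)$ are \emph{strict} $\oO$-maps: the $\oO$-action on a free algebra is induced by the operad structure maps $\ga$, which do not involve the ground $\sV$-category, so $\bOp$ applied to any $\sV$-functor commutes strictly with the actions and hence, viewed as a $1$-ary morphism, has identity distributivity data (\autoref{OPseudoMap}). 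Consequently, in the composition formula \eqref{delta-comp} the $\de$-data contributed by these $1$-ary factors are identity $2$-cells, and the $\de_i(n)$ of either composite reduces to the corresponding $\de_i(n)$ of $\om$, whiskered with $\bOp F,\bOp H$ on the source in one case and with $\bOp(F\times H)$ on the target in the other.

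For the underlying functors I would chase the defining maps \eqref{maps} of \autoref{omdefn}. Before passage to orbits, both composites are maps out of $\oO(j)\times\cA^j\times\oO(k)\times\cB^k$; in either order the operad coordinates are untouched by $\bOp F,\bOp H,\bOp(F\times H)$ and by the transposition $t$, and are sent identically by $\opair$, while the space coordinates are reindexed by $\ell$. The only thing to check is that $\ell$ is compatible with applying $F$ in the $\cC$-slots and $H$ in the $\cD$-slots, i.e. $\ell\circ(F^j\times H^k)=(F\times H)^{jk}\circ\ell$, which is immediate because $\ell$ is a pure reindexing coming from the lexicographic bijection $\la_{j,k}$ of \eqref{lexico}. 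These maps therefore agree, and being $(\SI_j\times\SI_k)$-equivariant they induce the same map on orbits; this is exactly the statement that the underlying $\sV_*$-functors of the two composites are equal.

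For the $2$-cells, $\de_1(n)$ is the identity on both sides (it is the identity for $\om$ by definition, and whiskering preserves this), so only $\de_2(n)$ remains. Inspecting the pasting \eqref{yuckytwo}, $\de_2^\om(n)$ is assembled from two copies of the transformation $\al$ of \autoref{prop:alphaNatTran}, the identity pentagon $\de_1^\om=\id$, and the structure maps $s_i$, $t$, $\bOp(t)$, $\tha(n)$. Since $\al$ of \eqref{pseudocommMonad} descends from the operad-level transformations $\al_{j,k}$, which live entirely on the $\oO(j)\times\oO(k)$ coordinates and are thus unaffected by $\bOp F$ and $\bOp H$, it is natural in both category variables; whiskering its two occurrences with $\bOp F,\bOp H$ on the source therefore agrees with whiskering with $\bOp(F\times H)$ on the target, and the auxiliary structure maps $s_i,t,\bOp(t)$ are likewise natural in the category variables, while $\tha(n)$ is strictly preserved because the relevant maps are strict $\oO$-maps. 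Hence the two whiskered $\de_2(n)$'s coincide and the two composite $2$-ary morphisms are equal. (When $\oO$ is chaotic, as in our applications, this step is essentially vacuous, since by \autoref{ChaoticPseudoCom} the $\al_{j,k}$ are the unique isomorphisms of their type and so carry no content to be checked.) The one genuinely fiddly part is the bookkeeping of the whiskering against the trivial $\de$-data of the strict maps in \eqref{delta-comp}; everything substantive is already contained in the transparency of $\ell$ and the naturality of $\al$, so I expect the write-up to be short.
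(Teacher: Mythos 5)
Your argument is correct and is exactly the intended one: the paper gives no proof beyond the remark that the lemma "follows easily from the definition of $\om$," and your write-up simply fills in that unwinding — strictness of $\bOp F$, $\bOp H$, $\bOp(F\times H)$ so that only whiskered copies of the $\de_i^\om$ survive in \eqref{delta-comp}, compatibility of the lexicographic reindexing $\ell$ with $F^j\times H^k$, and naturality of $\al$ (trivial in the chaotic case) for $\de_2$. Nothing further is needed.
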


\begin{lem}\label{om-assoc}
Given $\sV$-categories $\cC$, $\cD$, and $\cE$, the following diagram in $\Mult(\oO)$ commutes.
\[\xymatrix{
(\bOp \cC , \bOp \cD , \bOp \cE) \ar[r]^-{(\om , \id)}  \ar[d]_{(\id , \om)} &(\bOp( \cC \times  \cD), \bOp \cE )\ar[d]^{\om}\\
(\bOp \cC , \bOp (\cD \times  \cE ) )\ar[r]_-{\om}& \bOp (\cC \times \cD \times  \cE)
}
\]
\end{lem}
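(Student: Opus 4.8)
The plan is to verify the associativity of $\om$ by unwinding both composites in $\Mult(\oO)$ into explicit $\sV_*$-functors together with their distributivity $2$-cells $\de_i$, and checking equality on each piece of data separately. On underlying $\sV_*$-functors, both $\om\com(\om\sma\id)$ and $\om\com(\id\sma\om)$ send the orbit class of $(o_1; c_1,\dots,c_j)\sma(o_2;d_1,\dots,d_k)\sma(o_3;e_1,\dots,e_\ell)$ to an orbit class in $\bOp(\cC\times\cD\times\cE)$ whose $\oO$-component is built by iterating the intrinsic pairing $\opair$ and whose $(\cC\times\cD\times\cE)$-coordinates are the triples $(c_a,d_b,e_c)$ arranged lexicographically. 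The equality of the $\oO$-components is precisely the associativity of the monoid structure on $\ul{\oO}$ from \autoref{intrinsicmon} (equivalently, an iterated instance of the operad associativity diagram), and the equality of the point-coordinates is the associativity of lexicographic ordering; the compatibility of $\opair$ with symmetric group actions from \autoref{rem:opair-equivariant} is what guarantees these identifications descend to orbits. So the first step is to record that the two underlying $\sV_*$-functors literally agree.

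The second step is to identify the composite distributivity $2$-cells $\de_i(n)$, $i=1,2,3$, for each of the two multicomposites, using the pasting recipe \autoref{delta-comp}. Here I would use crucially that for $\om$ itself the first distributivity cell $\de_1$ is the identity (as established just before \autoref{yuckytwo}), while $\de_2$ is the pasting \autoref{yuckytwo} built out of copies of $\al$. For $\om\com(\om\sma\id)$, the recipe \autoref{delta-comp} gives $\de_1 = \de_1^{\om}\cdot(\de_1^{\om}\times\id) = \id$, $\de_2$ a pasting of $\de_1^{\om}=\id$ and $\de_2^{\om}$, hence essentially one copy of $\al$, and $\de_3$ a pasting of $\de_2^{\om}$ (for the outer $\om$) with $\de_1^{\om}=\id$ (for the inner $\om$), hence again essentially one copy of $\al$. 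For $\om\com(\id\sma\om)$ one gets the symmetric bookkeeping: $\de_1 = \id$, $\de_2$ one copy of $\al$, $\de_3$ one copy of $\al$. The claim is then that after accounting for the (identity) coherence reindexing isomorphisms relating the two ways of bracketing the $n$-fold products and $n$-fold powers, the resulting pasted $2$-cells coincide for $i=1,2,3$.

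The third step — and the main obstacle — is the verification that these two pastings of copies of $\al$ are equal as $\sV$-transformations. For $\de_1$ this is trivial since both are identities. For $\de_2$ and $\de_3$ the subtle point is that each side involves $\al$ applied to a pair of operad entries whose arities are themselves products (e.g. $\al_{jk,\ell}$ versus a pasting of $\al_{j,\ell}$ and $\al_{k,\ell}$, or $\al_{j,k\ell}$ versus a pasting of $\al_{j,k}$ and $\al_{j,\ell}$), so the relevant input is exactly the \emph{operadic compatibility} coherence axiom on $\al$ — condition \eqref{pseudocommhex1} of \autoref{pseudocom} — which is the same axiom invoked in the proof that $(\om,\id,\de_2)$ is a $2$-ary morphism. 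Since $\oO$ is chaotic, \autoref{ChaoticPseudoCom} tells us $\al$ is the unique pseudo-commutative structure and all coherence $2$-cells between fixed source and target agree automatically, so in the chaotic setting the equality is immediate; but I would still phrase the argument so that it shows the relevant diagram of $\al$'s commutes by the operadic compatibility axiom, so that it applies to a general pseudo-commutative operad as well.

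In summary: reduce to (i) equality of underlying $\sV_*$-functors, which is associativity of the intrinsic monoid $\ul\oO$ plus associativity of lexicographic order; (ii) equality of the three distributivity $2$-cells, which reduces via the pasting recipe to comparing nested pastings of $\al$; and (iii) that comparison, which is an instance of the operadic-compatibility coherence axiom on $\al$ (automatic in the chaotic case by \autoref{ChaoticPseudoCom}). As with the neighboring lemmas, the actual diagram chases are routine but lengthy, so I would state the structure of the argument and then, in keeping with the style of \autoref{om-nat} and the preceding proposition, leave the remaining bookkeeping to the reader.
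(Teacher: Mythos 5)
Your proposal is correct and follows essentially the same route as the paper, whose proof is just the two-line observation that the underlying square of $\sV_*$-functors commutes by associativity of the intrinsic pairing $\opair$ (\autoref{intrinsicmon}) and that "the compatibility of the $\de_i$ follows from the conditions on $\al$"; your elaboration via \autoref{delta-comp} and the operadic-compatibility axiom \eqref{pseudocommhex1} (automatic in the chaotic case) is exactly the intended content of that second clause. The only quibble is the loose count in your second step ($\de_3$ for $\om\com(\id,\om)$ is a pasting of two $\al$-cells, not one), but you implicitly fix this in your third step when comparing a single $\al$ at product arity against a pasting of two.
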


\begin{proof}
Since the intrinsic pairing $\opair$ is associative, the diagram
\[\xymatrix{
\bOp \cC \sma \bOp \cD \sma \bOp \cE \ar[r]^-{\om \sma \id}  \ar[d]_{\id \sma \om} &\bOp( \cC \times  \cD) \sma \bOp \cE \ar[d]^{\om}\\
\bOp \cC \sma \bOp (\cD \times  \cE ) \ar[r]_-{\om}& \bOp (\cC \times \cD \times  \cE)
}
\]
commutes; the compatibility of the $\de_i$ follows from the conditions on $\al$.
\end{proof}

Thus, given $\sV$-categories $\cC_1,\dots,\cC_k$, we have a corresponding $k$-ary morphism
\[\om_k\colon (\bOp\cC_1,\dots, \bOp\cC_k) \rtarr \bOp(\cC_1\times \dots \times  \cC_k)\]
 in $\Mult(\oO)$, defined by using $\om$ iteratively.  Since this is a composition in $\Mult(\oO)$, the $\de_i$ for $\om_k$ are obtained from those for $\om$ using the pasting \autoref{delta-comp}. We take $\om_1=\id$ and take $\om_0$ to be the choice of object $(\oid,\ast)\in \oO(1)\times \ast \subset \bOp(\ast)$.

\begin{thm}\label{freeinput} 
The functor $\bOp$ from $\sV$-categories to $\oO$-algebras extends to a multifunctor 
\[ \bOp\colon \Mult(\VCat) \rtarr \Mult(\oO).\] 
\end{thm}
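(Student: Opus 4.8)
The plan is to verify the three defining properties of a multifunctor: that $\bOp$ is a function on objects (here just the usual free $\oO$-algebra functor on $\sV$-categories, which is already defined), that it carries identities to identities, and that it respects the multicomposition $\ga$ of the two multicategories. Most of the structural work has already been packaged into the preceding lemmas, so the proof is mainly a matter of assembling them. First I would recall that a $k$-ary morphism in $\Mult(\VCat)$ from $(\cC_1,\dots,\cC_k)$ to $\cD$ is a $\sV_*$-functor $G\colon \cC_1\sma\cdots\sma\cC_k\rtarr\cD$ (equivalently a $\sV$-functor on the product sending any slice through a basepoint to the basepoint), and define $\bOp$ on such a $G$ to be the composite $k$-ary morphism
\[
(\bOp\cC_1,\dots,\bOp\cC_k)\xrtarr{\om_k}\bOp(\cC_1\times\cdots\times\cC_k)\xrtarr{\bOp(G)}\bOp\cD
\]
in $\Mult(\oO)$, where $\om_k$ is the iterated pairing constructed just above and $\bOp(G)$ is the (strict) $\oO$-map induced by the functoriality of the monad $\bOp$. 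Here one uses that $\bOp$ applied to a $\sV_*$-functor out of a smash product lands among the pointed maps, so that this is genuinely a morphism in the based multicategory.

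Next I would check the three multifunctor axioms. For identities, $\om_1=\id$ by construction, so $\bOp$ of the identity $1$-ary morphism is the identity $\oO$-map, as required. For equivariance under $\SI_k$, I would note that the $\SI_k$-action on $\Mult(\VCat)$ permutes the smash factors and the $\SI_k$-action on $\Mult(\oO)$ is given by \eqref{OObSym}; one checks $\om_k$ is $\SI_k$-equivariant using the compatibility of the intrinsic pairing with the symmetry, i.e.\ the permutations $\ta_{j,k}$ of \autoref{tau} together with \autoref{prop:alphaNatTran} and \autoref{om-assoc} — this is exactly the content that makes $\Mult(\oO)$ symmetric, and the only subtlety is tracking the distributivity $2$-cells $\de_i$ through the permutation, which is handled by the pasting recipe for $(F\si)$ in \autoref{MultiO}. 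For composition, given $F\colon\ul\aB\rtarr\aC$ of arity $k$ and $E_q\colon\ul\aA_q\rtarr\aB_q$ of arity $j_q$, both in $\Mult(\VCat)$, I would show
\[
\bOp\big(\ga(F;E_1,\dots,E_k)\big)=\ga\big(\bOp F;\bOp E_1,\dots,\bOp E_k\big)
\]
by expanding both sides: the left side is $\bOp(F)\circ\bOp(E_1\sma\cdots\sma E_k)\circ\om_{j}$, while the right side is $\bOp(F)\circ\om_k\circ(\bOp E_1\sma\cdots\sma \bOp E_k)$ composed appropriately; these agree by the naturality of $\om$ (\autoref{om-nat}), which lets one slide the $\bOp(E_q)$ past the $\om$'s, and the associativity/coherence of the iterated $\om_k$ (\autoref{om-assoc}), which lets one reassociate the nested pairings. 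The matching of the distributivity $2$-cells $\de_i$ on both sides is forced by the pasting formula \autoref{delta-comp} defining $\de$ for a multicomposition, again using the $\al$-coherence.

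The main obstacle is the bookkeeping for the distributivity $2$-cells rather than any conceptual difficulty: one must verify that the $\de_i(n)$ attached to $\bOp(G)\circ\om_k$ — which by \autoref{delta-comp} are pastings of the (identity) $\de_1$ and the nontrivial $\de_2$ of \autoref{yuckytwo} for the binary $\om$, together with the identity $\de_i$ for the strict map $\bOp(G)$ — agree, under permutation and under $\ga$, with those prescribed by the multicategory structure of $\Mult(\oO)$. This reduces, after unwinding, to repeated use of the operadic-compatibility axiom \eqref{pseudocommhex1} on $\al$ and the lexicographic-ordering bookkeeping already verified in the definitions of $\om$ and $\om_k$; since \autoref{om-nat} and \autoref{om-assoc} have already absorbed the hard coherence checks, the remaining verifications are routine and I would indicate them briefly and leave the diagram chases to the reader, as is done throughout this section.
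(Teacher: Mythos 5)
Your construction of $\bOp$ on $k$-ary morphisms (postcompose $\om_k$ with $\bOp$ applied to the given functor), the identity check via $\om_1=\id$, and the composition check via naturality of $\om$ (\autoref{om-nat}) together with the associativity of the iterated pairing (\autoref{om-assoc}) are exactly the paper's argument, and that part is fine. Two corrections, though, one of them substantive.

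The substantive one: your proposed verification of $\SI_k$-equivariance would fail, and in fact it must fail. The theorem only asserts that $\bOp$ is a multifunctor in the sense of \autoref{Multicat}, where equivariance of the arity-$k$ maps is precisely the extra condition defining a \emph{symmetric} multifunctor; the paper explicitly disclaims symmetry and proves in \autoref{ONotSym} that it does not hold. Concretely, the two composites you would need to compare for $k=2$ are $\bOp(t)\circ\om$ and $\om\circ t$ on $\bOp\cC\sma\bOp\cD$, and by \autoref{prop:alphaNatTran} these agree only up to the invertible $2$-cell $\al$ coming from the pseudo-commutativity (equivalently, the underlying functors differ by the block permutations $\ta_{j,k}$ reordering lexicographic to reverse-lexicographic, which are nontrivial in $\oO(jk)$). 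So the appeal to "the compatibility of the intrinsic pairing with the symmetry" proves only a comparison up to $\al$, not equality, and no choice of the $\de_i$'s repairs this at the level of the underlying $\sV_\bpt$-functor. Dropping that step entirely leaves a correct proof of the stated theorem.

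A minor misreading: the source multicategory is $\Mult(\VCat)$ for the cartesian monoidal structure on (unbased) $\sV$-categories, so a $k$-ary morphism is simply a $\sV$-functor $\cC_1\times\cdots\times\cC_k\rtarr\cD$, with no basepoint conditions; your description via smash products of based categories belongs to $\VCatp$, not to the input of this theorem. This does not affect the structure of the argument, since the basepoint behavior on the target side is supplied by $\bOp$ itself (the summand $\oO(0)\times X^0$), but the definition should be stated for the cartesian product.
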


We do not claim that the multifunctor we construct is symmetric, and we
shall show that it is not in \autoref{ONotSym}.

\begin{proof} 
In ${\bf{Mult}}(\VCat)$, a $k$-ary morphism is just a $\sV$-functor
$F\colon \cC_1\times \cdots \times \cC_k\rtarr \cD$. Its image under the multifunctor is the composite
\[ \xymatrix@1{( \bOp\cC_1 , \dots , \bOp\cC_k )\ar[r]^-{\om_k} 
& \bOp(\cC_1\times \cdots \times \cC_k) \ar[r]^-{\bOp F} & \bOp \cD.} \]

It is clear that this assignment sends the identity of $\cC$ to the identity of $\bOp(\cC)$. The fact that the assignment preserves composition follows from the functoriality of $\bOp$ and the naturality of $\om$ (\autoref{om-nat}). 
\end{proof}

\begin{rem}\label{ONotSym}
The multifunctor $\bOp$ is not symmetric. To see this, consider $\id_{\cC\times\cD}$ as a bilinear map in $\Mult(\VCat)$. On one hand, if we first hit it with the symmetry $t$ and then $\bOp$, we end up with a bilinear map whose $\sV_*$-functor is given by
\[ \xymatrix{ \bOp\cD \sma \bOp\cC \ar[r]^-{\om} & \bOp(\cD\times \cC) \ar[r]^-{\bOp(t)} & \bOp(\cC\times \cD).}\]
If, on the other hand, we first do $\bOp$ and next $t$, we get
\[ \xymatrix{\bOp\cD \sma \bOp\cC \ar[r]^-{t} & \bOp(\cC) \times \bOp(\cD) \ar[r]^-{\om} & \bOp(\cC \times \cD).}\]
These maps do not agree since they differ by the two-cell $\alpha$.
\end{rem}

\section{$\sV_\bpt$-$2$ categories and their algebras and pseudoalgebras}\label{sec:V2}

This section establishes terminology and notation that will be used frequently in the coming sections. 
Much of what we do in \autoref{NotnSectSub2} is to describe explicitly what it means to do enriched category theory over the 2-category $\VCat$.
In \autoref{sec:Cpseudo}, we introduce algebras and pseudoalgebras in this context, and we set ourselves up to discuss multiplicative structures on our categories of algebras by introducing monoidal structures on our enriched categories in \autoref{sec:permutative}.

\subsection{$\sV_{\bpt}$-$2$-categories}\label{NotnSectSub2} 

All of our categories of operators, which we introduce in Sections \ref{Dalgs}  and \ref{COFG}, are examples of $\sV_\bpt$-$2$-categories, and we explain what those are here. Returning to \autoref{NotnSect}, we note that $\VCat$ and $\VCatp$ are symmetric monoidal $2$-categories, with cartesian and smash product, respectively. As such, we can enrich and weakly enrich over them, in the sense of \cite{GS}.  Classically, a $2$-category is a category enriched in the category $\Cat$ of (small) categories.

\begin{defn}
 We refer to categories, functors, and natural transformations enriched in $\VCat$ as \ourdefn{$\sV$-$2$-categories}, \ourdefn{$\sV$-$2$-functors}, and \ourdefn{$\sV$-$2$-natural transformations}, respectively.  
 Similarly, we call categories, functors, and natural transformations enriched in $\VCatp$  \ourdefn{$\sV_*$-$2$-categories}, \ourdefn{$\sV_*$-$2$-functors}, and \ourdefn{$\sV_*$-$2$-natural transformations}, respectively.
\end{defn} 

We briefly unpack these definitions. A $\sV$-2-category $\cC$ consists of a collection of objects ($0$-cells) and  a morphism $\sV$-category $\cC(c,d)$ for each pair of objects  $(c,d)$ (giving objects in $\sV$ of $1$-cells and $2$-cells).  For a $\sV_*$-2-category we moreover have that each $\cC(c,d)$ has a basepoint, and the composition factors through the smash product.
If $\cC$ and $\cD$ are $\sV$-$2$-categories, a $\sV$-$2$-functor $F\colon \cC\rtarr \cD$ is given by a function $F$ on objects and $\sV$-functors $\cC(c,d) \rtarr \cD(F(c),F(d))$ satisfying the evident unit and associativity conditions (strictly). For a $\sV_*$-2-functor we further require that the latter are $\sV_*$-functors.

Finally, if $E$ and $F$ are $\sV$-2-functors  $\cC\rtarr \cD$, where $\cC$ and $\cD$ are $\sV$-$2$-categories, a
 $\sV$-$2$-natural transformation $\ze\colon E\Longrightarrow F$ consists of
1-cells\footnote{As in \autoref{object}, a 1-cell here means a $\sV$-functor $\ast\rtarr  \cD(E(c),F(c))$.}
$\ze_c\colon E(c) \rtarr F(c)$, meaning objects of $\cD(E(c),F(c))$, such that the naturality diagrams 
\[  \xymatrix{
\cC(c,d) \ar[r]^-{E} \ar[d]_{F} & \cD(E(c),E(d)) \ar[d]^{(\ze_d)_*} \\
\cD(F(c),F(d)) \ar[r]_-{(\ze_c)^*} & \cD(E(c),F(d))\\
}
\]
commute for all pairs $(c,d)$ of objects of $\cC$. The same is true for a $\sV_*$-2-natural transformations, except that the diagram above lives in $\VCatp$.

As in \cite[Section~1.3]{MMO}, there is a close relationship between $\sV$-2-categories with a zero object and $\sV_*$-2-categories, which we now explain. For a $\sV$-2-category $\cC$, we say that $\bzo$ in $\cC$ is a zero object if $\cC(c,d) \cong \ast$ if either $c=\bzo$ or $d=\bzo$.
If $\cC$ has a zero object, each hom $\sV$-category $\cC(c,d)$ is based with basepoint
\begin{equation}
\label{eq:bptC}
\xymatrix@1{  0_{c,d}\colon \ast \iso \cC(\bzo,d)\times \cC(c,\bzo) \ar[r]^-{\circ} & \cC(c,d).}\\ 
\end{equation}

\begin{prop}\label{ZeroObjEnrCatp}
For $\cC$ a $\sV$-2-category with zero object $\bzo$, the basepoints \eqref{eq:bptC} give $\cC$ an enrichment in $\VCatp$.
\end{prop}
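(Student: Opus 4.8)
The plan is to obtain the $\VCatp$-enrichment by descending the given composition $\sV$-functors of $\cC$ along the quotient maps $\cC(d,e)\times\cC(c,d)\rtarr\cC(d,e)\sma\cC(c,d)$, and then checking that the enrichment axioms are inherited from those of $\cC$. Since the data of a $\VCatp$-enrichment on $\cC$ is essentially forced---the hom objects must be the based $\sV$-categories $(\cC(c,d),0_{c,d})$, the units must be the $1$-cells $\id_c$, and the composition must descend the composition of $\cC$---the content is entirely (i) that this descent exists, and (ii) that the coherence axioms hold.

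For (i), by the universal property of the smash product of based $\sV$-categories recalled in \autoref{sec:based}, it suffices to show that for all objects $c,d,e$ the $\sV$-functor $\circ_{c,d,e}\colon \cC(d,e)\times\cC(c,d)\rtarr\cC(c,e)$ becomes constant at $0_{c,e}$ upon restriction along $\id\times 0_{c,d}$ and along $0_{d,e}\times\id$. For the first, I would unfold $0_{c,d}$ using \eqref{eq:bptC} and apply the associativity axiom for the $\VCat$-enriched category $\cC$ (to the outer pair) to identify the restriction
\[
\cC(d,e)\times\cC(\bzo,d)\times\cC(c,\bzo)\xrtarr{\id\times\circ}\cC(d,e)\times\cC(c,d)\xrtarr{\circ}\cC(c,e)
\]
with the composite
\[
\cC(d,e)\times\cC(\bzo,d)\times\cC(c,\bzo)\xrtarr{\circ\times\id}\cC(\bzo,e)\times\cC(c,\bzo)\xrtarr{\circ}\cC(c,e).
\]
Since $\bzo$ is a zero object, $\cC(\bzo,e)\iso\bpt$ and $\cC(c,\bzo)\iso\bpt$, so this second composite factors through $\cC(\bzo,e)\times\cC(c,\bzo)\iso\bpt$, and the map out of $\bpt$ is by definition $0_{c,e}$; hence the restriction is constant at $0_{c,e}$. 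The restriction along $0_{d,e}\times\id$ is handled in the same way, now applying associativity to the inner pair to rewrite it through $\cC(c,\bzo)\iso\bpt$. Thus each $\circ_{c,d,e}$ factors through a $\sV$-functor $\cC(d,e)\sma\cC(c,d)\rtarr\cC(c,e)$, which automatically preserves basepoints, giving the desired composition $\sV_\bpt$-functor. The unit $1$-cells are unchanged, since an object of $\cC(c,c)$ is the same thing as a $\sV_\bpt$-functor from the monoidal unit of $(\VCatp,\sma)$ into $\cC(c,c)$.

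For (ii), each associativity or unit axiom for the prospective $\VCatp$-enrichment is an equality of $\sV_\bpt$-functors whose source is an iterated smash product of hom-categories. Because such an iterated smash product is a quotient of the corresponding iterated cartesian product, a $\sV_\bpt$-functor out of it is determined by its underlying $\sV$-functor on the product, and this passage---compatibly with the comparison between the monoidal structures $\sma$ and $\times$---carries the two sides of each $\VCatp$-axiom to the two sides of the corresponding $\VCat$-axiom for $\cC$. Since the latter hold by hypothesis, so do the former, and $\cC$ is enriched in $\VCatp$.

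The only nonformal step is (i), and even there the work reduces to the short diagram chase above; the hypothesis that $\bzo$ is a zero object enters exactly to collapse $\cC(\bzo,e)$ and $\cC(c,\bzo)$ to $\bpt$. I expect the main (mild) obstacle to be bookkeeping: pinning down the descent condition against the precise form of the universal property of $\sma$, and checking that the ``underlying $\sV$-functor'' reduction in (ii) is applied to the correct diagrams, including its interaction with the associator of $\sma$.
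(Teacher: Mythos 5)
Your proposal is correct and follows essentially the same route as the paper: the key step, showing that composition restricted along $\id\times 0_{c,d}$ (and symmetrically along $0_{d,e}\times\id$) factors through $\cC(\bzo,e)\times\cC(c,\bzo)\iso\bpt$ via the associativity axiom and hence equals $0_{c,e}$, is exactly the paper's argument. Your part (ii) merely makes explicit the routine inheritance of the enrichment axioms, which the paper leaves implicit.
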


\begin{proof}
 We only need to check that composition in $\cC$ factors through the smash product. The associativity diagram
\[ \xymatrix{  \cC(d,e) \times \cC(\bzo,d)\times \cC(c,\bzo) \ar[r]^-{\id\times \circ} \ar[d]_-{\circ\times\id} & \cC(d,e)\times \cC(c,d) \ar[d]^{\circ}\\
\ast\iso \cC(\bzo,e) \times \cC(c,\bzo) \ar[r]_-{\circ}   & \cC(c,e)  \\ } \]
shows that composition sends $\cC(d,e) \times 0_{c,d}$ to $0_{c,e}$  and the symmetric argument shows that composition also sends $0_{d,e} \times \cC(c,d)$
to  $0_{c,e} $.
\end{proof}

The following result, whose proof we leave to the reader, characterizes $\sV_\bpt$-2-functors for $\sV$-2-categories with a zero object.

\begin{prop}\label{ReducedVCatpEnr}
If $\cC$ and $\cD$ have zero objects, then $\sV_{\bpt}$-2-functors  $F\colon \cC \rtarr \cD$ correspond bijectively to
 $\sV$-2-functors $F\colon \cC \rtarr \cD$  that are \ourdefn{reduced}, in the sense that $F(\bzo_\cC)\cong\bzo_\cD$. Moreover, $\sV_*$-2-natural transformations correspond bijectively to $\sV$-2-natural transformations between reduced $\sV$-2-functors (that is, there is no extra condition).
\end{prop}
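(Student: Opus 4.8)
The plan is to unwind the definitions, using two elementary consequences of the zero-object hypothesis. (Recall from \autoref{ZeroObjEnrCatp} that the zero objects equip $\cC$ and $\cD$ with $\VCatp$-enrichments, so both sides of the asserted bijections make sense.) First, for any object $c$ of $\cD$ the $\sV$-categories $\cD(\bzo_\cD,c)$ and $\cD(c,\bzo_\cD)$ are isomorphic to $\ast$, and an isomorphism of objects (a $1$-cell with strict two-sided inverse) induces, by pre- and post-composition, isomorphisms of the relevant hom-$\sV$-categories; hence if $a\iso\bzo_\cD$ then $\cD(a,c)\iso\ast\iso\cD(c,a)$ for every $c$. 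Second, by \eqref{eq:bptC} the basepoint $0_{c,d}$ of $\cC(c,d)$ is the $1$-cell obtained by composing the canonical $1$-cells $c\rtarr\bzo_\cC$ and $\bzo_\cC\rtarr d$.

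First I would show that a $\sV_\bpt$-$2$-functor $F$ is reduced. Since $\cC(\bzo_\cC,\bzo_\cC)\iso\ast$, its unique object is simultaneously $\id_{\bzo_\cC}$ and the basepoint $0_{\bzo_\cC,\bzo_\cC}$; as $F$ preserves identities and, being a $\sV_\bpt$-functor on each hom-$\sV$-category, preserves basepoints, we get $\id_{F\bzo_\cC}=0_{F\bzo_\cC,F\bzo_\cC}$ in $\cD$. Unwinding the right-hand side, this says exactly that the composite $F\bzo_\cC\xrtarr{f}\bzo_\cD\xrtarr{g}F\bzo_\cC$ of canonical $1$-cells is $\id_{F\bzo_\cC}$; since $\cD(\bzo_\cD,\bzo_\cD)\iso\ast$ forces $f\com g=\id_{\bzo_\cD}$, we conclude $F\bzo_\cC\iso\bzo_\cD$, i.e. $F$ is reduced. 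Conversely, given a reduced $\sV$-$2$-functor $F$, each $\sV$-functor $F\colon\cC(c,d)\rtarr\cD(Fc,Fd)$ is based: applying $F$ to $0_{c,d}$ and using compatibility with composition produces the composite of the $1$-cells $Fc\rtarr F\bzo_\cC\rtarr Fd$, and since $F\bzo_\cC\iso\bzo_\cD$ both $\cD(Fc,F\bzo_\cC)$ and $\cD(F\bzo_\cC,Fd)$ are terminal, so these are their unique $1$-cells; transporting along $F\bzo_\cC\iso\bzo_\cD$ identifies this composite with the one through $\bzo_\cD$ defining $0_{Fc,Fd}$, whence $F(0_{c,d})=0_{Fc,Fd}$. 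The two passages---forgetting the based structure, and equipping a reduced $\sV$-$2$-functor with its (unique) based structure---are mutually inverse and fix the object function, giving the first bijection.

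For the transformation statement, a $\sV_\bpt$-$2$-natural transformation is in particular a $\sV$-$2$-natural transformation, necessarily between $\sV_\bpt$-$2$-functors and hence, by the above, between reduced $\sV$-$2$-functors. For the converse, I would take reduced $E,F\colon\cC\rtarr\cD$ and a $\sV$-$2$-natural $\ze\colon E\Longrightarrow F$ and check that its naturality square lives in $\VCatp$, i.e. that all four $\sV$-functors in it are based. The functors $E$ and $F$ are based by the previous step, and for $(\ze_d)_*$ one computes $\ze_d\com 0_{Ec,Ed}$ to be a composite of $1$-cells factoring through $E\bzo_\cC$, hence---again using $E\bzo_\cC\iso\bzo_\cD$ and terminality of the relevant hom-$\sV$-categories---equal to $0_{Ec,Fd}$; the argument for $(\ze_c)^*$ is symmetric. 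No condition is placed on the components $\ze_c$, so every such $\ze$ already is a $\sV_\bpt$-$2$-natural transformation, and the correspondence is the identity on underlying data.

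The one step that is not pure bookkeeping is the deduction, in the second paragraph, that $F$ is reduced from the equation $\id_{F\bzo_\cC}=0_{F\bzo_\cC,F\bzo_\cC}$; once that identity is read as exhibiting an isomorphism $F\bzo_\cC\iso\bzo_\cD$, everything else is a routine chase with terminal hom-$\sV$-categories.
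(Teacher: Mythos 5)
Your argument is correct and is exactly the routine unwinding the authors have in mind when they leave this proof to the reader: since the hom $\sV$-categories into and out of a zero object are terminal, basepoint-preservation of the hom maps is equivalent to $F(\bzo_\cC)\iso\bzo_\cD$ (your identification of $\id_{F\bzo_\cC}$ with $0_{F\bzo_\cC,F\bzo_\cC}$ being the one non-trivial observation), and the same terminality forces $(\ze_c)^*$ and $(\ze_d)_*$ to be based, so no condition is imposed on transformations. No gaps; this matches the intended proof.
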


Following \cite[Sections~3.5 and 3.7]{GS}, we now introduce the weaker notions of $\sV$-pseudofunctor and $\sV$-pseudonatural transformation, together with their based 
variants.\footnote{In the language of \cite{AddCat1}, we are restricting to normal
$\sV$-pseudofunctors.}

\begin{defn}\label{Vpseudo}  A \ourdefn{$\sV$-pseudofunctor} $\squiggly{F\colon \cC}{\cD}$
between $\sV$-$2$-categories consists of a function $F$ on objects and $\sV$-functors 
\[ F\colon \cC(b, c)\rtarr \cD(F(b),F(c))\]
such that the following diagram commutes
\begin{equation}\label{unitpseudo}
  \xymatrix{
\ast \ar[r]^-{\id_c} \ar[dr]_{\id_{F(c)}}  & \cC(c,c) \ar[d]^F\\
& \cD(F(c),F(c)), \\
}
\end{equation}
together with invertible coherence $\sV$-transformations 
\begin{equation}\label{varphi}
\xymatrix{
\cC(b,c)\times \cC(a,b) \ar[d]_{\circ} \drtwocell<\omit>{<0>  \ \varphi}
\ar[r]^-{F\times F} & \cD(F(b),F(c))\times \cD(F(a),F(b)) \ar[d]^{\circ}\\
\cC(a,c) \ar[r]_-{F} & \cD(F(a),F(c)) \\
} 
\end{equation}
that are unital ($\varphi_{\id,-}$ and $\varphi_{-,\id}$ are the identity) and  associative in the sense that the relevant equalities of pasting diagrams relating to triple composition hold 
(see for example \cite[Section 1.1]{LeinBB} or \cite[Section 3.5]{GS}).
\end{defn} 

The based variant is defined similarly.

\begin{defn}\label{V*pseudo} 
 A \ourdefn{$\sV_\bpt$-pseudofunctor} $\squiggly{F\colon \cC}{\cD}$
between $\sV_\bpt$-$2$-categories is a $\sV$-pseudofunctor such that the functors on morphisms are $\sV_\bpt$-functors, with the unit diagram \autoref{unitpseudo} replaced with the diagram of $\sV_\bpt$-functors with source $\bpt \amalg \bpt$,\footnote{This is equivalent to requiring that the unit diagram \autoref{unitpseudo} commutes as a diagram of underlying $\sV$-functors.} and the transformations $\varphi$ in \autoref{varphi} descend to $\sV_\bpt$-transformations of $\sV_\bpt$-functors with source $\cC(b,c)\sma \cC(a,b)$.
\end{defn}

That is, in the diagram \autoref{varphi} for a $\sV_\bpt$-pseudofunctor, both instances of $\times$ in the top row are replaced by $\sma$.

\begin{defn}\label{Vpseudotran} Let $E$ and $F$ be $\sV$-pseudofunctors $\squiggly{\cC}{ \cD}$, where $\cC$ and $\cD$ are $\sV$-$2$-categories.  Then a 
$\sV$-pseudotransformation\footnote{This is meant as an abbreviation of $\sV$-pseudonatural transformation.}  
$\ze\colon E \Longrightarrow F$ consists of 
1-cells $\ze_c\colon E(c) \rtarr F(c)$
 for objects $c\in\cC$ and invertible $\sV$-transformations
\[\xymatrix{
\cC(b,c) \ar[r]^-F \ar[d]_E \drtwocell<\omit>{<0> \ \quad \ze_{b,c}} 
& \cD(F(b),F(c)) \ar[d]^{(\ze_b)^*}\\
\cD(E(b),E(c)) \ar[r]_{(\ze_c)_*} & \cD(E(b),F(c))\\}\]
for objects $b,c\in \cC$ such that 
the component of $\ze_{c,c}$ at $\id_c$ is the identity 2-cell for all $c\in \cC$
and  the relevant coherence diagram expressing compatibility with composition commutes  (see for example
\cite[Section~1.2]{LeinBB} or \cite[Section~3.7]{GS}).  
\end{defn}

Again, the definition is essentially the same in the based context.

\begin{defn}\label{V*pseudotran}
Let $E$ and $F$ be $\sV_\bpt$-pseudofunctors $\squiggly{\cC}{\cD}$, where $\cC$ and $\cD$ are $\sV_\bpt$-$2$-categories.
Then a $\sV_\bpt$-pseudotransformation $\ze\colon E \Longrightarrow F$ is a $\sV$-pseudotransformation of the underlying $\sV$-pseudofunctors such that each $\ze_{b,c}$ is a $\sV_\bpt$-transformation, meaning that  its component at $0_{b,c}$ is the identity.
\end{defn}

As in \autoref{ReducedVCatpEnr}, we have the following characterization of $\sV_\bpt$-pseudofunctors.

\begin{prop}\label{ReducedVCatpEnrPseudo} 
If $\cC$ and $\cD$ have zero objects, then $\sV_{\bpt}$-pseudofunctors  \mbox{$\squiggly{ \cC }{ \cD}$} correspond bijectively to
 $\sV$-pseudofunctors $\squiggly{ \cC }{ \cD}$  that are \ourdefn{reduced}, in the sense that $F(\bzo_\cC)\cong\bzo_\cD$, and are such that $\varphi$ restricts to the identity on the subcategories $0_{b,c}\times \cC(a,b)$ and $\cC(b,c)\times 0_{a,b}$. 
 
Moreover, $\sV_\bpt$-pseudotransformations $\ze\colon E \Longrightarrow F$ correspond bijectively to $\sV$-pseudotransformations between the underlying $\sV$-pseudofunctors  
 (that is, there is no extra condition).
\end{prop}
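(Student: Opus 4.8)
The plan is to mimic the proof of the strict statement \autoref{ReducedVCatpEnr}, using throughout the dictionary between zero objects and $\VCatp$-enrichment established in \autoref{ZeroObjEnrCatp}. Recall that, for a $\sV$-$2$-category with zero object, the basepoint of a hom $\sV$-category $\cC(b,c)$ is the zero $1$-cell $0_{b,c}$ of \eqref{eq:bptC}, which factors as a composite $b\rtarr\bzo_\cC\rtarr c$ through the zero object. Two elementary facts will be used repeatedly: $\cC(\bzo_\cC,c)\iso\ast\iso\cC(b,\bzo_\cC)$, and any hom $\sV$-category with a zero object as source or target is isomorphic to $\ast$ and so has a unique $1$-cell.

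First I would treat the passage from a $\sV_\bpt$-pseudofunctor $F$ to a $\sV$-pseudofunctor: forgetting the based structure on homs leaves a $\sV$-pseudofunctor, and it is \emph{reduced} by the argument used for \autoref{ReducedVCatpEnr}. Taking $c=\bzo_\cC$ in the requirement that $F\colon\cC(\bzo_\cC,\bzo_\cC)\rtarr\cD(F\bzo_\cC,F\bzo_\cC)$ be based, and using that $F$ is normal so preserves identity $1$-cells (the unit condition \eqref{unitpseudo}), one gets $\id_{F\bzo_\cC}=F(0_{\bzo_\cC,\bzo_\cC})=0_{F\bzo_\cC,F\bzo_\cC}$; since $0_{F\bzo_\cC,F\bzo_\cC}$ factors through $\bzo_\cD$ this displays $F\bzo_\cC$ as a retract of $\bzo_\cD$, and the complementary composite lies in $\cD(\bzo_\cD,\bzo_\cD)\iso\ast$, forcing it to be the identity, so $F\bzo_\cC\iso\bzo_\cD$. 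The condition on $\varphi$ in \autoref{ReducedVCatpEnrPseudo} is then just the unwinding of \autoref{V*pseudo}: demanding that $\varphi$ descend to a $\sV_\bpt$-transformation of $\sV_\bpt$-functors with source $\cC(b,c)\sma\cC(a,b)$ is, by the description of based transformations, precisely the statement that $\varphi$ restricts to the identity on $0_{b,c}\times\cC(a,b)$ and on $\cC(b,c)\times 0_{a,b}$.

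Conversely, given a reduced $\sV$-pseudofunctor $F$ whose $\varphi$ satisfies that normalization, I would verify two things. First, each hom-functor $F\colon\cC(b,c)\rtarr\cD(Fb,Fc)$ is based, i.e. $F(0_{b,c})=0_{Fb,Fc}$: applying the normalization to the triple $(b,\bzo_\cC,c)$, the subcategory $0_{\bzo_\cC,c}\times\cC(b,\bzo_\cC)$ is all of $\cC(\bzo_\cC,c)\times\cC(b,\bzo_\cC)$ since $\cC(\bzo_\cC,c)\iso\ast$, so $F\com\circ$ and $\circ\com(F\times F)$ agree there; evaluating on the unique $1$-cells and using that reducedness makes $\cD(F\bzo_\cC,Fc)$ and $\cD(Fb,F\bzo_\cC)$ isomorphic to $\ast$, this computes $F(0_{b,c})$ as the composite $Fb\rtarr F\bzo_\cC\rtarr Fc$ of unique $1$-cells through the zero object $F\bzo_\cC$, which is $0_{Fb,Fc}$. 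Second, $\varphi$ descends to a based transformation on each $\cC(b,c)\sma\cC(a,b)$: its two source functors in \eqref{varphi} already factor through the smash, since $\circ$ sends the wedge summands to $0_{a,c}$ by \autoref{ZeroObjEnrCatp} and $F$ then preserves $0_{a,c}$ by the first point, and what remains is exactly the normalization hypothesis. Since these passages do not alter the function on objects, the hom-functors, or $\varphi$, they are mutually inverse. For the last clause, a $\sV_\bpt$-pseudotransformation and a $\sV$-pseudotransformation between the underlying $\sV$-pseudofunctors $E,F$ carry the same data, so one need only check that the constraint of \autoref{V*pseudotran} — that the component of $\ze_{b,c}$ at $0_{b,c}$ be the identity — is automatic: the coherence constraint of \autoref{Vpseudotran} computes that component from the components of $\ze_{\bzo_\cC,c}$ and $\ze_{b,\bzo_\cC}$ (which lie in hom $\sV$-categories isomorphic to $\ast$ by reducedness of $E$ and $F$, hence are identities) pasted with components of $\varphi^E$ and $\varphi^F$ that are identities by the normalization already established, so the pasting is the identity.

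I expect the one genuinely delicate point to be the passage from ``isomorphic'' to ``equal'' in the converse direction: a reduced $\sV$-pseudofunctor a priori only gives $F\bzo_\cC\iso\bzo_\cD$ and makes $F$ commute with composition only up to the isomorphism $\varphi$, so $F(0_{b,c})$ is merely isomorphic to $0_{Fb,Fc}$ in general. What must be exploited carefully is that the $\varphi$-normalization, while vacuous on generic pairs of $1$-cells, becomes strict commutativity precisely along the degenerate strata $\cC(\bzo_\cC,c)\times\cC(b,\bzo_\cC)$ that define the basepoints, after which reducedness collapses the relevant target hom $\sV$-categories to $\ast$ and pins the values down on the nose. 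The same mechanism underlies the ``no extra condition'' clause for pseudotransformations, and getting the bookkeeping of the pseudotransformation coherence diagram right there is the other place where care is needed.
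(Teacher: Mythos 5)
Your proof is correct, and it is essentially the verification the paper intends but omits (the paper only points back to \autoref{ReducedVCatpEnr}, whose proof is left to the reader): the forward direction follows from normality plus basedness of $F$ on $\cC(\bzo,\bzo)$ giving $F\bzo_\cC\cong\bzo_\cD$ by the retract argument, the converse uses that the strictness of $\varphi$ on the strata through $\bzo_\cC$ together with reducedness pins $F(0_{b,c})=0_{Fb,Fc}$ on the nose, and the composition coherence of a pseudotransformation evaluated at the factorization of $0_{b,c}$ through $\bzo_\cC$ makes the basepoint condition on $\ze_{b,c}$ automatic. I see no gaps; your identification of the "isomorphic versus equal" point as the delicate step is exactly right, and your argument handles it correctly.
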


As in any enriched setting, we have the following construction. 

\begin{defn}\label{defn:enrsmash}
If $\cC$ and $\cD$ are $\sV$-$2$-categories, their product $\sV$-$2$-category $\cC\times \cD$ has objects  $\ob(\cC)\times \ob(\cD)$ and morphism $\sV$-categories
\[  (\cC\times \cD)((c,d), (c',d')) = \cC(c,c')\times \cD(d,d'), \]
with the evident composition.  If $\cC$ and $\cD$ 
have zero objects, so does
 $\cC\times \cD$, with zero object $(\bzo,\bzo)$. 
 
Similarly, if $\cC$ and $\cD$ are $\sV_\bpt$-2-categories, we have a $\sV_*$-$2$-category 
$\cC\sma \cD$ with objects $\ob(\cC)\times \ob(\cD)$ and with
\[  (\cC\sma\cD)((c,d), (c',d')) = \cC(c,c')\sma \cD(d,d') \]
as the $\sV_*$-category of morphisms. 
\end{defn}
 
The following remark allows us to regard $2$-categories as $\sV$-$2$-categories and establishes a convenient context.

\begin{rem} \label{bUbV}  The underlying set functor $\bU\colon \sV\rtarr \mathbf{Set}$ specified by $\bU X = \sV(\ast,X)$ has left adjoint  $\bV\colon \bf{Set} \rtarr \sV$ specified by $\bV S =  \coprod_{s\in S} \ast$, the coproduct of copies of the terminal object $\ast$ indexed on the elements of the set $S$. Thus
\begin{equation}\label{trivadj1}
\mathbf{Set}(S,\bU X) \iso \sV(\bV S,X). 
\end{equation}
When $\sV$ is strongly complete \cite[Definition 6.2]{AddCat1}, a mild condition that holds in all relevant examples,
$\bV$ preserves finite limits.  We can apply this with $\sV$ replaced by  $\VCat$, so that
\begin{equation}\label{trivadj2}
\mathbf{\Cat}(\cB,\bU \aX) \iso  \VCat(\bV \cB,\aX).
\end{equation}
for a category $\cB$ and a $\sV$-category $\aX$.  We regard categories $\cB$ as discrete $2$-categories, meaning that they have only identity $2$-cells.  Applying $\bV$ to the hom categories of $2$-categories allows us to change their enrichment from $\Cat$ to $\VCat$.  Thus we may regard categories $\cB$ as
$\sV$-$2$-categories, and we agree to do so without change of notation. We then call $\cB$ a \ourdefn{discrete $\sV$-$2$-category}.   In our examples, $\cB$ has a zero object and therefore gives rise to a $\sV_\bpt$-2-category.
\end{rem}

 \subsection{Algebras and pseudoalgebras 
over  $\sV_\bpt$-$2$-categories}\label{sec:Cpseudo}

Classically, algebras over a category $\cD$ enriched in based spaces
 can be defined as enriched functors $\aX\colon \cD\to \sU_\bpt$. 
 The ``action" of the category $\cD$ can be seen by translating this enriched functor into adjoint form as the data of a family of compatible continuous maps $\cD(a,b)\sma \aX(a)\to  \aX(b)$ indexed by objects $a,b$ of $\cD$. Similarly, algebras over an operad $\oO$ in $\sU$ can be  defined either via action maps $\oO(j)\times X^j\to X$ or equivalently via a map of operads from $\oO$ to the endomorphism operad $\mathrm{End}(X)$.  

\begin{rem}
This second interpretation can always be given when the ambient category is closed, 
as we have assumed.
 Indeed, if $\sV$ is closed, it has an internal hom object $\ul{\sV}(V,W)$ for each pair of objects and an adjunction 
\[   \sV(V\times W, Z) \iso \sV(V, \ul{\sV}(W,Z)). \]
Each object $V$ then has an endomorphism operad $\mathrm{End}(V)$ with  
\[\mathrm{End}(V)(j) = \ul{\sV}(V^j,V).\]
 An action of an operad $\oO$ in $\sV$ on an object $V\in \sV$ is then the same as a map of operads $\oO\rtarr \mathrm{End}(V)$.  This gives an adjoint specification of $\oO$-algebras.
 \end{rem}

As $\VCat$ is cartesian closed (see \autoref{VCatclosed}),
we will write $\CatV$ for the resulting $\sV$-2-category. 
Similarly, $\VCatp$ is closed by \autoref{VCatpclosed}, and we will write $\CatVp$ for the resulting \Vptwo.

If $\cC$ is a $\sV$-$2$-category, it is natural to define a $\cC$-algebra $\aX$ to be a $\sV$-$2$-functor  
$\aX\colon \cC\rtarr \CatV$.  If $\cC$ has a zero object,
we say that $\aX$ is reduced if $\aX(\bzo)= \ast$.  We have the following enhancement of Propositions
\ref{ReducedVCatpEnr} and \ref{ReducedVCatpEnrPseudo}.

\begin{prop}\label{ReducedAlg}
Let $\cC$  be a $\sV$-2-category with zero object. Then
\begin{enumerate}
\item {\Vptwofun}s $\cC\rtarr \CatVp$ correspond bijectively to reduced  {\Vtwofun}s 
$\cC\rtarr \CatV$.
\item $\sV_{\bpt}$-pseudofunctors  \mbox{$\squiggly{ \cC }{ \CatVp}$} correspond bijectively to reduced \linebreak
 $\sV$-pseudofunctors $\squiggly{ \cC }{ \CatV}$  such that $\varphi$ restricts to the identity on the subcategories $0_{b,c}\times \cC(a,b)$ and $\cC(b,c)\times 0_{a,b}$. 
\end{enumerate}
\end{prop}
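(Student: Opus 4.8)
The plan is to prove part (2) in full and to deduce part (1) as the special case in which every coherence isomorphism $\varphi$ is an identity, since a normal $\sV$-pseudofunctor with trivial $\varphi$ is exactly a {\Vtwofun} (and then the restriction condition on $\varphi$ holds vacuously), and likewise on the based side. Two observations carry most of the load. First, forgetting basepoints defines a {\Vtwofun} $U\colon\CatVp\rtarr\CatV$ whose hom-$\sV$-functors $\CatVp(\cA,\cB)\rtarr\CatV(\cA,\cB)$ are equalizer inclusions — selecting the based $\sV$-functors and the based $\sV$-transformations among all $\sV$-functors and $\sV$-transformations $\cA\rtarr\cB$ — and hence are monomorphisms in $\VCat$. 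Second, a zero object $\bzo$ of $\cC$ gives $\cC(\bzo,c)\iso\ast\iso\cC(c,\bzo)$ for every $c$, so there are unique $1$-cells $u_c\colon\bzo\rtarr c$ and $v_c\colon c\rtarr\bzo$, and, unwinding \eqref{eq:bptC}, the basepoint $0_{c,d}$ of $\cC(c,d)$ is the composite $u_d\com v_c$. Now, given a $\sV_\bpt$-pseudofunctor $\aY\colon\cC\rtarr\CatVp$, I would set $\aX:=U\aY$; the only non-formal point is that $\aX$ is automatically reduced. Indeed, since $\cC(\bzo,\bzo)\iso\ast$, the $\sV_\bpt$-functor $\aY_{\bzo,\bzo}$ is constant at the basepoint of $\CatVp(\aY(\bzo),\aY(\bzo))$, namely at the constant-at-basepoint functor $\aY(\bzo)\rtarr\aY(\bzo)$, while normality of $\aY$ forces $\aY_{\bzo,\bzo}$ to send $\id_\bzo$ to $\id_{\aY(\bzo)}$; hence $\id_{\aY(\bzo)}$ is that constant functor, which forces $\aY(\bzo)\iso\ast$ and so $\aX(\bzo)=\ast$. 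That $\varphi^\aX$ restricts to the identity on $0_{b,c}\times\cC(a,b)$ and $\cC(b,c)\times 0_{a,b}$ is then precisely the content of \autoref{V*pseudo}.

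Conversely, given a reduced $\sV$-pseudofunctor $\aX$ whose $\varphi^\aX$ has the stated restriction property, I would equip each $\aX(c)$ with the basepoint object $p_c:=\aX(u_c)\colon\ast\iso\aX(\bzo)\rtarr\aX(c)$ and set $\aY(c):=(\aX(c),p_c)$. The crux is that each $\aX_{c,d}$ factors through the equalizer inclusion $\CatVp(\aY(c),\aY(d))\rtarr\CatV(\aX(c),\aX(d))$, i.e.\ that $\aX$ sends each $1$-cell of $\cC(c,d)$ to a based functor and each $2$-cell to a based $\sV$-transformation. The key input is that, because $\cC(\bzo,c)\iso\ast$ and $\cC(\bzo,d)\iso\ast$, the restriction hypothesis forces $\varphi^\aX$ to be the identity on \emph{all} of $\cC(c,d)\times\cC(\bzo,c)$ and of $\cC(\bzo,d)\times\cC(c,\bzo)$; whiskering a $1$- or $2$-cell of $\cC(c,d)$ against $u_c$ and using naturality together with the associativity axiom for $\varphi^\aX$ then gives the based-ness claims (the same axiom, applied against a basepoint $1$-cell, also shows each component $\varphi^\aX_{f,g}$ is a based transformation, which is needed for $\varphi^\aX$ to corestrict), and since $0_{c,d}=u_d\com v_c$ we get $\aX(0_{c,d})=\aX(u_d)\com\aX(v_c)=p_d\com\aX(v_c)$, the constant functor at $p_d$, so that $\aY_{c,d}$ is a $\sV_\bpt$-functor. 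The induced descent of $\varphi^\aX$ to $\cC(b,c)\sma\cC(a,b)$ is again exactly the restriction hypothesis, and the remaining normality, unit, and associativity axioms for $\aY$ are inherited from those for $\aX$ because $U$ is monic on hom-objects.

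It then remains to check that $\aX\mapsto\aY$ and $\aY\mapsto U\aY$ are mutually inverse: $U\aY=\aX$, with $\varphi^{U\aY}=\varphi^\aX$, holds on the nose since $U$ undoes the corestriction, while the basepoint of $\aY(c)$ is forced to equal $\aX(u_c)=p_c$ (because $\aY_{\bzo,c}$ is a $\sV_\bpt$-functor out of $\cC(\bzo,c)\iso\ast$ and so picks out the basepoint), so rebuilding recovers $\aY$, uniqueness of the lift along the monomorphism $U$ taking care of the hom-data. Part (1) is then the restriction of this bijection to the pseudofunctors all of whose $\varphi$ are identities, since $\varphi^\aY$ is the corestriction of $\varphi^\aX$ and conversely $\varphi^\aX=U\varphi^\aY$. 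I expect the main obstacle to be the coherence bookkeeping in the reverse passage — checking that the restriction hypothesis on $\varphi$ collapses exactly the right cells to identities — together with the small but essential observation that reducedness is \emph{automatic} on the $\CatVp$ side, which is what promotes the correspondence from an injection to a bijection.
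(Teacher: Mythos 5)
Your argument is correct and follows essentially the same route as the paper's (much terser) proof: the basepoint of $\aX(c)$ is manufactured from the unique $1$-cell $\bzo\rtarr c$ (the adjoint of $\cC(\bzo,c)\rtarr\CatV(\aX(\bzo),\aX(c))$), the restriction hypothesis on $\varphi$ — amplified by $\cC(\bzo,c)\cong\ast$ so that the basepoint subcategories fill out whole hom-categories — supplies basedness of the hom-level data, and the inverse simply forgets basepoints, with reducedness automatic (a point the paper delegates to \autoref{ReducedVCatpEnr}, which your normality-forces-$\aY(\bzo)\cong\ast$ argument in effect reproves). Proving (2) first and obtaining (1) as the identity-constraint special case, rather than the paper's order, is an immaterial difference.
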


\begin{proof}
 To prove (1), let $\aX\colon \cC \rtarr \CatV$ be a reduced {\Vtwofun}. Then for $c\in \cC$, the adjoint of the map
 \[\cC(\bzo,c) \rtarr \CatV(\aX(\bzo),\aX(c))\] 
 endows $\aX(c)$ with a basepoint:
 \[\ast \cong \cC(\bzo,c)\times \aX(\bzo)\rtarr \aX(c).\]
  Functoriality can then be used to check that this indeed gives rise to a {\Vptwofun} $\cC\rtarr \CatVp$. For the converse we apply \autoref{ReducedVCatpEnr} with $\cD=\VCatp$, and then forget the basepoints to get a map with target $\VCat$. 
  
The argument for (2) is similar, with the caveat that the condition on $\varphi$ is necessary to ensure that we do get a map with target $\VCatp$.
\end{proof}

\begin{defn}\label{strict}  
Let $\cC$ be a $\sV_\ast$-2-category.
A (strict) $\cC$-algebra is a {\Vptwofun} $\aX\colon \cC\rtarr \CatVp$.
Unpacking the definition in adjoint form, this consists of a function that assigns a $\sV_\ast$-category $\aX(c)$ to each object $c$ of $\cC$, 
together with action $\sV_\bpt$-functors 
\[  \tha\colon \cC(c,d) \sma \aX(c)\rtarr \aX(d) \] 
such that the unit and composition diagrams of $\sV_\bpt$-functors
\[ \xymatrix{ (\ast \amalg \ast)\sma \aX(c) \ar[d]_{\id_c\sma \id}  \ar[r]^-{\iso} &  X \\
\cC(c,c)\sma \aX(c) \ar[ur]_{\tha} & } 
\ \ \ \text{and} \ \ \ 
\xymatrix{
\cC(d,e)\sma \cC( c,d) \sma \aX(c)  
\ar[r]^-{\id\sma \tha} \ar[d]_{\circ\sma \id} 
&  \cC( d, e)\sma \aX( d) \ar[d]^{\tha} \\
\cC( c, e) \sma \aX(c)  \ar[r]_-{\tha} & \aX(e) \\}
\]
commute. 
\end{defn}

\begin{rem}
The unit diagram can equally well be expressed as a diagram of $\sV$-functors, with source  $\ast \times \aX(c)$. 
\end{rem}

We do not discuss general $\cC$-pseudoalgebras here, leaving such consideration for \cite{AddCat2}. 
However, we will need a version of pseudoalgebras in the special case of $\cC=\sF_G$ starting in \autoref{sec:section}. We define these now. 

\begin{defn}\label{wCAlg}
Let $\cC$ be a $\sV_\ast$-2-category. A \ourdefn{weak $\cC$-pseudoalgebra} $\aX$  is a $\sV_{\bpt}$-pseudofunctor  \mbox{$\squiggly{ \cC }{ \CatVp}$. Unpacking the definition in adjoint form, this}
consists of a function that assigns a $\sV_\bpt$-category $\aX(c)$ to each $c \in \cC$, together with action
$\sV_\bpt$-functors 
\[ \tha: \cC(c,d) \sma \aX(c) \rtarr \aX(d)\]
and invertible $\sV_\bpt$-transformations
\[ \xymatrix{
\cC(d,e) \sma \cC(c,d) \sma \aX(c) \ar[r]^-{\id \sma \tha} \ar[d]_{\circ \sma\id} \drtwocell<\omit>{<0>\quad  \  \varphi} &\cC(d,e) \sma \aX(d) \ar[d]^\tha \\
\cC(c,e) \sma \aX(c) \ar[r]_-\tha & \aX(e)
}\]
which are the identity when either morphism is the identity 1-cell. Moreover, the $\varphi$ are required to be coherent as in \autoref{Vpseudo}.
\end{defn}

\begin{rem} The strictness with respect to basepoints encoded in the definitions above expresses the intuition that additive zero objects should behave strictly in multiplicative structures.  The strictness with respect to identity arrows expresses the intuition that identity operations should be the identity.
\end{rem}

\begin{rem}\label{NoPseudoAlgs}
Though it is not the choice made in this article, it is also possible to consider less general $\cC$-pseudoalgebras. As $\VCatp$ is closed, these can be described as $\sV_\bpt$-pseudofunctors $\squiggly{\cC}{\CatVp}$ that are strictly functorial
on a  subcategory $\cB \subset \cC$.
Different choices of $\cB$ lead to several possible versions of pseudoalgebras. While we will only deal with the weakest possible variant of pseudoalgebras here, the different types of pseudomorphisms described in the next definition will play an important role. We will return to consider pseudoalgebras in more detail in \cite{AddCat2}.
\end{rem}

\begin{defn}\label{Cpseudo}  
Let $\cC$ be a $\sV_\ast$-2-category and  $\cB\subset \cC$ a subcategory, and let
 $\aX$ and $\aY$ be $\cC$-algebras (or weak $\cC$-pseudo\-algebras).  
 A \ourdefn{$(\cC,\cB)$-pseudomorphism}, 
$F\colon \squiggly{\aX}{\aY}$ is a $\sV_\bpt$-pseudo\-transformation between $\sV_\bpt$-2-functors (or $\sV_\bpt$-pseudo\-functors) that is strict when restricted to $\cB$. Unpacking this definition in adjoint form, this
consists of  $\sV_\bpt$-functors 
\[F(c)\colon \aX(c) \rtarr \aY(c)\] 
together with invertible $\sV_\bpt$-transformations $\de$ as in the diagram
\begin{equation}\label{delta1}
 \xymatrix{
\cC(c, d) \sma \aX(c) \ar[r]^-{\id\sma F(c)} \ar[d]_{\tha}   \drtwocell<\omit>{ \hspace{.5em} \de}  &  \cC(c,d) \sma \aY(c) \ar[d]^{\tha} \\
\aX(d) \ar[r]_-{F(d)} & \aY(d)\\
} 
\end{equation}
that are the identity transformation on the subcategory $\cB(c,d) \sma \aX(c)$.
The $\de$ must satisfy the relevant equalities of pasting diagrams relating to composition in $\cC$. 
 If $\de$ is always the identity, then $F$ is a (strict) \ourdefn{$\cC$-map} .  
Thus the condition on $\cB$ says that $F$ restricts to a strict $\cB$-algebra map.
 We refer to the extreme case, in which $\cB$ consists only of identity morphisms and basepoint morphisms,  as \ourdefn{weak $\cC$-pseudomorphisms}.
 \end{defn}
 
 The following notion will also be needed later.

\begin{defn}\label{defn:leveleqv}
A $\cC$-pseudomorphism $\squiggly{F \colon \aX}{\aY}$ of $\cC$-pseudoalgebras is said to be a \ourdefn{level equivalence} if each component $F(c)\colon \aX(c) \rtarr \aY(c)$ is an (internal) equivalence of $\sV_\bpt$-categories.
\end{defn} 

\begin{defn}\label{Ctran} A \ourdefn{$\cC$-transformation} $\om$ between $\cC$-pseudomorphisms $E$ and $F$ 
consists of $\sV_{\bpt}$-transformations $\om_c \colon E(c) \Longrightarrow F(c)$ that 
are suitably compatible with the $\sV_\bpt$-transformations $\de^E$ and $\de^F$, as in \cite[Section~3.10]{GS}.
We do {\em not} require the $\om_c$ to be isomorphisms. 
\end{defn}

This definition is just a translation of the notion of a $\sV_\bpt$-modification between $\sV_\bpt$-pseudo\-transformations. We will only use the explicit description just given. 

These notions assemble to form various 2-categories of interest to us.

\begin{notn}\label{CatCAlg}
For a given $\sV_\bpt$-2-category $\cC$ with a subcategory $\cB\subset \cC$, we define the following 2-categories, with $\cC$-transformations as the 2-cells in all cases. 
\begin{itemize}
\item $\CAlg$ of $\cC$-algebras and strict $\cC$-maps;
\item $\CBAlg$ of $\cC$-algebras and  $(\cC,\cB)$-pseudo\-morphisms;
\item $\CPsAlg$ of weak $\cC$-pseudoalgebras and  weak $\cC$-pseudo\-morphisms.
\end{itemize} 
\end{notn}

We have inclusions
\[ \CAlg \subset \CBAlg \subset \CPsAlg.\]

This article largely focuses on the 2-categories $\CBAlg$
in the case that $\cC$ is a category of operators. As noted in Notations \ref{defn:DAlg} and \ref{defn:DGAlg}, we will then fix the relevant $\cB$ and drop the 
subscript $\cB$ from the notation.
However, pulling back along the section $\oursectionG$ in \autoref{sec:section} will land in a category of type $\CPsAlg$, and the strictification theorem in \autoref{sec:PowerLack} will land in a category of type $\CAlg$.

We end this subsection with two constructions on (pseudo)algebras.

\begin{notn}\label{notn:pullingback}
Given a $\sV_\bpt$-pseudofunctor $\squiggly{\xi\colon \cD}{\,\cC}$ and a $\cC$-pseudoalgebra $\aX$, we define the $\cD$-pseudoalgebra $\xi^* \aX$ as the composite
\[ 
\xymatrix{
\cD \ar@{~>}[r]^-{\xi} & \cC \ar@{~>}[r]^-{\aX} & \CatVp.
}
\]
This construction extends to $\cC$-pseudomorphisms and $\cC$-transformations, giving a 2-functor
\[\xi^* \colon \CPsAlg \rtarr \DPsAlg.\]
If $\xi$ is a strict $\sV_\bpt$-2-functor, this construction restricts to give a 2-functor
\[\xi^* \colon \CAlg \rtarr \cD\text{-}{\bf{Alg}}.\]
\end{notn}

\noindent The 2-functor $\xi^*$ preserves level equivalences of pseudoalgebras as defined in \autoref{defn:leveleqv}.

\begin{defn}\label{PairsPairs} Suppose given 
$\sV_\bpt$-2-categories
$\cC$ and $\cD$.  Given a $\cC$-pseudoalge\-bra $\aX$ and an 
$\cD$-pseudoalgebra  $\aY$, we define their \ourdefn{external smash product}  $\aX \bsma \aY$ to be the 
$\cC\sma\cD$-pseudoalgebra given by the composite
\[
\xymatrix{
\cC \esma \cD \ar@{~>}[r]^-{\aX \esma \aY} & \CatVp \esma \CatVp \ar[r]^-{\sma} & \CatVp.
}
\]
If $\aX$ and $\aY$ are strict algebras, so is $\aX \bsma \aY$.

This construction extends to pseudomorphisms and transformations.
\end{defn}

\subsection{Permutative structures on $\sV_\bpt$-$2$-categories}\label{sec:permutative}
In order to encode multiplicative structures on algebras, we use monoidal structures on $\sV_\bpt$-2-categories, as defined in this section. Even in the case when $\sV=\bf{Set}$, what we present here is not the most general definition of a symmetric monoidal structure on a 2-category (see \cite{ChengGurski, GurskiOsorno}). Here, we present a rather strict notion in which the monoidal product is allowed to be a pseudofunctor, but must strictly satisfy associativity and unitality; while the symmetry is allowed to be a pseudotransformation, it must satisfy the symmetry axiom strictly.

\begin{defn}\label{permv2cat}
 A \ourdefn{strict pseudo-monoidal $\sV_\bpt$-2-category}  consists of a $\sV_\bpt$-2-category $\cC$ together with an object $I$, and a $\sV_\bpt$-pseudofunctor
 \[ \squiggly{\smaD \colon \cC \esma \cC}{\cC}\]
 that is strictly associative and strictly unital with respect to $I$. A \ourdefn{pseudo-permutative $\sV_\bpt$-2-category} is a 
 strict pseudo-monoidal $\sV_\bpt$-2-category together with a $\sV_\bpt$-pseudo\-transformation
\[ \xymatrix{
\cC \esma \cC \ar@{~>}[dr]_{\smaD} \ar[rr]^{t}  &  & \cC\esma \cC \ar@{~>}[dl]^{\smaD}\\
& \cC  \utwocell<\omit>{<0>  \ta} & \\} \]
such that the following three axioms hold. 
\begin{enumerate}[(i)]
\item\label{ta_sym_axiom}
The following pasting diagram is equal to the identity of $\smaD$:
\[ \xymatrixcolsep{4pc}\xymatrix{
\cC \esma \cC \ar@{~>}[dr]_{\smaD} \ar[r]^{t}  & \cC \esma \cC \ar@{~>}[d]_{\smaD} \ar[r]^{t} & \cC\esma \cC \ar@{~>}[dl]^{\smaD}\\
& \cC.  \utwocell<\omit>{<-4>  \ta} \utwocell<\omit>{<4>  \ta}& \\} 
\]
\item\label{ta_hex_axiom}
The 2-cell
\[\xymatrix{
& \cC^{\esma 3} \ar[rr]^-{\id\esma t} &&  \cC^{\esma 3} \ar@{~>}[rd]^-{\smaD\esma\id}\\
\cC^{\esma 3} \ar[ru]^-{t\sma\id} \ar@{~>}[rr]^-{\id\esma\smaD} \ar@{~>}[rd]_-{\smaD\esma\id} && \cC^{\esma 2} \ar[rr]^-{t} \ar@{~>}[rd]_-{\smaD}&& \cC^{\esma 2} \ar@{~>}[ld]^-{\smaD}\\
& \cC^{\esma 2} \ar@{~>}[rr]_-{\smaD}&& \cC \utwocell<\omit>{<0>  \ta}
}
\]
is equal to the 2-cell
\[\xymatrix{
& \cC^{\esma 3} \ar[rr]^{\id\esma t} \ar@{~>}[dd]^-{\smaD\esma\id} \ar@{~>}[rd]_-{\id\esma\smaD} &&  \cC^{\esma 3} \ar@{~>}[rd]^-{\smaD\esma\id} \ar@{~>}[ld]^-{\id\esma\smaD}\\
\cC^{\esma 3} \ar[ru]^-{t\sma \id}  \ar@{~>}[rd]_-{\smaD\esma\id} && \cC^{\esma 2} \ar@{~>}[rd]^-{\smaD} \utwocell<\omit>{<0>  \id \esma\ta}&& \cC^{\esma 2} \ar@{~>}[ld]^-{\smaD}\\
& \cC^{\esma 2} \ar@{~>}[rr]_-{\smaD} \ultwocell<\omit>{<3>  \ta\esma \id \quad }&& \cC.
}
\]
The unlabeled regions in both diagrams commute, the quadrilaterals by the strict associativity of $\smaD$ and the pentagon by the naturality of $t$.
\end{enumerate}
If $\smaD$ is a strict $\sV_\bpt$-2-functor and $\ta$ is a strict $\sV_\bpt$-transformation, we say $(\cC,I,\smaD,\ta)$ is a \ourdefn{permutative $\sV_\bpt$-2-category}.
\end{defn}

\begin{rem}\label{likepermutative} Classically, a permutative category is a symmetric strict monoidal category, strict meaning that the product is strictly associative and unital.
The definition above is similar, just done in the context of the 2-category of $\sV_\bpt$-2-categories, $\sV_\bpt$-pseudofunctors, and $\sV_\bpt$-pseudotransformations. 
Thus ``strict pseudo-monoidal'' here means that $\smaD$ is a strictly associative and unital operation given by a $\sV$-pseudofunctor; it respects composition only up to coherent isomorphisms.
The standard coherence theorem for permutative categories still applies in this case:  for any permutation $\si\in \SI_k$, there exists a unique composite of instances of $\ta$ that fits in the diagram below.
\[
 \xymatrix{
\cC ^{\esma k} \ar@{~>}[dr]_{\smaD^k} \ar[rr]^{t_\si}  &  & \cC^{\esma k} \ar@{~>}[dl]^{\smaD^k}\\
& \cC  \utwocell<\omit>{<0>  \ta_\si} & } 
\]
Here $\smaD^k$ denotes the $k$-ary product induced by iterating $\smaD$, and the map $t_\si$ sends a $k$-tuple $(c_1,\dots,c_k)$ to $(c_{\si^{-1}(1)},\dots,c_{\si^{-1}(k)})$. The 1-cell component of $\ta_\si$ is the (unique) composite of instances of the 1-cell of $\ta$ that reorders 
\[c_1\smaD \cdots \smaD c_k \rtarr c_{\si^{-1}(1)} \smaD \cdots \smaD c_{\si^{-1}(k)}.\]
\end{rem}

\begin{defn}\label{permfunctor}
Let $(\cC,I,\smaD,\ta)$ and $(\cD,I',\smaD',\ta')$ be pseudo-permutative $\sV_\bpt$-2-categories. A \ourdefn{symmetric monoidal pseudofunctor}  $\squiggly{(\Psi,\mu)\colon \cC}{\cD}$ 
consists of a $\sV_\bpt$-pseudofunctor $\squiggly{\Psi\colon \cC}{\cD}$ such that $\Psi(I)=I'$, together with a $\sV_\bpt$-~pseudotransformation
\[ \xymatrix{
\cC \sma \cC \ar@{~>}[r]^{\Psi \sma \Psi} \ar@{~>}[d]_{\smaD} \drtwocell<\omit>{\mu}
& \cD \sma \cD \ar@{~>}[d]^{\smaD'} \\
\cC \ar@{~>}[r]_{\Psi} & \cD
}\]
such that the following axioms hold.
\begin{enumerate}[(i)]
\item $\mu$ is unital, meaning that its restrictions to $\{I\} \sma \cC$ and  $\cC \sma \{I\}$ are the identity transformation,
where $\{I\}\subset \cC$ denotes the discrete $\sV_\bpt$-2-category on the single object $I$.\footnote{The $\sV_\bpt$-object of morphisms in $\{ I \}$ is  $\bV\{0_I,\id_I\}\cong * \amalg *$. Thus, $\{ I\} \sma \cC \cong \cC$.} 
\item $\mu$ is associative, meaning that  
\[ \xymatrixcolsep{3.5pc}\xymatrix{
\cC \sma \cC \sma \cC \ar@{~>}[r]^{\Psi \sma \Psi \sma \Psi} \ar@{~>}[d]_{\smaD \sma \id} \drtwocell<\omit>{\quad \mu \sma \id}
& \cD \sma \cD \sma \cD \ar@{~>}[d]^{\smaD' \sma \id}\\
\cC \sma \cC \ar@{~>}[r]^{\Psi \sma \Psi} \ar@{~>}[d]_{\smaD} \drtwocell<\omit>{\mu}
& \cD \sma \cD \ar@{~>}[d]^{\smaD'} \\
\cC \ar@{~>}[r]_{\Psi} & \cD
}
\ \ \ \ 
\raisebox{-9ex}{=}
\ \ \ \ 
\xymatrixcolsep{3.5pc}\xymatrix{
\cC \sma \cC \sma \cC \ar@{~>}[r]^{\Psi \sma \Psi \sma \Psi} \ar@{~>}[d]_{\id \sma \smaD} \drtwocell<\omit>{\quad \id \sma \mu}
& \cD \sma \cD \sma \cD \ar@{~>}[d]^{\id \sma \smaD'}\\
\cC \sma \cC \ar@{~>}[r]^{\Psi \sma \Psi} \ar@{~>}[d]_{\smaD} \drtwocell<\omit>{\mu}
& \cD \sma \cD \ar@{~>}[d]^{\smaD'} \\
\cC \ar@{~>}[r]_{\Psi} & \cD.
}
\]
The vertical boundaries of these diagrams are equal because of the associativity of $\smaD$ and $\smaD'$.
\item The following equality of pasting diagrams holds:
\[ \xymatrix @R=1.8em{
\cC \sma \cC \ar@{~>}[r]^{\Psi\sma \Psi} \ar@{~>}[dd]_{\smaD} \ddrtwocell<\omit>{\mu}
& \cD \sma \cD \ar@{~>}[dd]^{\smaD'} \ar[dr]^{t} \ddtwocell<\omit>{<-5>\tau'} \\
 & & \cD \sma \cD \ar@{~>}[dl]^{\smaD'} \\
\cC \ar@{~>}[r]_{\Psi} & \cD & 
} 
\ \ \ \ 
\raisebox{-3.2em}{=}
\ \ \ \ 
\xymatrix @R=1.8em{
\cC \sma \cC \ar@{~>}[dd]_{\smaD} \ar[dr]^{t} \ddtwocell<\omit>{<-4>\tau} \ar@{~>}[r]^{\Psi \sma \Psi}  & \cD \sma \cD \ar[dr]^{t} & \\
 & \cC \sma \cC \ar@{~>}[dl]^{\smaD} \ar@{~>}[r]^{\Psi \sma \Psi} \dtwocell<\omit>{\mu} & \cD \sma \cD \ar@{~>}[dl]^{\smaD'} \\
 \cC \ar@{~>}[r]_{\Psi} & \cD. & 
}
\]
\end{enumerate}
If $\Psi$ is a strict $\sV_\bpt$-2-functor, we refer to $(\Psi,\mu)$ as a symmetric monoidal 2-functor.
\end{defn}

 \section{The multicategory of $\sD$-algebras and the multifunctor $\bR$}\label{sec:DAlg}

 Nonequivariantly, categories of operators were introduced on the space level in order to mediate the passage from algebras over an $E_\infty$-operad in spaces to special $\sF$-spaces when comparing the operadic and Segalic infinite loop space machines \cite{MT}. Algebras over a category of operators are a generalization of both $\sF$-spaces (aka $\Gamma$-spaces) and algebras over an $E_\infty$-operad. 
 In this section we discuss their categorical analogues. Equivariant categories of operators were studied in \cite{Sant, MMO}, and their categorical analogues will be introduced in \autoref{sec:DG}. 

We show that for a category of operators $\sD$ with pseudo-commutative structure, as defined in \autoref{sec:PseudocommCO}, there is a multicategory of algebras over $\sD$. For categories of operators coming from operads, the necessary structure arises from a pseudo-commutative structure on the operad, as in \autoref{pseudocom}. We make all of this precise in this section.

\subsection{Categories of operators over $\sF$}\label{Dalgs} 
The definitions in this subsection are categorical analogues of definitions in \cite{MT}. We give the definitions in the setting of \VtwoCats.

\begin{defn}\label{F} Recall that $\sF$ denotes the category of based sets $\bn =\{0,1,\dots,n\}$ with basepoint $0$ and let $\PI$ denote its subcategory of morphisms $\ph\colon \bm \rtarr \bn$ such that $|\ph^{-1}(j)| = 0$ or $1$ for $1\leq j\leq n$.  We often use the abbreviated notation  $\ph_j  = |\ph^{-1}(j)|$. We regard $\sF$ and $\PI$ as discrete $2$-categories, meaning that they have only identity $2$-cells.   Via \autoref{bUbV}, we then regard them as 
$\sV_*$-$2$-categories.
\end{defn}

\begin{defn}\label{GCO/F} 
A \ourdefn{$\VCat$-category of operators} $\sD$ over $\sF$, abbreviated 
$\VCat$-$\mathbf{CO}$ over $\sF$, is a  $\sV$-$2$-category whose objects are the based sets $\bn$ for $n\geq 0$ together with $\sV$-$2$-functors 
\[\xymatrix@1{ \Pi \ar[r]^-{\io} & \sD \ar[r]^-{\xi} & \sF\\} \] 
such that $\io$ and $\xi$ are the identity on objects and $\xi\com \io$ is the inclusion.  
A morphism 
$\nu\colon \sD\rtarr \sE$ of $\VCat$-$\mathbf{CO}$s over $\sF$ is a $\sV$-2-functor  over $\sF$ and under $\PI$.
\end{defn} 

\begin{defn}\label{reducedGCO/F}
A $\VCat$-$\mathbf{CO}$ $\sD$ over $\sF$ is \ourdefn{reduced} if $\bzo$ is a zero object, and  we then say that $\sD$
is a \ourdefn{$\VCatp$-category of operators} over $\sF$. We shall restrict attention to $\VCatp$-categories of operators over $\sF$.
\end{defn}

\begin{rem}\label{reducedCO/Frem}  By Propositions \ref{ZeroObjEnrCatp} and \ref{ReducedVCatpEnr}, if  $\sD$ is a $\VCatp$-$\mathbf{CO}$ over $\sF$, then 
$\sD$ is a $\sV_\ast$-$2$-category and $\io$ and $\xi$ are  $\sV_\ast$-$2$-functors; that is, $\sD$ is a $\sV_\ast$-$2$-category  over $\sF$ and under $\Pi$.    
A morphism $\sD\rtarr \sE$ of reduced $\VCat$-$\mathbf{CO}$s over $\sF$ is necessarily reduced since it must send $\bzo$ to $\bzo$; thus it is a $\sV_\ast$-2-functor over $\sF$ and under $\PI$.
\end{rem}

Let $\oO$ be an operad in $\VCat$. We can associate to it a category of operators $\sD=\sD(\oO)$ over $\sF$  by letting
\[ \sD(\bm,\bn) = \coprod_{\ph\in \sF(\bm,\bn)}\ \prod_{1\leq j\leq n} \oO( \ph_j). \]
\noindent 
Composition is induced from the structural maps $\ga$ of $\oO$. To write formulas instead of diagrams, we use elementwise  notation, writing $c_i\in \oO(\phi_j)$ for objects and morphisms in $\oO(\phi_j)$.  For $(\ph,c_1,\dots, c_n)\colon \bm\rtarr \bn$ and $(\ps,d_1,\dots, d_p)\colon \bn\rtarr \bp$, define
\begin{equation}\label{Dcomposition} 
(\ps,d_1,\dots, d_p)\com (\ph,c_1,\dots,c_n) = \Big(\ps\com \ph, 
\prod_{1\leq j\leq p}\ga(d_j;\prod_{\ps(i) = j} c_i)\rho_j(\ps,\ph)\Big). 
\end{equation}
 The $c_i$ with $\ps(i) =j$  are ordered by the natural order on their indices $i$,
and $\rho_j(\ps, \ph)$ is that permutation of $|(\ps\com\ph)^{-1}(j)|$ letters which converts 
the natural ordering of $(\ps\com\ph)^{-1}(j)$ as a subset of $\{1,\dots,m\}$ to
its ordering obtained by regarding it as $\coprod_{\ps(i)=j}\ph^{-1}(i)$, so ordered
that elements of $\ph^{-1}(i)$ precede elements of $\ph^{-1}(i')$ if $i< i'$ and 
each $\ph^{-1}(i)$ has its natural ordering as a subset of $\{1,\dots,m\}$.  When it is clear which $\phi$ and  $\psi$  are being composed, we abbreviate the notation for the permutation $\rho_j(\ps, \ph)$ to $\rho_j$.

\begin{prop}\cite[Construction 4.1]{MT}\label{DODefn} The above specification makes $\sD(\oO)$ into a category of operators over $\sF$, and it is reduced if $\oO$ is reduced.
\end{prop}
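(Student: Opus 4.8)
The plan is to follow \cite[Construction~4.1]{MT}, recast in the enriched setting. First I would note that, because each $\oO(n)$ is a $\sV$-category and the operadic structure maps $\ga$ together with the $\SI_n$-actions are $\sV$-functors, the morphism object $\sD(\oO)(\bm,\bn)=\coprod_{\ph}\prod_{1\le j\le n}\oO(\ph_j)$ is automatically a $\sV$-category ($\VCat$ being bicomplete and the coproduct finite), and the composition law \autoref{Dcomposition}, being assembled from $\ga$ and permutation actions, is automatically a $\sV$-functor. So the enrichment and the $2$-categorical structure come for free, and the content of the proposition reduces to three things: (a) the composition \autoref{Dcomposition} is associative and unital; (b) there are comparison $\sV$-$2$-functors $\io$ and $\xi$ with the properties required in \autoref{GCO/F}; and (c) $\sD(\oO)$ is reduced when $\oO$ is.

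The routine parts are (b), (c), and unitality. I would take the identity $1$-cell of $\bn$ to be $(\id_{\bn};\oid,\dots,\oid)$ with each $\oid\in\oO(1)$ and check the two unit laws directly: precomposing $(\ps;d_1,\dots,d_p)$ with it uses only the operad identity $\ga(d_j;\oid^{\ps_j})=d_j$ together with the fact that the reindexing permutation $\rho_j(\ps,\id)$ is trivial, while postcomposing $(\ph;c_1,\dots,c_n)$ with it uses $\ga(\oid;c_i)=c_i$ and $\rho_j(\id,\ph)=\id$. For the comparison functors, $\xi$ is defined on morphisms by $(\ph;c_1,\dots,c_n)\mapsto\ph$; it is the identity on objects, is visibly a $\sV$-$2$-functor, and covers composition in $\sF$ by inspection of \autoref{Dcomposition}. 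Since $\oO$ is reduced, $\oO(0)=\bpt$, so $\io\colon\PI\rtarr\sD(\oO)$ may be defined by sending $\ph\in\PI(\bm,\bn)$ to $(\ph;u_1,\dots,u_n)$ with $u_j=\oid$ when $\ph_j=1$ and $u_j$ the unique object of $\bpt$ when $\ph_j=0$; composition-preservation follows once more from the operad unit axioms, and $\xi\com\io$ is the inclusion $\PI\into\sF$ by construction. Reducedness is immediate: when $\oO(0)=\bpt$, the unique map $\bzo\rtarr\bn$ has empty fibers over $1,\dots,n$, so $\sD(\oO)(\bzo,\bn)=\prod_{1\le j\le n}\oO(0)=\bpt$, and $\sD(\oO)(\bm,\bzo)=\bpt$ as well, the relevant product being empty; hence $\bzo$ is a zero object and \autoref{reducedGCO/F} applies.

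The main obstacle is the associativity of \autoref{Dcomposition}. Given composable morphisms $\bm\xrightarrow{(\ph;c)}\bn\xrightarrow{(\ps;d)}\bp\xrightarrow{(\chi;e)}\bq$, the underlying maps compose associatively in $\sF$, so I would fix a target element $\ell\in\{1,\dots,q\}$ and compare the operadic entry in slot $\ell$ produced by the two bracketings. Each bracketing yields an iterated application of $\ga$ to $e_\ell$, the $d_j$ with $\chi(j)=\ell$, and the $c_i$ with $\chi\ps(i)=\ell$, but with different parenthesizations and different intervening permutations. I would first reconcile the parenthesizations using the general associativity pentagon for $\ga$ from the definition of an operad --- the same diagram displayed in the proof of \fullref{intrinsicmon} --- and then use the $\SI_n$-equivariance formulas for $\oO$ to absorb the discrepancy between $\rho_\ell(\chi,\ps)$ together with the permutations $\rho_j(\ps,\ph)$ appearing on one side and the single permutation $\rho_\ell(\chi,\ps\ph)$ appearing on the other. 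The genuinely delicate step, which I expect to be the crux, is the purely combinatorial identity that makes this match: that blocking $\rho_\ell(\chi,\ps)$ along the fibers of $\ph$ and inserting the appropriate $\rho_j(\ps,\ph)$ recovers $\rho_\ell(\chi,\ps\ph)$ up to exactly the reshuffle that equivariance of $\ga$ accounts for. This amounts to carefully tracking the induced total orderings on the relevant finite subsets of $\{1,\dots,m\}$; it is the bookkeeping carried out in \cite{MT}, which I would reproduce in the enriched context since nothing in it uses more than the operad axioms.
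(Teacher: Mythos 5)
Your proposal is correct and takes essentially the same route as the paper: the paper's proof consists only of specifying $\xi$ by $(\ph,c_1,\dots,c_n)\mapsto\ph$ and $\io$ by inserting $\oid\in\oO(1)$ or $\ast\in\oO(0)$ according to whether $\ph_j=1$ or $0$, leaving unitality, associativity, and reducedness to the cited \cite[Construction 4.1]{MT}. You define the same functors and additionally sketch those verifications (the unit laws, the zero object, and the associativity via the operad associativity/equivariance axioms), deferring the combinatorial bookkeeping with the permutations $\rho$ to \cite{MT} exactly as the paper does.
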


\begin{proof}
The map $\xi\colon \sD \rtarr \sF$
sends $(\ph,c_1,\dots, c_n)$ to $\ph$.   
Recall that any morphism $\phi$ in $\PI$ satisfies $\phi_j \leq 1$ for $j > 0$.
The inclusion $\io\colon \PI\rtarr \sD$ sends 
$\ph\colon \bm\rtarr \bn$ to $(\ph, c_1,\dots, c_n)$, where $c_j = \oid \in \oO(1)$ if $\ph_j= 1$ and
$c_j = \ast\in \oO(0)$ if $\ph_j= 0$. 
\end{proof}

\subsection{Pseudo-commutative categories of operators over $\sF$}\label{sec:PseudocommCO}

In analogy with our definition of pseudo-commutativity of an operad, we define a compatible notion of
pseudo-commutativity of a category of operators $\sD$ over $\sF$.  
The categories $\PI$ and $\sF$ are permutative 
under the smash product of finite based sets, as we now recall. 
On objects, $\bm\sma \bp$ is defined to be $\bm\bp$. Given $\phi\in \sF(\bm,\bn)$ and $\psi\in \sF(\bp,\bq)$ 
their smash product $\phi \sma \psi \in \sF(\bm\bp,\bn\bq)$ is defined, in parallel to \autoref{otimes}, as the restriction of the map
\[  \phi \times \psi \colon \bm\times \bp \rtarr \bn\times \bq \]
along the (based) inclusion $\bm\bp \into \bm \times \bp$ that is 
given by \autoref{lexico} away from the basepoint.
The symmetry 
isomorphisms $\ta$ are given by 
the permutations $\tau_{m,p}$ of \autoref{tau} which reorder the sets  $\bm \bp$ 
from lexicographic to reversed lexicographic ordering.
The inclusion of $\SI$ in $\sF$ identifies $\spair$ with $\sma$. We will continue to use the symbol $\spair$ for emphasis when dealing with permutations.

Recall the notion of pseudo-permutative $\sV_\bpt$-2-category from \autoref{permv2cat}.

\begin{defn}\label{pairprod}
A \ourdefn{pseudo-commutative structure} 
 on $\sD$ is a pseudo-permutative structure $(\sD,\mathbf{1},\smaD,\ta)$ such that
\begin{enumerate}
 \item\label{smaDrestricts} $\smaD$ restricts  to $\sma$ on 
$\PI\sma \PI$ and projects to $\sma$ on $\sF$ 
(in the sense that $\xi \circ \smaD = \sma \circ(\xi\sma \xi)$); 
\item\label{PPDDcond} $\smaD$ restricts to a strict $\sV_\bpt$-2-functor on $\PI\esma\sD$ and $\sD\esma\PI$;
\item\label{tarestricts} $\ta$ restricts to the symmetry on $\PI$ given in \autoref{tau}.
\end{enumerate}
\end{defn}

We identify the pieces of this definition explicitly.  First note that condition \eqref{smaDrestricts}
implies in particular that $\smaD = \sma$ on objects. 
The fact that $\smaD$ is a $\sV_*$-pseudofunctor means that there is a collection of invertible  $\sV_{\bpt}$-transformations
\begin{equation}\label{beta0}
\xymatrix{
\sD(\bn, \bp)  \sma \sD(\br, \bs) \sma \sD(\bm, \bn) \sma  \sD(\bq, \br) 
\ar[rr]^-{\smaD \sma \smaD} \ar[d]_{\id\sma t\sma  \id}^{\iso}   \ddrrtwocell<\omit>{<0>\     \vartheta} & &
\sD(\bn \sma \br, \bp \sma \bs) \sma \sD(\bm \sma \bq, \bn \sma \br )\ar[dd]^\circ\\
\sD(\bn,\bp )\sma  \sD(\bm, \bn)\sma  \sD(\br, \bs) \sma \sD(\bq, \br) 
\ar[d]_{\circ \sma  \circ}  & & \\
\sD(\bm,\bp) \sma \sD(\bq,\bs) \ar[rr]_-{\smaD}  & & \sD(\bm \sma \bq,\bp\sma \bs ) 
}
\end{equation}
relating  $\smaD$ to composition.

Condition \eqref{PPDDcond}, which is necessary for \autoref{multithm2}, translates to requiring 
$\vartheta$ to be the identity (so that the diagram commutes) when  either both $\sD(\bn,\bp)$ and $\sD(\br,\bs)$  are restricted to $\PI$ or both $\sD(\bm, \bn)$ and $\sD(\bq,\br)$ are restricted to $\PI$. 

Writing this out elementwise, on 1-cells it means that, whenever the composites are defined, 
\[   (c\smaD d)\circ(a \sma b) = (c\circ a)\smaD (d\circ b) \]
and
\[   (a \sma b)\circ(c\smaD d) = (a\circ c)\smaD (b\circ d) \] 
where $c$ and $d$ are morphisms of $\sD$ and $a$ and $b$ are morphisms of $\PI$.  We think of this as saying that the monoidal structure on $\smaD$ is \ourdefn{strict relative  to $\PI$}. 
As the pseudofunctoriality constraint for $\smaD$, the $\sV_{\bpt}$-transformations $\vartheta$ must satisfy a condition with respect to triple composition. The condition on $\smaD$ being strictly associative imposes another set of conditions on 
$\vartheta$.

Condition \eqref{tarestricts} means that
  the 1-cell constraint of $\ta$ at the object $(\bm,\bp)$ is the permutation $\tau_{m,p}$ thought of as a morphism in $\sD$, and 
the pseudonaturality constraint is an invertible  $\sV_{\bpt}$-transformation 
\begin{equation}\label{symmetry}
\xymatrix{
\sD(\bm,\bn) \sma \sD(\bp,\bq)  \ar[r]^-{\opair\circ t} \ar[d]_-{\opair} \drtwocell<\omit>{<0>  \hat{\ta}} & \sD(\bp\bm, \bq\bn) \ar[d]^{(\tau_{m,p})^*}\\
\sD(\bm \bp, \bn\bq)  \ar[r]_{(\tau_{n,q})_*} &   \sD(\bm \bp, \bq\bn)
}
 \end{equation}
that is the identity when restricted to the subcategory $\PI(\bm,\bn) \sma \PI(\bp,\bq)$. 
These pseudonaturality constraints must be compatible with composition in $\sD$ and the pseudofunctoriality constraint $\vartheta$.

\begin{rem}\label{condstarcomp}  
For later use, we emphasize  a particular consequence of the strictness relative to $\PI$ here.
Let $a\in \PI(\bm',\bm)$ and $b\in \PI(\bq',\bq)$. The compatibility of $\vartheta$ with triple composition together with condition \eqref{PPDDcond} implies that
\[
\xymatrix{
\sD(\bn, \bp)  \sma \sD(\br, \bs) \sma \sD(\bm, \bn) \sma  \sD(\bq, \br) 
\ar[rr]^-{\smaD \sma \smaD} \ar[d]_{\id\sma \id \sma a^* \! \sma b^*}   & &
\sD(\bn \sma \br, \bp \sma \bs) \sma \sD(\bm \sma \bq, \bn \sma \br ) \ar[d]^{\id \sma (a\sma b)^*}\\
\sD(\bn, \bp)  \sma \sD(\br, \bs) \sma \sD(\bm', \bn) \sma  \sD(\bq', \br) 
\ar[rr]^-{\smaD \sma \smaD} \ar[d]_{\id\sma t\sma  \id}^{\iso}   \ddrrtwocell<\omit>{<0>\     \vartheta} & &
\sD(\bn \sma \br, \bp \sma \bs) \sma \sD(\bm' \sma \bq', \bn \sma \br )\ar[dd]^\circ\\
\sD(\bn,\bp )\sma  \sD(\bm', \bn)\sma  \sD(\br, \bs) \sma \sD(\bq', \br) 
\ar[d]_{\circ \sma  \circ}  & & \\
\sD(\bm',\bp) \sma \sD(\bq',\bs) \ar[rr]_-{\smaD}  & & \sD(\bm' \sma \bq',\bp\sma \bs ) 
}
\]
is equal to
\[
\xymatrix{
\sD(\bn, \bp)  \sma \sD(\br, \bs) \sma \sD(\bm, \bn) \sma  \sD(\bq, \br) 
\ar[rr]^-{\smaD \sma \smaD} \ar[d]_{\id\sma t\sma  \id}^{\iso}   \ddrrtwocell<\omit>{<0>\     \vartheta} & &
\sD(\bn \sma \br, \bp \sma \bs) \sma \sD(\bm \sma \bq, \bn \sma \br )\ar[dd]^\circ\\
\sD(\bn,\bp )\sma  \sD(\bm, \bn)\sma  \sD(\br, \bs) \sma \sD(\bq, \br) 
\ar[d]_{\circ \sma  \circ}  & & \\
\sD(\bm,\bp) \sma \sD(\bq,\bs) \ar[rr]_-{\smaD} \ar[d]_{a^* \!\sma b^*}  & & \sD(\bm \sma \bq,\bp\sma \bs ) \ar[d]^{(a\sma b)^*} \\
\sD(\bm',\bp) \sma \sD(\bq',\bs) \ar[rr]_-{\smaD}  & & \sD(\bm' \sma \bq',\bp\sma \bs )
}\]
The two unlabeled squares have instances of $\vartheta$ that are the identity because of condition \eqref{PPDDcond}. The boundaries on both diagrams are equal since composition in $\sD$ is strict. This equality expresses a condition on $\vartheta$ for when the first terms of a triple composition come from $\PI$. There are similar conditions for when the middle and the last terms come from $\PI$.
\end{rem}

\begin{defn}\label{pseudocommap}
We define a map $(\Psi,\mu)\colon \sD \rtarr \sE$ of pseudo-commutative categories of operators to be a symmetric monoidal 2-functor (\autoref{permfunctor}) such that $\Psi$ is a map of $\VCatp$-$\mathbf{CO}$s over $\sF$ and the restriction of $\mu$ to the subcategory $\PI\sma \PI$ is the identity transformation.
 \end{defn}

We defer the proof  of the following theorem  to \autoref{Rpf}.   It ensures that our definitions of pseudo-commutativity for operads and for their associated categories of operators are compatible.  The verification is essentially combinatorial bookkeeping and is painstaking rather than hard. 

\begin{thm}\label{multithm} Let $\oO$ be a pseudo-commutative operad in $\VCat$. Then $\sD= \sD(\oO)$ is a pseudo-commutative category of operators.  
\end{thm}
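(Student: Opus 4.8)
The plan is to upgrade the pseudo-commutative structure $\al_{j,k}$ on $\oO$ to a pseudo-permutative structure $(\sD,\mathbf{1},\smaD,\ta)$ on $\sD = \sD(\oO)$ satisfying the three restriction conditions of \autoref{pairprod}. First I would define the monoidal product $\smaD$ on objects as $\bm\smaD\bp = \bm\sma\bp = \bm\bp$, and on hom-$\sV$-categories by combining the smash product $\sma$ of morphisms in $\sF$ (described right after \autoref{pairprod}) with the intrinsic pairing $\opair$ of \autoref{intrinsicpairing} on the operad factors. Concretely, given $(\ph,c_1,\dots,c_n)\colon \bm\rtarr\bn$ and $(\ps,d_1,\dots,d_q)\colon \bp\rtarr\bq$, the object $(\ph\sma\ps)\colon \bm\bp\rtarr\bn\bq$ has fibers $(\ph\sma\ps)^{-1}(j,l)$ of cardinality $\ph_j\cdot\ps_l$, and the $(j,l)$-component of $\smaD$ should be (a reindexing by the lexicographic bijections $\la$ of) the composite $\oO(\ph_j)\times\oO(\ps_l)\xrtarr{\opair}\oO(\ph_j\ps_l)$. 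The unit object is $\mathbf{1}$ with its canonical $\oid\in\oO(1)$, and strict associativity and unitality of $\smaD$ follow from \autoref{intrinsicmon}, which says exactly that $\ul\oO$ is a monoid under $\opair$ with unit $\oid$ and zero $\ast\in\oO(0)$ — the latter giving strictness with respect to the basepoint $\bzo$.

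Next I would produce the pseudofunctoriality constraint $\vartheta$ of \eqref{beta0} and the symmetry pseudotransformation $\ta$ with its constraint $\hat\ta$ of \eqref{symmetry}. The $\sV_\bpt$-transformation $\vartheta$ measures the failure of $\smaD$ to commute strictly with composition; since composition in $\sD$ is built from $\ga$ and from reordering permutations $\rho_j$, and since $\opair$ is strictly associative but does \emph{not} interact strictly with the iterated $\ga$ needed to compose, $\vartheta$ is assembled from instances of the identity on the operad side together with the bookkeeping permutations $\rho_j$, $\la$, and the block permutations $\spair$ of \autoref{otimes}; crucially there is \emph{no} use of $\al$ in $\vartheta$ (only in $\ta$), so the relevant $\sV_\bpt$-transformations are in fact structural and the operadic equivariance identity \autoref{rem:opair-equivariant} and the associativity pentagon for $\ga$ force coherence. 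The symmetry $\ta$ has 1-cell component at $(\bm,\bp)$ the permutation $\tau_{m,p}$ of \autoref{tau} (so condition \eqref{tarestricts} holds by construction), and its pseudonaturality constraint $\hat\ta$ is built precisely from the operadic $\al_{j,k}$: for a morphism $(\ph,c_\bullet)\sma(\ps,d_\bullet)$, the component of $\hat\ta$ is the product over the fibers of the transformations $\al_{\ph_j,\ps_l}\colon \tau_{\ps_l,\ph_j}\circ\opair\Longrightarrow\opair\circ t$, conjugated by the appropriate $\la$'s and $\rho$'s. By \autoref{pseudocom}, $\al$ is the identity whenever $j$ or $k$ equals $1$; since every morphism of $\PI$ has all fibers of cardinality $\leq 1$, this shows $\hat\ta$ restricts to the identity on $\PI\sma\PI$ and that $\smaD$ restricts to a strict $\sV_\bpt$-$2$-functor on $\PI\esma\sD$ and $\sD\esma\PI$ — i.e.\ conditions \eqref{smaDrestricts} and \eqref{PPDDcond}.

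The three axioms (i)--(iii) of \autoref{permv2cat} for $(\sD,\mathbf{1},\smaD,\ta)$ then reduce, fiberwise, to the corresponding coherence axioms imposed on $\al$ in \autoref{pseudocom} (identity, symmetry, equivariance, and operadic compatibility, as spelled out in \autoref{cohpseudo}): axiom \eqref{ta_sym_axiom} follows from the symmetry axiom on $\al$ (using $\tau_{k,j}^{-1}=\tau_{j,k}$ from \autoref{tau}), and axiom \eqref{ta_hex_axiom} follows from the operadic-compatibility hexagon on $\al$ together with the block-permutation identities \autoref{ttt} for the $\tau_{j,k}$. The compatibility of $\hat\ta$ with composition in $\sD$ and with $\vartheta$ is where the operadic-compatibility axiom on $\al$ interacts with the associativity pentagon for $\ga$; this is the main obstacle, and it is the reason the theorem's proof is deferred to \autoref{Rpf}. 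I expect the bulk of the work to be an organized but lengthy verification that, after stripping away the permutation bookkeeping via \autoref{rem:opair-equivariant} and \autoref{ttt}, each required equality of pasting diagrams for $\sD$ is exactly a product, over fibers, of an already-verified equality of pasting diagrams for $\oO$. The conceptual content is that $\smaD$ and $\ta$ are nothing but "$\opair$ and $\al$, spread out over the finite sets indexing the category of operators," so no genuinely new coherence is needed — only careful reindexing — which is precisely the "painstaking rather than hard" combinatorial bookkeeping promised in the statement.
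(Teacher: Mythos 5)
There is a genuine gap, and it sits at the heart of the theorem: your claim that the pseudofunctoriality constraint $\vartheta$ of $\smaD$ involves ``no use of $\al$'' and is assembled purely from the bookkeeping permutations $\rho_j$, $\la$, and $\spair$. That is false, and it is precisely where the pseudo-commutativity hypothesis is consumed. Chasing the interchange square of \autoref{beta0} on a fixed component (morphisms $\ps,\nu,\ph,\mu$ of $\sF$ with operad labels $c\in\oO(\ps_k)$, $a\in\oO(\nu_i)$, $d_j\in\oO(\ph_j)$, $b_h\in\oO(\mu_h)$), the clockwise composite produces, up to a permutation, $\ga\bigl(c\opair a;\ \prod d_j\opair b_h\bigr)$, while the counterclockwise composite produces $\ga\bigl(c;\prod d_j\bigr)\opair\ga\bigl(a;\prod b_h\bigr)$. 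After massaging both with operad associativity and the equivariance of \autoref{rem:opair-equivariant} (as in \autoref{newexpression1} and \autoref{newexpression2}), the two sides still differ by replacing $a\opair d_j$ with $d_j\opair a$ inside $\ga(c;-)$, and no reindexing permutation can bridge that: the canonical comparison is exactly $\al$, conjugated by $\tau$'s and the block permutations $D_{\ell,k_\ast}$ of \autoref{dist}. This is the content of \autoref{isapseudo}, which is why the theorem's proof is deferred to \autoref{Rpf}: the whole point is that $\smaD$ is only a \emph{pseudo}functor, with $\vartheta$ built from the $\al_{j,k}$; if $\vartheta$ were structural, $\smaD$ would be a strict $2$-functor and pseudo-commutativity of $\oO$ would not be needed for this step at all (it already fails for $\oO=\Ass$, where $a\spair d_j\neq d_j\spair a$ in general).

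Your proposal is also internally inconsistent on this point: your verification of condition \eqref{PPDDcond} of \autoref{pairprod} argues that the triviality of $\al$ on $\PI$-type entries makes $\smaD$ strict on $\PI\esma\sD$ and $\sD\esma\PI$ --- but that argument only makes sense if $\vartheta$ is made of $\al$'s, which you just denied; it is exactly the paper's \autoref{beta_PI}. (A minor related overstatement: the axioms of \autoref{cohpseudo} do not say $\al_{j,k}$ is the identity whenever $j$ or $k$ is $1$, only that its components at $\oid$ and at $\ast$ are identities; this suffices because $\io$ sends $\PI$-morphisms to tuples of $\oid$'s and $\ast$'s, but the distinction matters for a general, nonchaotic $\oO(1)$.) The rest of your outline --- $\smaD$ on objects and hom-categories via $\sma$ and $\opair$, strict associativity and unitality from \autoref{intrinsicmon}, the symmetry $\ta$ with $1$-cells $\tau_{m,p}$ and constraint $\hat\ta$ built fiberwise from $\al$ --- does match the paper's construction (\autoref{be_pi}); but with the wrong $\vartheta$ the remaining coherence checks (compatibility of $\vartheta$ with triple composition, of $\hat\ta$ with $\vartheta$, and axiom \eqref{ta_hex_axiom}, all of which in the paper invoke axiom \eqref{pseudocommhex1} of \autoref{pseudocom}) cannot be carried out as you describe.
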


\begin{rem}  The construction is functorial.  With the appropriate definition of a pseudo-commutative morphism $\oO\rtarr \sP$  of pseudo-commutative operads,  
the map $\sD(\oO)\rtarr \sD(\sP)$ is pseudo-commutative.  In analogy with \autoref{ChaoticPseudoCom}, when $\oO$ and $\sP$ are chaotic, any morphism of operads 
between them is necessarily pseudo-commutative.
\end{rem}

\subsection{The multicategory of $\sD$-algebras}\label{Multiop1}\label{MdpaDef} 
For a category of operators $\sD$ over $\sF$, we consider $\sD$-algebras as defined in \autoref{strict}.
As indicated in \autoref{NoPseudoAlgs}, there is a more general notion of $\sD$-pseudoalgebra, but 
we defer discussion of that to \cite{AddCat2}. Recall that we have the notions of $(\sD,\Pi)$-pseudomorphism and $\sD$-transformation from Definitions \ref{Cpseudo} and \ref{Ctran}.

\begin{notn}\label{defn:DAlg} For a category of operators $\sD$ over $\sF$,  we denote by 
$\DAlg$ the 2-category of strict $\sD$-algebras, $(\sD,\Pi)$-pseudo\-morphisms, and $\sD$-transformations.  This 2-category was denoted by ${\DAlg}_\PI$ in \autoref{CatCAlg}, but we now fix $\cB =\PI$ and drop it from the notation.
\end{notn} 

This notion, with its strictness with respect to $\PI$, is {\em essential} for the construction of $\bP$ in the left column of \autoref{RoadMap}, as we explain in \autoref{keyG}.  

Let $\sD$ be a reduced pseudo-commutative category of operators over $\sF$.  
We define the multicategory $\Mult(\sD)$ of $\sD$-algebras, which amounts to 
defining the $k$-ary morphisms. As said before, we set it up to have its objects be $\sD$-algebras, although with only slightly more work we could equally well 
have set it up to have its objects be $\sD$-pseudoalgebras.

Recall from \autoref{sec:Cpseudo} that a $\sD$-algebra is given by a $\sV_\bpt$-2-functor $\aX \colon \sD \rtarr \CatVp$, which can be expressed in adjoint form 
as in \autoref{strict}.  Thus the action of $\sD$ on $\aX$ is given by $\sV_\bpt$-functors
\[ \tha\colon \sD(\bm,\bn)\sma \aX(\bm) \rtarr \aX(\bn). \] 

Let $\sD^{\esma k}$ denote the $k$-fold smash power. Following \autoref{PairsPairs}, given $\sD$-algebras $\aX_1, \dots, \aX_k$, we have the external smash product $\aX_1 \overline{\sma} \dots \overline{\sma} \aX_k$.  It sends an object $(\bn_1,\dots,\bn_k)$ of $\sD^{\esma k}$ to $\aX_1(\bn_1)\sma \cdots \sma \aX_k(\bn_k)$, with action map $\tha^k$ given by the composite
 \[
 \xymatrix{\bigwedge\limits_i \sD(\bm_i,\bn_i)\sma \bigwedge\limits_i \aX(\bm_i) \ar[r]^-{t}_-{\cong} & \bigwedge\limits_i \sD(\bm_i,\bn_i)\sma \aX(\bm_i) \ar[r]^-{\bigwedge \tha} & \bigwedge\limits_i \aX(\bn_i),
 }
 \]
 where the first map is the appropriate shuffle. For a $\sD$-algebra $\aY$, we consider the $\sD^{\esma k}$-pseudoalgebra 
  \[
\xymatrix{
 \sD^{\esma k} \ar@{~>}[r]^-{\smaD^k} & \sD \ar[r]^-{\aY} & \CatVp.
 }
 \]
 The conditions in \autoref{pairprod} imply that $\aY \circ \smaD^k$ restricts to a strict $\PI^{\esma k}$-algebra. Since $\PI^{\esma k}$ is discrete, this is a 
functor from $\PI^{\sma k}$ to the underlying $1$-category of $\VCatp$. 

\begin{defn}\label{MultiD}
Let $\sD$ be a reduced pseudo-commutative category of operators over $\sF$. 
 We define a (symmetric) multicategory $\Mult(\sD)$ of $\sD$-algebras as follows. The objects are $\sD$-algebras. 
For objects $\aX_i$, $1\leq i\leq k$, and $\aY$, a $k$-ary morphism $\ul{\aX}\rtarr \aY$ consists of a $(\sD^{\esma k},\Pi^{\esma k})$-pseudomorphism
\[F\colon \squiggly{\aX_1 \overline{\sma} \dots \overline{\sma} \aX_k }{\aY \circ \smaD^k}.\]
Recall that this is the same as saying that $F$ is a $\sV_\bpt$-pseudotransformation
\[\xymatrixcolsep{4pc}\xymatrix{
\sD^{\sma k} \ar[r]^-{\aX_1 \sma \cdots \sma \aX_k} \ar@{~>}[d]_{\smaD^k} \drtwocell<\omit>{<0>\quad   F} & \CatVp ^{\sma k}\ar[d]^{\sma^k}\\
\sD \ar[r]_-{\aY} & \CatVp
}
\]
that is strict when restricted to $\Pi^{\esma k}$.

Given a $j_i$-ary morphism $E_i\colon (\aX_{i,1},\dots,\aX_{i,j_i}) \rtarr \aY_i$ for $i=1,\dots k$, and a $k$-ary morphism $F\colon (\aY_1,\dots,\aY_k) \rtarr \aZ$, the composite is defined by the pasting diagram below, where the right hand 2-cell is the associativity isomorphism for $\sma$ on $\CatVp$.
\begin{equation}\label{compMultiD}
 \xymatrix @C=4em{
 \sD^{\sma j} \ar[rrr]^-{\aX_{1,1} \sma \cdots \sma \aX_{k,j_k}} \ar@{~>}[dd]_{\smaD^j}  \ar@{~>}[dr]^{\bigwedge\limits _i \smaD^{j_i}}    & \drtwocell<\omit>{<0>\qquad  \quad  E_1 \sma \cdots \sma E_k} & \ddtwocell<\omit>{<-12> } & \CatVp^{\esma j}  \ar[dl]_{\hspace{2em}\bigwedge\limits _i \sma^{j_i}} \ar[dd]^{\sma^j}  \\
 & \sD^{\esma k}  \ar[r]^-{\aY_1\sma \cdots \sma \aY_k} \drtwocell<\omit>{F} \ar@{~>}[dl]^{\smaD^k} & \CatVp^{\esma k} \ar[dr]_{\sma ^k} & \\
 \sD \ar[rrr]_\aZ & & & \CatVp.
 }
\end{equation}

Finally, we specify the symmetric structure on the multicategory $\Mult(\sD)$.
Given a permutation $\sigma \in \SI_k$ and a $k$-ary morphism $F\colon (\aX_1 , \dots ,\aX_k) \rtarr \aY$, the 
$k$-ary morphism \mbox{$F\sigma \colon (\aX_{\si(1)},\dots, \aX_{\si(k)}) \rtarr \aY$} is defined by the pasting diagram
\[\scalebox{0.95}{
 \xymatrix @C=4em{
 \sD^{\esma k} \ar@{~>}[dd]_{\smaD^k} \ar[dr]^{t_\si} \ddtwocell<\omit>{<-4>\tau_\si^{-1}} \ar[rrr]^{\aX_{\si(1)} \sma \cdots \sma \aX_{\si(k)}} & & \ddtwocell<\omit>{<-12> t_{\si }} & \CatVp^{\esma k}  \ar[dl]^{t_\si} \ar[dd]^{\sma^k}  \\
 & \sD^{\esma k}  \ar[r]^-{\aX_1\sma \cdots \sma \aX_k} \drtwocell<\omit>{F} \ar@{~>}[dl]^{\smaD^k} & \CatVp^{\esma k} \ar[dr]_{\sma ^k} & \\
 \sD \ar[rrr]_\aY & & & \CatVp.
}}\]
Here the different maps called $t_\si$ send a $k$-tuple $(a_1,\dots,a_k)$ to $(a_{\si^{-1}(1)},\dots,a_{\si^{-1}(k)})$, and $\ta_\si$ is the invertible $\sV_\bpt$-pseudotransformation of \autoref{likepermutative}.
\end{defn}

We now unpack this definition. In what follows, given a $k$-tuple $(n_1,\dots,n_k)$ of natural numbers, we write $n = n_1\cdot \dots \cdot n_k$. 

A $k$-ary morphism
$F=(F,\de)\colon (\aX_1,\dots,\aX_k) \rtarr \aY$ consists of $\sV_\bpt$-functors 
\[ F\colon \aX_1(\bn_1) \sma \cdots \sma \aX_k(\bn_k) \rtarr \aY(\bn), \] 
together with invertible $\sV_\bpt$-transformations $\de$ in the following diagrams, in which $1\leq i\leq k$.
\begin{equation}\label{delta}
\xymatrix @R=5ex{
*-<1ex>{\bigwedge\limits_{i} \sD(\bm_i,\bn_i) \sma \bigwedge\limits_i \aX_i(\bm_i)} 
\ar[]!<13ex,0pt>;[rr(0.47)]^-{\id\sma F} 
\ar[]!<0ex,-1ex>;[d(0.75)]_(.4){t}^(0.4)\iso
\ddrrtwocell<\omit>{<0>\quad  \  \delta} & & 
 *- {\bigwedge\limits_{i} \sD(\bm_i, \bn_i)\sma  \aY(\bm)} \ar[d]^(0.35){\smaD^k\sma \id} \\
*-{\raisebox{-3ex}{$\bigwedge\limits_{i} \sD(\bm_i,\bn_i) \sma \aX_i(\bm_i)$} }
\ar[d(0.7)]_(0.65){\bigwedge\limits_i \tha}  &  &  
\raisebox{1pt}{$\sD(\bm,\bn)\sma \aY(\bm)\ar[d(0.8)]^(0.5){\tha}$}  
\\
*+{\bigwedge\limits_{i} \aX_i(\bn_i)} \ar[rr(0.9)]_{F}  & & *+{\raisebox{4pt}{$\aY(\bn)$}}. 
\\}
\end{equation} %
We  require $\de$ to be the identity when restricted to $\PI^{\esma k}$.
The $\de$ must satisfy coherence diagrams related to composition and identities in $\sD^{\esma k}$. The latter are subsumed in the conditions on $\Pi$. We defer writing out the details of the required conditions for composition to \autoref{cohMultD}.

Unpacking the action of $\si$,  the component of $(F,\de)\si=(F\si,\de\si)$ at an object $\bm_1,\dots, \bm_k$ is defined by the following commutative diagram.

\begin{equation}\label{ObjSym}
 \xymatrix{
\aX_{\si(1)}(\bm_1) \sma \cdots \sma \aX_{\si(k)}(\bm_{k}) \ar[r]^-{F\si} \ar[d]_{t_\si} &  \aY(\bm) \\
\aX_1(\bm_{\si^{-1}(1)}) \sma \cdots \sma \aX_k(\bm_{\si^{-1}(k)}) \ar[r]_-{F} & \aY(\bm) \ar[u]_{\aY(\ta_{\si}^{-1})} \\} 
\end{equation} 

The invertible $\sV_\bpt$-transformation $\de\si$ is obtained by whiskering the $\de$ of \autoref{delta},
but using the pseudocommutativity of $\sD$.  Precisely, we construct $\delta{\si}$ by the following pasting diagram,
where we write $\siginv$ for $\sigma^{-1}$.
Here the inner hexagon is \autoref{delta}, and the outer hexagon is the corresponding diagram for $F\si$.
On $\sD(\bm,\bn)$ we denote by $c_\si$ the
pre- and postcomposition with 
\[\tau_\si\colon \bm=\bigwedge_i \bm_i  \rtarr \bigwedge_i \bm_{\siginv(i)} \qquad \text{and} \qquad
\tau_\si^{-1}\colon \bigwedge_i \bn_{\siginv(i)} \rtarr \bigwedge_i \bn_i = \bn,\]
 respectively.

\begin{equation}\label{schematic}  
\scalebox{0.6}{
\xymatrix{  
 \bigwedge\limits_{i} \sD(\bm_{i},\bn_{i}) \sma \bigwedge\limits_i \aX_{\si(i)}(\bm_{i}) \ar[rrrr]^-{\id\sma F\si}  \ar[dr]^-{t_\si\sma t_\si} \ar[dd]^{t} 
 & & &  
  \ddrtwocell<\omit>{<4>  \quad \  \hat{\ta}\sma \id } &  \bigwedge\limits_{i} \sD(\bm_i, \bn_i)\sma  \aY(\bm) \ar[dd]^{\smaD\sma\id} \\
 & \bigwedge\limits_{i}  \sD(\bm_{\siginv(i)},\bn_{\siginv(i)}) \sma \bigwedge\limits_i \aX_i(\bm_{\siginv(i)}) 
  \ar[d]_{t} \ar[rr]^-{\id\sma F}   \ddrrtwocell<\omit>{<0> \quad \  \delta}  
  & & \bigwedge\limits_{i} \sD(\bm_{\siginv(i)}, \bn_{\siginv(i)})\sma  \aY(\bigwedge\limits_i \bm_{\siginv(i)}) \ar[d]^{\smaD\sma \id}
   \ar[ur]^{t_\si^{-1}\sma \aY(\ta_{\si}^{-1})} & \\ 
 \bigwedge\limits_{i} \sD(\bm_{i},\bn_{i}) \sma \aX_{\si(i)}(\bm_i) \ar[dd]_{\bigwedge\limits_i \tha}   \ar[r]^-{t_\si} 
   & \bigwedge\limits_{i} \sD(\bm_{\siginv(i)},\bn_{\sigma^{-1}(i)}) \sma \aX_i(\bm_{\siginv(i)})    \ar[d]_{\bigwedge\limits_i \tha}  
& &    \sD(\bigwedge\limits_i \bm_{\siginv(i)},\bigwedge\limits_i \bn_{\siginv(i)})\sma  \aY(\bigwedge\limits_i \bm_{\siginv(i)}) \ar[r]^-{c_{\si}\sma \aY(\ta_{\si}^{-1})}  \ar[d]^{\tha}  &  \sD(\bm,\bn)\sma \aY(\bm)  \ar[dd]^{\tha}\\
& \bigwedge\limits_{i} \aX_i(\bn_{\siginv(i)}) \ar[rr]_-{F}  &  &  \aY(\bigwedge\limits_i \bn_{\siginv(i)}) \ar[dr]_{\aY(\ta_{\si}^{-1})} & \\
\bigwedge\limits_{i} \aX_{\si(i)}(\bn_{i}) \ar[rrrr]_-{F\si} \ar[ur]_-{t_\si}  & & & &  \aY(\bn) \\ }}
\end{equation}

The top and bottom trapezoids commute by the definition of $F{\si}$.  The left two trapezoids commute trivially.  The bottom right trapezoid commutes 
since $\aY \in \Mult(\sD)$ is a (strict) $\sD$-algebra.
The top right trapezoid is filled by an invertible $\sV_\bpt$-transformation $\hat{\ta}$ given by the pseudonaturality constraint of the appropriate (unique) composition of instances of the pseudocommutativity of $\sD$. 

 Recall \autoref{notn:pullingback}.

\begin{thm}\label{pullingback}
 Let $(\Psi,\mu) \colon \sD \rtarr \sE$ be a map of pseudo-commutative categories of operators (\autoref{pseudocommap}). Then pulling back along $\Psi$ induces a (symmetric) multifunctor
 \[\Psi^* \colon \Mult(\sE) \rtarr \Mult(\sD).\] 
 \end{thm}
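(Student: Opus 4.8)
The plan is to assemble $\Psi^*$ out of the pullback $2$-functors of \autoref{notn:pullingback} and the iterated monoidal constraint of $(\Psi,\mu)$. By \autoref{pseudocommap}, $\Psi$ is a strict $\sV_\bpt$-$2$-functor which lies over $\sF$ and is the identity on $\PI$; hence precomposition with $\Psi$ sends strict $\sE$-algebras to strict $\sD$-algebras, $(\sE,\PI)$-pseudomorphisms to $(\sD,\PI)$-pseudomorphisms, and $\sE$-transformations to $\sD$-transformations, so it defines a $2$-functor $\Psi^*\colon \sE\text{-}{\bf Alg}\rtarr \sD\text{-}{\bf Alg}$. This is the underlying functor of the multifunctor; on $0$- and $1$-ary morphisms $\Psi^*$ is just this pullback, using $\Psi(\mathbf 1)=\mathbf 1$.

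For $k\ge 2$, I first iterate the pseudonatural constraint $\mu$: by the unitality and associativity axioms \autoref{permfunctor}(i),(ii) together with the strict associativity of $\smaD_\sD$ and $\smaD_\sE$, the two ways of whiskering copies of $\mu$ together agree and produce a well-defined $\sV_\bpt$-pseudotransformation $\mu^k$ filling the square with sides $\smaD^k_\sD$, $\Psi$, $\smaD^k_\sE$, $\Psi^{\sma k}$, with $\mu^0=\mu^1=\id$. It is invertible because its $1$-cell components are endomorphisms of the objects $\bm_1\cdots\bm_k$, hence are identities since $\mu$ restricts to the identity transformation on $\PI\sma\PI$, which contains all objects. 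Given a $k$-ary morphism of $\Mult(\sE)$, i.e.\ a $(\sE^{\sma k},\PI^{\sma k})$-pseudomorphism $F\colon \aX_1\bsma\cdots\bsma\aX_k\rightsquigarrow \aY\circ\smaD^k_\sE$, I pull it back along the strict $2$-functor $\Psi^{\sma k}$, noting that $(\Psi^{\sma k})^*(\aX_1\bsma\cdots\bsma\aX_k)=(\Psi^*\aX_1)\bsma\cdots\bsma(\Psi^*\aX_k)$ and $(\Psi^{\sma k})^*(\aY\circ\smaD^k_\sE)=\aY\circ\smaD^k_\sE\circ\Psi^{\sma k}$, and then compose with the pseudomorphism $\aY\circ\smaD^k_\sE\circ\Psi^{\sma k}\rightsquigarrow\aY\circ\Psi\circ\smaD^k_\sD=(\Psi^*\aY)\circ\smaD^k_\sD$ gotten by whiskering $\aY$ onto $(\mu^k)^{-1}$. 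The result is a $(\sD^{\sma k},\PI^{\sma k})$-pseudomorphism of the required source and target: strictness on $\PI^{\sma k}$ holds because $\Psi^{\sma k}$ is the identity on $\PI^{\sma k}$, $F$ is strict on $\PI^{\sma k}$, and $\mu^k$ restricts to the identity on $\PI^{\sma k}$ — the one place the $\PI$-normalization of $\mu$ in \autoref{pseudocommap} is essential. This defines $\Psi^*$ on $k$-ary morphisms, reducing to the pullback above when $k=1$.

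It then remains to verify the multifunctor axioms. Preservation of identities is immediate from $\mu^1=\id$ and functoriality of whiskering. For composition one expands $\Psi^*(\gamma(F;E_1,\dots,E_k))$ and $\gamma(\Psi^*F;\Psi^*E_1,\dots,\Psi^*E_k)$ using the defining pasting \eqref{compMultiD}: both unwind to the same pasting built from $F$, the $E_i$, the pullbacks $(\Psi^{\sma(-)})^*$, the constraints $\mu^{(-)}$, and the associator of $\sma$ on $\CatVp$, and they coincide by functoriality of whiskering, strict associativity of $\smaD_\sD,\smaD_\sE$, and the coherence of the $\mu^{(-)}$ under iteration, which is precisely \autoref{permfunctor}(ii). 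For the symmetric-group action one checks $\Psi^*(F\sigma)=(\Psi^*F)\sigma$: the action of $\sigma$ in $\Mult(\sD)$ is the pasting involving $t_\sigma$, the pseudo-commutativity $2$-cell $\hat\tau$ of $\sD$, and $\aY(\tau_\sigma^{-1})$; since $\Psi$ lies over $\sF$ and is the identity on $\PI$ it carries $\tau^\sD_\sigma$ to $\tau^\sE_\sigma$, and the compatibility of $\mu$ with the symmetries $\tau,\tau'$ (axiom \autoref{permfunctor}(iii)) matches $\hat\tau^\sD$ with $\hat\tau^\sE$ under pullback, so the two pastings again agree. This yields that $\Psi^*$ is a symmetric multifunctor.

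I expect the main obstacle to be the composition axiom: carrying the large pasting diagram \eqref{compMultiD} through $\Psi^*$ and isolating the exact instances of \autoref{permfunctor}(ii) and of the strict associativity of the $\smaD$'s needed to identify the two resulting pastings. As elsewhere in this paper, this is painstaking combinatorial bookkeeping rather than a conceptual difficulty; the equivariance step, resting on \autoref{permfunctor}(iii) and the compatibility of $\hat\tau$ with pullback along a map of pseudo-commutative categories of operators, is of the same flavor.
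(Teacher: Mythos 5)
Your proposal is correct and follows essentially the same route as the paper: the paper defines $\Psi^*(F)$ as the pasting of $F$ with an iterated constraint $\mu_k$ (unique by the associativity axiom of $\mu$), and disposes of identities, composition, the $\PI^{\sma k}$-strictness, and the symmetric group action by appealing to the axioms of \autoref{pseudocommap}, exactly the points you verify in more detail. The extra care you take with the $\PI$-normalization of $\mu$ and with axiom (iii) for equivariance is precisely what the paper's terse ``follows from the axioms'' is implicitly invoking.
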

 
 \begin{proof}
Given a $k$-ary morphism $F\colon (\aX_1,\dots,\aX_k) \rtarr \aY$ in $\Mult(\sE)$, the multimorphism $\Psi^*(F)$ is defined as the pasting
 \[\xymatrixcolsep{4pc}\xymatrix{
\sE^{\sma k} \ar[r]^-{\Psi^{\esma k}} \ar@{~>}[d]_{\smaD^k} \drtwocell<\omit>{<0>\quad   \mu_k}  & \sD^{\sma k} \ar[r]^-{\aX_1 \sma \cdots \sma \aX_k} \ar@{~>}[d]_{\smaD^k} \drtwocell<\omit>{<0>\quad   F} & \CatVp ^{\sma k}\ar[d]^{\sma^k}\\
\sE\ar[r]_{\Psi} & \sD \ar[r]_-{\aY} & \CatVp,
}
\]
 where the 2-cell $\mu_k$ denotes an appropriate composite of instances of $\mu$, which is unique by the associativity of $\mu$. Compatibility with the identity, composition, and the symmetric group action follows from the axioms in \autoref{pseudocommap}.
\end{proof}

\subsection{Definition of the functor $\bR$}\label{Rdef} 
Let $\oO$ be a reduced operad in $\VCat$ with associated $\VCatp$-$\mathbf{CO}$ $\sD$ over $\sF$. We define a $2$-functor $\bR\colon \OAlg \rtarr \DAlg$ with the property that for an $\oO$-algebra $\aA$, the resulting $\sD$-algebra is defined on objects by $\bn \mapsto \aA^n$, and we show in \autoref{Multiop2} that $\bR$ extends to a multifunctor.

We have the $2$-category $\PAlg$ of  
$\PI$-algebras, $\PI$-morphisms, and $\PI$-trans\-formations, and we have the evident $2$-functor $\bR\colon \VCatp\rtarr \PAlg$ that sends a $\sV_\bpt$-category $\aA$ to the $\PI$-algebra $\PI\rtarr  \VCatp$  whose value at $\bn$ is $\aA^n$. The injections, projections, and permutations of $\PI$ are sent to basepoint inclusions, projections, and permutations of the $\aA^n$.
For a $\sV_\bpt$-functor $F\colon \aA\rtarr \aB$,  $\bR F$ has component $F^n\colon \aA^n \rtarr \aB^n$ at $\bn$.   

The notation $\bR$ records that $\bR$ is right adjoint to the $2$-functor $\bL$ that sends a $\PI$-algebra to its first $\sV_{\bpt}$-category, $\bL(\aX)=\aX(\mathbf{1})$ \cite[\S1]{Rant2}. 
We claim that $\bR$ extends to a $2$-functor  $\bR\colon \OAlg \rtarr \DAlg$.  When starting operadically, it is convenient to use $\times$ instead of $\sma$. 
For an $\oO$-algebra $\aA$ in $\VCat$, we give $\bR\aA$ a $\sD$-algebra structure via a $\sV$-functor
\[\tha\colon\sD(\bm,\bn) \times \aA^m \rtarr \aA^n \] 
that is compatible with basepoints and therefore descends to a $\sV_\bpt$-functor on the smash product.
Writing $\phi_j = |\phi^{-1}(j)|$ again, this $\sV$-functor can be expressed as a composition
\[ \xymatrix@1{\coprod\limits_{ \ph\colon \bm\to \bn} \Big( \prod\limits_{1\leq j\leq n} \oO(\phi_j) \Big)\times \aA^m   \ar[r] 
& \coprod\limits_{\ph\colon \bm\to \bn} \,\prod\limits_{1\leq j\leq n} \big(\oO(\phi_j)\times \aA^{\phi_j}\big) \ar[r]^-{\tha} & \aA^n.\\}  \]
On each component $\phi\colon \bm\rtarr \bn$, 
 the first map reorders $\aA^m\iso \aA^{\phi_0} \times \aA^{\phi_1}\times \dots \times \aA^{\phi_n}$ and projects away $\aA^{\phi_0}$, while the second map
 is the product of $n$ algebra structure maps $\tha(\ph_j) \colon \oO(\phi_j)\times \aA^{\phi_j} \rtarr \aA$. 

We next define $\bR$ on morphisms. Thus let $\squiggly{(F,\pa_*)\colon\aA}{\aB}$ be a pseudomorphism 
of $\oO$-algebras. 
We define a $(\sD,\Pi)$-pseudomorphism 
\[\squiggly{\bR(F,\pa_*)=(\bR F , \de)\colon\bR\aA}{\bR\aB}.\]  
The required $\sV_{\ast}$-transformation 
\[\xymatrix{ 
\sD(\bm,\bn) \sma \aA^m \ar[r]^{\id \sma F^m} \ar[d]_{\tha} \drtwocell<\omit>{ \hspace{2em} \de_{\bm,\bn}} & \sD(\bm,\bn) \sma \aB^m \ar[d]^\tha \\
\aA^n\ar[r]_{F^n} & \aB^n
}\]
is obtained by passage to smash products from a coproduct of whiskerings of
\[\xymatrixcolsep{4em}\xymatrix{ 
\Prod_{1\leq j\leq n} \big(\oO(\phi_j)\times \aA^{\phi_j}\big) \ar[r]^-{\Prod \id\times F^{\phi_j}} \ar[d]_{\prod \tha} \drtwocell<\omit>{ \hspace{1.5em} \prod \pa }
& \Prod_{1\leq j\leq n} \big(\oO(\phi_j)\times \aB^{\phi_j}\big) \ar[d]^{\prod \tha} \\
\aA^n \ar[r]_-{F^n} & \aB^n
}\]
along the reordering morphisms $\Big( \Prod_{1\leq j\leq n} \oO(\phi_j) \Big)\times \aA^m \rtarr \Prod_{1\leq j\leq n} \big(\oO(\phi_j)\times \aA^{\phi_j}\big)$. 

For an $\oO$-transformation $\om\colon E \Longrightarrow F$, we define the component of the $\sD$-transformation $\bR \om$ at $\bn$ as $\om^n$.
We leave it to the reader to fill in the details of the proof of the following result.

\begin{prop} The above data specifies a $2$-functor $\bR\colon \OAlg \rtarr \DAlg$.
\end{prop}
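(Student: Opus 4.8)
The plan is to verify in turn each piece of data and axiom that constitutes a $2$-functor $\bR\colon \OAlg\rtarr\DAlg$: on objects, on $1$-cells, on $2$-cells, functoriality on $1$-cells, functoriality on $2$-cells, and compatibility with whiskering (the two interchange laws). Almost all of this reduces to routine bookkeeping once the maps have been written down as in \autoref{Rdef}, so the proof is a sequence of ``the following diagram commutes'' assertions, each verified by inspection. First I would check that $\bR\aA$ really is a $\sD$-algebra: unitality of the action $\tha\colon\sD(\bm,\bn)\times\aA^m\rtarr\aA^n$ follows because $\io(\id_\bn)=(\id,\oid,\dots,\oid)$ and $\tha(1)(\oid,-)=\id_\aA$ by the unit axiom for the $\oO$-algebra $\aA$; associativity follows from the composition formula \autoref{Dcomposition} in $\sD$ together with the associativity and equivariance axioms for the $\oO$-action, the permutations $\rho_j$ being exactly what is needed to match the two reorderings of $\aA^m$. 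The basepoint strictness is immediate since $\oO$ is reduced, so the $\phi_0$-coordinates are discarded and the action descends to the smash product.

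Next I would check that $\bR$ sends a pseudomorphism $\squiggly{(F,\pa_*)}{}$ to a $(\sD,\Pi)$-pseudomorphism $(\bR F,\de)$. Here $\bR F$ is $F^n$ at $\bn$, which is manifestly a $\sV_\bpt$-functor preserving the basepoint since $F(0_\aA)=0_\aB$. The $\sV_\bpt$-transformation $\de_{\bm,\bn}$ is built by whiskering the product $\prod_j\pa_{\phi_j}$ along the reordering morphisms, as displayed in \autoref{Rdef}; it is invertible because each $\pa_n$ is. That $\de$ is the identity on $\Pi$ follows because $\io$ lands in tuples of $\oid$'s and $\ast$'s, where $\pa_1$ (restricted along $\oid$) and $\pa_0$ are identities by \autoref{OPseudoMap}. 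The composition-coherence axiom \autoref{cohMultD} for $\de$ becomes, after unwinding, precisely the associativity pasting identity relating the $\pa_n$ to the operad structure maps $\ga$ that is built into \autoref{OPseudoMap}; the reordering permutations $\rho_j(\psi,\phi)$ on both sides match because they are forced by the composition formula in $\sD$.

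For $2$-cells, given an $\oO$-transformation $\om\colon E\Longrightarrow F$ I would set $(\bR\om)_\bn=\om^n$ and check the $\sD$-transformation condition of \autoref{Ctran} componentwise: at $\bn$ it unpacks, via the reordering isomorphisms, into the $n$-fold product of the $\oO$-transformation identity of \autoref{Otran} for the indices $\phi_j$, which holds by hypothesis. Finally I would verify functoriality: $\bR(\id_\aA)=\id_{\bR\aA}$ is clear, and $\bR(G\circ F)=\bR G\circ\bR F$ on $1$-cells amounts to $(G\circ F)^n=G^n\circ F^n$ together with the fact that the $\de$ for a composite pseudomorphism is the pasting of the two constituent diagrams --- which matches the definition of composition of $(\sD,\Pi)$-pseudomorphisms --- and similarly for horizontal and vertical composition of $2$-cells. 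The main (though still mild) obstacle is keeping the combinatorics of the permutations $\rho_j(\psi,\phi)$ straight in the composition-coherence check for $\de$: one must confirm that the reorderings coming from the $\sD$-composition formula \autoref{Dcomposition} are exactly those appearing when one pastes the associativity diagrams for the $\pa_n$, so that no stray permutation survives. Everything else is bookkeeping, so I would simply record that these verifications go through and leave the routine diagram-chases to the reader, as the statement invites.
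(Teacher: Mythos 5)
Your verification is correct and is exactly the routine check the paper intends: the paper gives no proof at all ("We leave it to the reader to fill in the details"), and your componentwise verification of the $\sD$-algebra axioms, the $(\sD,\Pi)$-pseudomorphism coherence (reducing to the associativity pasting for the $\pa_n$ and the matching of the permutations $\rho_j(\ps,\ph)$), the $\sD$-transformation condition, and $2$-functoriality supplies precisely those details. No gaps.
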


\subsection{The proof that $\bR$ is a symmetric multifunctor}\label{Multiop2}

Now let $\oO$ be a reduced pseudo-commutative operad in $\VCat$
with associated pseudo-commutative category of operators $\sD$.  

\begin{thm}\label{ORExtend} The $2$-functor $\bR\colon \OAlg \rtarr \DAlg$ 
extends to a symmetric multifunctor $\Mult(\oO) \rtarr \Mult(\sD)$. 
\end{thm}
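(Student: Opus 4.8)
The goal is to upgrade the $2$-functor $\bR\colon \OAlg\rtarr \DAlg$ to a symmetric multifunctor $\Mult(\oO)\rtarr\Mult(\sD)$, so the content is in the action on $k$-ary morphisms together with checking compatibility with composition, units, and the symmetric group actions. The plan is to build $\bR$ on a $k$-ary morphism $(F,\de_i)\colon(\aA_1,\dots,\aA_k)\rtarr\aB$ in $\Mult(\oO)$ by using the component $\sV_\bpt$-functors
\[
F\colon \aA_1(\bn_1)^{\otimes}\cdots \rtarr \quad \text{i.e.} \quad F^{n_1,\dots,n_k}\colon \aA_1^{n_1}\sma\cdots\sma\aA_k^{n_k}\rtarr \aB^{n_1\cdots n_k}
\]
obtained from $F\colon\aA_1\sma\cdots\sma\aA_k\rtarr\aB$ by applying it in each slot of the product of powers, where the indexing of the target copy of $\aB$ on the set $\ul{n_1}\times\cdots\times\ul{n_k}$ is taken in lexicographic order via the bijections $\la$ of \autoref{lexico}. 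The first thing I would do is verify that, as $(\bn_1,\dots,\bn_k)$ varies, these assemble into a $\sV_\bpt$-functor $\bR\aA_1\bsma\cdots\bsma\bR\aA_k\rtarr \bR\aB\circ\smaD^k$ on objects, i.e.\ that they are compatible with the $\PI^{\esma k}$-structure maps (injections, projections, permutations); this is immediate from the basepoint conditions in \autoref{MultiO}(a) and the lexicographic bookkeeping, exactly as in the single-variable case of \autoref{Rdef}.

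Next I would construct the distributivity $2$-cells $\de$ required of a $(\sD^{\esma k},\PI^{\esma k})$-pseudomorphism. The idea is that the $\sD$-action on $\bR\aB$ is, componentwise, a product of the $\oO$-actions $\tha(\phi_j)\colon\oO(\phi_j)\times\aB^{\phi_j}\rtarr\aB$, so the needed $\sV_\bpt$-transformation should be assembled out of the $\de_i(n)$ from \autoref{MultiO}(b), whiskered along the reordering maps that split $\aA_i^{m_i}\cong\prod_j\aA_i^{\phi_j}$ and along the lexicographic shuffles identifying $(\aA_1\times\cdots\times\aA_k)^{\otimes}$-powers. Concretely, for a $k$-tuple of morphisms $(\phi_1,\dots,\phi_k)$ in $\sD$, the $2$-cell $\de$ for $\bR F$ at that tuple is a pasting of one copy of the appropriate $\de_i$ per coordinate, interleaved with the structure isomorphisms $s_i$ of \autoref{strengths} and diagonals. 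The condition that $\de$ be the identity on $\PI^{\esma k}$ follows from the normalization conditions on $\de_i$ (they are identities when the operad input is $\oid$ or $\ast$). I would then observe that the coherence axioms for $\de$ as a $(\sD^{\esma k},\PI^{\esma k})$-pseudomorphism (the composition-in-$\sD^{\esma k}$ diagrams of \autoref{cohMultD}) reduce, slot by slot, to the coherence axioms on the $\de_i$ in \autoref{cohMultO}, using the associativity diagram for $\ga$ exactly as in the proof of \autoref{DODefn}/\autoref{intrinsicmon}.

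Having $\bR$ on $k$-ary morphisms, the remaining checks are: (1) $\bR$ preserves the units $\id_\aA$, which is clear since $\bR\id_\aA=\bR\id$ componentwise; (2) $\bR$ preserves multicomposition, i.e.\ $\bR\ga(F;E_1,\dots,E_k)=\ga(\bR F;\bR E_1,\dots,\bR E_k)$ — on underlying functors this is the functoriality of taking powers and smashes, and on the $\de$'s it follows by comparing the pasting diagram \autoref{delta-comp} (for the $\oO$-side) with \autoref{compMultiD} (for the $\sD$-side), both of which are built by the same slotwise pasting recipe; (3) $\bR$ is \emph{symmetric}, i.e.\ $\bR(F\si)=(\bR F)\si$ for $\si\in\SI_k$. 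For (3) the point is that the action of $\si$ on $\Mult(\oO)$ (\autoref{OObSym}) reorders the $k$ tensor factors, while the action of $\si$ on $\Mult(\sD)$ (the pasting after \autoref{MultiD}) reorders via $t_\si$ and corrects the target by $\aY(\ta_\si^{-1})$, i.e.\ by the permutation $\tau_{\cdot,\cdot}$ of the lexicographic product sets; these match precisely because our indexing of $\aB$-powers was chosen lexicographically and because the pseudo-commutativity structure on $\sD$ restricts to the genuine symmetry $\tau$ on $\PI$ (\autoref{pairprod}\eqref{tarestricts}). The comparison of the $\de$-pasting \autoref{schematic} with the whiskering of $\de_i$ under $\si$ uses \autoref{rem:opair-equivariant} and the equivariance axiom on the $\al_{j,k}$.

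\textbf{Main obstacle.} The genuinely laborious step — and the one I expect to be the crux — is item (1) above, verifying that the slotwise-assembled $\de$ actually satisfies the composition coherence axioms of \autoref{cohMultD} as a $(\sD^{\esma k},\PI^{\esma k})$-pseudomorphism. This is where the pseudo-commutative structure on $\sD$ (rather than merely on $\oO$) is needed, via \autoref{multithm}: composition in $\sD^{\esma k}$ mixes the $k$ coordinates through the shuffle permutations $\rho_j$ of \autoref{Dcomposition}, so the naive "do it one slot at a time" $2$-cell must be reconciled with these reorderings, and the reconciliation is exactly the content of the operadic-compatibility hexagon for $\al$ together with the strictness-relative-to-$\PI$ condition of \autoref{pairprod}\eqref{PPDDcond} (so that the $\vartheta$'s that appear are identities). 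Everything else is formal or a diagram chase of the type already carried out in \autoref{MultiO}; I would state the slotwise recipe precisely, check the normalization and symmetry conditions in detail, and — in keeping with the style of this paper (cf.\ the proofs of \autoref{multithm}, \autoref{pullingback}) — reduce the composition-coherence verification to the axioms on the $\de_i$ and $\al_{j,k}$ already in hand, leaving the full bookkeeping to the reader.
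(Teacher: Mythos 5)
Your plan matches the paper's proof in all essentials: $\bR F$ is defined componentwise via the lexicographic map $\ell$ followed by $F^n$, the 2-cell $\bR\de$ is assembled slotwise from the $\de_i(n)$ (the paper builds it, before passage to smash products, as unions of products of the pastings appearing in axiom \eqref{DeiDejCommute} of \autoref{cohMultO}), strictness over $\PI^{\esma k}$ comes from axioms \eqref{OperUnitAxiom} and \eqref{OperIdentAxiom}, and both composition and symmetry are settled by comparing \autoref{OObSym} with \autoref{ObjSym} and invoking \eqref{DeiDejCommute}. The only cosmetic difference is bookkeeping of where the pseudo-commutativity enters: you route it through $\vartheta$ and \autoref{multithm}, while the paper absorbs it directly into the coherence axiom \eqref{DeiDejCommute}, which already encodes $\al_{m,n}$ and the orbit passage — the same mechanism.
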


\begin{proof}   Let 
$(F,\de_i)\colon  
 (\aA_1, \dots, \aA_k) \rtarr \aB$,
be a $k$-ary morphism in $\Mult(\oO)$.  Here $F$ is a $\sV_\bpt$-functor $ \aA_1 \sma \cdots \sma \aA_k \rtarr \aB$ and $\de_i$ is given by $\sV$-transformations $\de_i(n)$ as in \autoref{deltan}. 
We must construct a $k$-ary morphism 
$\bR(F,\de_i)=(\bR F, \bR \de)\colon (\bR\aA_1,\dots, \bR\aA_k) \rtarr \bR\aB$ as in \autoref{MultiD}.

Writing $n=n_1\cdots n_k$ as before, the component
\[ \bR\aA_1(\bn_1)\sma \cdots \sma \bR\aA_k(\bn_k) \rtarr \bR\aB(\bn)\]
of $\bR F$ is
\[ 
\aA_1^{n_1}\sma  \cdots \sma  \aA_k^{n_k} \xrtarr{\ell} (\aA_1\sma \cdots \sma \aA_k)^n \xrtarr{F^n} \aB^n
,\]
where $\ell$ is a based version of the map defined in \autoref{omdefn}, using lexicographic ordering. Next, we specify the $\sV_\bpt$-transformations $\bR\de$ in the following specialization of diagram \autoref{delta}. 
\begin{equation}\label{delta2}
\xymatrix{
*-<1ex>{ \bigwedge\limits_{i} \sD(\bm_i,\bn_i) \sma \bigwedge\limits_i {\aA_i}^{m_i}}
 \ar[rr(0.65)]^-{\id \sma \bR{F}} \ar[d(0.85)]_{t}^\iso
\ddrrtwocell<\omit>{<0>\quad  \  \bR\delta} & & 
 *- {\bigwedge\limits_{i} \sD(\bm_i, \bn_i)\sma  \aB^{m }}
  \ar[d]^(0.4){\smaD \sma \id} \\
*-{\raisebox{-4ex}{$\bigwedge\limits_{i} \sD(\bm_i,\bn_i) \sma \aA_i^{m_i}$}}
 \ar[d]_(0.57){\bigwedge\limits_i \tha}  &  & 
 \sD(\bm,\bn)\sma \aB^{m} \ar[d]^{\tha}\\
\bigwedge\limits_{i} \aA_i^{n_i} \ar[rr]_{\bR F}  & & \aB^{n}. 
\\}
\end{equation}

Before passage to smash products, these transformations are constructed as disjoint unions of products of compositions of the $\de_i(n)$'s. 
To see this, consider, for instance, the case in which $k=2$ and $\bn_1=\bn_2=\mathbf{1}$. We restrict further to the components $\oO(m_1)\subseteq \sD(\bm_1,\mathbf{1})$ and $\oO(m_2)\subseteq \sD(\bm_2,\mathbf{1})$ corresponding to the maps $\phi\colon \bm_1\rtarr {\bf 1}$ and $\psi\colon \bm_2\rtarr {\bf 1}$ that send all non-basepoint elements to 1.  Then the $\sV_\bpt$-transformation $\bR\de$ is obtained by passage to smash products from the 2-cell
\[\scalebox{.85}{\xymatrix @C=4em{
\oO(m_1) \times \oO(m_2) \times \aA_1^{m_1}\times \aA_2^{m_2} \ar[d]_\iso \ar[r]^{\id\times\bR F} \ddrtwocell<\omit>{<-1>} &  \oO(m_1) \times  \oO(m_2)  \times \aB^{m_1m_2} \ar[d]^{\opair \times \id}
\\
\oO(m_1) \times \aA_1^{m_1} \times  \oO(m_2) \times  \aA_2^{m_2} \ar[d]_{\tha\times \tha} & \oO(m_1m_2)  \times \aB^{m_1m_2} \ar[d]^\tha
\\
 \aA_1 \times \aA_2 \ar[r]_{F}&  \aB  
}}\]
of \eqref{DeiDejCommute}, the axiom for commutation of cells $\de_i$ and $\de_j$, in \autoref{cohMultO}, where \autoref{MultiO} is completed.  
Now consider the general case of $k=2$, with arbitrary $\bn_1$ and $\bn_2$. For the component of $\sD(\bm_1, \bn_1)\times \sD(\bm_2, \bn_2)$ indexed by maps $\phi\colon \bm_1\rtarr \bn_1$ and $\psi\colon\bm_2\rtarr \bn_2$ in $\sF$, the required 2-cell is of the form 
\[\xymatrix@C=4em@R=5em{
\prod\limits_{1\leq j\leq n_1} \oO(\phi_j) \times \prod\limits_{1\leq k\leq n_2} \oO(\psi_k) \times \aA_1^{m_1}\times \aA_2^{m_2} \ar[d]_-{(\PI \theta\times \PI \theta) \circ t} \ar[r]^-{\opair\times\bR F} \drtwocell<\omit>{<-1>} &  \prod\limits_{j,k} \oO((\phi\sma \psi)_{(j,k)})  \times \aB^{m_1m_2} \ar[d]^-{\Pi \theta}
\\
 \aA_1^{n_1} \times \aA_2^{n_2} \ar[r]_-{\bR F}&  \aB^{n_1n_2}  
}\]
and is a product of 2-cells of the previous type. We note that $\bR \de$ is the identity when $\sD^k$ is restricted to $\Pi^k$,  by axioms \eqref{OperUnitAxiom} and \eqref{OperIdentAxiom} in \autoref{MultiO}.  When $k=1$, the construction above recovers that of \autoref{Rdef}.
Axiom \eqref{DeiDejCommute} of  \autoref{MultiO} implies that $\bR$ preserves composition.

We prove that $\bR$ is symmetric by a comparison of the definitions here with those of \autoref{Multiop}.  
Remembering the lexicographic reordering, it is straightforward to check by comparison
of \autoref{OObSym} with \autoref{ObjSym}  that $\bR (F\si) = (\bR F)\si$ for a $k$-ary morphism $(F,\de_i)$ of $\Mult(\oO)$.   
The equality of pasting diagrams required to ensure that the $\sV_\bpt$-transformations in $(\bR(F, \de_i)) \si$ and $ \bR ((F,\de_i)\si)$ are equal follows from 
axiom \eqref{DeiDejCommute} of  \autoref{MultiO}. 
\end{proof}

\section{The multicategory of $\sD_G$-algebras and the multifunctor $\bP$}
\label{sec:DG}

We introduced $\VCat$-categories of operators  in \autoref{sec:DAlg}, as well as the multicategory $\Mult(\sD)$ associated to any pseudo-commutative category of operators $\sD$. 
In this section, we finally bring in equivariance, starting in \autoref{GCats}, where we specialize the content of the previous sections to the category $G\sV$ of $G$-objects in $\sV$. In \autoref{COFG} we introduce $\GVCat$-categories of operators over $\sF_G$, the category of finite based $G$-sets.
A key idea here is \autoref{DtoDG}, which allows us to prolong from equivariant categories of operators over $\sF$ to equivariant categories of operators over $\sF_G$.
We introduce the multicategory $\Mult(\sD_G)$ in \autoref{sec:MultDG} and extend the prolongation functor to a symmetric multifunctor in \autoref{subsec:prolong}.
Much of this section is precisely  parallel to the previous one. 

\subsection{$G\sV$-categories and $G\sV_{\bpt}$-categories}\label{GCats}
So far, equivariance has not entered into the picture and yet everything we have done applies equally well equivariantly, as we now explain. Start again with a category $\sV$ satisfying Assumptions \ref{ass1} and \ref{ass2}, such as the category $\sU$ of spaces, and let $G$ be a finite group.  An action of $G$ on an object $X$ of $\sV$ can be specified in several equivalent ways.  One is to regard $G$  as a group in $\sV$ via \autoref{bUbV} and to require a map $G\times X\rtarr X$ in $\sV$ that satisfies the evident unit and associativity properties, expressed diagrammatically.  Another is to regard $G$ as a category with one object and to require a functor $G\rtarr \sV$ that sends the one object to $X$.  We  have the evident notion of a $G$-map $X\rtarr Y$.  

Let $G\sV$ denote the category of $G$-objects in $\sV$ and $G$-maps between them.  Then $G\sV$ is bicomplete, with limits and colimits created in $\sV$ and given the induced actions by $G$.  With the second description, this is a standard fact about functor categories.   Therefore $G\sV$  satisfies \autoref{ass1}.  Similarly, we have the $2$-category $\GVCat$ of categories internal to $G\sV$, which can be identified with the 2-category of $G$-objects in $\VCat$.  Hence $G\sV$ satisfies \autoref{ass2} as well.  Thus we can replace $\sV$ by $G\sV$ and everything we have said so far applies verbatim.   

\begin{rem}\label{GVvsV} 
As just noted, we can think of a $G\sV$-category as a $\sV$-category $\cC$ together with $\sV$-functors $g\colon \cC\rtarr \cC$ for $g\in G$.  Then a $G\sV$-functor 
$F \colon \cB\rtarr \cC$ is a $\sV$-functor $F$ such that the diagrams
\[  \xymatrix{   
\cB\ar[r]^-g \ar[d]_{F} & \cB  \ar[d]^{F}\\
\cC \ar[r]_-{g} & \cC\\} \]
commute.  A $G\sV$-transformation $\nu\colon E \Longrightarrow F$  is a $\sV$-transformation such that 
$\nu\colon \ob\cB \rtarr \mor{\cC}$ is $G$-equivariant.  This is equivalent to having the following equality of pasting diagrams.
\[  \xymatrix{   
\cB\ar[r]^-g \dtwocell^E_F{\nu} & \cB  \ar[d]^{E}\\
\cC \ar[r]_-{g} & \cC} \ \ \ \ 
\raisebox{-1.75em}{=}
\ \ \ \ 
\xymatrix{   
\cB\ar[r]^-g \ar[d]_{F} & \cB  \dtwocell^E_F{\nu}\\
\cC \ar[r]_-{g} & \cC} \]
\end{rem}  

The terminal object of $\sV$, with trivial $G$-action, is terminal in $G\sV$, and we have the category $G\sV_{\bpt}$ of based objects in $G\sV$, which can be identified with the category of $G$-objects in $\sV_\bpt$.  We also have the $2$-category $\GVCatp$ of categories internal to $G\sV_{\bpt}$, which can be identified with the 2-category of $G$-objects in $\VCatp$; an analogue of \autoref{GVvsV} applies in this case as well.

We also have $G\sV$-$2$-categories, which are defined to be categories enriched  in $\GVCat$, using cartesian products,
and $G\sV_{\bpt}$-$2$-categories, which  are categories enriched in $\GVCatp$, defined using smash products.  
We emphasize that $G$ does not act on the collection of objects of a $G\sV$-2-category. It acts on the $\sV$-categories of morphisms.

What changes is that we now build finite $G$-sets into the picture.  We work with categories of operators $\sD_G$ over $\sF_G$, our chosen permutative model of the category of based finite $G$-sets. We define these in \autoref{COFG},  we define pseudo-commutativity for them in \autoref{sec:PseuodocomG}, and we define algebras and pseudoalgebras over them in \autoref{DGpseudo}.   When $\sD_G$ is pseudo-commutative, we define a multicategory with underlying category $\DGAlg$ in \autoref{sec:MultDG}.    We define the prolongation functor $\bP$ from $\sD$-algebras to $\sD_G$-algebras and show that it extends to a symmetric multifunctor $\bP\colon \Mult(\sD) \rtarr \Mult(\sD_G)$  in \autoref{subsec:prolong}.  

\subsection{Categories of operators over $\sF_G$}\label{COFG} 
The definition of a category of operators over $\sF_G$ in this section  is parallel to that of a category of operators over  $\sF$ in \autoref{Dalgs}, and it is the categorical analogue of the definition given in \cite{MMO} on the space level. After giving the relevant definitions, we show how we can go back and forth between categories of operators over $\sF$ and categories of operators over $\sF_G$.

\begin{defn} Let $\sF_G$ be the following model of the $G$-category of based finite $G$-sets.  An object in $\sF_G$ consists of a based set $\bn$ together with a $G$-action prescribed by a homomorphism $\al\colon G\rtarr \SI_n$.  We denote this object by $\bn^\al$.The morphisms are defined to be all based functions, not just the equivariant ones, and we let $G$ act by conjugation on the set of morphisms. Thus $\sF_G$ can be viewed as a $G$-category where $G$ acts trivially on objects.

Let $\PI_G\subset \sF_G$ be the sub $G$-category of morphisms 
$\ph\colon \bm^\al \rtarr \bn^\be$ such that $\ph_j = 0$ or $1$ for $1\leq j\leq n$.   Write $\bn$ for $\bn^\epz$, where $\epz$ is the trivial homomorphism. 
That fixes compatible embeddings of $\PI$ in $\PI_G$ and $\sF$ in $\sF_G$. 
Note that $\bzo$ is a zero object in $\PI_G$ and $\sF_G$.
\end{defn}

\begin{rem}
\label{keyG}  
We think of $\SI_n$ as the subset of isomorphisms in $\PI(\bn, \bn)$.  For a based $G$-set $\bn^\al$, the homomorphism $\al$ thus maps $G$ to $\PI(\bn, \bn)$, which is contained in all categories of operators of either type.  We have built strictness with respect to $\PI$ into all of our structures, and the strictness with respect to permutations is crucial in dealing with equivariance, in particular in constructing the prolongation functor $\bP$.  
\end{rem}

\begin{defn}\label{GCO/FG} 
A \ourdefn{$\GVCat$-category of operators} $\sD_G$ over $\sF_G$, which we abbreviate to
$\GVCat$-$\mathbf{CO}$ over $\sF_G$, is a 
$G\sV$-$2$-category whose objects are the based $G$-sets $\bn^\al$ for $n\geq 0$, together with $G\sV$-$2$-functors 
\[\xymatrix@1{ \Pi_G \ar[r]^-{\io_G} & \sD_G \ar[r]^-{\xi_G} & \sF_G \\} \] 
such that $\io_G$ and $\xi_G$ are the identity on objects and $\xi_G\com \io_G$ is the inclusion.  
 A morphism $\nu_G\colon \sD_G\rtarr \sE_G$ of $\GVCat$-$\mathbf{CO}$s over $\sF_G$ is a $G\sV$-$2$-functor over $\sF_G$ and under $\PI_G$.
 
 For a $\GVCat$-$\mathbf{CO}$ $\sD_G$ over $\sF_G$, let $\sD$ denote the full subcategory on the objects $\bn=\bn^\varepsilon$ with trivial $G$-action. This is the underlying $\GVCat$-$\mathbf{CO}$ over $\sF$ of $\sD_G$.
\end{defn}

\begin{defn}\label{reducedGCO/FG}
A \ourdefn{$\GVCat$-category of operators} $\sD_G$ is \ourdefn{reduced} if $\bzo$ is a zero object, and  we then say that $\sD$
is a \ourdefn{$\GVCatp$-category of operators} over $\sF_G$. We shall restrict attention to $\GVCatp$-categories of operators over $\sF_G$. 
\end{defn}

\begin{rem}\label{reducedCO/FGrem}
As in \autoref{reducedCO/Frem},  a $\GVCatp$-$\mathbf{CO}$ $\sD_G$ over $\sF_G$ is a $G\sV_\ast$-$2$-category, with $\io_G$ and $\xi_G$  $G\sV_\ast$-$2$-functors, and  a morphism of  reduced $\GVCat$-$\mathbf{CO}$s over $\sF_G$ is reduced and is thus a $G\sV_\ast$-2-functor  over $\sF_G$ and under $\PI_G$.
\end{rem}

\begin{con}\label{DtoDG}
We construct a \ourdefn{prolongation functor} $\bP$  from the category of $\GVCatp$-$\mathbf{CO}$s over $\sF$ to the category of $\GVCatp$-$\mathbf{CO}$s over $\sF_G$. Let $\sD$ be a $\GVCatp$-$\mathbf{CO}$  over $\sF$. 
 Define the morphism $G\sV_\bpt$-category $\bP\sD(\bm^\al, \bn^\be)$ to be a copy of $\sD(\bm, \bn)$, but with $G$-action induced by conjugation and the original given  $G$-action on $\sD(\bm,\bn)$.  Explicitly, the action of $g\in G$ on $\bP\sD(\bm^\al, \bn^\be)$, which we shall call $\bP g$ when $\al$ and $\be$ are understood, is the composite
\begin{equation}\label{GD} 
 \xymatrix@1{\bP g:= \sD(\bm,\bn) \ar[rr]^-{\al(g^{-1})^*} & &  \sD(\bm,\bn) \ar[r]^-{g} &  \sD(\bm,\bn) \ar[rr]^-{\be(g)_*}  & & \sD(\bm,\bn). \\} 
 \end{equation}
Here $\al(g^{-1})^*$ and $\be(g)_*$ are defined by precomposition with $\al(g^{-1})$ and postcomposition with $\be(g)$; we think of them as prewhiskerings and postwhiskerings. Composition is inherited from $\sD$ and is equivariant. 
Observing that $\PI_G$ and $\sF_G$ are the prolongations of $\PI$ and $\sF$, the inclusion $\io_G$ and projection  $\xi_G$ are inherited from $\sD$ as $\bP\io$ and $\bP\epz$, This uses the functoriality of $\bP$, which we now explain. For a map of  $\GVCatp$-$\mathbf{CO}$s $\nu\colon\sD\to \sE$, we define 
\[\bP\nu \colon \bP\sD(\bm^\al,\bn^\be)\rtarr\bP\sE(\bm^\al,\bn^\be)\]
to just be $\nu$; it is equivariant with respect to the new action because $\nu$ is a $G\sV_\bpt$-2-functor and thus is compatible with the $G$-action and with precomposition and postcomposition with maps in $\Pi$. 
\end{con}

\begin{prop}
If $\sD_G$ is a $\GVCatp$-$\mathbf{CO}$ over $\sF_G$, then $\sD_G \cong \bP\sD$, where $\sD$ is the underlying $\GVCatp$-$\mathbf{CO}$ over $\sF$  of $\sD_G$.
\end{prop}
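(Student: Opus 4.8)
The plan is to build the isomorphism explicitly out of the canonical isomorphisms, inside $\sD_G$, between an object and its underlying trivial $G$-set. For a homomorphism $\al\colon G\rtarr\SI_n$, the identity function of $\bn$, regarded as a morphism $\bn^\al\rtarr\bn^\epz$, lies in $\PI_G$, so applying $\io_G$ produces an isomorphism $u_\al\colon\bn^\al\rtarr\bn$ in $\sD_G$ with inverse $u_\al^{-1}=\io_G(\id\colon\bn\rtarr\bn^\al)$. The one computation needed records the effect of the $G$-action of $\sD_G$ on these isomorphisms. Since $\io_G$ is a $G\sV$-$2$-functor and $G$ acts on $\sF_G$ by conjugation, $g\cdot u_\al$ is $\io_G$ applied to the conjugate $\epz(g)\circ\id\circ\al(g)^{-1}=\al(g^{-1})$ of the identity function; writing that function as the composite $\al(g^{-1})\circ\id$ in $\PI_G$ and applying the functor $\io_G$ gives
\[ g\cdot u_\al \;=\; \al(g^{-1})\circ u_\al , \qquad g\cdot u_\al^{-1}\;=\;u_\al^{-1}\circ\al(g), \]
where $\al(g^{-1})$ now denotes $\io(\al(g^{-1}))$ in $\sD$ (these images are $G$-fixed, as $\PI$ carries the trivial $G$-action), and the same formulas hold with $\al$ replaced by $\be$ at the target object.

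With this in hand, I would define $\Phi\colon\bP\sD\rtarr\sD_G$ to be the identity on objects and, on morphism $\sV_\bpt$-categories, the based $\sV$-functor
\[ \bP\sD(\bm^\al,\bn^\be)=\sD(\bm,\bn)\rtarr\sD_G(\bm^\al,\bn^\be),\qquad f\longmapsto u_\be^{-1}\circ f\circ u_\al . \]
Then one checks, in order: first, that $\Phi$ preserves identities and composition, which is immediate because composition in $\bP\sD$ is, by \autoref{DtoDG}, literally composition in $\sD\subseteq\sD_G$, so the inserted copies of $u_\al$ and $u_\be^{-1}$ telescope; second, that $\Phi$ is $G$-equivariant, which follows by combining the formula $\bP g(f)=\be(g)\circ(g\cdot f)\circ\al(g^{-1})$ read off from \eqref{GD} with the identities above, giving $\Phi(\bP g(f))=u_\be^{-1}\circ\be(g)\circ(g\cdot f)\circ\al(g^{-1})\circ u_\al=(g\cdot u_\be^{-1})\circ(g\cdot f)\circ(g\cdot u_\al)=g\cdot\Phi(f)$; and third, that $\xi_G\circ\Phi=\xi_G^{\bP\sD}$ and $\Phi\circ\io_G^{\bP\sD}=\io_G$, which hold because $\xi_G(u_\al)$ is the identity function and every morphism of $\PI_G$ factors as $u_\be^{-1}\circ(\text{morphism of }\PI)\circ u_\al$. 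The two-sided inverse is the analogously defined $G\sV_\bpt$-$2$-functor $\Psi\colon\sD_G\rtarr\bP\sD$, $h\mapsto u_\be\circ h\circ u_\al^{-1}$, with $\Psi\Phi=\id$ and $\Phi\Psi=\id$ again by telescoping; hence $\Phi$ is an isomorphism of $\GVCatp$-$\mathbf{CO}$s over $\sF_G$, and it is visibly natural in $\sD_G$.

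The main obstacle is the equivariance check in the second step: one must see that the conjugation twists $\al(g^{-1})$ on the source and $\be(g)$ on the target that are forced on $\bP\sD$ by \eqref{GD} are exactly absorbed by the twists that $u_\al$ and $u_\be$ acquire under the $G$-action of $\sD_G$. This is precisely where the hypothesis that $\sD_G$ is a genuine $\GVCatp$-$\mathbf{CO}$ over $\sF_G$ — so that the comparison isomorphisms $u_\al$ exist inside $\sD_G$ and are compatible with both $\io_G$ and $\xi_G$ — is used; the remaining items are routine bookkeeping about based enriched $2$-functors over $\sF_G$ and under $\PI_G$.
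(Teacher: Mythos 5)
Your proof is correct and follows essentially the same route as the paper: both use the mutually inverse morphisms of $\PI_G$ given by the identity function on $\bm$ (your $u_\al$, $u_\al^{-1}$), compute their conjugation twists under the $G$-action, and deduce equivariance of the pre/post-composition isomorphism $\sD(\bm,\bn)\cong\sD_G(\bm^\al,\bn^\be)$ from the equivariance of composition in $\sD_G$. You merely write the isomorphism in the opposite direction and spell out the routine functoriality and over-$\sF_G$/under-$\PI_G$ checks that the paper leaves implicit.
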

\begin{proof}
Let $\sD_G$ be a $\GVCatp$-$\mathbf{CO}$ over $\sF_G$. Let $\id^\al \in \Pi_G(\bm,\bm^\al)$ and $\id_\al \in \Pi_G(\bm^\al, \bm)$ be the morphisms given by the identity map on the set $\bm$. They are not identity morphisms but rather are mutual inverses in $\Pi_G$, and hence in $\sD_G$. Since $G$ acts by conjugation on $\Pi_G(\bm,\bm^\al)$ and $\Pi_G(\bm^\al, \bm)$, the action of $g$ sends $\id^\al$ and  $\id_\al$ to the maps on $\bm$ given by $\alpha(g)$ and $\alpha(g)^{-1}$, respectively. 

Precomposition with $\id^\al$ and postcomposition with $\id_\be$ induce an isomorphism of $\sV_\bpt$-categories
\begin{equation}\label{DGtoD}
\sD_G(\bm^\al,\bn^\be) \rtarr \sD_G(\bm,\bn)=\sD(\bm,\bn).
\end{equation}

The above observations and the fact that composition in $\sD_G$ is $G$-equivariant imply that this map becomes $G$-equivariant when we endow the target with the action defined on $\bP\sD(\bm^\al,\bn^\be)$, giving the desired isomorphism $\sD_G \cong \bP\sD$.
\end{proof}

\begin{rem}
The map of $\sV_\bpt$-categories in \autoref{DGtoD} induces a $\sV_\bpt$-2-functor
\[\sD_G \rtarr \sD.\]
It is an inverse up to invertible $\sV_\bpt$-2-natural transformation to the inclusion $\sD \rtarr \sD_G$. Thus $\sD$ and $\sD_G$ are equivalent as $\sV_\bpt$-2-categories, but not as $G\sV_\bpt$-2-categories.
\end{rem}
 
\begin{defn} For a reduced operad $\oO$ in $\GVCat$, define the associated category of operators $\sD_G(\oO)$ over $\sF_G$ to be the prolongation  $\bP(\sD(\oO))$. 
\end{defn}

A more explicit description is given on the space level in \cite[Definition 5.1]{MMO}.\footnote{Numbered references to \cite{MMO} refer to the first version posted on ArXiv;  they will be changed when the published version appears.}

\subsection{Pseudo-commutative categories of operators over $\sF_G$}
\label{sec:PseuodocomG}

Observe that $\PI_G$ and $\sF_G$ are permutative  under the smash product of finite based $G$-sets.  On underlying sets, the smash product and its symmetry isomorphism are defined just as for $\PI$ and $\sF$.  Recall that when we restrict to $\SI$, we denote the smash product $\spair$. This can be thought of as a collection of maps $\SI_m\times\SI_n \rtarr \SI_{mn}$. 
Then  homomorphisms $\al\colon G\rtarr \SI_m$ and $\be\colon G\rtarr \SI_n$ have the product homomorphism $\al\otimes \be$ given by applying $\spair$ elementwise; that is, $(\al\spair \be)(g)=\al(g)\spair\be(g)$. 

Then $\bm^\al\sma\bn^\be=\bm\bn^{\al \otimes \be}$, and the smash product 
\[  \sF_G(\bm^{\al}, \bp^{\ga}) \sma \sF_G(\bn^{\be}, \bq^{\de}) \rtarr \sF_G(\bm\bn^{\al\otimes \be}, \bp\bq^{\ga\otimes \de}) 
\]
is $G$-equivariant.   
 
The following definition is precisely analogous to  \autoref{pairprod}.  

\begin{defn}\label{pseudocomDG}
A \ourdefn{pseudo-commutative structure}
on a category of operators $\sD_G$ over $\sF_G$ is a pseudo-permutative structure $(\sD_G,\mathbf{1},\smaD,\ta)$ such that
\begin{enumerate}
 \item $\smaD$ restricts  to $\sma$ on 
$\PI_G\sma \PI_G$ and projects to $\sma$ on $\sF_G$ 
(in the sense that $\xi_G \circ \smaD = \sma \circ(\xi_G\sma \xi_G)$); 
\item $\smaD$ restricts to a strict $G\sV_\bpt$-2-functor on $\PI_G\esma\sD_G$ and $\sD_G\esma\PI_G$;
\item $\ta$ restricts to the symmetry on $\PI_G$.
\end{enumerate}
Define a map of pseudo-commutative categories of operators over $\sF_G$ as in \autoref{pseudocommap}.
\end{defn}

The following theorem states that the prolongation of a pseudo-commutative category of operators is again pseudo-commutative.

\begin{thm}\label{multithm2} Let $\sD$ be a pseudo-commutative $\GVCatp$-category of operators over $\sF$. Then $\bP\sD$ is a 
pseudo-commutative $\GVCatp$-category of operators over $\sF_G$ and the inclusion $(\sD,\PI)\rtarr (\bP\sD,\PI_G)$ preserves the pseudo-commutative structure.
\end{thm}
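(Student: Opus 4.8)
The plan is to transport the pseudo-permutative structure of $\sD$ through the identifications $\bP\sD(\bm^\al,\bn^\be)=\sD(\bm,\bn)$ of underlying $\sV_\bpt$-categories supplied by \autoref{DtoDG}, and then to verify that what results is genuinely equivariant by combining the $G$-equivariance already present on $\sD$ with strictness relative to $\PI$; everything else is formal transport of structure. The only genuinely new point is this equivariance check, and it is also the main obstacle.

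First I would define the pseudo-permutative structure $(\bP\sD,\mathbf 1,\smaD,\ta)$ on $\bP\sD$. The unit object is $\mathbf 1=\mathbf 1^\epz$. On objects set $\bm^\al\smaD\bn^\be=\bm\bn^{\al\otimes\be}$, where $\al\otimes\be$ is the pointwise $\spair$-product of homomorphisms as in \autoref{sec:PseuodocomG}; this is strictly associative and unital because $\spair$ on $\SI$ is. On morphism $\sV_\bpt$-categories, define
\[\smaD\colon \bP\sD(\bm^\al,\bp^\ga)\sma \bP\sD(\bn^\be,\bq^\de)\rtarr \bP\sD(\bm\bn^{\al\otimes\be},\bp\bq^{\ga\otimes\de})\]
to be the underlying $\sV_\bpt$-functor of $\smaD\colon\sD(\bm,\bp)\sma\sD(\bn,\bq)\rtarr\sD(\bm\bn,\bp\bq)$, and take the pseudofunctoriality constraint $\vartheta$ of \autoref{beta0}, the $1$-cell components $\tau_{m,n}$ of $\ta$ (now regarded as morphisms of $\PI_G$), and the pseudonaturality constraint $\hat\ta$ of \autoref{symmetry} to be the corresponding data of $\sD$. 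Since composition in $\bP\sD$ is inherited unchanged from $\sD$, the coherence identities of \autoref{permv2cat} — strict associativity and unitality of $\smaD$, the triple-composition condition on $\vartheta$, and axioms (i)--(ii) for $\ta$ — hold verbatim because they hold for $\sD$.

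It remains to check that $\smaD$ is a $G\sV_\bpt$-pseudofunctor and $\ta$ a $G\sV_\bpt$-pseudotransformation for the conjugation actions $\bP g=\be(g)_*\circ g\circ\al(g^{-1})^*$ of \autoref{GD}. On a morphism $c\in\sD(\bm,\bp)$ one has $(\bP g)(c)=\ga(g)\circ g(c)\circ\al(g^{-1})$, and likewise on the other factors, so equivariance of $\smaD$ amounts to the identity
\[\smaD\bigl(\ga(g)\circ g(c)\circ\al(g^{-1}),\ \de(g)\circ g(d)\circ\be(g^{-1})\bigr)=\bigl(\ga(g)\spair\de(g)\bigr)\circ g\bigl(\smaD(c,d)\bigr)\circ\bigl(\al(g^{-1})\spair\be(g^{-1})\bigr).\]
I would prove this in three moves. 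First, strictness relative to $\PI$ (condition \eqref{PPDDcond} of \autoref{pairprod}), namely $(a'\circ c')\smaD(b'\circ d')=(a'\sma b')\circ(c'\smaD d')$ and $(c'\circ a)\smaD(d'\circ b)=(c'\smaD d')\circ(a\sma b)$ for $a,a',b,b'$ in $\PI$, pulls the pre- and post-whiskerings by $\al(g^{-1}),\be(g^{-1}),\ga(g),\de(g)$ — which are permutations and hence lie in $\PI$ — outside of $\smaD$. Second, the $G$-equivariance of $\smaD$ already assumed on $\sD$ commutes $g$ past $\smaD$. Third, since $\sma$ restricted to $\SI$ agrees with $\spair$, the whiskerings $\ga(g)\sma\de(g)$ and $\al(g^{-1})\sma\be(g^{-1})$ produced by $\smaD$ are exactly $(\ga\otimes\de)(g)$ and $(\al\otimes\be)(g^{-1})$, which finishes the identity. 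The same three ingredients, now applied to the $2$-cell data, show $\vartheta$ is $G$-equivariant — here one also uses the conditions on $\vartheta$ under $\PI$-whiskering recorded in \autoref{condstarcomp} — and that $\hat\ta$ and the $1$-cells $\tau_{m,n}$ are $G$-equivariant, where one also uses the compatibility of the symmetry with $\spair$-products of permutations as in \autoref{rem:opair-equivariant}.

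Finally I would check conditions (1)--(3) of \autoref{pseudocomDG} and the statement about the inclusion. Since $\PI_G$ and $\sF_G$ are the prolongations of $\PI$ and $\sF$ and our $\smaD$, $\ta$ are transported from $\sD$, condition (1) follows from \eqref{smaDrestricts} for $\sD$, condition (2) — that $\vartheta$ is the identity on $\PI_G\esma\sD_G$ and $\sD_G\esma\PI_G$ — follows from \eqref{PPDDcond} for $\sD$, and condition (3) follows from \eqref{tarestricts} for $\sD$. The inclusion $\sD\into\bP\sD$, $\bn\mapsto\bn^\epz$, restricts to the inclusion $\PI\into\PI_G$, is the identity on each morphism $\sV_\bpt$-category $\sD(\bm,\bn)=\bP\sD(\bm^\epz,\bn^\epz)$, and by construction carries $\smaD$ and $\ta$ on $\sD$ to the restrictions along it of $\smaD$ and $\ta$ on $\bP\sD$; hence it is a (strict) symmetric monoidal $2$-functor in the sense of \autoref{permfunctor} with $\mu$ the identity and a map of pseudo-commutative categories of operators in the sense of \autoref{pseudocommap}, which is precisely the assertion that it preserves the pseudo-commutative structure.
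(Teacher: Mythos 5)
Your proposal is correct and follows essentially the same route as the paper: transport the structure through the identification $\bP\sD(\bm^\al,\bn^\be)=\sD(\bm,\bn)$, and verify equivariance of $\smaD$, $\vartheta$, and $\ta$ for the conjugation action by combining the original $G$-equivariance on $\sD$ with strictness relative to $\PI$ (condition \eqref{PPDDcond} and \autoref{condstarcomp}), exactly as in the paper's diagram \eqref{DGsmaD}. The only quibble is a citation: the equivariance of the $1$-cells $\tau_{m,p}$ under conjugation rests on the relation \eqref{ttt} in $\SI$ rather than on \autoref{rem:opair-equivariant}, but the mathematical content you invoke is the right one.
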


\begin{proof}
We define the $G\sV_\bpt$-pseudofunctor $\squiggly{\smaD\colon \bP\sD \sma \bP\sD}{\bP\sD}$ as follows. On objects, $\smaD$ is just $\sma$; that is, 
$\bm^\al \smaD \bn^\be = \bm\bn^{\al \spair \be}$. On $G\sV_\bpt$-categories of morphisms, 
\[  \smaD\colon \bP\sD(\bm^\al, \bp^\ga) \sma \bP\sD(\bn^\be, \bq^\de) \rtarr \bP\sD(\bm\bn^{\al\otimes \be}, \bp\bq^{\ga\otimes \de}) \]
is just
\[  \smaD\colon \sD(\bm, \bp) \sma \sD(\bn, \bq) \rtarr \sD(\bm\bn, \bp\bq). \]

We need to show that $\smaD$ is equivariant with respect to the action of $G$ on $\bP\sD$ (see \autoref{GVvsV}). The equivariance  is encoded in the following commutative diagram. 

\begin{equation}\label{DGsmaD}
\scalebox{0.85}{\xymatrix{ 
\sD(\bm,\bp)\sma \sD(\bn,\bq) \ar[ddd]_{\smaD}   \ar[dr]^-{\ \ \al(g^{-1})^*\sma \be(g^{-1})^*} \ar[rrr]^-{\bP g\sma \bP g} &  & &    \sD(\bm,\bp)\sma \sD(\bn, \bq)  \ar[ddd]^{\smaD} \\
&  \sD(\bm,\bp)\sma \sD(\bn,\bq)  \ar[d] _{\smaD} \ar[r]^{g\sma g} & \sD(\bm,\bp)\sma \sD(\bn, \bq) \ar[d]^{\smaD}  \ar[ur]^-{\ga(g)_*\sma \de(g)_* } &  \\
 & \sD(\bm\bn, \bp\bq)  \ar[r]_-{g}  & \sD(\bm\bn, \bp\bq)  \ar[dr]_-{(\ga\spair \de)(g)_* }  & \\
 \sD(\bm\bn, \bp\bq) \ar[ur]_{(\al\spair \be)(g^{-1})^*}  \ar[rrr]_{\bP g}   &&& \sD(\bm\bn,\bp\bq)) \\} }
\end{equation} 
The central square commutes 
since $\sD$ is a $\GVCatp$-category of operators, making the displayed functors $\smaD$ internal to $G\sV_\bpt$ and therefore equivariant.
The left and right trapezoids commute because $\smaD$ is strict when 
composing with morphisms in $\PI$ according to condition \eqref{PPDDcond} of \autoref{pairprod}.
The upper and lower trapezoids commute by definition, as in \autoref{GD}.  

The pseudofunctoriality constraint 
\[\scalebox{0.8}{\xymatrix{
\bP\sD(\bn^\be, \bp^\ga)  \sma \bP\sD(\br^\zeta, \bs^\eta) \sma \bP\sD(\bm^\al, \bn^\be) \sma \bP \sD(\bq^\de, \br^\zeta) 
\ar[rr]^-{\smaD \sma \smaD} \ar[d]_{\id\sma t\sma  \id}^{\iso}   \ddrrtwocell<\omit>{<0>\    \vartheta} & &
\bP\sD(\bn  \br^{\be\spair\zeta}, \bp  \bs^{\ga\spair\eta}) \sma \bP\sD(\bm  \bq^{\al\spair\de}, \bn  \br ^{\be\spair\zeta})\ar[dd]^\circ\\
\bP\sD(\bn^\be,\bp^\ga )\sma  \bP\sD(\bm^\al, \bn^\be)\sma \bP \sD(\br^\zeta, \bs^\eta) \sma \bP\sD(\bq^\de, \br^\zeta) 
\ar[d]_{\circ \sma  \circ}  & & \\
\bP\sD(\bm^\al,\bp^\ga) \sma \bP\sD(\bq^\de,\bs^\eta) \ar[rr]_-{\smaD}  & & \bP\sD(\bm\bq^{\al\spair\de},\bp\bs^{\ga\spair\eta} ) }}
\]
is just that of $\sD$ at the corresponding objects with trivial action (see \autoref{beta0}).  The equivariance of $\vartheta$ with respect to the original action on $\sD$ and the conditions on $\vartheta$ from \autoref{condstarcomp} combine to show that $\vartheta$ prewhiskered with $\bP g$ is equal to $\vartheta$ postwhiskered with $\bP g$, as needed. 

The symmetry $\ta$ is prolonged similarly. The 1-cell at the object $(\bm^\al,\bp^\be)$ is the permutation $\ta_{m,p}$ considered as a 1-cell in $\bP\sD$. The pseudonaturality constraint $\hat{\tau}$ (see \autoref{symmetry})
is given by that on $\sD$. One needs to check that this $\sV_\bpt$-transformation is $G$-equivariant with respect to the prolonged action. This follows from the equivariance of $\hat{\ta}$ with respect to the original action on $\sD$, the compatibility of $\hat{\ta}$ with $\vartheta$, condition \eqref{PPDDcond} of \autoref{pairprod}, and the fact that $\hat{\ta}$ restricted to $\PI$ is the identity. 
\end{proof} 

The construction is functorial with respect to pseudo-commutative morphisms of pseudo-commutative categories of operators over $\sF$.  Theorems \ref{multithm}  and \ref{multithm2} have the following corollary.

\begin{cor}  If $\oO$ is a pseudo-commutative operad, then $\sD_G(\oO)$ is  pseudo-commutative category of operators over $\sF_G$.
\end{cor}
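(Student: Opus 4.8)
The plan is to deduce this directly from Theorems \ref{multithm} and \ref{multithm2}, with essentially no new work, by recognizing the equivariant setting as a specialization of the general framework of \autoref{sec:DAlg}. First I would recall, as explained in \autoref{GCats}, that for a finite group $G$ the category $G\sV$ is again cartesian closed and bicomplete, hence satisfies Assumptions \ref{ass1} and \ref{ass2}, and that $\GVCat$ is identified with $\VCat$ for $\sV$ replaced by $G\sV$. Therefore every definition and result of \autoref{sec:DAlg}---in particular the notion of a (reduced, pseudo-commutative) category of operators over $\sF$ and \autoref{multithm}---applies verbatim with $\sV = G\sV$.

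Next I would apply \autoref{multithm} with $\sV = G\sV$ to the reduced pseudo-commutative operad $\oO$ in $\GVCat$. This yields that $\sD(\oO)$ is a reduced pseudo-commutative category of operators over $\sF$, i.e., a pseudo-commutative $\GVCatp$-category of operators over $\sF$ in the sense of \autoref{pairprod}. I would then apply \autoref{multithm2} to $\sD = \sD(\oO)$ to conclude that the prolongation $\bP(\sD(\oO))$ is a pseudo-commutative $\GVCatp$-category of operators over $\sF_G$. Since $\sD_G(\oO)$ is by definition $\bP(\sD(\oO))$, this completes the argument.

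I do not expect a genuine obstacle here: all of the content lives in the two cited theorems, whose proofs (the combinatorial bookkeeping deferred to \autoref{Rpf} for \autoref{multithm}, and the equivariance verifications carried out in the proof of \autoref{multithm2}) do the real work. The only point requiring a little care is ensuring that the hypothesis of \autoref{multithm2} is genuinely met, namely that the pseudo-commutative structure on $\oO$ viewed as an operad in $\GVCat$ produces a pseudo-commutative structure on $\sD(\oO)$ that is $G$-equivariant in the appropriate sense---but this is automatic once \autoref{multithm} is read with $\sV = G\sV$, since then the $\sV$-transformations $\al_{j,k}$ and the induced constraints $\vartheta$ and $\hat{\ta}$ on $\sD(\oO)$ are $G$-equivariant by construction.
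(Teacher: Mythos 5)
Your proposal is correct and is exactly the paper's argument: the corollary is stated there as an immediate consequence of Theorems \ref{multithm} and \ref{multithm2}, with \autoref{multithm} read in $\GVCat$ (as licensed by \autoref{GCats}) and $\sD_G(\oO)$ identified with $\bP(\sD(\oO))$ by definition. Your closing remark about the $G$-equivariance of $\al_{j,k}$, $\vartheta$, and $\hat{\ta}$ being automatic when $\sV=G\sV$ is precisely the point the proof of \autoref{multithm2} relies on.
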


\subsection{Algebras and pseudoalgebras over categories of operators over $\sF_G$}
\label{DGpseudo}

Just as for categories of operators over $\sF$, Definitions \ref{strict}, 
\ref{Cpseudo}, and \ref{Ctran} specialize to 
define a 2-category of algebras, pseudomorphisms, and transformations for categories of operators $\sD_G$ over $\sF_G$.

\begin{notn}\label{defn:DGAlg} 
Let $\sD_G$ be a $\GVCatp$-$\mathbf{CO}$ over $\sF_G$. We denote by 
$\DGAlg$ the 2-category 
of strict $\sD_G$-algebras, $(\sD_G,\Pi_G)$-pseudo\-morphisms, and $\sD_G$-transformations. This 2-category was denoted by ${\DGAlg}_{\PI_G}$ in \autoref{CatCAlg}. Just as in \autoref{defn:DAlg} we fix $\cB=\PI_G$ and drop the subscript from the notation.
\end{notn} 

Again, we do not discuss general $\sD_G$-pseudoalgebras here, leaving such consideration for \cite{AddCat2}. However, we will need pseudoalgebras in the special case of $\sD_G=\sF_G$ starting in \autoref{sec:section}. Recall from \autoref{CatCAlg} that we have the 2-category $\psFGAlg$ of weak $\sF_G$-pseudoalgebras, weak $\sF_G$-pseudomorphisms, and $\sF_G$-transformations. 

\begin{rem}\label{ExplainWeak}
We comment on the choice of weak pseudoalgebras.
This choice is already essential nonequivariantly. Look back at \autoref{RoadMap}, but take $G=e$.   
With more effort, we could have started with $\oO$-pseudoalgebras, as defined in \cite{AddCat1}; the functor $\bR$ 
would then land in $\sD$-pseudoalgebras that are strict over $\PI$.
However, the section $\oursection \colon \sF\rtarr \sD$ (see \autoref{TheSection}) loses strictness with respect to $\PI$, as explained in \autoref{warn}, so that whether the 
domain of $\oursection^*$ is taken to be $\DAlg$ or some 2-category of $\sD$-pseudoalgebras, its target must still be $\psFAlg$.  It takes strict $\sD$-algebras only to weak $\sF$-pseudoalgebras.
\end{rem}

\subsection{The symmetric multicategory of $\sD_G$-algebras}
\label{sec:MultDG}

Let $\sD_G$ be a pseudo-commutative category of operators over $\sF_G$.  
We define the multicategory $\Mult(\sD_G)$ of $\sD_G$-algebras, which amounts to defining the $k$-ary morphisms.
The definition  is exactly like \autoref{MultiD}, hence we refer the reader there for details.  Again, we set it up to have its objects be $\sD_G$-algebras, although with only slightly more work we could equally well have set it up to have its objects be $\sD_G$-pseudoalgebras.  Remember that we write 
\[ \tha\colon \sD_G(\bm^\al,\bn^\be)\sma \aX(\bm^\al) \rtarr \aX(\bn^\be) \] 
for the action of $\sD_G$ on a $\sD_G$-algebra $\aX$.  For a $k$-tuple of finite $G$-sets $\bm_i^{\al_i}$,
we write $\bm^\al$ for the finite $G$-set with  $m = m_1\cdots m_k$ and $\al = \al_1\otimes \cdots \otimes \al_k$. 

Recall from \autoref{PairsPairs} that given $\sD_G$-algebras $\aX_1,\dots,\aX_k$, we have the external smash product $\aX_1\overline{\sma} \dots \overline{\sma} \aX_k$, which  is a $\sD_G^{\esma k}$-algebra. For a $\sD_G$-algebra $\aY$, we have the $\sD_G^{\esma k}$-pseudoalgebra $\sY \circ \smaD^k$, which is strict over 
$\PI^{\esma k}_G$.

\begin{defn}\label{MultiDG} 
We define a symmetric multicategory $\Mult(\sD_G)$ of $\sD_G$-algebras. The objects are the $\sD_G$-algebras. 
For $\sD_G$-algebras $\aX_i$, $1\leq i\leq k$, and $\aY$, a $k$-ary morphism
$\ul{\aX} \rtarr \aY$ consists of a $(\sD_G^{\esma k},\PI_G^{\esma k})$-pseudomorphism
\[\squiggly{F\colon \aX_1\overline{\sma} \dots \overline{\sma} \aX_k}{\aY\circ \smaD^k}.\] 
Composition and the symmetric action are specified as in \autoref{MultiD}.
\end{defn}

Unpacking the definition, a $k$-ary morphism $F=(F,\de)$ consists of $G\sV_\bpt$-functors 
\[ F\colon \aX_1(\bm_1^{\al_1}) \sma \cdots \sma \aX_k(\bm_k^{\al_k}) \rtarr \aY(\bm^\al), \]
together with invertible $G\sV_\bpt$-transformations $\de$ as  in the following diagram, in which $1\leq i\leq k$.

\begin{equation}\label{deltaG}
\scalebox{0.9}{\xymatrix @R=5ex{
*-<1ex>{\bigwedge\limits_{i} \sD_G(\bm_i^{\al_i},\bn_i^{\be_i})  \sma \bigwedge\limits_i \aX_i(\bm_i^{\al_i})} 
\ar[]!<15.5ex,0pt>;[rr(0.43)]^-{\id\sma F} 
\ar[]!<0ex,-1ex>;[d(0.65)]_(.45){t}^(0.45)\iso
\ddrrtwocell<\omit>{<0>\quad  \  \delta} & & 
*-{\bigwedge\limits_{i}  \sD_G(\bm_i^{\al_i}, \bn_i^{\be_i})\sma  \aY(\bm^{\al})}
 \ar[d]^(0.35){\smaD\sma \id} \\
*-{\raisebox{-3ex}{$\bigwedge\limits_{i}  \sD_G(\bm_i^{\al_i}, \bn_i^{\be_i}) \sma \aX_i(\bm_i^{\al_i})$}} \ar[d(0.7)]_(0.7){\bigwedge\limits_i \tha}  &  &
\raisebox{1pt}{$ \sD_G(\bm^{\al},\bn^{\be})\sma \aY(\bm^\al)$}  \ar[d]^(0.45){\tha}\\
*+{\bigwedge\limits_{i} \aX_i(\bn_i^{\be_i})} \ar[rr]_{F}  & & \aY(\bn^{\be}). 
\\} }
\end{equation}

The strictness over $\Pi_G^{\esma k}$ is encoded by requiring $\de$ to be the identity if 
all factors $\sD_G$ are restricted to $\PI_G$.
These transformations must satisfy coherence with respect to composition in $\sD_G^{\esma k}$, details of which can be found in \autoref{cohMultD}.

We have the following analogue of \autoref{pullingback}; its proof is essentially the same.

\begin{thm}\label{pullingbackG}
 Let $(\Psi,\mu) \colon \sD_G \rtarr \sE_G$ be a map of pseudo-commutative categories of operators over $\sF_G$.  Then pulling back along $\Psi$ induces a (symmetric) multifunctor
 \[\Psi^* \colon \Mult(\sE_G) \rtarr \Mult(\sD_G).\] 
 \end{thm}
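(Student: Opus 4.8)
The plan is to mimic the proof of \autoref{pullingback} almost verbatim, since the only difference is that the ambient category has been upgraded from $\VCatp$ to $\GVCatp$, and the only thing one needs to check is that the construction of $\Psi^*$ remains equivariant at each stage. Given a $k$-ary morphism $F = (F,\de)\colon (\aX_1,\dots,\aX_k)\rtarr \aY$ in $\Mult(\sE_G)$, define $\Psi^*(F)$ to be the pasting
\[
\xymatrixcolsep{4pc}\xymatrix{
\sE_G^{\sma k} \ar[r]^-{\Psi^{\esma k}} \ar@{~>}[d]_{\smaD^k} \drtwocell<\omit>{<0>\quad   \mu_k}  & \sD_G^{\sma k} \ar[r]^-{\aX_1 \sma \cdots \sma \aX_k} \ar@{~>}[d]_{\smaD^k} \drtwocell<\omit>{<0>\quad   F} & \CatGVp ^{\sma k}\ar[d]^{\sma^k}\\
\sE_G\ar[r]_{\Psi} & \sD_G \ar[r]_-{\aY} & \CatGVp,
}
\]
where $\mu_k$ is the composite of instances of $\mu$ built by iterating the associativity of $\mu$ and is well defined because $(\Psi,\mu)$ is a symmetric monoidal $2$-functor (\autoref{permfunctor}) in the sense required by the equivariant version of \autoref{pseudocommap}. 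Unpacking, $\Psi^*(F)$ has underlying $G\sV_\bpt$-functor on $\aX_1(\bm_1^{\al_1})\sma\cdots\sma \aX_k(\bm_k^{\al_k})$ the composite of $F$ with $\Psi$-structure maps, and distributivity $2$-cells obtained by whiskering $\de$ with the appropriate instances of $\mu$.

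The steps are then: (1) check $\Psi^*(F)$ is again a $(\sD_G^{\esma k},\PI_G^{\esma k})$-pseudomorphism in the sense of \autoref{MultiDG} --- here the condition that $\mu$ restricts to the identity on $\PI_G\sma\PI_G$ (part of the definition of a map of pseudo-commutative categories of operators over $\sF_G$, \autoref{pseudocomDG}) is exactly what guarantees that the whiskered $2$-cells stay the identity when all $\sD_G$-factors are restricted to $\PI_G$; the coherence of $\mu_k$ with composition gives the coherence of the $\de$'s with respect to composition in $\sD_G^{\esma k}$. (2) Verify $\Psi^*(\id_{\aX}) = \id_{\Psi^*\aX}$, which is immediate since $\Psi$ preserves identities and $\mu$ is the identity on identity $1$-cells. (3) Verify compatibility with multicategory composition: this follows from the associativity axiom for $\mu$ (\autoref{permfunctor}(ii)) together with the fact that pasting is functorial, exactly as in \autoref{pullingback}. (4) Verify $\SI_k$-equivariance of $\Psi^*$, i.e. $\Psi^*(F\si) = (\Psi^*F)\si$; this uses axiom (iii) of \autoref{permfunctor}, the compatibility of $\mu$ with the symmetry $\ta$, to slide the $t_\si$ and $\ta_\si$ appearing in the definition of $F\si$ (\autoref{MultiDG}) past the $\mu_k$ pasting.

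The main (and really only) new content relative to \autoref{pullingback} is verifying that at every stage the relevant $G\sV_\bpt$-functors and $G\sV_\bpt$-transformations are genuinely $G$-equivariant, i.e. that the $G$-action prolonged onto $\sD_G$ and $\sE_G$ (as in \autoref{DtoDG}) is respected by $\Psi$, $\mu$, and hence by $\Psi^*(F)$. Since $(\Psi,\mu)$ is by hypothesis a map of $\GVCatp$-$\mathbf{CO}$s over $\sF_G$ --- so $\Psi$ is a $G\sV$-$2$-functor over $\sF_G$ and under $\PI_G$, and $\mu$ is a $G\sV_\bpt$-pseudotransformation --- this equivariance is built in, and the check amounts to observing that the prolonged $G$-action on morphism objects is conjugation by elements of $\PI$ (see \autoref{GD}), which $\mu$ commutes with precisely because $\mu$ is the identity on $\PI_G\sma\PI_G$. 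I expect this equivariance bookkeeping to be the only place where care is needed; the rest is a transcription of the proof of \autoref{pullingback}, and I would simply say so rather than repeating it. As in \autoref{pullingbackG}'s statement, $\Psi^*$ is symmetric when $(\Psi,\mu)$ is, which it is by hypothesis.
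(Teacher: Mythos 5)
Your proposal is correct and follows exactly the route the paper takes: the paper proves \autoref{pullingbackG} simply by noting that its proof is essentially the same as that of \autoref{pullingback}, namely defining $\Psi^*(F)$ by pasting with the iterated constraint $\mu_k$ and checking identities, composition, and the $\SI_k$-action via the axioms of \autoref{pseudocommap} (in the form of \autoref{pseudocomDG}), with equivariance automatic because all data live in $G\sV_\bpt$. Your additional remarks on the $G$-equivariance bookkeeping are consistent with, and slightly more explicit than, what the paper records.
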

 
\subsection{The symmetric multifunctor $\bP$}
\label{subsec:prolong}

Let $\sD$ be a $\GVCatp$-category of operators over $\sF$ and
$\sD_G =  \bP\sD$ be its prolonged category of operators over $\sF_G$, as defined in \autoref{DtoDG}.
Define 
$\bU\colon \DGAlg \rtarr \DAlg$
by restricting along the inclusion $\sD\subset \sD_G$. Then $\bU$ has a left adjoint 
prolongation functor on the level of algebras,
\[\bP: \DAlg \rtarr \DGAlg.\]
For the subcategory of {\it strict} maps, this is the categorical analogue of the prolongation functor from
\cite[\S4.3]{MMO} or \cite{Shim}, and it gives an equivalence of categories.
We will discuss the extension of $\bP$ to $(\sD,\Pi)$-pseudo\-morphisms 
in \autoref{PMultiFun} below.
  
 On objects, 
 $\bP(\aX)(\bn^\al)\in \GVCatp$ is defined  by letting $\bP(\aX)(\bm^\al)$ be a copy of $\aX(\bm)$, but with the action of 
$g\in G$, denoted $\bP g$ when $\al$ is understood, defined to be the composite
\[
\xymatrix@1{ \aX(\bm) \ar[r]^-{g} & \aX(\bm) \ar[r]^-{\al(g)_*} & \aX(\bm).\\} 
\]
Here $\al(g)_*\colon \aX(\bm)\rtarr \aX(\bm)$ is given by the action of $\PI$ on $\aX$.  
The enriched functor $\aX$ takes the morphisms of $\Pi$, which are $G$-fixed, to $G$-equivariant functors. It follows that 
 we can equivalently write $\bP g$ as the composite
\begin{equation}\label{GDX2} 
\xymatrix@1{ \aX(\bm) \ar[r]^-{\al(g)_*}   & \aX(\bm)\ar[r]^-{g} & X(\bm).\\} 
\end{equation}

The action $G\sV_{\bpt}$--functor 
\[  \bP\tha \colon  \sD_G (\bm^\al, \bn^\be)\sma \bP \aX(\bm^\al) \rtarr \bP\aX(\bn^\be) \] 
is defined to be 
\[ \tha\colon \sD(\bm,\bn) \sma \aX(\bm) \rtarr  \aX(\bn). \]
  The following diagram shows that 
$\bP\tha$ is equivariant because $\tha$ is equivariant, as displayed in the middle square.  
\[
\scalebox{0.9}{\xymatrix{ 
\sD(\bm,\bn)\sma \aX(\bm) \ar[ddd]_{\tha}   \ar[dr]^-{\al(g^{-1})^*\sma \al(g)_*} \ar[rrr]^-{\bP g\sma \bP g} &  & &    \sD(\bm,\bn)\sma \aX(\bm)  \ar[ddd]^{\tha} \\
&  \sD(\bm,\bn)\sma \aX(\bm)  \ar[d] _{\tha} \ar[r]^{g\sma g} & \sD(\bm,\bn)\sma \aX(\bm) \ar[d]^{\tha}  \ar[ur]^-{\be(g)_*\sma\id} &  \\
 & \aX(\bn)  \ar[r]_-{g}  & \aX(\bn)  \ar[dr]_-{\be(g)_*}  & \\
 \aX(\bn) \ar@{=}[ur]   \ar[rrr]_{\bP g}   &&& \aX(\bn) \\}}
\]
Since the action $\tha$ is compatible with composition in $\sD$, the left and right trapezoids commute, the left one using that $\al(g^{-1})_*\com \al(g)_*= \id$.
 
With these preliminaries, we have the following result.

\begin{thm}\label{PMultiFun} The functor $\bP \colon \DAlg \rtarr \DGAlg$ extends to a (symmetric) multifunctor
\[ \bP \colon \Mult(\sD) \rtarr \Mult(\sD_G).\]
\end{thm}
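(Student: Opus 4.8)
The plan is to mirror, in the equivariant setting, the construction of $\bR$ as a symmetric multifunctor given in \autoref{Multiop2}, but now starting from the prolongation functor $\bP$ on algebras instead of from the $2$-functor $\bR$. Since $\sD_G = \bP\sD$ is obtained from $\sD$ by reindexing the hom-categories and twisting the $G$-action by conjugation (\autoref{DtoDG}), the $k$-ary morphisms of $\Mult(\sD_G)$ are, on underlying $\sV_\bpt$-$2$-categories, the same data as $k$-ary morphisms of $\Mult(\sD)$; the only extra content is $G$-equivariance. So, given a $k$-ary morphism $(F,\de)\colon(\aX_1,\dots,\aX_k)\rtarr\aY$ in $\Mult(\sD)$, I would define $\bP(F,\de)$ by setting its component
\[ \bP\aX_1(\bm_1^{\al_1})\sma\cdots\sma\bP\aX_k(\bm_k^{\al_k})\rtarr\bP\aY(\bm^{\al}) \]
to be the underlying map of $F\colon\aX_1(\bm_1)\sma\cdots\sma\aX_k(\bm_k)\rtarr\aY(\bm)$, and taking $\bP\de$ to be the underlying $\sV_\bpt$-transformation of $\de$, now viewed as a $G\sV_\bpt$-transformation via the prolonged actions. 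The first thing to check is that these maps are genuinely $G$-equivariant with respect to the prolonged $G$-actions on source and target. This is a diagram chase of exactly the same shape as the one proving $\bP\tha$ is equivariant in \autoref{subsec:prolong}, combined with the diagram \autoref{DGsmaD} for the equivariance of $\smaD$ on $\bP\sD$: one inserts the prewhiskerings $\al_i(g^{-1})^*$ and postwhiskering $\be(g)_*$, uses that $F$ commutes with permutation actions because it is a $\sV_\bpt$-functor defined on smash products (built in via the $s_i$/strictness conventions, cf.\ \autoref{keyG}), and uses that $\de$ restricted to $\PI^{\esma k}$ is the identity together with the conditions on $\vartheta$ and $\hat\ta$ recorded in \autoref{condstarcomp}.

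Once equivariance of the component data is established, the multifunctor axioms are essentially inherited. Preservation of identities is immediate since $\bP(\id_\aX)$ is the identity by construction. Preservation of composition follows by comparing the pasting diagram \autoref{compMultiD} for $\Mult(\sD)$ with the corresponding one for $\Mult(\sD_G)$: both are built from $\smaD^k$, $\smaD^{j_i}$, the associativity isomorphism for $\sma$ on $\CatVp$, and the components of $F$ and the $E_i$, and under prolongation each of these pieces goes to its counterpart (the hom-categories and $\smaD$ of $\bP\sD$ being literally those of $\sD$), so the two pastings agree. Compatibility with the $\SI_k$-action — i.e.\ that $\bP$ is \emph{symmetric} — is the analogue of the last paragraph of the proof of \autoref{ORExtend}: one checks $\bP(F\si) = (\bP F)\si$ by comparing the defining diagram \autoref{ObjSym} with its prolonged version (noting $\ta_\si$ is the prolongation of the permutation $\tau_\si$), and the equality of the $\de$-pasting diagrams \autoref{schematic} on both sides, which rests on the pseudo-commutativity constraints $\hat\ta$ of $\bP\sD$ being the prolongations of those of $\sD$, as established in \autoref{multithm2}.

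I expect the main obstacle to be the verification that the distributivity $2$-cells $\bP\de$ really are $G$-equivariant and satisfy the coherence conditions of \autoref{cohMultD} with respect to composition in $\sD_G^{\esma k}$ — not because anything new happens conceptually, but because one must carefully track how the prolonged $G$-action interacts simultaneously with the prewhiskering/postwhiskering by $\al_i$, $\be_i$, the pseudofunctoriality constraint $\vartheta$ of $\smaD$ on $\bP\sD$, and the symmetry constraint $\hat\ta$. The key input making this go through is precisely the strictness with respect to $\PI$ (hence with respect to the image of each $\al_i\colon G\rtarr\PI(\bm_i,\bm_i)$) that is built into all our structures, highlighted in \autoref{keyG}: it is what allows the $G$-twisting on each coordinate to be absorbed without disturbing the invertible $2$-cells. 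I would present the equivariance check as the one displayed diagram and then say that the remaining axioms are inherited from \autoref{ORExtend} and \autoref{MultiD}, leaving the routine pasting verifications to the reader as is done elsewhere in the paper.
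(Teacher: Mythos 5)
Your proposal is correct and follows essentially the same route as the paper: define $\bP(F,\de)$ to be $(F,\de)$ on underlying $\sV_\bpt$-data, verify $G$-equivariance for the prolonged actions by a whiskering argument whose key input is the strictness of $F$ and $\de$ over $\PI$ (together with equivariance for the original action), and then observe that composition, identities, and the $\SI_k$-action are preserved because the underlying data is unchanged. The paper's verification is, if anything, slightly leaner than what you sketch (it does not need the full equivariance diagram for $\smaD$ or the $\vartheta$, $\hat\ta$ constraints beyond strictness over $\PI$), but the argument is the same.
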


\begin{proof}  
Since the values of $\bP \aX$ at objects in $\sD_G$ are the values of $\aX$ but with a new $G$-action, the idea of the proof is to show that the data of a map of $\sD$-algebras remains $G$-equivariant with respect to the new action. 

Thus let  $(F,\de)\colon (\aX_1,\dots,\aX_k) \rtarr \aY$ be a $k$-ary morphism of $\sD$-algebras. This means that we are given
$G\sV_{\bpt}$-functors 
\[ F\colon \aX_1(\bm_1) \sma \cdots \sma  \aX_k(\bm_k)  \rtarr \aY(\bm) \]
and invertible $G\sV_\bpt$-transformations $\de$ is as in \autoref{delta}.  We define $\bP(F,\de)=(\bP F,\bP\de)$ as follows. For $\bm_1^{\al_1},\dots, \bm_k^{\al_k}$, with $m=m_1\cdots m_k$ and $\al = \al_1\otimes \cdots \otimes\al_k$, the map
\[  \bP F\colon \bP\aX_1(\bm_1^{\al_1}) \sma \cdots \sma \bP\aX_k(\bm_k^{\al_k}) \rtarr  \bP\aY(\bm^\al),\] 
is given by $F$. Similarly, we define $\bP\de$ in the diagram \autoref{deltaG} to be $\de$ in the underlying
diagram \autoref{delta}. 

We must check that these give $G\sV_\bpt$-functors and $G\sV_\bpt$-transformations, respectively, that is, that they are equivariant with respect to the prolonged action.
When $k=1$, this gives the promised definition of $\bP$ on pseudomorphisms of $\sD$-algebras.
We first check that $\bP F$ is equivariant. For ease of notation, we consider the case $k=2$. 
Recall the $G$-action on $\bP \aX(\bm^\al)$ given in \autoref{GDX2}.
Then to check that
\[ \bP\aX_1(\bm_1^{\al_1}) \sma \bP\aX_2 (\bm_2^{\al_2}) \xrtarr{\bP F} \bP\aY(\bm^\al)\]
is $G$-equivariant, it suffices to show that the diagram
\[ \xymatrix{
\aX_1(\bm_1) \sma \aX_2(\bm_2) \ar[r]^-F \ar[d]_{\al_1(g)_*\sma \al_2(g)_*} & \aY(\bm) \ar[d]^{\al(g)_*} \\
\aX_1(\bm_1) \sma \aX_2(\bm_2) \ar[r]^-F \ar[d]_{g\sma g} & \aY(\bm) \ar[d]^{g} \\
\aX_1(\bm_1) \sma \aX_2(\bm_2) \ar[r]_-F  & \aY(\bm) 
}\]
commutes.
The top square commutes since $F$ restricts to a strict transformation of $\Pi$-functors (see \autoref{Cpseudo}). The bottom square commutes since $F$ is equivariant with respect to the original action.

It remains to check that $\bP\de$ is equivariant with respect to the prolonged action. By \autoref{GVvsV}, this is done by proving that prewhiskering $\de$ with $\bP g$ is equal to postwhiskering it.
We illustrate by taking $k=1$.
Thus we return to \autoref{Cpseudo}.
Let  $\aX$ and $\aY$ be $\sD$-algebras and let  $F\colon \squiggly{\aX}{\aY}$ be a $(\sD,\Pi)$-pseudo\-morphism.  
The 2-cell
\[  \scalebox{0.9}{
\xymatrix{
\sD(\bm,\bn)\sma \aX(\bm) \ar[r]^{1\sma F} \ar[d]_{\be(g)_*\al(g^{-1})^*}^{\sma \al(g)_*} & \sD(\bm,\bn)\sma \aY(\bm)  \ar[d]_{\be(g)_*\al(g^{-1})^*}^{\sma \al(g)_*} \\
\sD(\bm,\bn)\sma \aX(\bm) \ar[r]^{1\sma F} \ar[d]_{g\sma g} & \sD(\bm,\bn)\sma \aY(\bm) \ar[d]^{g\sma g} \\
\sD(\bm,\bn)\sma \aX(\bm) \ar[r]^{1\sma F} \ar[d]_\tha \drtwocell<\omit>{ \hspace{.5em} \de}
& \sD(\bm,\bn)\sma \aY(\bm) \ar[d]^\tha \\
\aX(\bn) \ar[r]_F & \aY(\bn)
} 
\quad
\raisebox{-15ex}{\text{is}}
\xymatrix{
\sD(\bm,\bn)\sma \aX(\bm) \ar[r]^{1\sma F} \ar[d]_{\be(g)_*\al(g^{-1})^*}^{\sma \al(g)_*} & \sD(\bm,\bn)\sma \aY(\bm)  \ar[d]_{\be(g)_*\al(g^{-1})^*}^{\sma \al(g)_*} \\
\sD(\bm,\bn)\sma \aX(\bm) \ar[r]^{1\sma F} \ar[d]_{\tha} \drtwocell<\omit>{ \hspace{.5em} \de} & \sD(\bm,\bn)\sma \aY(\bm) \ar[d]^{\tha} \\
 \aX(\bn) \ar[r]_{ F} \ar[d]_g 
&  \aY(\bn) \ar[d]^g \\
\aX(\bn) \ar[r]_F & \aY(\bn),
}
}
\]
by the equivariance of $\de$ with respect to the original action, and this also agrees with the 2-cell
\[  \scalebox{0.9}{
\xymatrix{
\sD(\bm,\bn)\sma \aX(\bm) \ar[r]^{1\sma F} \ar[d]_{\tha} \drtwocell<\omit>{ \hspace{.5em} \de}& \sD(\bm,\bn)\sma \aY(\bm)  \ar[d]^{\tha} \\
 \aX(\bn) \ar[r]_{ F} \ar[d]_{\be(g)_*}  & \aY(\bn) \ar[d]^{\be(g)_*} \\
 \aX(\bn) \ar[r]_{ F} \ar[d]_g 
&  \aY(\bn) \ar[d]^g \\
\aX(\bn) \ar[r]_F & \aY(\bn)
}
}
\]
by the compatibility of $\de$ with composition and the strictness of $\de$ with respect to $\Pi$.
The equivariance of $\bP\de$ for $k>1$ is deduced from the equivariance of $\de$ in a similar way. The diagrams are larger, but the verification is essentially the same.

Since $\bP F$ and $\bP \de$ are just $F$ and $\de$ on the underlying $\sV_\bpt$-categories, it follows that $\bP$ respects composition, the identity, and the $\SI$-action.
\end{proof}

\begin{defn}
We define $\bR_G$ to be the composite
\[\xymatrix{ \OAlg \ar[dr]_{\bR_G}  \ar[r]^\bR & \DAlg \ar[d]^{\bP} \\
 & \DGAlg }\]
\end{defn}

\begin{cor}\label{RGMulti}
The functor $\bR_G\colon \OAlg \rtarr \DGAlg$ extends to a symmetric multifunctor.
\end{cor}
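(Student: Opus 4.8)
The plan is to observe that this is an immediate consequence of the two main results of this section and the preceding one, since $\bR_G$ is by definition the composite $\bP\circ\bR$. First I would record that the relevant multicategories are all in place: because $\oO$ is a reduced pseudo-commutative operad, \autoref{multithm} equips the associated category of operators $\sD = \sD(\oO)$ over $\sF$ with a pseudo-commutative structure, and \autoref{multithm2} then equips its prolongation $\sD_G = \sD_G(\oO) = \bP\sD$ over $\sF_G$ with one as well; hence the multicategories $\Mult(\oO)$, $\Mult(\sD)$, and $\Mult(\sD_G)$ are defined as in Definitions \ref{MultiO}, \ref{MultiD}, and \ref{MultiDG}. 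By \autoref{ORExtend}, the $2$-functor $\bR\colon \OAlg \rtarr \DAlg$ extends to a symmetric multifunctor $\Mult(\oO)\rtarr \Mult(\sD)$; by \autoref{PMultiFun}, the $2$-functor $\bP\colon \DAlg \rtarr \DGAlg$ extends to a symmetric multifunctor $\Mult(\sD)\rtarr \Mult(\sD_G)$.

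It then remains only to invoke the elementary fact that the composite of two symmetric multifunctors is again a symmetric multifunctor. On objects one composes the underlying assignments on $\oO$-algebras; on $k$-ary morphisms, writing $\ul{\aA}$ for a tuple $(\aA_1,\dots,\aA_k)$ of $\oO$-algebras and $\bR\ul{\aA}$ for $(\bR\aA_1,\dots,\bR\aA_k)$, one takes the composite
\[
\Mult(\oO)_k(\ul{\aA};\aB) \rtarr \Mult(\sD)_k(\bR\ul{\aA};\bR\aB) \rtarr \Mult(\sD_G)_k(\bP\bR\ul{\aA};\bP\bR\aB),
\]
and compatibility with units, with the multicategorical composition $\ga$, and with the $\SI_k$-actions is inherited at each step from the corresponding properties of $\bR$ and $\bP$ established in Theorems \ref{ORExtend} and \ref{PMultiFun}. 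Hence $\bR_G = \bP\bR$ extends to a symmetric multifunctor $\Mult(\oO)\rtarr \Mult(\sD_G)$ whose underlying $2$-functor is $\bR_G\colon \OAlg \rtarr \DGAlg$, which is exactly the assertion.

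There is no genuine obstacle here: all of the substantive work lies in \autoref{ORExtend} and \autoref{PMultiFun} (and, behind them, in the pseudo-commutativity statements \ref{multithm} and \ref{multithm2}), so that the proof of the corollary is just the bookkeeping that the two multifunctors compose. The only point worth a sentence of care is that the category of operators prolonged by $\bP$ agrees with the one produced from $\oO$ by the construction used for $\bR$, i.e. that $\sD_G(\oO) = \bP(\sD(\oO))$, which is how $\sD_G(\oO)$ was defined.
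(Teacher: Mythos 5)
Your proposal is correct and is exactly the argument the paper intends: the corollary is stated without separate proof precisely because it follows immediately from \autoref{ORExtend} and \autoref{PMultiFun} together with the definition $\bR_G = \bP\bR$ and the fact that symmetric multifunctors compose. Your added remark that $\sD_G(\oO)=\bP(\sD(\oO))$ by definition (with the pseudo-commutative structures supplied by Theorems \ref{multithm} and \ref{multithm2}) is the right point of care and matches the paper's setup.
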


\section{From $\sD_G$-algebras to $\sF_G$-pseudoalgebras} 
\label{sec:section}

We recall the notion of an $E_\infty$ $G$-operad in $G\Top$ and in $\Cat(G\Top)$ in \autoref{sec:Einfty}. For a category of operators $\sD_G$ arising from a chaotic $E_\infty$ $G$-operad $\oO$, we produce a pseudofunctor $\oursectionG\colon \squiggly{\sF_G }{\sD_G}$ that is a section to $\xi_G$  in \autoref{TheSection}. 
Finally, in \autoref{sectMulti}, we show that pulling back along $\oursectionG$ defines a symmetric multifunctor from $\sD_G$-algebras to $\sF_G$-pseudoalgebras.

Throughout this section, we restrict attention to the topological case $\sV=\Top$ and thus $G\sV = G\Top$.  

\subsection{$E_{\infty}$ $G$-operads}\label{sec:Einfty}

So far, our chaotic operads have been quite general.  We now restrict attention to a chaotic $E_{\infty}$ $G$-operad $\oO$ of $G\Top$-categories and its associated category of operators $\sD_G = \sD_G(\oO)$ over $\sF_G$.

\begin{defn}\label{einfty}
An operad $\oO$ in $G\Top$ is an \ourdefn{$E_\infty$ $G$-operad} if for all $n\geq 0$ and all subgroups $\LA$ of $G\times \SI_n$, the fixed point space $\oO(n)^\LA$ is contractible if $\LA \cap \SI_n=\{e\}$ and is empty otherwise. 

An operad $\oO$ in $\Cat(G\Top)$ is an \ourdefn{$E_\infty$ $G$-operad} if the operad $B\oO$ obtained by applying the classifying space functor levelwise is an $E_\infty$ $G$-operad of $G$-spaces.
\end{defn}

The condition on fixed-points implies that for an $E_\infty$ $G$-operad in $G\Top$, the space $\oO(n)$ is a universal principal $(G,\SI_n)$-bundle. Algebras over $E_\infty$ $G$-operads, are, up to group completion, equivariant infinite loop spaces with deloopings with respect to all finite-dimensional $G$-representations, and thus give rise to genuine $G$-spectra. For more background and examples we refer the reader to \cite[Section~2.1]{GMPerm}. 

The following result shows how chaotic categories are useful in this context.

\begin{prop}
 Let $\oO$ be a chaotic operad in $\Cat(G\Top)$. Then $\oO$ is an $E_\infty$ $G$-operad if and only if for all $n\geq 0$ and all subgroups $\LA$ of $G\times \SI_n$, the fixed point object space $(\ob\oO(n))^\LA$ is non-empty if $\LA \cap \SI_n=\{e\}$ and is empty otherwise.
\end{prop}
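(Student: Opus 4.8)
The plan is to reduce the statement about the $E_\infty$ condition on a chaotic operad $\oO$ in $\Cat(G\Top)$ to a statement about the fixed points of the object spaces, using the interaction between the classifying space functor $B$, fixed points, and the chaotic construction. Recall that by \autoref{einfty}, $\oO$ is an $E_\infty$ $G$-operad if and only if $B\oO$ is an $E_\infty$ $G$-operad of $G$-spaces, i.e.\ for all $n$ and all $\LA\leq G\times\SI_n$, the space $(B\oO(n))^\LA$ is contractible when $\LA\cap\SI_n=\{e\}$ and empty otherwise.

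First I would observe that $B$ commutes with fixed points: since $\LA$ acts on the internal category $\oO(n)$ (via the $G$-action and the $\SI_n$-action restricted to $\LA$), and since taking the classifying space of an internal category and taking $\LA$-fixed points are both limit-type constructions that commute, we have $(B\oO(n))^\LA \cong B\big((\oO(n))^\LA\big)$, where $(\oO(n))^\LA$ is the fixed-point internal category with object space $(\ob\oO(n))^\LA$ and morphism space $(\mor\oO(n))^\LA$. This identification is standard but should be cited or briefly justified from the fact that $B$ is built levelwise out of products, which commute with the fixed-point functor. Next, I would use that a subcategory (or in this case, the fixed-point subcategory) of a chaotic category is again chaotic: if $\mor\oO(n)\xrightarrow{(S,T)}\ob\oO(n)\times\ob\oO(n)$ is an isomorphism, then restricting to $\LA$-fixed points gives an isomorphism $(\mor\oO(n))^\LA \cong (\ob\oO(n))^\LA\times(\ob\oO(n))^\LA$, so $(\oO(n))^\LA$ is a chaotic $G\Top$-category (in fact a chaotic space-level category). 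The key point, recalled from \cite[\S1.2]{AddCat1}, is that the classifying space of a chaotic category on an object space $Y$ is homotopy equivalent to $Y$ when $Y$ is nonempty, and is empty when $Y$ is empty; more precisely, $B$ of the chaotic category on $Y$ is contractible if $Y\neq\emptyset$ and empty if $Y=\emptyset$.

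Putting these together: $(B\oO(n))^\LA \simeq B\big((\oO(n))^\LA\big)$, and since $(\oO(n))^\LA$ is chaotic on the space $(\ob\oO(n))^\LA$, its classifying space is contractible exactly when $(\ob\oO(n))^\LA$ is nonempty and empty exactly when $(\ob\oO(n))^\LA$ is empty. Therefore the $E_\infty$ condition on $B\oO$ — contractibility when $\LA\cap\SI_n=\{e\}$ and emptiness otherwise — is equivalent to the condition that $(\ob\oO(n))^\LA$ be nonempty when $\LA\cap\SI_n=\{e\}$ and empty otherwise. This is exactly the claimed characterization.

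The main obstacle, such as it is, is making the identification $(B\oO(n))^\LA\cong B((\oO(n))^\LA)$ precise and confirming that the chaotic-category-to-classifying-space statement ("$B$ of chaotic on $Y$ is contractible iff $Y\neq\emptyset$") is available in the stated form — both are essentially bookkeeping facts about internal categories, simplicial objects, and the commutation of $B$ with the limits defining fixed points, and both are collected in \cite[\S1.2]{AddCat1}, so I would simply invoke them. There is no real analytic or combinatorial difficulty; the proof is a short chain of equivalences once these two ingredients are in hand.
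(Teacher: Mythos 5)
Your proposal is correct and is essentially the paper's own argument: the paper also notes that $\oO(n)^\LA$ is chaotic whenever $\oO(n)$ is and invokes the fact (from \cite[Remark 1.15]{AddCat1}) that the classifying space of a non-empty chaotic $\sU$-category is contractible, with the commutation of $B$ with fixed points used implicitly. Your write-up merely makes the step $(B\oO(n))^\LA \cong B(\oO(n)^\LA)$ explicit, which is fine but not a different route.
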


\begin{proof}
 As noted in \cite[Remark 1.15]{AddCat1}, the classifying space of a non-empty chaotic $\sU$-category is contractible. Thus, the statement follows by noting that if $\oO$ is chaotic, then $\oO(n)^\LA$ is also chaotic.
\end{proof}

\subsection{The section map $\oursectionG$ from $\sF_G$ to $\sD_G$} 

Recall that  $\sD_G$ comes equipped with functors 
\[\iota_G\colon \PI_G\rtarr \sD_G \ \ \text{and} \ \  \xi_G\colon \sD_G\rtarr \sF_G\]
such that $\xi_G\circ\iota_G$ is the inclusion. We here define an (equivariant) section 
\[\xymatrix@1{\oursectionG \colon \sF_G\ar@{~>}[r] &\sD_G}\]
to $\xi_G$.

\begin{defn}\label{pseudoCO}
A \ourdefn{pseudomorphism}
$\squiggly{\nu\colon \sD_G}{\sE_G}$ of $\mathbf{CO}$s over $\sF_G$ is a $G\sU_\bpt$-pseudofunctor  over $\sF_G$ and under $\PI_G$.
\end{defn}

\begin{prop}\label{TheSection}
Let $\oO$ be a chaotic $E_{\infty}$ $G$-operad in $\GUCat$ and let $\sD_G = \sD_G(\oO)$.
Then there exists a pseudomorphism 
\[\xymatrix@1{\oursectionG \colon \sF_G\ar@{~>}[r] &\sD_G}\]
of $\mathbf{CO}$s over $\sF_G$ and an invertible $G\sU_\bpt$-pseudotransformation 
\[ \xymatrix{
\sD_G  \ar[dr]_{\xi_G} \ar[rr]^{\id}  \rrtwocell<\omit>{<3>  \chi}  & & \sD_G. \\
& \sF_G  \ar@{~>}[ur]_{\oursectionG}& \\} \]
\end{prop}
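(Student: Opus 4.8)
The plan is to construct $\oursectionG$ one morphism-space at a time, using chaoticity of $\oO$ to supply the coherence data essentially for free. Recall that for a morphism $\ph\colon \bm^\al\rtarr \bn^\be$ in $\sF_G$ we have
\[ \sD_G(\bm^\al,\bn^\be) = \sD(\bm,\bn) = \coprod_{\psi\in\sF(\bm,\bn)}\ \prod_{1\leq j\leq n}\oO(\psi_j), \]
and $\xi_G$ is the projection recording $\psi$. So a section must pick, for each $\ph$, an object in the $\sV$-category $\prod_j \oO(\ph_j)$, i.e. an object of each $\oO(\ph_j)$. Since $\oO$ is chaotic and $E_\infty$, every $\oO(k)$ is a (nonempty, for $k\leq$ arbitrary; in particular all relevant) chaotic category, so for each $j$ I would choose $\oursectionG(\ph)_j$ to be \emph{any} object of $\oO(\ph_j)$ — concretely I would fix once and for all, for each $k$, a chosen object $o_k\in\oO(k)$ (using that $(\ob\oO(k))^{\{e\}}$ is nonempty) and set $\oursectionG(\ph)_j = o_{\ph_j}$. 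On the subcategory $\PI_G$ this choice should be pinned down so that $\oursectionG\circ\io_G$ is (isomorphic to) $\id$: for $\ph$ in $\PI_G$ each $\ph_j$ is $0$ or $1$, and I would take $o_0 = \ast\in\oO(0)$ and $o_1 = \oid\in\oO(1)$, so that $\oursectionG$ restricted to $\PI_G$ is literally $\io_G$. This forces $\oursectionG$ to be a section to $\xi_G$ strictly on objects and under $\PI_G$, as required for a pseudomorphism of $\mathbf{CO}$s over $\sF_G$ (\autoref{pseudoCO}).

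Next I would supply the pseudofunctoriality constraint $\varphi$ for $\oursectionG$. Given composable $\ph\colon\bm^\al\rtarr\bn^\be$ and $\ps\colon\bn^\be\rtarr\bp^\ga$, both $\oursectionG(\ps\circ\ph)$ and $\oursectionG(\ps)\circ\oursectionG(\ph)$ are objects of $\sD(\bm,\bp)$ lying over $\ps\circ\ph\in\sF$ — the composite does land in the right component by \autoref{Dcomposition}, since composition is computed componentwise over $\sF$. The $\sV$-category $\sD(\bm,\bp)$ restricted to the component over $\ps\circ\ph$ is $\prod_{j}\oO((\ps\circ\ph)_j)$, a product of chaotic categories, hence chaotic; therefore there is a \emph{unique} isomorphism $\varphi_{\ps,\ph}\colon \oursectionG(\ps)\circ\oursectionG(\ph)\xrtarr{\iso}\oursectionG(\ps\circ\ph)$. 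Uniqueness immediately gives all the coherence axioms a $\sV$-pseudofunctor must satisfy (unitality, associativity of $\varphi$ relative to triple composition) — this is exactly the mechanism of \cite[\S1.2]{AddCat1} and \autoref{ChaoticPseudoCom}. Unitality with respect to identities, and the extra condition making $\varphi$ restrict to the identity on $\PI_G$, hold because on $\PI_G$ the two composites literally agree (our choices $o_0,o_1$ are strictly multiplicative under $\ga$, since $\ga(\oid;\oid^j)=\oid$ and any $\ga$ involving $o_0=\ast$ lands in $\oO(0)=\ast$). I also need to check $\oursectionG$ is $G$-equivariant as a $G\sV_\bpt$-pseudofunctor; but $G$ acts on $\sD_G(\bm^\al,\bn^\be)=\sD(\bm,\bn)$ by the prolonged (conjugation) action of \autoref{DtoDG}, which only reshuffles components over $\sF$ and relabels within the $\oO(k)$'s, and since our chosen objects $o_k$ are $G$-fixed the map $\oursectionG$ commutes with $\bP g$; equivariance of $\varphi$ is again automatic from uniqueness of isomorphisms in chaotic categories.

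Finally I would produce the invertible $G\sU_\bpt$-pseudotransformation $\chi\colon \id_{\sD_G}\Longrightarrow \oursectionG\circ\xi_G$. Its $1$-cell components are $\chi_{\bn^\al}\colon \bn^\al\rtarr\bn^\al$, for which I take the identity (both functors are the identity on objects, and $\xi_G$ followed by $\oursectionG$ fixes objects). The pseudonaturality $2$-cells are invertible $\sV_\bpt$-transformations
\[ \xymatrix{ \sD_G(\bm^\al,\bn^\be) \ar[r]^-{\xi_G} \ar[d]_{\id} \drtwocell<\omit>{<0>\ \chi_{\bm,\bn}} & \sF_G(\bm^\al,\bn^\be) \ar[d]^{\oursectionG} \\ \sD_G(\bm^\al,\bn^\be) \ar[r]_-{\id} & \sD_G(\bm^\al,\bn^\be) } \]
i.e. a natural isomorphism between $\id$ and $\oursectionG\circ\xi_G$ as $\sV$-functors $\sD(\bm,\bn)\rtarr\sD(\bm,\bn)$; since $\oursectionG\circ\xi_G$ preserves the underlying $\sF$-component and $\sD(\bm,\bn)$ restricted to each such component is chaotic, there is a unique such isomorphism, and uniqueness again yields the coherence and the based/equivariant normalization (identity on $0_{\bm,\bn}$, identity on $\PI_G$). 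The main obstacle, such as it is, is bookkeeping: one must be careful that the chosen objects $o_0=\ast$, $o_1=\oid$ really do make $\oursectionG$ strictly the identity under $\PI_G$ — this is where the structure built into categories of operators (\autoref{DODefn}, \autoref{reducedCO/FGrem}) and the strictness relative to $\PI_G$ (\autoref{keyG}) are used — and that the ``unique isomorphism in a chaotic category'' argument is invoked only for components lying over a \emph{fixed} morphism of $\sF_G$, so that source and target genuinely lie in the same chaotic piece. Everything else is formal consequence of \cite[Corollary~4.9]{CG}-style uniqueness. The one genuine subtlety, deferred but worth flagging, is that $\oursectionG$ is \emph{not} strict as a pseudofunctor and does not preserve strictness over $\PI_G$ in the algebra-level pullback — this is precisely the source of the loss of symmetry recorded in \autoref{ExplainWeak} and \autoref{warn} — but that does not affect the present statement.
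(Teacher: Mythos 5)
There is a genuine gap, and it is exactly at the point where the equivariant content of the statement lives: your definition of $\oursectionG$ on morphism $G$-sets is not $G$-equivariant. You choose one object $o_k\in\oO(k)$ per arity (even granting that you choose $o_k$ to be $G$-fixed, which is possible) and set $\oursectionG(f)=(f;o_{f_1},\dots,o_{f_n})$. But the $G$-action on $\sD_G(\bm^\al,\bn^\be)$ is the prolonged action $\bP g=\be(g)_*\circ g\circ \al(g^{-1})^*$ of \autoref{DtoDG}, and precomposition with the permutation $\al(g^{-1})$ does not merely ``reshuffle components'': by the composition formula \autoref{Dcomposition} it acts on each operad label through the block permutations $\rho_j(f,\al(g^{-1}))\in\SI_{f_j}$. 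So $\bP g(\oursectionG(f))$ has entries of the form $(g\cdot o_{f_{j'}})\cdot\rho_{j'}=o_{f_{j'}}\cdot\rho_{j'}$, while $\oursectionG(g\cdot f)$ has entries $o_{f_{j'}}$. Equality would force $o_k$ to be fixed by nontrivial permutations, which is impossible for an $E_\infty$ $G$-operad, since $\oO(k)^\LA$ is empty whenever $\LA\cap\SI_k\neq\{e\}$ (\autoref{einfty}). A concrete failure: take $G=C_2$, $f=p_2\colon \mb{2}^\al\rtarr \mb{1}$ with $\al$ the swap action; then $g\cdot f=f$, but $\bP g$ acts on the component $\oO(2)$ over $f$ by $c\mapsto (g\cdot c)\cdot(1\,2)$, so your $G$-fixed $o_2$ satisfies $\bP g(\oursectionG(f))=o_2\cdot(1\,2)\neq o_2=\oursectionG(g\cdot f)$.

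The paper's proof is built precisely to avoid this: one chooses a representative $f$ in each $G$-orbit of $\sF_G(\bm^\al,\bn^\be)$, lets $H$ be its stabilizer under the conjugation action, and then chooses an object in the component of $f$ that is fixed for the \emph{twisted} ($\bP$-prolonged) $H$-action; such an object exists because the relevant fixed-point category is of the form $\oO(n)^\LA$ for a graph subgroup $\LA\leq G\times\SI_n$ with $\LA\cap\SI_n=\{e\}$, which is nonempty by the $E_\infty$ condition (this is where \cite[Theorem 5.4]{MMO} is invoked). One then extends over each orbit by equivariance. Your proposal uses only nonemptiness of the $\oO(k)$ (or of their untwisted $G$-fixed points), which is strictly weaker than what is needed; the $E_\infty$ hypothesis cannot be reduced to that. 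The remainder of your argument --- taking $\oursectionG=\io_G$ on $\PI_G$, producing $\varphi$, $\chi$ and all coherence and equivariance of the $2$-cells by uniqueness of morphisms in chaotic categories within a fixed component over $\sF_G$ --- matches the paper and is fine, but it only goes through once the object-level assignment has been made equivariantly, which is the step your construction does not supply.
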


\begin{proof}
On objects, we have no choice: $\oursectionG$ is the identity. More generally, on $\PI_G$ we must take $\oursectionG = \iota_G$. Given finite $G$-sets 
$\bm^\al$ and $\bn^\be$, we must specify a based $G$-equivariant function 
\[ \oursectionG \colon  \sF_G(\bm^\al,\bn^\be) \rtarr \ob\sD_G(\bm^\al,\bn^\be).\]
To define an equivariant function, it suffices to specify the function on each $G$-orbit. Moreover, an equivariant function out of an orbit is completely determined by its value at any point in the orbit. We thus {\it choose, for each $\bm^\al$ and $\bn^\be$, a point of each $G$-orbit of the $G$-set $\sF_G(\bm^\al,\bn^\be)$}. 

Let $f\in\sF_G(\bm^\al,\bn^\be)$ be such a chosen element. Let $H\leq G$ be the stabilizer of $f$. The section $\oursectionG$ must send $f$ to an $H$-fixed object of the $G$-category $\sD_G(\bm^\al,\bn^\be)$ that is in the component $\prod_{1\leq j\leq k} \oO(f^{-1}(j))$ of $f$. Since $\oO$ is an $E_{\infty}$ $G$-operad, the $H$-fixed point subset of this component has contractible classifying space, by \cite[Theorem 5.4]{MMO}, hence is nonempty. Thus we can choose an $H$-fixed object $\oursectionG(f)$ in the component of $f$. The only exception to such use of choices is that we already know the definition of $\oursectionG$ on $\PI_G$, so these choices only apply to morphisms of $\sF_G$ that are not in $\PI_G$.

The claim is that these choices specify an equivariant pseudofunctor  $\oursectionG$. The equivariance has been forced by the definition of $\oursectionG$: if $f$ is one of our distinguished points, then $\oursectionG(g\cdot f) = g\cdot \oursectionG(f)$. To see the pseudofunctor structure, we must specify a $G$-equivariant natural isomorphism 
\[ \xymatrix{
\sF_G(\bn^\be,\bp^\ga)\sma  \sF_G(\bm^\al,\bn^\be)\ar[d]_{\com} \ar[r]^{\oursectionG \sma \oursectionG} \drtwocell<\omit>{\varphi}
& \sD_G(\bn^\be,\bp^\ga) \sma \sD_G(\bm^\al,\bn^\be) \ar[d]^{\circ} \\
\sF_G(\bm^\al,\bp^\ga) \ar[r]_{\oursectionG} & \sD_G(\bm^\al,\bp^\ga).
}\]
The component of $\varphi$ at $(h,f)$ must be a morphism in $\sD_G(\bm^\al,\bp^\ga)$ of the form
\[ \varphi_{h,f}\colon \oursectionG(h)\circ \oursectionG(f) \rtarr \oursectionG(h\circ f).\]
Since both of these points in $\sD_G(\bm^\al,\bp^\ga)$ live over $h\circ f\in \sF_G(\bm^\al,\bp^\ga)$ and $\oO$ is chaotic, there is a unique morphism with the required source and target. 

We claim that $\varphi$ is $G$-equivariant. This means that $\varphi_{g\cdot h,g\cdot f} = g\cdot \varphi_{h,f}$ for $g\in G$. This again holds because $\oO$ is chaotic: there is a unique morphism between any two objects, so these two morphisms are necessarily the same.  The compatibility of $\varphi$ with triple composition follows again from the uniqueness of these morphisms.

For an object $\bm^\al$, the 1-cell component $\chi_{\bm^\al}\colon \bm^\al \rtarr \oursectionG \circ \xi_G(\bm^\al)=\bm^\al$ is the identity map. We need to construct the pseudonaturality constraint, which is an invertible $G\sU_\bpt$-transformation 
\[ \xymatrix{
\sD_G(\bm^\al,\bn^\be)\ar[d]_{\id} \ar[r]^{\oursectionG \circ \xi_G} \drtwocell<\omit>{\chi}
&  \sD_G(\bm^\al,\bn^\be) \ar[d]^{(\chi_{\bm^\al})^*} \\
\sD_G(\bm^\al,\bn^\be) \ar[r]_{(\chi_{\bn^\be})_*} & \sD_G(\bm^\al,\bn^\be).
}\]

For a $1$-cell $d\in \sD_G(\bm^\al,\bn^\be)$,   the object $\oursectionG\circ \xi_G(d)$ is in the same component of the same $G$-fixed point subset as $d$, hence there is a unique morphism $\chi_d\colon  \oursectionG\circ \xi_G(d)\rtarr d$, and this assignment is continuous and equivariant. The uniqueness implies naturality and the required compatibility with $\varphi$.  That and the evident inverse isomorphism lead to the conclusion.
\end{proof} 

\begin{rem}\label{warn}  
Although the section $\oursectionG$ is strictly functorial when restricted to $\PI_G$, it is not when only one of the morphisms of a composite in $\sF_G$ is in $\PI_G$, even when $G = e$.  Let $p_n\colon \bn\rtarr \bf{1}$ be the based function that sends $j$ to $1$ for $1\leq j\leq n$.  Then $\oursection(p_n)=(p_n;d)$ for some $d\in \oO(n)$. For a permutation $\si\in \SI_n$,  we have $p_n\com \si =p_n$, while $\oursection(p_n) \circ \oursection(\si) = (p_n ;d\cdot \si)$. In the cases of interest $\SI_n$ acts freely on $\oO(n)$ and we cannot have  $\oursection(p_n) \circ \oursection(\si)= \oursection(p_n)$.
It is this fact that led us to the distinctions highlighted in 
\autoref{ExplainWeak}.  
 \end{rem}
 
Recall the definition of a map between pseudo-commutative categories of operators over $\sF_G$ from 
Definitions \ref{pseudocommap} and \ref{pseudocomDG}. We adapt that definition to pseudomorphisms of $\mathbf{CO}$s over $\sF_G$. 

\begin{defn}
A \ourdefn{pseudomorphism of pseudo-commutative $\mathbf{CO}$s over $\sF_G$ }  
consists of a pseudomorphism $\squiggly{\nu\colon \sD} {\sE}$
of $\mathbf{CO}$s over $\sF_G$ 
and a $G\sU_\bpt$-pseudotransformation
\[ \xymatrix{
\sD \sma \sD \ar@{~>}[r]^{\nu \sma \nu} \ar@{~>}[d]_{\opair_\sD} \drtwocell<\omit>{\mu}
& \sE \sma \sE \ar@{~>}[d]^{\opair_\sE} \\
\sD \ar@{~>}[r]_{\nu} & \sE
}\]
satisfying analogues of all the axioms in \autoref{pseudocommap}.
\end{defn}

\begin{prop}\label{DGFGsmaD}
Let $\oO$ be a chaotic $G$-operad and let $\sD_G = \sD_G(\oO)$.
Then the section 
$\xymatrix@1{\oursectionG \colon \sF_G\ar@{~>}[r] &\sD_G}$
 is a pseudomorphism of pseudo-commutative $\mathbf{CO}$s. 
 \end{prop}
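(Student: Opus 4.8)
The plan is to verify directly that the section $\oursectionG$, together with a canonically chosen $G\sU_\bpt$-pseudotransformation $\mu$, satisfies the axioms of \autoref{pseudocommap}. Since $\oO$ is chaotic, the $\sV_\bpt$-categories $\sD_G(\bm^\al,\bn^\be)$ are chaotic, so between any two objects lying over the same morphism of $\sF_G$ there is a unique morphism of $\sD_G$; this is the mechanism by which every diagram we need to produce will automatically exist, be equivariant, and be coherent.

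First I would construct $\mu$. We need an invertible $G\sU_\bpt$-pseudotransformation filling the square whose two composites $\sF_G \sma \sF_G \rightsquigarrow \sD_G$ are $\oursectionG\com(\opair_{\sF_G})$ and $(\opair_{\sD_G})\com(\oursectionG\sma\oursectionG)$. For a pair of $1$-cells $(f,f')$ of $\sF_G$, the two objects $\oursectionG(f\opair f')$ and $\oursectionG(f)\opair\oursectionG(f')$ of $\sD_G(\bm\bn^{\al\spair\be},\bp\bq^{\ga\spair\de})$ both lie over $f\opair f' \in \sF_G$, because $\oursectionG$ is a section of $\xi_G$ and because $\opair_{\sD_G}$ projects to $\opair_{\sF_G}$ (the latter using \autoref{multithm2} and \autoref{pseudocomDG}\eqref{smaDrestricts}). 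By chaoticity there is a unique morphism $\mu_{f,f'}$ between them; this defines the component of $\mu$. Equivariance of $\mu$ ($\mu_{g\cdot f,g\cdot f'} = g\cdot \mu_{f,f'}$) is immediate from uniqueness, as is the naturality $\sV_\bpt$-transformation structure and the requirement that $\mu$ restrict to the identity on $\PI_G\sma\PI_G$ (there $\oursectionG=\iota_G$ is a strict monoidal functor, so both objects coincide and the unique endomorphism is the identity).

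Next I would check the three axioms of \autoref{pseudocommap}, namely unitality, associativity, and compatibility with the symmetries $\ta$. In each case the two pasting diagrams that must be shown equal have the same source and target $1$-cells (this is the only thing one must confirm by hand, and it follows from the strict functoriality of $\oursectionG$ on $\PI_G$ together with the fact that $\xi_G$ intertwines all the relevant structure), and each diagram is built out of $2$-cells of $\sD_G$ all lying in a single chaotic hom-category over a fixed morphism of $\sF_G$; hence the two pastings are equal by uniqueness of morphisms in a chaotic category. The compatibility of $\mu$ with the pseudofunctoriality constraint $\varphi$ of $\oursectionG$ (from \autoref{TheSection}) and with the coherence $2$-cell $\chi$ is verified the same way. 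I would also note that $\oursectionG$ is by construction a map \emph{of} $\mathbf{CO}$s over $\sF_G$ (the identity on objects, equal to $\iota_G$ on $\PI_G$), so the only content is the monoidal data $\mu$ and its axioms.

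The main obstacle is not any single computation but the bookkeeping of confirming that the boundary $1$-cells of the axiom pasting diagrams genuinely agree; in the non-chaotic setting this would be the heart of the matter, but here it reduces to tracking that everything in sight projects correctly under $\xi_G$ and restricts correctly to $\PI_G$, after which chaoticity makes all $2$-cell equalities automatic. I would therefore present the proof by (i) defining $\mu$ via chaoticity, (ii) observing equivariance and $\PI_G$-triviality, and (iii) dispatching the three axioms with the one-line observation that equal source and target in a chaotic category forces equal morphisms, relegating the (routine) boundary checks to a remark that they follow from \autoref{TheSection} and \autoref{pseudocomDG}.
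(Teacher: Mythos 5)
Your proposal is correct and follows essentially the same route as the paper: the paper also defines the $1$-cell component of $\mu$ to be the identity, produces the $2$-cell components $\mu_{f,h}\colon \oursectionG(f\sma h)\iso\oursectionG(f)\smaD\oursectionG(h)$ as the unique morphisms in the relevant chaotic component over $f\sma h$, and dismisses naturality, equivariance, and all coherence axioms by the same uniqueness argument. Your version merely spells out the boundary-matching checks a bit more explicitly than the paper does.
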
 
\begin{proof} We must produce a $G\sU_{\bpt}$-pseudotransformation $\mu$ as above.
Since $\oursectionG$ is the identity on objects and $\mu$ must restrict to the identity on $\PI_G$, we take the $1$-cell component of $\mu$ to be the identity. Given $1$-cells $f$ and $h$ in $\sF_G$, we must produce an invertible two-cell
\[ \mu_{f,h} \colon \oursectionG(f\sma h) \iso \oursectionG(f) \smaD \oursectionG(h),\] 
that is the component of a $G\sU_\bpt$-transformation.
Since the specified source and target  live over $f\sma h$ in $\sF_G$, the fact that $\oO$ is chaotic  implies that there is a unique choice for $\mu_{f,h}$; that $\mu$ is natural and equivariant, and satisfies all the axioms follows for the same reason.
\end{proof}

\subsection{The symmetric multifunctor $\oursectionG^*$}
We have elected to work with (strict) $\sD_G$-algebras but, due to \autoref{warn}, when we precompose with our section $\oursectionG$, we only produce weak 
$\sF_G$-pseudoalgebras; recall from \autoref{CatCAlg} that we defined the objects and morphisms of $\psFGAlg$ 
as enriched pseudofunctors from $\sF_G$ to $\CatGVp$ and pseudotransformations, with no strictness conditions over $\PI_G$. The following proposition follows immediately from the definitions.

\begin{prop} Pullback along the section $\oursectionG$ defines a 2-functor
\[ \oursectionG^*\colon\! \DGAlg \rtarr \psFGAlg.\]
\end{prop}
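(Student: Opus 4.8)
The plan is to verify directly that the assignment $\aX \mapsto \oursectionG^*\aX = \aX \circ \oursectionG$ is well-defined on all the relevant data and strictly functorial. Since $\oursectionG\colon \squiggly{\sF_G}{\sD_G}$ is a $G\sU_\bpt$-pseudofunctor by \autoref{TheSection}, and $\aX\colon \sD_G \rtarr \CatGVp$ is a strict $\sV_\bpt$-2-functor for a $\sD_G$-algebra $\aX$, the composite $\aX \circ \oursectionG$ is again a $G\sU_\bpt$-pseudofunctor $\squiggly{\sF_G}{\CatGVp}$; this is just the observation that composing a pseudofunctor with a strict $2$-functor yields a pseudofunctor, with coherence constraint $\aX(\varphi)$, where $\varphi$ is the coherence constraint of $\oursectionG$. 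Unpacking via \autoref{wCAlg}, this exhibits $\oursectionG^*\aX$ as a weak $\sF_G$-pseudoalgebra: no strictness over $\PI_G$ is claimed, consistent with \autoref{warn}. First I would spell out that the action $G\sU_\bpt$-functors of $\oursectionG^*\aX$ are the composites $\sF_G(\bm^\al,\bn^\be)\sma \aX(\bm^\al) \xrtarr{\oursectionG \sma \id} \sD_G(\bm^\al,\bn^\be)\sma \aX(\bm^\al) \xrtarr{\tha} \aX(\bn^\be)$, and that the invertible $\sV_\bpt$-transformation $\varphi$ in \autoref{wCAlg} is obtained by pasting $\aX$'s strict compositionality with $\oursectionG$'s coherence $2$-cell.

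Next I would treat morphisms and $2$-cells. Given a $(\sD_G,\PI_G)$-pseudomorphism $\squiggly{F\colon \aX}{\aY}$, which is a $\sV_\bpt$-pseudotransformation of the underlying $G\sU_\bpt$-$2$-functors, precomposition with $\oursectionG$ produces a $\sV_\bpt$-pseudotransformation $\oursectionG^*F\colon \aX\circ\oursectionG \Longrightarrow \aY\circ\oursectionG$ whose component $1$-cells are those of $F$ and whose structure $2$-cells $\de$ are whiskered by $\oursectionG$; since we no longer require strictness over $\PI_G$, this lands in the weak pseudomorphisms that constitute the morphisms of $\psFGAlg$. Similarly, a $\sD_G$-transformation $\om$ (a $\sV_\bpt$-modification) precomposes with $\oursectionG$ to give an $\sF_G$-transformation. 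Strict $2$-functoriality — $\oursectionG^*$ preserves identities and composition of both $1$-cells and $2$-cells in $\DGAlg$ — is then immediate because precomposition with a fixed pseudofunctor is strictly functorial in the sense that $(G\circ F)\circ \oursectionG = G\circ(F\circ\oursectionG)$ on the nose, and the coherence $2$-cells compose correctly by the interchange law; there is nothing to check beyond bookkeeping. This is essentially the content already recorded in \autoref{notn:pullingback}, applied to the pseudofunctor $\oursectionG$ rather than a strict one, so one could alternatively just cite that discussion.

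There is essentially no obstacle here: the only subtlety, which I would flag explicitly, is that the target must be $\psFGAlg$ rather than $\FGAlg$ or any intermediate $2$-category of pseudoalgebras with strictness over $\PI_G$, precisely because $\oursectionG$ fails to be strictly functorial even on composites with one leg in $\PI_G$, as \autoref{warn} shows. Thus the proof reduces to the phrase ``follows immediately from the definitions'' together with the pointer that $\oursectionG$ is a pseudofunctor by \autoref{TheSection} and that composition of a pseudofunctor with a strict $2$-functor (or strict pseudotransformation, etc.) is a pseudofunctor; I would write out at most one of the coherence-pasting diagrams for $\varphi$ to make the weak-pseudoalgebra structure visible and leave the rest to the reader.
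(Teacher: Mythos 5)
Your proposal is correct and is essentially the paper's argument: the paper simply states that the result "follows immediately from the definitions," and your unpacking—composing the strict $2$-functor $\aX$ with the pseudofunctor $\oursectionG$ yields a weak $\sF_G$-pseudoalgebra, with morphisms and $2$-cells handled by whiskering as in \autoref{notn:pullingback}, and with the target necessarily $\psFGAlg$ because of the loss of strictness over $\PI_G$ noted in \autoref{warn}—is exactly the verification the paper leaves to the reader.
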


\begin{rem}
Since $\oursectionG$ restricts to the inclusion $\io_G$ on $\Pi_G$, for any $\sD_G$-algebra $\aX$, the underlying $\Pi_G$-algebra of $\oursectionG^*\aX$ is the underlying $\Pi_G$-algebra of $\aX$. 
\end{rem}

\begin{cor}\label{StrictEquiv}
Let $\aY$ be a $\sD_G$-algebra. Then the pseudonatural isomorphism $\chi$ of \autoref{TheSection} induces an invertible pseudomorphism
\[ \chi\colon \aY \iso \xi_G^* \oursectionG^* \aY\]
of weak $\sD_G$-pseudoalgebras (see \autoref{Cpseudo}) whose components are identity maps.
\end{cor}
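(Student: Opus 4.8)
The plan is to unwind the definitions and use the pseudonatural isomorphism $\chi$ produced in \autoref{TheSection} directly. Recall that $\chi$ is an invertible $G\sU_\bpt$-pseudotransformation fitting into the triangle relating $\id_{\sD_G}$ and $\oursectionG\circ\xi_G$. Its $1$-cell component at an object $\bm^\al$ is the identity, and its pseudonaturality constraint assigns to each $1$-cell $d\in\sD_G(\bm^\al,\bn^\be)$ the unique morphism $\chi_d\colon\oursectionG\circ\xi_G(d)\rtarr d$ in the relevant chaotic component. We want to transport this into a pseudomorphism of weak $\sD_G$-pseudoalgebras $\chi\colon\aY\iso\xi_G^*\oursectionG^*\aY$.

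First I would identify what the target $\xi_G^*\oursectionG^*\aY$ is concretely. By \autoref{notn:pullingback}, it is the composite $\aY\circ\oursectionG\circ\xi_G$ as a $\sV_\bpt$-pseudofunctor $\sD_G\rtarr\CatGVp$; since $\oursectionG$ and hence $\oursectionG^*\aY$ is only a pseudofunctor, this is a \emph{weak} $\sD_G$-pseudoalgebra, which is why the statement lands in the setting of \autoref{Cpseudo}/\autoref{wCAlg} rather than strict algebras. Note $\aY$ itself, being a strict $\sD_G$-algebra, is in particular a weak $\sD_G$-pseudoalgebra, so the comparison makes sense. Next I would apply the $2$-functor $\aY\colon\sD_G\rtarr\CatGVp$ (viewed as a strict enriched $2$-functor) to the $G\sU_\bpt$-pseudotransformation $\chi\colon\id_{\sD_G}\Longrightarrow\oursectionG\circ\xi_G$: whiskering a pseudotransformation by a strict enriched $2$-functor yields a pseudotransformation $\aY*\chi\colon\aY\Longrightarrow\aY\circ\oursectionG\circ\xi_G=\xi_G^*\oursectionG^*\aY$. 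This is precisely the data of a pseudomorphism of weak $\sD_G$-pseudoalgebras in the sense of \autoref{Cpseudo}. Its $1$-cell components are $\aY(\chi_{\bn^\be})$; since each $\chi_{\bn^\be}$ is an identity $1$-cell (by the construction in \autoref{TheSection}, the $1$-cell component of $\chi$ at every object is the identity), these components are identity $\sV_\bpt$-functors on $\aY(\bn^\be)$, as claimed. Invertibility of the resulting pseudomorphism follows because $\chi$ is invertible as a $G\sU_\bpt$-pseudotransformation and whiskering by a strict $2$-functor preserves invertibility of the $2$-cell components; the inverse pseudomorphism is $\aY*\chi^{-1}$.

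Finally I would check that $\chi$ genuinely lands in $\CBAlg$-type data, i.e.\ that it is a \emph{weak} $\sD_G$-pseudomorphism with the required strictness over $\PI_G$ being vacuous (weak pseudomorphisms impose strictness only over identities and basepoint morphisms). Since $\oursectionG$ restricts to $\iota_G$ on $\PI_G$ and $\chi$ restricts to the identity there (as $\chi_d$ is an identity $2$-cell whenever $d$ lies in $\PI_G$, both objects coinciding in the same chaotic component with $\oursectionG\circ\xi_G(d)=d$ on the nose for $d\in\PI_G$), the naturality $2$-cells of $\aY*\chi$ over $\PI_G$ are identities; in particular the basepoint and identity strictness conditions hold. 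The compatibility of the naturality $2$-cells with composition in $\sD_G$ follows from the analogous coherence for $\chi$ established in \autoref{TheSection}, again because everything in sight is a unique morphism in a chaotic component. I do not expect a serious obstacle here: the entire content is bookkeeping in the definitions, and the only subtlety is making sure one quotes the right whiskering construction (strict enriched $2$-functor applied to a pseudotransformation) and correctly identifies that the $1$-cell components are identities because the $1$-cell part of $\chi$ was arranged to be the identity. The mild care needed is purely in verifying that $\aY$ being a \emph{strict} enriched $2$-functor is what makes the whiskering produce a pseudotransformation with no additional coherence to supply.
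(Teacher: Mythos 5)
Your proposal is correct and is essentially the argument the paper intends: the corollary is stated without proof as an immediate consequence of \autoref{TheSection} and \autoref{notn:pullingback}, the implicit content being exactly your observation that post-whiskering the invertible pseudotransformation $\chi$ with the strict enriched $2$-functor $\aY$ gives a pseudotransformation $\aY \Rightarrow \aY\circ\oursectionG\circ\xi_G = \xi_G^*\oursectionG^*\aY$ with identity $1$-cell components and invertible constraints $\aY(\chi_d)$. Your additional checks (normality over identities and basepoints, strictness of $\chi$ over $\PI_G$ forced by chaoticity, and invertibility via $\aY*\chi^{-1}$) are accurate and complete the verification.
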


We define multimorphisms of weak $\sF_G$-pseudoalgebras following \autoref{MultiDG}, but deleting its strictness conditions with respect to $\PI_G$. Note that $\smaD$ is just $\sma$ on $\sF_G$.  Recall that we are working in $\GUCatp$ in this section.

\begin{defn}\label{MultiF}
We define a (symmetric) multicategory $\Mult(\psFGAlg)$ of (weak) $\sF_G$-pseudoalgebras whose underlying category is $\psFGAlg$, as follows.
For weak $\sF_G$-pseudoalgebras $\aX_i$, $1\leq i\leq k$, and $\aY$, a $k$-ary morphism
$\ul{\aX} \rtarr \aY$ consists of a $\sF_G^{\esma k}$-pseudomorphism
\[\squiggly{F\colon \aX_1\overline{\sma} \dots \overline{\sma} \aX_k}{\aY\circ \sma^k}.\]
Composition and the symmetric group action are given by the corresponding pasting diagrams, as done explicitly in \autoref{MultiD}.
\end{defn} 

The unpacking of this definition is similar to the unpackings given for Definitions \ref{MultiD} and \ref{MultiDG}, with the caveat that the coherence diagrams in \autoref{cohMultD} must account for the pseudofunctoriality constraints of the weak $\sF_G$-pseudoalgebras.

\begin{rem}  We could define an analogous multicategory $\psFAlg$ of weak structures, but we would not have a
prolongation multifunctor $\bP\colon \psFAlg\rtarr \psFGAlg$ since we would no longer have the compatibility with $\PI$
that we used in the proof of \autoref{PMultiFun}.  However, naturally occurring examples of $\sF$-pseudoalgebras that do not arise from use of the section 
often do have such compatibility with $\PI$ and thus can be prolonged to $\sF_G$-pseudoalgebras.
\end{rem}

We have the following theorem, whose proof is essentially the same as that of \autoref{pullingback}. Recall \autoref{notn:pullingback}.
\begin{thm}\label{sectMulti} Pullback along $\oursectionG$ induces a symmetric multifunctor 
\[\oursectionG^* \colon \Mult(\sD_G) \rtarr \Mult(\psFGAlg).\]
\end{thm}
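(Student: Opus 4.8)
The plan is to mimic the proof of \autoref{pullingback} essentially verbatim, the only subtlety being that we are now pulling back along a \emph{pseudomorphism} $\squiggly{\oursectionG}{\sD_G}$ of pseudo-commutative categories of operators rather than a strict map, and that both source and target consist of pseudoalgebras rather than strict algebras. First I would record that on objects $\oursectionG^*$ is the $2$-functor of the preceding proposition, so that it suffices to define $\oursectionG^*$ on $k$-ary morphisms and check compatibility with composition, identities, and the $\SI_k$-action. Given a $k$-ary morphism $F\colon(\aX_1,\dots,\aX_k)\rtarr\aY$ in $\Mult(\sD_G)$, i.e.\ a $(\sD_G^{\esma k},\PI_G^{\esma k})$-pseudomorphism $\squiggly{F}{\aY\circ\smaD^k}$, I would define $\oursectionG^*(F)$ by the pasting
\[
\xymatrixcolsep{4pc}\xymatrix{
\sF_G^{\sma k} \ar@{~>}[r]^-{\oursectionG^{\esma k}} \ar@{~>}[d]_{\sma^k} \drtwocell<\omit>{<0>\quad \mu_k}
& \sD_G^{\sma k} \ar[r]^-{\aX_1\sma\cdots\sma\aX_k} \ar@{~>}[d]_{\smaD^k} \drtwocell<\omit>{<0>\quad F}
& \CatGVp^{\sma k} \ar[d]^{\sma^k}\\
\sF_G \ar@{~>}[r]_-{\oursectionG} & \sD_G \ar[r]_-{\aY} & \CatGVp,
}
\]
where $\mu_k$ is the canonical composite of instances of the $G\sU_\bpt$-pseudotransformation $\mu$ of \autoref{DGFGsmaD}, which is unambiguous by the associativity axiom for $\mu$. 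Since $\oursectionG$ restricts to $\io_G$ on $\PI_G$ and $\mu$ restricts to the identity on $\PI_G^{\esma k}$, the resulting pasting restricts to a strict $\PI_G^{\esma k}$-pseudomorphism structure where it needs to, so $\oursectionG^*(F)$ is indeed a legitimate $k$-ary morphism in $\Mult(\psFGAlg)$ in the weaker sense of \autoref{MultiF}, once one accounts for the pseudofunctoriality constraint of the weak $\sF_G$-pseudoalgebra $\aY\circ\oursectionG$.

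\textbf{Key steps.} The verifications break into three parts, all of which are formal pasting-diagram manipulations. (i) \emph{Identities}: the $1$-ary morphism $\id_{\aX}$ goes to $\id_{\oursectionG^*\aX}$, which is immediate since $\mu_1=\id$ and $\oursectionG$ is the identity on $1$-cell components of the relevant kind (it agrees with $\io_G$, which preserves identities strictly). (ii) \emph{Composition}: given $E_i\colon(\aX_{i,1},\dots,\aX_{i,j_i})\rtarr\aY_i$ and $F\colon(\aY_1,\dots,\aY_k)\rtarr\aZ$, one must show that $\oursectionG^*(\ga(F;E_1,\dots,E_k))$ equals $\ga(\oursectionG^*F;\oursectionG^*E_1,\dots,\oursectionG^*E_k)$; this follows by pasting the defining diagram for $\oursectionG^*$ onto the composition diagram \autoref{compMultiD} and using the associativity axiom for $\mu$ (axiom (ii) of \autoref{permfunctor}) to reassociate the instances of $\mu$ coming from $\smaD^{j_i}$ and $\smaD^k$ versus those coming directly from $\smaD^j$. (iii) \emph{Equivariance}: for $\si\in\SI_k$, one shows $\oursectionG^*(F\si)=(\oursectionG^*F)\si$ by pasting onto the diagram defining $F\si$ and invoking the symmetry axiom (iii) of \autoref{permfunctor} for the pseudomorphism $(\oursectionG,\mu)$, which relates $\mu$ precomposed with $t_\si$ to $\mu$ postcomposed with the symmetry, together with the coherence theorem (\autoref{likepermutative}) guaranteeing that the composite $\ta_\si$ of instances of the symmetry is well-defined.

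\textbf{Main obstacle.} I expect the one genuinely delicate point to be bookkeeping the \emph{pseudofunctoriality constraints} that now appear on both ends: $\aY\circ\oursectionG$ is only a weak $\sF_G$-pseudoalgebra, so the coherence diagrams for $\oursectionG^*(F)$ in \autoref{cohMultD} must be pasted with the constraint $\varphi$ of $\oursectionG$ from \autoref{TheSection}, and one needs that $\varphi$ interacts correctly with $\mu$ — precisely the content of the ``analogues of all the axioms in \autoref{pseudocommap}'' that $\oursectionG$ satisfies as a pseudomorphism of pseudo-commutative $\mathbf{CO}$s. In the cases relevant here, however, $\oO$ is chaotic, so $\varphi$, $\mu$, and $\chi$ are all the unique morphisms between the relevant objects, and every compatibility among them holds automatically; this collapses the potentially formidable coherence check into a triviality. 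The remaining work — writing out the three pasting diagrams and matching $2$-cell boundaries — is routine, and I would leave the bulk of it to the reader, as is done for \autoref{pullingback}.
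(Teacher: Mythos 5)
Your proposal is correct and follows essentially the same route as the paper: the paper proves \autoref{sectMulti} by declaring its proof "essentially the same as that of \autoref{pullingback}," i.e.\ defining $\oursectionG^*(F)$ by the pasting with the canonical composite $\mu_k$ of instances of the constraint $\mu$ from \autoref{DGFGsmaD} and deducing compatibility with identities, composition, and the $\SI_k$-action from the axioms for the pseudomorphism $(\oursectionG,\mu)$, exactly as you do. Your extra bookkeeping of the pseudofunctoriality constraint $\varphi$ of $\oursectionG$ (handled via chaotic uniqueness) only makes explicit what the paper leaves implicit, so no further changes are needed.
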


\section{Strictification of pseudoalgebras}\label{sec:PowerLack}

For clarity about what is general and what is special and also for simplicity of notation, we revert to a general $\sV$ satisfying our standard assumptions in this section.  The reader may prefer to focus on $\sV=\sU$ or $\sV=G\sU$, but equivariance and topology play no role in this section.

\subsection{Power-Lack strictification}\label{sec:PowerLackFG} 

We here specialize a general result of Power and Lack \cite{Lack,Power} (see also \cite[Theorem 0.1]{AddCat1})
about strictification of pseudoalgebras over a 2-monad.  Let $\cC$ be a $\sV_\bpt$-2-category, and recall the 2-categories $\CAlg$  and $\CPsAlg$  (\autoref{CatCAlg}) of $\cC$-algebras and weak $\cC$-pseudoalgebras. We prove the existence of a strictification 2-functor $\St\colon \CPsAlg\rtarr \CAlg$.
While the general result specializes to give everything we need, 
we prefer to give an independent account that translates out from the theory of $2$-monads and includes an explicit construction.  We are implicitly
using the existence of an enhanced factorization system on $\VCatp$ \cite[\S4]{AddCat1} to apply Power and Lack's result to our context.

\begin{thm}\label{ConjOut1} 

Let $\cC$ be a $\sV_\bpt$-2-category.
 The inclusion of $2$-categories 
\[ \bJ\colon \CAlg \rtarr \CPsAlg \]
has a left 2-adjoint 
\[\St_{\cC} \colon \CPsAlg \rtarr \CAlg.\] 
The component of the unit $i$ of the $2$-adjunction is a $\cC$-pseudomorphism which is a levelwise equivalence (\autoref{defn:leveleqv}).
\end{thm}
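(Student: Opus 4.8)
The plan is to follow the Power--Lack strictification paradigm \cite{Lack, Power} but specialized to our enriched setting, giving an explicit construction rather than invoking the $2$-monadic machinery. The key point is that, since $\cC$ is a $\sV_\bpt$-$2$-category, a weak $\cC$-pseudoalgebra $\aX$ is the same as a $\sV_\bpt$-pseudofunctor $\squiggly{\aX}{\CatVp}$, and the forgetful $2$-functor $\bJ$ corresponds to restriction to genuine $\sV_\bpt$-$2$-functors. We construct the left adjoint $\St_\cC$ by the standard ``rectification by the universal left Kan extension'' formula: given a weak $\cC$-pseudoalgebra $\aX$, we set
\[ (\St_\cC \aX)(c) = \int^{d \in \cC} \cC(d,c) \sma \aX(d), \]
the enriched coend over $\cC$ (equivalently, the value at $c$ of the left Kan extension of $\aX$ along the identity of $\cC$, formed in the pseudo sense). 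Here we use \autoref{ass2} (cocompleteness of $\VCat$, hence of $\VCatp$) to guarantee that these colimits exist. The $\cC$-action on $\St_\cC \aX$ is induced by composition in $\cC$, using the coend structure maps; the associativity and unitality are strict because composition in $\cC$ is strictly associative and unital, which is precisely what turns the pseudo-coherences of $\aX$ into genuine functoriality of $\St_\cC\aX$.

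\textbf{Key steps, in order.} First, I would set up the coend construction carefully, checking that the basepoint strictness hypotheses in \autoref{wCAlg} (that $\varphi$ is the identity when either $1$-cell is the identity, and that everything is based) ensure $\St_\cC \aX$ lands in $\CatVp$ rather than merely $\CatV$; this is where the enhanced factorization system on $\VCatp$ \cite[\S4]{AddCat1} enters, to control the bar-type resolution implicit in the coend. Second, I would verify the $2$-adjunction $\St_\cC \dashv \bJ$ by exhibiting the natural isomorphism of hom-$2$-categories
\[ \CAlg(\St_\cC \aX, \aZ) \iso \CPsAlg(\aX, \bJ \aZ), \]
which amounts to the universal property of the (pseudo) left Kan extension: a strict $\cC$-map out of $\St_\cC \aX$ is the same as a $\cC$-pseudomorphism out of $\aX$. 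Third, I would identify the unit $i \colon \aX \rtarr \bJ\St_\cC\aX$ with the canonical pseudomorphism whose component at $c$ is the coprojection $\aX(c) = \cC(c,c) \sma_{\id} \aX(c) \rtarr (\St_\cC\aX)(c)$ coming from $\id_c$, wait---more precisely the map classifying the identity-indexed summand. Fourth, and crucially, I would prove that each component $i(c) \colon \aX(c) \rtarr (\St_\cC \aX)(c)$ is an internal equivalence of $\sV_\bpt$-categories: the coend admits a deformation retraction onto the $\id$-summand using the pseudoalgebra coherences $\varphi$, which are invertible, so the standard ``extra degeneracy'' argument for the bar construction applies. This gives the level equivalence assertion (\autoref{defn:leveleqv}).

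\textbf{Main obstacle.} The hard part will be the fourth step: showing $i(c)$ is a level equivalence, i.e.\ that the coend (or its bar-construction model) deformation-retracts onto the identity-indexed piece in a way that is compatible with the internal (enriched, based) structure. Nonequivariantly and over $\Cat$ this is the classical fact that the two-sided bar construction $B(\cC(-,c), \cC, \aX)$ is equivalent to $\aX(c)$ via an extra degeneracy, but here we must carry it out internally to $\sV_\bpt$ and check the equivalence is an \emph{internal} equivalence of $\sV_\bpt$-categories rather than merely a weak equivalence; this is exactly the role of the enhanced factorization system invoked before \autoref{ConjOut1}, and it is the technical heart of \cite[Theorem 0.1]{AddCat1}, which we are entitled to cite. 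The remaining verifications---that $\St_\cC$ is a $2$-functor (not just a functor), that the adjunction is a $2$-adjunction, and strict functoriality of the $\cC$-action on $\St_\cC\aX$---are routine diagram chases with the coend universal property and can be left to the reader or deferred to \cite{AddCat2}.
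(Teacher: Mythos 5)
Your construction is where the proof breaks. The formula $(\St_\cC \aX)(c)=\int^{d\in\cC}\cC(d,c)\sma\aX(d)$ cannot do the job for two reasons. First, the strict enriched coend is not even defined when $\aX$ is only a weak $\cC$-pseudoalgebra: one leg of the defining coequalizer diagram is built from the action of $\aX$, which is only pseudofunctorial, so there is no strict diagram to take a colimit of. Second, even when $\aX$ is strict, the co-Yoneda lemma gives $\int^{d}\cC(d,c)\sma\aX(d)\iso\aX(c)$, so your formula returns $\aX$ itself; but the left $2$-adjoint of $\bJ$ is not (isomorphic to) the identity on strict algebras, so no formula with this property can be the strictification. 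Saying the colimit is ``formed in the pseudo sense'' as a pseudo left Kan extension along $\id_\cC$ does not repair this: that Kan extension has its universal property against pseudomorphisms on both sides, so it again only reproduces $\aX$ up to equivalence in $\CPsAlg$, whereas the adjunction you correctly state in your second step, $\CAlg(\St_\cC\aX,\aZ)\iso\CPsAlg(\aX,\bJ\aZ)$, is the universal property of the codescent object of the simplicial bar resolution of $\aX$ (this is the actual content of Power--Lack), not of any Kan extension along the identity. Relatedly, the ``extra degeneracy / deformation retraction'' argument is a simplicial-homotopy heuristic that does not directly apply to a $2$-categorical codescent object; what is needed for the level-equivalence claim is an explicit inverse-up-to-isomorphism built from the invertible constraints $\varphi$, internal to $\sV_\bpt$.

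For comparison, the paper's proof avoids colimits and quotients altogether. It takes $\ob(\St\aX(c))=\bigvee_b \ob\cC(b,c)\sma\ob\aX(b)$ and defines morphisms by pulling back the morphisms of $\aX(c)$ along the action map, so that a morphism $(f,x)\rtarr(f',x')$ is by definition a morphism $\tha(f,x)\rtarr\tha(f',x')$ in $\aX(c)$; the action $(h,(f,x))\mapsto(h\circ f,x)$ is then strictly functorial because composition in $\cC$ is. With this model the unit $i_c(y)=(\id_c,y)$ and the map $m_c=\tha$ are explicit mutually inverse equivalences (using invertibility of $\varphi$), which yields both the $2$-adjunction and the levelwise internal equivalence immediately, with no bar construction and no need to check that a quotient stays in $\CatVp$. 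If you prefer your route, you may of course simply quote \cite{Lack,Power} or \cite[Theorem 0.1]{AddCat1}, as the paper notes is possible; but then your coend construction is doing no work, and as written it is not the correct object. To make your approach self-contained you would have to replace the coend by the codescent object and verify its universal property and the equivalence of the unit there---precisely the $2$-monadic overhead the paper's explicit construction is designed to bypass.
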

For brevity, we will omit the subscript $\cC$ and simply write $\St$ for the strictification functor when the category $\cC$ is clear from the context.

\begin{pf}
For a weak $\cC$-pseudoalgebra $(\aX,\tha,\varphi)$, we define the (strict) $\cC$-algebra $\St \aX$ as follows.  

For $c$ in $\cC$, the $\sV_{\bpt}$-category  $\St \aX(c)$  has $\sV_\bpt$-object of objects
\begin{equation}\label{ObSt}
 \ob (\St \aX(c)) = \bigvee_{b}\ob\cC(b,c) \sma \ob (\aX(b)),
 \end{equation}
where $b$ ranges over all objects of $\cC$.

The action of $\cC$ on $\aX$ defines a morphism
\begin{equation}\label{mOb}
 \tha\colon \ob (\St \aX(c)) \rtarr \ob (\aX(c)),
  \end{equation}
and this allows us to define the 
 $\sV_\bpt$-object 
of morphisms of $\St \aX(c)$ as the pullback displayed in the diagram
\begin{equation}\label{MorSt}
\xymatrix{
\mor (\St \aX(c)) \ar[r] \ar[d]_{(T,S)} & \mor (\aX(c)) \ar[d]^{(T,S)}\\
 \ob (\St \aX(c)) \times  \ob (\St \aX(c)) \ar[r]_-{\tha \times \tha} & \ob (\aX(c) ) \times \ob (\aX(c)).
 }
 \end{equation}
Composition is induced by composition in $\aX(c)$. 

To describe the pullback more explicitly, writing elementwise for the sake of exposition, let 
\[ (f,x) \in \ob\cC(b,c) \sma \ob (\aX(b)) \ \ \text{and} \ \ (f',x') \in \ob\cC(b',c) \sma \ob (\aX(b')).\] 
Then 
\[ \mor(\St\aX(c))\Big((f,x),(f',x')\Big) = \mor(\aX(c))\Big(\tha(f,x),\tha(f',x')\Big). \]
The action map
\[\St \tha \colon \cC(b,c)\sma \St \aX(b) \rtarr \St \aX(c)\]
descends to the smash product from the map on the product given on objects by $(\St \tha) (h, (f,x)) = (h\com f, x)$ for $h\in \cC(b,c)$.

For a morphism $\al:(f,x) \rtarr (f',x')$ in $\St\aX(b)$ and $\la\colon h\rtarr h'$ in $\cC(b,c)$, the corresponding morphism $\St\tha(h,(f,x)) \rtarr \St\tha(h',(f',x'))$ is defined to be 
\[ \tha(h\com f,x)\xrtarr{\varphi^{-1}}\tha(h,\tha(f,x)) \xrtarr{\tha(\la,\al)} \tha(h',\tha(f',x')) \xrtarr{\varphi} \tha (h'\circ f,x).\]
Then $\St\tha$ gives $\St\aX$ a strict $\cC$-algebra structure by the strict functoriality of composition in $\cC$.

For a $\cC$-pseudomorphism $\squiggly{(F,\de)\colon \aX} { \aY}$, 
we define $\St (F,\de)\colon \St \aX \rtarr \St \aY$ by letting $\St(F,\de)_c:\St \aX(c) \rtarr \St \aY(c)$ be the functor sending $(f,x)$ to $(f,F x)$ and $\alpha\colon (f,x)\rtarr (g,y)$ to the composite
\[ \tha_Y(f,F x) \xrtarr{\de^{-1}} F(\tha_X(f,x)) \xrtarr{F \alpha}F(\tha_X(g,y)) \xrtarr{\de} \tha_Y(g,F y).\]
It is straightforward to check that these form the components of a strict $\cC$-morphism.
We can similarly define the action of $\St$ on  $\cC$-transformations.

For a weak $\cC$-pseudoalgebra $(\aY,\tha,\varphi)$, 
we define $\cC$-pseudomorphisms
\[\squiggly{i\colon \aY }{\, \St\aY } \qquad \text{and} \qquad \squiggly{m \colon \St \aY }{\, \aY}\]
as follows. The component $\sV_\bpt$-functor $i_c\colon \aY(c)\rtarr \St\aY(c)$  is given by $i_c(y) = (\id_c, y)$ on objects $y$ of $\aY(c)$ and $i_c(\ga) = \ga$ on morphisms $\ga$ of $\aY(c)$. The latter makes sense since $\tha(\id_c,y)=y$.  The component of the pseudonaturality $\sV_\bpt$-transformation
\[ \xymatrix{
\cC(b,c) \sma \aY(b) \ar[r]^{\id\sma i_b} \ar[d]_\tha 
\drtwocell<\omit>{ \hspace{1em}  i_{b,c}} & \cC(b,c) \sma \St \aY(b) \ar[d]^{\St \tha} \\
\aY(c) \ar[r]_{i_c} & \St \aY(c)
}\]
at $(f,y)$ is the morphism $(f,y) \rtarr (\id_c,\tha(f,y))$ in $\St \aY(c)$ corresponding to the identity map of $\tha(f,y)$ in $\aY(c)$.

The component $\sV_\bpt$-functors $m_c\colon \St\aY(c)\rtarr \aY(c)$ are given by  \autoref{mOb} on the $\sV_\bpt$-object of objects  and by the top horizontal arrow in \autoref{MorSt} on the $\sV_\bpt$-object of morphisms.  The pseudonaturality constraint
\[ \xymatrix{
\cC(b, c) \sma \St\aY(b) \ar[r]^-{\id\sma m_b} \ar[d]_{\St\tha}   \drtwocell<\omit>{ \hspace{.5em} \tilde{\varphi}}  &  \cC(b,c) \sma \aY(b) \ar[d]^{\tha} \\
\St\aY(c) \ar[r]_-{m_c} & \aY(c)
} \]
is induced by the invertible $\sV_\bpt$-transformation $\varphi$ that witnesses the $\cC$-pseudoalgebra structure of $\aY$.

As is easily checked directly, $i$ and $m$ are inverse equivalences in $\CPsAlg$, so in particular, $i$ is a level equivalence. The map $i$ is 2-natural with respect to $\cC$-pseudomorphisms and $\cC$-transformations and is the unit of the adjunction. If $\aY$ is a strict $\cC$-algebra and thus of the form $\aY=\bJ\aX$,
then $m$ is 
a strict $\cC$-morphism, and moreover, it is 2-natural with respect to strict $\cC$-morphisms and $\cC$-transformations; it is the counit of the adjunction.
\end{pf}

\begin{rem}\label{mCounit}
 The $\cC$-pseudomorphism $\squiggly{m\colon \St \aY }{\, \aY}$ of the proof of \autoref{ConjOut1} is not strictly natural with respect to $\cC$-pseudomorphisms of weak $\cC$-pseudoalgebras. With slightly extra work, one can prove that it gives the components of a pseudonatural transformation $m\colon \bJ \St \Longrightarrow \id$ of endo-2-functors of $\CPsAlg$. As it is not necessary for our work, we shall not pursue this route. However, we will use the fact, noted in the proof, that $m_c\colon \St \aY (c) \rtarr \aY(c)$ is an equivalence of $\sV_\bpt$-categories.
\end{rem}

Recall \autoref{notn:pullingback}. We shall apply the following result to an iterated monoidal product $ \cC^{\sma k}\rtarr \cC$ in \autoref{sec:StrictMult} and to $\xi_G\colon \sD_G \rtarr \sF_G$ in \autoref{HtpySect}.

\begin{lem}\label{zeta1}  Let $\cD$ and $\cC$ be $\sV_\bpt$-2-categories and let $\xi\colon \cD\rtarr \cC$ be a $\sV_\bpt$-2-functor. Given a 
$\cC$-pseudoalgebra $\aY$,
there is a $\cD$-morphism  
\[ \ps=\ps_\xi\colon \St_\cD (\xi^*\aY) \rtarr \xi^*(\St_\cC \aY) \] 
that is 2-natural with respect to $\cC$-pseudomorphisms and $\cC$-transformations.  Moreover, the diagram
\[  \xymatrix{ & \xi^*\aY \ar@{~>}[dl]_{i_{\cD}} \ar@{~>}[dr]^{\xi^*i_{\cC}}  & \\
\St_\cD (\xi^*\aY) \ar[rr]_{\ps} & &  \xi^*(\St_\cC \aY)\\}  \]
commutes, hence $\ps$ is a levelwise equivalence.  
\end{lem}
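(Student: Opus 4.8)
The key point is that strictification was defined in \autoref{ConjOut1} by an explicit formula, and pullback along $\xi$ is equally explicit (\autoref{notn:pullingback}), so the comparison $\ps$ can simply be written down and checked. First I would unwind the two sides on the $\sV_\bpt$-object of objects. For $d$ an object of $\cD$, by \eqref{ObSt} applied in $\cD$ we have $\ob(\St_\cD(\xi^*\aY)(d)) = \bigvee_{a}\ob\cD(a,d)\sma \ob(\aY(\xi a))$, where $a$ ranges over objects of $\cD$; on the other hand $\ob(\xi^*(\St_\cC\aY)(d)) = \ob(\St_\cC\aY(\xi d)) = \bigvee_{c}\ob\cC(c,\xi d)\sma \ob(\aY(c))$, where $c$ ranges over objects of $\cC$. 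The $\sV_\bpt$-2-functor $\xi$ supplies maps $\ob\cD(a,d)\rtarr \ob\cC(\xi a,\xi d)$, and these assemble (indexing the $\cC$-side wedge summand $c = \xi a$) into a map on objects $\ps_d$ sending, in elementwise notation, $(f,y)$ with $f\in\cD(a,d)$ and $y\in\aY(\xi a)$ to $(\xi f, y)$ with $\xi f\in\cC(\xi a,\xi d)$. On morphisms, both hom-$\sV_\bpt$-objects are defined by the pullback \eqref{MorSt}: $\mor(\St_\cD(\xi^*\aY)(d))$ sits over $\mor(\aY(\xi d))$ via the action map $\tha$ for $\xi^*\aY$, and $\mor(\St_\cC\aY(\xi d))$ sits over $\mor(\aY(\xi d))$ via the action map $\tha$ for $\aY$; since the action of $\cD$ on $\xi^*\aY$ is \emph{by definition} the action of $\cC$ on $\aY$ precomposed with $\xi$, the two structure maps to $\mor(\aY(\xi d))$ agree after applying $\ps_d$ on objects, so the universal property of the pullback produces a unique map $\mor(\St_\cD(\xi^*\aY)(d)) \rtarr \mor(\St_\cC\aY(\xi d))$ covering the identity on $\mor(\aY(\xi d))$. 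This defines the component $\sV_\bpt$-functors $\ps_d$.

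Next I would verify that $\ps$ is a strict $\cD$-morphism. The $\cD$-action on $\St_\cD(\xi^*\aY)$ is, on objects, $(h,(f,y))\mapsto (h\circ f, y)$ for $h\in\cD(a',a)$, and the $\cD$-action on $\xi^*(\St_\cC\aY)$ is the $\cC$-action on $\St_\cC\aY$ restricted along $\xi$, hence on objects $(h,(f',y))\mapsto (\xi h\circ f', y)$; since $\xi$ is a strict $2$-functor, $\xi(h\circ f)=\xi h\circ \xi f$, so $\ps$ commutes strictly with the action on objects. On morphisms one uses that the structure isomorphisms $\varphi$ occurring in the definition of $\St\tha$ in \autoref{ConjOut1} for $\xi^*\aY$ are literally those of $\aY$ (pulled back), so the two composites $\tha(\la,\al)$ conjugated by $\varphi^{\pm 1}$ match under $\ps$; no coherence is lost because $\xi$ contributes no constraint $2$-cells. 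Strict naturality of $\ps$ with respect to $\cC$-pseudomorphisms $(F,\de)$ and $\cC$-transformations is immediate from the formulas $\St(F,\de)_c(f,x)=(f,Fx)$ in \autoref{ConjOut1}: on the $\cD$-side the relevant data is $(F,\de)$ pulled back along $\xi$, and $\ps$ only alters the first coordinate via $\xi$, which is untouched by $F$ and $\de$.

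Finally, to get the triangle, I would compare all three maps on objects: $i_\cD$ sends $y\in\xi^*\aY(d)=\aY(\xi d)$ to $(\id_d, y)$; $\xi^* i_\cC$ sends $y\in\aY(\xi d)$ to $i_\cC(y)=(\id_{\xi d}, y)$; and $\ps_d$ sends $(\id_d, y)$ to $(\xi(\id_d), y) = (\id_{\xi d}, y)$ since $\xi$ preserves identities. On morphisms all three are the evident identifications (both units $i$ are defined to be the identity on morphisms and $\ps$ covers the identity), so $\ps\circ i_\cD = \xi^* i_\cC$. Since $i_\cD$ is a level equivalence (\autoref{ConjOut1}) and $\xi^* i_\cC$ is a level equivalence because $\xi^*$ preserves level equivalences of pseudoalgebras (noted after \autoref{notn:pullingback}) and $i_\cC$ is one, the two-out-of-three property for (internal) equivalences of $\sV_\bpt$-categories forces each component $\ps_d$ to be an equivalence; hence $\ps$ is a levelwise equivalence.

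\textbf{Main obstacle.} None of the individual verifications is deep; the only thing requiring care is bookkeeping the coherence isomorphisms $\varphi$ in the definition of $\St\tha$ on morphisms and confirming that, because $\xi$ is a \emph{strict} $2$-functor, $\ps$ can be taken to be strict (rather than merely a pseudomorphism) and no constraint cells intervene. I expect that to be the one spot where the argument, though routine, must be written out rather than asserted.
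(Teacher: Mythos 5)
Your proof is correct, but it takes the explicit, hands-on route where the paper argues by universal property. The paper defines $\ps$ as the adjunct of the weak $\cD$-pseudomorphism $\xi^* i_\cC\colon \squiggly{\xi^*\aY}{\xi^*(\St_\cC\aY)}$ under the 2-adjunction $\St_\cD \dashv \bJ$ of \autoref{ConjOut1} (this uses that $\xi^*(\St_\cC\aY)$ is a \emph{strict} $\cD$-algebra because $\xi$ is a strict $\sV_\bpt$-2-functor); existence of $\ps$, the commuting triangle, and 2-naturality then come essentially for free from the universal property and the 2-naturality of the unit $i$. You instead write $\ps$ down directly, $(f,y)\mapsto(\xi f,y)$ on objects and the induced map of pullbacks on morphisms, and check strictness, 2-naturality, and the triangle by hand; your formula is exactly the elementwise description the paper records in the remark following the lemma, and by uniqueness of adjuncts the two constructions agree. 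What each approach buys: the adjunction argument is shorter and makes the naturality statement formal, while your explicit construction makes transparent where strict 2-functoriality of $\xi$ enters (no constraint 2-cells from $\xi$, so $\ps$ is genuinely strict) and produces the concrete formula that the paper in any case needs later, e.g.\ in \autoref{psi-comp}, \autoref{ZEandtimes}, and the proof of \autoref{StMulti}. Your concluding two-out-of-three argument for the levelwise equivalence (using the quasi-inverse $m$ of $i$ and preservation of level equivalences by $\xi^*$) is the same reasoning the paper leaves implicit in the phrase ``commutes, hence $\ps$ is a levelwise equivalence.''
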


\begin{proof}  
We can specify $\psi$ as the $\cD$-morphism corresponding to the $\cD$-pseudomorphism
\[ \xi^* i_\cC \squiggly{\colon \xi^* \aY}{\xi^*(\St_\cC \aY)}\]
under the adjunction of \autoref{ConjOut1}.
The claim about 2-naturality then follows from the 2-naturality of $i$. The commutativity of the diagram can be verified directly from the definition. 
\end{proof}

\begin{rem} For later use, we give an explicit description of $\ps$ in terms of elements. For an object $d\in \cD$, $\psi_d \colon \St_\cD(\xi^* \aY) (d) \rtarr \xi^*(\St_\cC \aY)(d)$ sends an object $(f,y)$, with $f\colon d'\to d$ in $\cD$ and $y \in \aY(\xi(d'))$, to the object $(\xi(f),y)$.
\end{rem}

The following lemma records the compatibility of the morphism $\psi$ of \autoref{zeta1} with composition of $\sV_\bpt$-2-functors; it follows directly from the definitions.

\begin{lem}\label{psi-comp}
Let $\nu\colon \cE \rtarr \cD$ and $\xi \colon \cD \rtarr \cC$ be $\sV_\bpt$-2-functors. Then the diagram 
\[
 \xymatrix{
 \St_\cE((\xi\nu)^* \aY ) \ar[rr]^-{\psi_{(\xi\nu)}} \ar@{=}[d] && (\xi\nu)^*(\St_\cC \aY) \ar@{=}[d]\\
 \St_\cE(\nu^*\xi^* \aY ) \ar[r]_-{\psi_{\nu}} & \nu^* (\St_\cD (\xi^* \aY)) \ar[r]_-{\nu^* \psi_{\xi}} & \nu^*\xi^*(\St_\cC \aY)
 }
\]
commutes for every $\cC$-pseudoalgebra $\aY$.
\end{lem}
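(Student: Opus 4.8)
The plan is to deduce the identity from the universal property in \autoref{ConjOut1} rather than by chasing elements, though the hands-on route also works and I will indicate it. The first observation is that $(\xi\nu)^* = \nu^*\xi^*$ holds strictly: pullback along a strict $\sV_\bpt$-$2$-functor is literal precomposition, which is strictly functorial. Hence the two composites in the square really do share the source $\St_\cE((\xi\nu)^*\aY)$ and the target $(\xi\nu)^*(\St_\cC\aY) = \nu^*\xi^*(\St_\cC\aY)$, and this target is a strict $\cE$-algebra, being the pullback of the strict $\cC$-algebra $\St_\cC\aY$ along the strict $2$-functor $\xi\nu$. Both edges of the square are strict $\cE$-morphisms into this strict $\cE$-algebra: the top edge $\psi_{(\xi\nu)}$ by its construction in \autoref{zeta1}, and the left-bottom composite $\nu^*\psi_\xi\circ\psi_\nu$ because $\psi_\nu$ and $\psi_\xi$ are strict morphisms, $\nu^*$ carries strict $\cD$-morphisms to strict $\cE$-morphisms (\autoref{notn:pullingback}), and strict morphisms compose.

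Next I would invoke the $2$-adjunction $\St_\cE\dashv\bJ$ of \autoref{ConjOut1}: a strict $\cE$-morphism out of $\St_\cE((\xi\nu)^*\aY)$ into a strict $\cE$-algebra is uniquely determined by its precomposition with the unit $i\colon(\xi\nu)^*\aY\rtarr\St_\cE((\xi\nu)^*\aY)$, viewed as a pseudomorphism of $\cE$-pseudoalgebras. It therefore suffices to verify $\psi_{(\xi\nu)}\circ i = (\nu^*\psi_\xi\circ\psi_\nu)\circ i$. The left side equals $(\xi\nu)^* i_\cC$ by the commuting triangle of \autoref{zeta1} applied to $\xi\nu$ and $\aY$, where $i_\cC\colon\aY\rtarr\St_\cC\aY$ is the unit. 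For the right side, applying the triangle of \autoref{zeta1} to $\nu$ and the $\cD$-pseudoalgebra $\xi^*\aY$ gives $\psi_\nu\circ i = \nu^*(i_\cD)$ with $i_\cD\colon\xi^*\aY\rtarr\St_\cD(\xi^*\aY)$ the unit; using functoriality of $\nu^*$ and the triangle of \autoref{zeta1} for $\xi$ and $\aY$,
\[ (\nu^*\psi_\xi\circ\psi_\nu)\circ i = \nu^*\psi_\xi\circ\nu^*(i_\cD) = \nu^*(\psi_\xi\circ i_\cD) = \nu^*(\xi^* i_\cC) = (\xi\nu)^* i_\cC. \]
Hence both sides agree, which is the claim.

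Alternatively, one can read off the statement from the explicit description of $\psi$ recorded just before the lemma: on an object $e\in\cE$ both composites carry an object $(f,y)$ of $\St_\cE((\xi\nu)^*\aY)(e)$, with $f\colon e'\to e$ in $\cE$ and $y\in\aY(\xi\nu(e'))$, to $(\xi\nu(f),y)$, and they act identically on morphisms since, by the pullback \eqref{MorSt} defining the morphisms of a strictification, those are morphisms of the ambient pseudoalgebra between the relevant $\tha$-images, on which both composites act by the identity — and strictness of the $\psi$'s means there is no constraint data left to compare. I do not expect a genuine obstacle here; the only points needing care are the strict identification $(\xi\nu)^* = \nu^*\xi^*$ of pullback $2$-functors and keeping straight the three instances of the triangle of \autoref{zeta1}, which is precisely why the paper remarks that the lemma follows directly from the definitions.
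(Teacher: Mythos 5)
Your proposal is correct. The paper offers no written argument beyond ``it follows directly from the definitions,'' and the explicit elementwise description of $\psi$ recorded just before the lemma is evidently what is meant to be used; your alternative check at the end of the proposal is exactly that verification, so that part coincides with the paper's (implicit) proof. Your primary route is genuinely different and arguably cleaner: you observe that $(\xi\nu)^*=\nu^*\xi^*$ strictly, that both composites are strict $\cE$-maps into the strict $\cE$-algebra $(\xi\nu)^*(\St_\cC\aY)$ (using that $\psi_\nu$, $\psi_\xi$ are strict, that $\nu^*$ preserves strict maps per \autoref{notn:pullingback}, and that pullback along a strict $2$-functor preserves strict algebras), and then appeal to the $2$-adjunction of \autoref{ConjOut1} to reduce equality of strict maps out of $\St_\cE((\xi\nu)^*\aY)$ to equality of their precomposites with the unit $i$; the three instances of the triangle in \autoref{zeta1} then give $(\xi\nu)^*i_\cC$ on both sides. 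This is consistent with how $\psi$ is itself defined (as the adjoint of $\xi^*i_\cC$), so the abstract route buys a proof that never touches the construction of $\St$, at the cost of invoking the adjunction's uniqueness, whereas the paper's direct route is immediate once the explicit formula for $\psi$ is in hand. The only small point to be careful about in your elementwise variant is that the remark only describes $\psi$ on objects; the identification of its effect on morphisms (via the pullback \eqref{MorSt}, under which both composites act as the identity on the common hom-objects in $\aY(\xi\nu(e))$) is easy but is an extra step you correctly supply rather than something stated in the paper.
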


The following observation about the interaction of strictification with the external smash product (\autoref{PairsPairs}) generalizes \cite[Lemma 3.5]{MayPair}.

\begin{lem}\label{Sprod}
Let $\cC$ and $\cD$ be $\sV_\bpt$-2-categories, $\aX$ a $\cC$-pseudoalgebra and $\aY$ a $\cD$-pseudoalgebra. Then there is a canonical isomorphism 
\[  \St_{\cC\esma \cD} (\aX \bsma\aY)\cong (\St_\cC \aX) \bsma (\St_\cD \aY),\] 
which is $2$-natural with respect to the respective pseudomorphisms and pseudo\-transformations. 
In particular, up to composing with this canonical isomorphism, for a $\cC$-pseudomorphism $\squiggly{E\colon \aX}{\aX'}$ and an
$\cD$-pseudomorphism $\squiggly{F\colon \aY}{\aY'}$, the $\cC\esma \cD$-morphism $\St_{\cC\esma \cD} (E\bsma F)$  corresponds to $(\St_\cC E) \bsma( \St _\cD F)$.
\end{lem}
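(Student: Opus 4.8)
The strategy is to unwind the explicit construction of $\St$ from the proof of \autoref{ConjOut1} and observe that everything is built levelwise out of smash products of hom-objects and algebra-objects, so that the external smash product $\bsma$ slides through. Concretely, fix objects $c$ of $\cC$ and $d$ of $\cD$. By \autoref{ObCommutesSma} and the construction \autoref{ObSt}, the $\sV_\bpt$-object of objects of $\St_{\cC\esma\cD}(\aX\bsma\aY)((c,d))$ is $\bigvee_{(b,e)}\ob(\cC\esma\cD)((b,e),(c,d))\sma\ob(\aX\bsma\aY)((b,e))$, which by the definition of the product enrichment and of $\bsma$ equals $\bigvee_{b,e}\bigl(\ob\cC(b,c)\sma\ob\cD(e,d)\bigr)\sma\bigl(\ob\aX(b)\sma\ob\aY(e)\bigr)$. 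Reassociating and commuting smash factors (using that $\VCatp$ is symmetric monoidal, \autoref{VCatpclosed}) and distributing the wedge over the smash, this is isomorphic to $\Bigl(\bigvee_b\ob\cC(b,c)\sma\ob\aX(b)\Bigr)\sma\Bigl(\bigvee_e\ob\cD(e,d)\sma\ob\aY(e)\Bigr)$, i.e.\ to $\ob\bigl((\St_\cC\aX)\bsma(\St_\cD\aY)\bigr)((c,d))$. This is the heart of the canonical isomorphism.

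Next I would check that this identification on objects is compatible with the morphism $\sV_\bpt$-objects. By \autoref{MorSt}, the morphism object of $\St_{\cC\esma\cD}(\aX\bsma\aY)((c,d))$ is a pullback of $\mor(\aX\bsma\aY)((c,d)) = \mor\aX(c)\sma\mor\aY(d)$ along the product of the action map $\tha^{\esma}$ with itself. Since the action map on $\aX\bsma\aY$ is itself the smash of the action maps on $\aX$ and $\aY$ (postcomposed with the shuffle, as in \autoref{PairsPairs}), and since pullbacks in $\VCatp$ of smash products of such squares are the smash products of the pullbacks (here one uses that $\sma$ in $\VCatp$ preserves pullbacks of the relevant shape because it is a left adjoint in each variable, \autoref{VCatpclosed}), the pullback defining $\mor\St_{\cC\esma\cD}(\aX\bsma\aY)((c,d))$ is identified with the smash of the two pullbacks defining $\mor\St_\cC\aX(c)$ and $\mor\St_\cD\aY(d)$. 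One then verifies that composition and the action $\sV_\bpt$-functors $\St\tha$ match up under these identifications; this is a direct diagram chase from the elementwise formulas $(\St\tha)(h,(f,x))=(h\circ f,x)$ given in the proof of \autoref{ConjOut1}, together with the fact that the pseudofunctoriality constraints $\varphi$ of $\aX\bsma\aY$ are the external smash of those of $\aX$ and $\aY$.

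Having the isomorphism on objects $\St_{\cC\esma\cD}(\aX\bsma\aY)\cong(\St_\cC\aX)\bsma(\St_\cD\aY)$ of $\cC\esma\cD$-algebras, $2$-naturality is then a matter of tracing through the formulas for $\St$ on pseudomorphisms and transformations from the proof of \autoref{ConjOut1}: on a pair $(E,F)$ of pseudomorphisms, $\St_{\cC\esma\cD}(E\bsma F)$ sends $(f,x)$ to $(f,(E\bsma F)x)=(f,Ex\wedge Fx)$ and acts on $2$-cells via the distributivity isomorphisms $\de^{E\bsma F}=\de^E\bsma\de^F$, which is exactly $(\St_\cC E)\bsma(\St_\cD F)$ under the canonical identification; similarly for $\cC$- and $\cD$-transformations. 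The last sentence of the statement is an immediate specialization. The main obstacle, and the only point requiring genuine care, is the interchange of the pullback \autoref{MorSt} with the smash product: one must confirm that $\sma$ on $\VCatp$ commutes with the particular pullbacks appearing here, which is where closedness of $\VCatp$ (\autoref{VCatpclosed}) is used, exactly as in the cited precedent \cite[Lemma 3.5]{MayPair}; everything else is bookkeeping with the explicit elementwise descriptions already recorded in \autoref{ConjOut1}.
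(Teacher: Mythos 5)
Your object-level identification is exactly the paper's (via \autoref{ObCommutesSma} and \autoref{ObSt}), but your treatment of the morphism objects has a genuine gap, in two places. First, you assert $\mor\bigl((\aX\bsma\aY)(c,d)\bigr)=\mor\aX(c)\sma\mor\aY(d)$. The paper only establishes the object-level statement $\ob(\cA\sma\cB)\cong\ob\cA\sma\ob\cB$ (\autoref{ObCommutesSma}), and that argument uses that $\ob$ has both adjoints; $\mor$ does not commute with the pushouts defining $\sma$ in $\VCatp$ in general (collapsing a subcategory to a point is not computed levelwise on morphism objects), so this identification is unjustified. Second, your key step — that the pullback \autoref{MorSt} for $\aX\bsma\aY$ is the smash of the two pullbacks for $\aX$ and $\aY$ — is justified by saying that $\sma$ preserves the relevant pullbacks "because it is a left adjoint in each variable." That is backwards: being a left adjoint in each variable gives preservation of \emph{colimits}, not limits, and $\sma$ does not preserve pullbacks in general. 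So the interchange you flag as "the only point requiring genuine care" is exactly the point your argument does not establish.

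The paper's proof sidesteps both issues with a different device: since $m_\cC\colon\St_\cC\aX(c)\rtarr\aX(c)$ and $m_\cD\colon\St_\cD\aY(d)\rtarr\aY(d)$ are equivalences (\autoref{mCounit}), so is $m_\cC\sma m_\cD$, hence it is fully faithful in the internal sense; this says precisely that the square expressing $\mor\bigl((\St_\cC\aX)(c)\sma(\St_\cD\aY)(d)\bigr)$ over $\ob$'s is a pullback, and (after the object-level identification) it is a pullback of the \emph{same} cospan as the square \autoref{MorSt} defining $\mor\bigl(\St_{\cC\esma\cD}(\aX\bsma\aY)(c,d)\bigr)$. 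The two morphism objects are therefore isomorphic without ever commuting $\sma$ past a pullback or past $\mor$. If you want to salvage your route, replace the adjointness claim by this full-faithfulness argument; the remaining checks you describe (compatibility of the action maps, and that $\St_{\cC\esma\cD}(E\bsma F)$ corresponds to $(\St_\cC E)\bsma(\St_\cD F)$ via the elementwise formulas of \autoref{ConjOut1}) are then fine and agree with the paper.
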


\begin{proof}  On the level of objects, the identification
follows from \autoref{ObCommutesSma} and \autoref{ObSt}.
Again writing elementwise, 
it is just the twist that sends an object $((e,f),(x,y))$ of $\St_{\cC\esma \cD} (\aX\bsma\aY)(c,d)$ to the object
$((e,x), (f,y))$ of  $(\St_\cC \aX)(c) \sma (\St_\cD \aY)(d)$.  

Note that since the $\sV_\bpt$-functor
\[m_\cC \sma m_\cD \colon (\St_\cC \aX(c))\sma (\St_\cD \aY(d)) \to \aX(c)\sma \aY(d)\]
is an equivalence (\autoref{mCounit}), it is fully faithful \cite[Lemma~4.17]{InternalCats}, 
in the sense  that the diagram
\[
\xymatrix{
\mor \Big((\St_\cC \aX(c))\sma (\St_\cD \aY(d))\Big) \ar[r] \ar[d]_{(T,S)} & \mor (\aX(c)\sma \aY(d)) \ar[d]^{(T,S)}\\
 \ob \Big((\St_\cC \aX(c)) \sma (\St_\cD \aY(d))\Big)^{\times 2} \ar[r] & \ob (\aX(c)\sma \aY(d) )^{\times 2} 
 }
 \]
is a pullback square which is  isomorphic to the pullback square in \autoref{MorSt} that defines 
$\mor(\St_{\cC\esma \cD}(\aX \bsma\aY))$.   
This shows that the $\sV_*$-categories $\St_{\cC\esma\cD} (X\overline{\sma}Y)(c,d)$ and $(\St_\cC X)(c)\sma (\St_\cD Y)(d)$ are isomorphic. One can check that these isomorphisms respect the action of $\cC \sma \cD$, thus proving the result.
\end{proof}

We record the relationship between the 2-natural transformation $\ps$ of \autoref{zeta1} and the canonical isomorphism 
of \autoref{Sprod}.

\begin{lem}\label{ZEandtimes}
 Let $\cC$, $\cC'$, $\cD$, and $\cD'$,  be $\sV_\bpt$-2-categories,  let
  $\xi \colon \cC'\rtarr \cC$ and $\ze\colon \cD'\rtarr \cD$ be $\sV_\bpt$-2-functors, and let
  $\aX$ be a $\cC$-pseudoalgebra and $\aY$ be a $\cD$-pseudoalgebra.  
 Then the following diagram commutes, where the unnamed isomorphisms are those of \autoref{Sprod}.
 
 \[\xymatrix{
 \St_{\cC'\esma\cD'} \Big((\xi \sma \ze )^*(\aX \bsma\aY) \Big)\ar[r]^-{\ps_{\xi \sma \ze}} \ar@{=}[d] &  (\xi\sma \ze)^* \St_{\cC\esma\cD} (\aX \bsma \aY) \ar[d]^{\cong}\\
  \St_{\cC'\esma\cD'} \Big((\xi ^* \aX )\bsma(\ze^*\aY) \Big)  \ar[d]_{\cong} & (\xi\sma \ze)^* (\St_\cC \aX \bsma \St_\cD\aY) \ar@{=}[d]\\
 (\St_{\cC'} \xi^*\aX) \bsma (\St_{\cD'} \ze^*\aY) \ar[r]_-{\ps_{\xi} \bsma \ps_{\ze}} & (\xi^*\St_\cC \aX)\bsma (\ze^*\St_\cD \aY)\\} \] 
 \end{lem}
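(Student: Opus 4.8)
The plan is to reduce the lemma entirely to \emph{uniqueness} of morphisms induced by the universal property of strictification. Recall that by \autoref{ConjOut1}, each of the maps $\ps_\xi$, $\ps_\ze$, $\ps_{\xi\sma\ze}$ is, \emph{by construction}, the strict morphism adjoint to a pseudomorphism of the form $\eta^* i_{(-)}$ under the $2$-adjunction $\St_{(-)} \dashv \bJ$. Moreover, by \autoref{Sprod}, both the canonical isomorphism $\St_{\cC\esma\cD}(\aX\bsma\aY)\iso (\St_\cC\aX)\bsma(\St_\cD\aY)$ and its variant over $\cC'\esma\cD'$ are $2$-natural strict isomorphisms. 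Hence every arrow appearing in the square of the lemma is a strict $(\cC'\esma\cD')$-morphism out of $\St_{\cC'\esma\cD'}\big((\xi\sma\ze)^*(\aX\bsma\aY)\big)$, and by the $2$-adjunction such a morphism is uniquely determined by its precomposite with the unit $\squiggly{i\colon (\xi\sma\ze)^*(\aX\bsma\aY)}{\St_{\cC'\esma\cD'}\big((\xi\sma\ze)^*(\aX\bsma\aY)\big)}$. So it suffices to check that the two composites around the square agree after precomposing with $i$.

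First I would precompose the top-right composite with $i$. Using the defining triangle of $\ps_{\xi\sma\ze}$ from \autoref{zeta1}, namely $\ps_{\xi\sma\ze}\com i = (\xi\sma\ze)^* i_{\cC\esma\cD}$, and then the $2$-naturality of the canonical isomorphism of \autoref{Sprod} applied to the pseudomorphism $i_{\cC\esma\cD}$, this composite becomes (after identifying along the stated isomorphisms) the pseudomorphism $(\xi^* i_\cC)\bsma(\ze^* i_\cD)$ out of $(\xi^*\aX)\bsma(\ze^*\aY)$. For the bottom-left composite, precomposing $\ps_\xi\bsma\ps_\ze$ with $i$ and using $2$-naturality of the isomorphism of \autoref{Sprod} in the other direction (to commute $i$ past the identification $\St_{\cC'\esma\cD'}((\xi^*\aX)\bsma(\ze^*\aY))\iso(\St_{\cC'}\xi^*\aX)\bsma(\St_{\cD'}\ze^*\aY)$), together with the triangles $\ps_\xi\com i = \xi^* i_\cC$ and $\ps_\ze\com i = \ze^* i_\cD$, yields exactly the same pseudomorphism $(\xi^* i_\cC)\bsma(\ze^* i_\cD)$. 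Since the two precomposites with $i$ coincide, the $2$-adjunction forces the two strict morphisms to be equal, which is the claim.

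The main obstacle I anticipate is purely bookkeeping: making the identifications along the unnamed isomorphisms of \autoref{Sprod} genuinely match up, since the square as drawn involves two \emph{different} instances of that canonical isomorphism (one over $\cC\esma\cD$ composed with $(\xi\sma\ze)^*$, one over $\cC'\esma\cD'$), and one must confirm that $(\xi\sma\ze)^*$ applied to the isomorphism over $\cC\esma\cD$ is compatible with the isomorphism over $\cC'\esma\cD'$ through the defining twist ``$((e,f),(x,y))\mapsto((e,x),(f,y))$''. This compatibility is precisely what the elementwise description of $\ps$ in the remark following \autoref{zeta1} makes transparent: $\ps_{\xi\sma\ze}$ sends $((f,f'),(x,y))$ to $((\xi f,\ze f'),(x,y))$, and chasing this element around both composites gives the same element $((\xi f,x),(\ze f',y))$ on the nose. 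So in practice I would either run the adjunction argument above or, if a referee prefers concreteness, simply verify the square on objects via this element chase and then invoke fully-faithfulness of the counit equivalences (as in the proof of \autoref{Sprod}) to extend to morphisms; the two routes are interchangeable, and I would present the adjunction one as the main line with the element description as the sanity check.
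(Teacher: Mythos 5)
Your argument is correct, but it is worth saying how it sits relative to the paper: the paper states this lemma with no proof at all, the intended justification being the routine check from the explicit descriptions set up just beforehand — the elementwise formula for $\ps$ in the remark after \autoref{zeta1} and the twist $((e,f),(x,y))\mapsto((e,x),(f,y))$ in the proof of \autoref{Sprod}; chasing an object $((f,f'),(x,y))$ both ways gives $((\xi f,x),(\ze f',y))$ on the nose, and the morphism-level agreement is equally immediate since all four maps act as the identity on the underlying morphisms of $\aX\sma\aY$. Your main line instead reduces the square to the universal property: all four arrows are strict morphisms out of the strictification $\St_{\cC'\esma\cD'}\bigl((\xi\sma\ze)^*(\aX\bsma\aY)\bigr)$ into a strict algebra, so by the $2$-adjunction of \autoref{ConjOut1} it suffices to compare the two composites after precomposition with the unit $i$, using the triangles $\ps_{\xi}\circ i=\xi^*i_{\cC}$, etc. That is a legitimate and somewhat slicker route — it isolates exactly which data the square depends on — whereas the paper's implicit direct check is shorter to write down because the formulas are already explicit.

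One imprecision to fix in your write-up: the step where you turn $\ps_{\xi\sma\ze}\circ i=(\xi\sma\ze)^*i_{\cC\esma\cD}$ into $(\xi^*i_{\cC})\bsma(\ze^*i_{\cD})$ is not an instance of the $2$-naturality asserted in \autoref{Sprod}. That naturality concerns pseudomorphisms of the inputs, i.e. it identifies $\St_{\cC\esma\cD}(E\bsma F)$ with $(\St_{\cC}E)\bsma(\St_{\cD}F)$; the unit $i_{\aX\bsma\aY}$ is not of the form $E\bsma F$, so what you actually need is the separate compatibility
\[
\cong\ \circ\ i_{\aX\bsma\aY}\;=\;i_{\aX}\bsma i_{\aY}
\]
(over $\cC\esma\cD$, and its analogue over $\cC'\esma\cD'$ for the bottom-left composite), as an equality of pseudomorphisms including the constraint $2$-cells. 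This is not part of the statement of \autoref{Sprod}, but it is immediate from the explicit constructions — indeed it is exactly what your element chase verifies — so once you cite that check rather than ``$2$-naturality,'' the adjunction argument closes up and the proof is complete.
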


\subsection{The extension of strictification to a multifunctor}\label{sec:StrictMult}

We now assume that $(\cC,I,\smaD,\ta)$ is a 
permutative $\sV_\bpt$-2-category (\autoref{permv2cat}).  We extend $\St$  to a multifunctor from the multicategory $\Mult(\CPsAlg)$ of (weak) $\cC$-pseudoalgebras to the multicategory $\Mult(\CAlg)$ of strict $\cC$-algebras and strict multilinear maps.  The former is defined by replacing $\sF_G$ by $\cC$ in \autoref{MultiF},  and we now define the latter.

\begin{defn}
The multicategory $\Mult(\CAlg)$ is constructed by using the symmetric monoidal structure on $\CAlg$ given by Day convolution along iterations of the monoidal product $\smaD$ on $\cC$. Reinterpreting this externally, via the universal property of Day convolution,  we see that for strict
$\cC$-algebras $\aX_1, \dots, \aX_k$ and $\aY$, a $k$-ary morphism is given by a strict $\cC^{\sma k}$-morphism  
\[F \colon \aX_1 \bsma \cdots \bsma \aX_k \rtarr  \aY\com \smaD^k.\]
\end{defn}

We remark that the definition above differs from Definitions \ref{MultiD} and \ref{MultiDG} in that the multimorphisms are pseudotransformations in those cases, and strict here. 

\begin{thm}\label{StMulti} The strictification functor $\St $ induces a (non-symmetric) multifunctor 
\[\St \colon \Mult(\CPsAlg) \rtarr \Mult(\CAlg).\]
\end{thm}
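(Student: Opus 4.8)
The plan is to build the multifunctor $\St$ on $k$-ary morphisms and then verify compatibility with composition, identities, and (non-)symmetry, reducing everything to the structural lemmas already established. First I would fix strict $\cC$-pseudoalgebras $\aX_1,\dots,\aX_k$ and $\aY$ together with a $k$-ary morphism $F\colon \squiggly{\aX_1 \bsma \cdots \bsma \aX_k}{\aY\com \smaD^k}$, i.e.\ a $\cC^{\sma k}$-pseudomorphism. Applying $\St_{\cC^{\sma k}}$ (which exists by \autoref{ConjOut1}) and then the canonical isomorphism of \autoref{Sprod} produces a strict $\cC^{\sma k}$-morphism
\[ \St_{\cC^{\sma k}}(F)\colon \St_\cC\aX_1 \bsma \cdots \bsma \St_\cC\aX_k \rtarr \St_{\cC^{\sma k}}(\aY\com\smaD^k).\]
To land in $\Mult(\CAlg)$ I must postcompose with a strict $\cC^{\sma k}$-morphism $\St_{\cC^{\sma k}}(\aY\com\smaD^k) \rtarr (\St_\cC\aY)\com\smaD^k$, and this is exactly the morphism $\ps_{\smaD^k}$ of \autoref{zeta1} applied to the iterated monoidal product $\smaD^k\colon \cC^{\sma k}\rtarr \cC$ (here $\smaD^k$ is a strict $\sV_\bpt$-2-functor since $\cC$ is a \emph{permutative} $\sV_\bpt$-2-category, not merely pseudo-permutative). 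Thus I define $\St(F)$ to be the composite $\ps_{\smaD^k}\com \St_{\cC^{\sma k}}(F)$, suitably conjugated by the isomorphism of \autoref{Sprod}.

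Next I would check functoriality. That $\St$ preserves identities is immediate: $\St_{\cC^{\sma k}}(\id)=\id$ and $\ps$ composed with the canonical isomorphism sends $\id$ to $\id$. For composition, given $E_i\colon (\aX_{i,1},\dots,\aX_{i,j_i})\rtarr \aY_i$ and $F\colon (\aY_1,\dots,\aY_k)\rtarr\aZ$, the composite in $\Mult(\CPsAlg)$ is the pasting of \autoref{compMultiD} (with $\sF_G$ replaced by $\cC$). I would use: (1) the 2-functoriality of $\St_{\cC^{\sma j}}$, so it carries the pasted pseudomorphism to the pasted strict morphism; (2) \autoref{Sprod}, specifically its statement that $\St_{\cC\esma\cD}(E\bsma F)$ corresponds to $(\St E)\bsma(\St F)$, to split $\St_{\cC^{\sma j}}(E_1\bsma\cdots\bsma E_k)$; (3) \autoref{psi-comp}, the compatibility $\ps_{\smaD^j}=\ps_{\smaD^k}\com (\smaD^k)^*\ps_{(\smaD^{j_1},\dots,\smaD^{j_k})}$ coming from the factorization $\smaD^j = \smaD^k\com(\smaD^{j_1}\sma\cdots\sma\smaD^{j_k})$ of the structure map (this is where strict associativity of $\smaD$ is essential); and (4) \autoref{ZEandtimes}, relating $\ps$ to external smash products, to show the two ways of assembling the $\ps$-maps agree. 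Pasting these identities together gives $\St(\ga(F;E_1,\dots,E_k)) = \ga(\St F;\St E_1,\dots,\St E_k)$.

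Finally I would address the symmetric structure. The action of $\si\in\SI_k$ on a $k$-ary morphism is given by the pasting diagram in \autoref{MultiDG}, whose boundary 2-cells involve $\ta_\si$, the pseudotransformation built from the \emph{non-strict} symmetry of $\cC$. Since $\St_{\cC^{\sma k}}$ is 2-functorial it commutes with the precomposition $t_\si$ on the $\cC^{\sma k}$ side, but the $\ta_\si$ whiskering on the $\cC$ side gets strictified, and there is no reason for $\ps_{\smaD^k}$ to intertwine $\ta_\si$ with its strictification — this is precisely the loss of symmetry flagged in the introduction and in \autoref{sec:nono}. So the theorem claims only a non-symmetric multifunctor, and I would simply not assert $\SI_k$-equivariance; I would remark that $\St$ is nonetheless a multifunctor in the non-symmetric sense, which suffices for the associative/unital applications. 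The main obstacle is the composition-compatibility verification: it is the one place where the interplay of the three lemmas \autoref{psi-comp}, \autoref{Sprod}, \autoref{ZEandtimes} must be orchestrated, and care is needed to track the strict-associativity isomorphisms of $\smaD$ and the associativity isomorphism of $\sma$ on $\CatVp$ appearing on the right of \autoref{compMultiD} so that the two pasted diagrams literally agree rather than merely agreeing up to a 2-cell.

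\begin{proof}[Proof sketch]
We define $\St$ on a $k$-ary morphism $F\colon \squiggly{\aX_1 \bsma \cdots \bsma \aX_k}{\aY\com \smaD^k}$, that is, a $\cC^{\sma k}$-pseudomorphism, to be the composite
\[
\xymatrix{
\St_\cC\aX_1 \bsma \cdots \bsma \St_\cC\aX_k \ar@{=}[r] & \St_{\cC^{\sma k}}(\aX_1\bsma\cdots\bsma\aX_k) \ar[d]^{\St_{\cC^{\sma k}}(F)} \\
& \St_{\cC^{\sma k}}(\aY\com\smaD^k) \ar[d]^{\ps_{\smaD^k}} \\
& (\St_\cC\aY)\com\smaD^k,
}
\]
where the first identification is the canonical isomorphism of \autoref{Sprod}, $\St_{\cC^{\sma k}}(F)$ is the image of $F$ under the strictification 2-functor of \autoref{ConjOut1}, and $\ps_{\smaD^k}$ is the strict $\cC^{\sma k}$-morphism of \autoref{zeta1} associated to the strict $\sV_\bpt$-2-functor $\smaD^k\colon \cC^{\sma k}\rtarr\cC$; here we use that $\cC$ is a permutative $\sV_\bpt$-2-category, so $\smaD$, and hence $\smaD^k$, is a strict $\sV_\bpt$-2-functor. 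On objects and $1$-ary morphisms this is the ordinary strictification functor $\St_\cC$ of \autoref{ConjOut1}.

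Preservation of identities is immediate from $\St_{\cC^{\sma k}}(\id)=\id$ together with \autoref{Sprod} and the fact that $\ps$ sends identities to identities. For preservation of composition, one writes out the composite $\ga(F;E_1,\dots,E_k)$ via the pasting in \autoref{compMultiD} (with $\cC$ in place of $\sF_G$) and applies $\St_{\cC^{\sma j}}$. Using the $2$-functoriality of $\St_{\cC^{\sma j}}$, the identification of $\St_{\cC^{\sma j}}(E_1\bsma\cdots\bsma E_k)$ with $\St_{\cC^{\sma k_{\bullet}}}E_1\bsma\cdots\bsma\St E_k$ from \autoref{Sprod}, the factorization $\ps_{\smaD^j}=\ps_{\smaD^k}\com(\smaD^k)^{*}\ps$ of \autoref{psi-comp} coming from $\smaD^j=\smaD^k\com(\smaD^{j_1}\sma\cdots\sma\smaD^{j_k})$ (valid by strict associativity of $\smaD$), and the compatibility of $\ps$ with external smash products from \autoref{ZEandtimes}, one checks that the result agrees on the nose with $\ga(\St F;\St E_1,\dots,\St E_k)$; the associativity isomorphism for $\sma$ on $\CatVp$ appearing on the right of \autoref{compMultiD} is absorbed into the canonical isomorphism of \autoref{Sprod}.

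We do \emph{not} claim $\St$ is symmetric. The $\SI_k$-action on $k$-ary morphisms, given by the pasting of \autoref{MultiDG}, involves the pseudotransformation $\ta_\si$ built from the non-strict symmetry of $\cC$; while $\St_{\cC^{\sma k}}$ is compatible with the precomposition $t_\si$, the whiskering by $\ta_\si$ is strictified and $\ps_{\smaD^k}$ does not intertwine it with its strictification (see \autoref{sec:nono}). Hence $\St$ is a multifunctor in the non-symmetric sense only, which is all that is needed for the applications.
\end{proof}
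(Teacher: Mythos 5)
Your proposal is correct and follows essentially the same route as the paper: define $\St$ on a $k$-ary morphism as the composite of the canonical isomorphism of \autoref{Sprod}, $\St_{\cC^{\esma k}}(F)$, and $\ps_{\smaD^k}$ from \autoref{zeta1}, then verify composition using the $2$-functoriality of strictification together with \autoref{Sprod}, \autoref{ZEandtimes}, the naturality of $\ps$, and \autoref{psi-comp}, exactly as in the paper's diagram chase. Your emphasis on the strictness of $\smaD$ (permutative rather than pseudo-permutative) and on the non-symmetry matches the paper's own remarks.
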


\begin{proof}  
For a $\cC$-pseudoalgebra $\aX$, 
we set
$\St\aX=\St_\cC\aX$. For multimorphisms, we use the 2-functor $\St_{\cC^{\esma k}}$, which for brevity we denote by $\St_k$, to strictify $\cC^{\esma k}$-pseudomorphisms. More precisely, recall that a $k$-ary morphism in $\Mult(\CPsAlg)$ is given by a $\cC^{\esma k}$-pseudomorphism
\[ \squiggly{F\colon \aX_1 \bsma \cdots \bsma \aX_k }{\, \aY\circ \smaD^k.} \]
We define the  $k$-ary morphism $\St (F)$ in $\Mult(\cC)$  as the composite
\[\xymatrix{
\St\aX_1\bsma \cdots \bsma \St \aX_k \ar[r]^-{\cong} & \St_k (\aX_1\bsma \cdots \bsma \aX_k) \ar[r]^-{\St_k(F)} &  \St_k (\aY\com \smaD^k) \ar[r]^-{\ps} & (\St \aY)\com \smaD^k,}
\]
where the unnamed isomorphism is that of \autoref{Sprod}, and the map $\ps$ is the one defined in \autoref{zeta1}.

Note that on 1-ary morphisms, $\St$ is just the 2-functor $\St_\cC$, so in particular this assignment sends the identity to itself.

It remains to prove that $\St$ preserves multicomposition. Let 
\[ F\colon \squiggly{\aY_1\bsma \cdots \bsma \aY_k }{\, \aZ\com \smaD^k } \] 
be a $\cC^{\esma k}$-pseudomorphism
and,   for  $1\leq r\leq k$, let
\[ E_r\colon \squiggly{\aX_{r,1} \bsma \cdots \bsma \aX_{r,j_r} }{\, \aY_{r}\com \smaD^{j_r} } \] 
be a $\cC^{\esma j_r}$-pseudomorphism.  
The composite in $\Mult(\CPsAlg)$ is given by the pasting diagram in \autoref{compMultiD}. In terms of the external smash product of \autoref{PairsPairs} and the pullback of \autoref{notn:pullingback}, this $\cC^{\sma j}$-pseudomorphism can be expressed as the composite of $\cC^j$-pseudomorphisms
\[
\xymatrixcolsep{1.5cm}\xymatrix{
\overline{\bigwedge\limits_{r,i}} \aX_{r,i} \ar@{~>}[r]^-{\overline{\bigwedge\limits_{r}} E_r} & \overline{\bigwedge\limits_{r}} (\smaD^{j_r})^*\aY_{r} = (\bigwedge\limits_r \smaD^{j_r})^* (\overline{\bigwedge\limits_r} \aY_r) \ar@{~>}[r]^-{(\bigwedge\limits_r \smaD^{j_r})^*F} & (\bigwedge\limits_r \smaD^{j_r})^*(\smaD^k)^*\aZ=(\smaD^j)^*\aZ.
}
\]

Consider the following diagram of $\cC^{\sma j}$-morphisms.
\[
\xymatrixcolsep{1.3cm}\xymatrix{
 \overline{\bigwedge\limits_{r,i}}\St \aX_{r,i}  \ar[r]^-{\cong}  \ar[d]_-{\cong} 
 & \overline{\bigwedge\limits_r}\St_{j_r}\Big(\overline{\bigwedge\limits_i}\aX_{r,i} \Big) \ar[r]^-{\overline{\bigwedge\limits_r}\St_{j_r}(E_r)} \ar[dl]^-{\cong}
 & \overline{\bigwedge\limits_r} \St_{j_r}\Big((\smaD^{j_r})^*\aY_r\Big) \ar[d]^-{\overline{\bigwedge\limits_r}\ps_{\smaD^{j_r}} } \ar[ddll]^-{\cong}\\
  \St_j\Big( \overline{\bigwedge\limits_{r,i}}\aX_{r,i}\Big) \ar[d]_-{\St_j\Big(\overline{\bigwedge\limits_r}E_r\Big)}
  && \overline{\bigwedge\limits_r} (\smaD^{j_r})^*\St\aY_r \ar@{=}[d]\\
 \St_j\Big(\overline{\bigwedge\limits_r} (\smaD^{j_r})^* \aY_r \Big)\ar@{=}[d]
  && (\bigwedge\limits_r \smaD^{j_r})^* \Big(\overline{\bigwedge\limits_r} \St \aY_r\Big)\ar[d]^-{\cong}\\
  \St_j\Big((\bigwedge\limits_r \smaD^{j_r})^*(\overline{\bigwedge\limits_r}  \aY_r) \Big)\ar[d]_-{\St_j\big((\bigwedge\limits_r \smaD^{j_r})^*F\big)} \ar[rr]^-{\ps_{(\bigwedge \smaD^{j_r})}}
  && (\bigwedge\limits_r \smaD^{j_r})^* \St_k\Big(\overline{\bigwedge\limits_r}  \aY_r\Big)\ar[d]^-{(\bigwedge\limits_r \smaD^{j_r})^*\St_k F}\\
  \St_j\Big( (\bigwedge\limits_r \smaD^{j_r})^*(\smaD^k)^*\aZ\Big) \ar@{=}[d] \ar[rr]_-{\ps_{(\bigwedge \smaD^{j_r})}}
  &&  (\bigwedge\limits_r \smaD^{j_r})^*\St_k\Big((\smaD^k)^*\aZ\Big) \ar[d]^-{(\bigwedge\limits_r \smaD^{j_r})^*\ps_{\smaD^k}}\\
  \St_j\Big((\smaD^j)^*\aZ\Big) \ar[r]_{\ps_{\smaD^j}}
  & (\smaD^j)^*\St\aZ \ar@{=}[r]
  & (\bigwedge\limits_r \smaD^{j_r})^*(\smaD^k)^*\St\aZ
}
\]
Strictifying $E_1,\dots,E_r$ and $F$ and then composing is equal to going around clockwise. Using that $\St_j$ is a 2-functor, we get that going around counter-clockwise is equal to strictifying the composite. The diagram commutes; indeed, going from top to bottom, the regions commute by associativity of $\sma$ in $\CatVp$, \autoref{Sprod}, \autoref{ZEandtimes}, naturality of $\ps$, and \autoref{psi-comp}, respectively. 
\end{proof}

\begin{rem}  This proof depends crucially on the fact that the monoidal product on $\cC$ is a strict $\sV_\bpt$-2-functor and not just a pseudofunctor.  It does not work for  our pseudo-commutative categories of operators $\sD$ or $\sD_G$.  This is the crux of why the route in this paper is less categorically intensive than the monadic route of \cite{MayToBe}, which simultaneously strictifies and transfers structure from $\sD_G$ to $\sF_G$.
\end{rem}

\subsection{$\St$ is not a symmetric multifunctor}\label{sec:nono}
As stated in \autoref{StMulti}, $\St$ is not a  {\em symmetric} multifunctor.  We explain why in this parenthetical subsection. 
We consider the case when $k =2$, with $\si$ the non-trivial element of $\SI_2$.   Since the problem already appears nonequivariantly, we take $G=e$ and specialize to $\cC=\sF$.  Thus let 
$F\colon \squiggly{\aX_1 \sma\aX_2}{\, \aY}$ be a (weak) $\sF$-pseudomorphism between (weak) $\sF$-pseudoalgebras.  Following \autoref{ObjSym}, for an object $(\bm,\bp)$ of $\sF \esma \sF$, the 1-cell component of $F\si$ is the composite
\[
\xymatrix{
\aX_2(\bm)\sma \aX_1(\bp) \ar[r]^-{t} & \aX_1(\bp)\sma \aX_2(\bm)  \ar[r]^-{F} & \aY(\bp\bm) \ar[r]^-{\aY(\ta_{p,m})} & \aY(\bm\bp).
}
\] 

We claim that the 1-cell components of $\St(F\si)$ and  $(\St F)\si$ do not agree. For an object $(\bm,\bp)$ of $\sF \esma \sF$, these are $\sV_\bpt$-functors
\[\St \aX_2(\bm) \sma \St \aX_1(\bp)\rtarr \St\aY(\bm\bp).\]
We compare them at the level of objects, writing elementwise. 
An object of the source has the form
\[ \Big( (f_2, x_2), (f_1, x_1)\Big) \]
where $f_1\colon \bq\rtarr \bp$ and $f_2\colon \bn \rtarr \bm$ are morphisms of $\sF$ and  $x_1$ and $x_2$ are objects of 
$\aX_1(\bq)$ and $\aX_2(\bn)$, respectively.  

Then $\St (F)\si$ sends $\Big((f_2, x_2), (f_1, x_1)\Big)$ to $\Big((\ta_{p,m} \circ (f_1\sma f_2),F(x_1,x_2)\Big)$.
We can rewrite 
the output as 
$$\Big((f_2\sma f_1)\circ \tau_{q,n},F(x_1,x_2)\Big).$$
On the other hand, $\St(F\si)$ sends $\Big((f_2, x_2), (f_1, x_1)\Big)$ to 
$$\Big( (f_2\sma f_1),\tha(\ta_{q,n},F(x_1,x_2))\Big).$$
We conclude that the multifunctor $\St$ is not symmetric.

\begin{rem}\label{StfNotSt}  This failure of symmetry is forced by our need to use weak pseudo\-structure in the target of $\oursectionG^*$. If we instead use pseudofunctors which are strict relative to $\PI$ when we strictify,
{for example using the generalized strictification theorem given in \cite{AddCat2}},
 then we do have symmetry. 
Symmetry is also studied in the $2$-monadic context in \cite{MayToBe}, where the problem is entirely different.
\end{rem}

\section{From $\sF_G$-algebras in Cat to  $G$-spectra} 
\label{sec:CattoTop}

In this section, we describe how we pass from categorical to topological $\sF_G$-algebras and $\sD_G$-algebras, and then to $G$-spectra, keeping track of multiplicative structure.   Nonequivariantly, $\sF$-spaces (aka $\Gamma$-spaces) were introduced by Segal in his treatment of infinite loop space theory. These generalize to $\sF_G$-$G$-spaces, which are the input of the equivariant version of the Segal infinite loop space machine. For a detailed treatment of $\sF_G$-$G$-spaces, we refer the reader to \cite{MMO}, and for a treatment of its symmetric monoidal structure to \cite{GMMO}.  Topological categories of operators $\sD$ and $\sD$-spaces were introduced in \cite{MT} as an intermediary between $\sF$-spaces and operadic algebras in the proof of the uniqueness of infinite loop space machines. The topological equivariant analogues, $\sD_G$-spaces, are treated extensively in \cite{MMO} in the comparison of equivariant infinite loop space machines. 

In \autoref{sec:HaveB}, we discuss the classifying space functor multiplicatively. We recall the equivariant Segal machine in \autoref{sec:HaveSegal}. Using that the classifying space functor and the Segal machine are both lax monoidal, we restate and prove  \autoref{IntroMultiKG} as \autoref{KGMultiFun}.  In effect, it gives a multiplicative equivariant infinite loop space machine starting from operadic categorical input. 

Some technicalities ensuring that our passage from categorical to space-level input is homotopically well-behaved are postponed to \autoref{nondegen}. The point is just to give conditions on the categorical input that ensure that the output $\sF_G$-$G$-spaces have nondegenerate basepoints.  We briefly discuss a related open question about Day convolution in  \autoref{DayTop}. The brief \autoref{homotopy} shows how to obtain homotopies between maps of $G$-spectra from operadic categorical input.

\subsection{The multifunctor $B$}\label{sec:HaveB}
In order to construct equivariant spectra from $\oO$-algebras in a multiplicative way, we need to understand the multiplicative properties of the classifying space functor $B$.

The classifying space functor $B$ does not commute with  smash products in general.  However, we have the following result, which allows us to use $B$ to change enrichment.  Recall \autoref{Catstar}.

\begin{prop}\label{BLaxMon}
 The classifying space functor $B\colon \GUCatp \rtarr G\Top_\bpt$ is lax symmetric monoidal.
\end{prop}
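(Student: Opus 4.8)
The plan is to produce the lax symmetric monoidal structure maps explicitly and check the coherence axioms by reduction to well-known properties of the classifying space functor on ordinary (small) categories. Recall that for $\sV_\bpt$-categories $\cA$ and $\cB$ we have $\ob(\cA \sma \cB) \iso \ob(\cA)\sma\ob(\cB)$ by \autoref{ObCommutesSma}, and that the morphism object of $\cA \sma \cB$ is a quotient of $\mor(\cA)\times\mor(\cB)$. The nerve $N$ of an internal category in $G\sU_\bpt$ is the simplicial $G$-space whose space of $q$-simplices is the $q$-fold fibered product of $\mor$ over $\ob$; the classifying space $B\cC$ is its geometric realization, which is a based $G$-space.

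First I would construct the unit map. The unit of $(\GUCatp, \sma)$ is the discrete category $\bV\{0,\id\}\cong \ast\amalg\ast$ on two objects with only identity morphisms (as in the footnote to \autoref{permfunctor}), whose classifying space is $S^0$, the unit of $(G\Top_\bpt,\sma)$; so the unit map is the identity. Next, the lax structure map $\lambda_{\cA,\cB}\colon B\cA \sma B\cB \rtarr B(\cA\sma\cB)$. Using that geometric realization commutes with products of simplicial spaces and that $N$ of a product of internal categories is the levelwise product of nerves, one has $B(\cA\times\cB)\iso B\cA\times B\cB$ for the unbased (cartesian) product $\cA\times\cB$. For the smash product, $\cA\sma\cB$ is the pushout of $\ast \leftarrow \cA\vee\cB \rightarrow \cA\times\cB$ in $\GUCatp$; applying $B$ and using that $B$ preserves this particular pushout (it is a pushout along a levelwise closed inclusion after taking nerves, and realization commutes with such pushouts) gives $B(\cA\sma\cB)\iso B\cA\sma B\cB$. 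Thus in fact $B$ is \emph{strong} symmetric monoidal on this point, and in particular lax: the structure maps $\lambda_{\cA,\cB}$ are the canonical isomorphisms. I would then verify naturality of $\lambda$ in $\cA$ and $\cB$ (immediate from functoriality of $N$ and $|{-}|$), the associativity hexagon, the left/right unit triangles, and compatibility with the symmetry isomorphisms of $\sma$ on $\GUCatp$ and on $G\Top_\bpt$. Each of these is an equality of maps of based $G$-spaces that reduces, after unwinding, to the corresponding coherence for the product-preservation isomorphism of $B$ on internal categories in $G\sU$, which is classical; the symmetry compatibility comes down to the fact that the transposition $\cA\times\cB\to\cB\times\cA$ is sent by $N$ to the levelwise transposition of simplicial $G$-spaces, and realization respects this.

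The main obstacle is the handling of the smash-product pushout: one must check that $B$ genuinely preserves the coequalizer/pushout defining $\sma$, which requires knowing that the inclusion $\cA\vee\cB \into \cA\times\cB$ induces a levelwise cofibration of simplicial $G$-spaces (so that realization commutes with the pushout and no derived correction is needed). This is where the point-set hypotheses on $\sU$ being compactly generated weak Hausdorff, and the fact that we work with based objects whose basepoints behave well, are used; it is closely related to the nondegeneracy issues deferred to \autoref{nondegen}, and I would either cite the relevant lemma there or note that $B(\cA\vee\cB)\iso B\cA\vee B\cB$ and that the square $B\cA\vee B\cB \to B\cA\times B\cB$, $B\cA\vee B\cB\to \ast$ is a pushout in $G\Top_\bpt$. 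Granting that, the rest is routine bookkeeping with coherence diagrams, and I would leave those verifications to the reader, as is done elsewhere in the paper.
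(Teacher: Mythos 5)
Your unit map and your use of $B(\cA\times\cB)\iso B\cA\times B\cB$ are fine, but the central claim of your argument --- that $B$ preserves the pushout defining $\sma$, so that $B(\cA\sma\cB)\iso B\cA\sma B\cB$ and $B$ is in fact \emph{strong} symmetric monoidal --- is false in general, and the paper warns of exactly this immediately before the proposition (``the classifying space functor $B$ does not commute with smash products in general''). The failure is not where you locate it (realization versus pushouts of simplicial $G$-spaces along cofibrations); it already occurs at the level of nerves. The smash product $\cA\sma\cB$ is a pushout in the category of internal categories, and such quotients are not computed levelwise on object and morphism objects: collapsing the wedge $\cA\vee\cB$ to the base object makes previously non-composable morphisms composable (for instance a morphism $(a,b)\rtarr(a',\ast)$ followed by a morphism $(\ast,b'')\rtarr(c,d)$), so $\mor(\cA\sma\cB)$ contains formal composites through the basepoint and the nerve of $\cA\sma\cB$ is not the quotient of $N(\cA\times\cB)$ by $N(\cA\vee\cB)$. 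Hence there is no natural isomorphism $B\cA\sma B\cB\iso B(\cA\sma\cB)$; your identification is valid only in special cases, e.g.\ when the basepoints are disjoint, where $\cA'_+\sma\cB'_+\iso(\cA'\times\cB')_+$.

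The proposition survives because only a lax structure map is needed, and it exists with no pushout-preservation at all: the composite $B\cA\times B\cB\iso B(\cA\times\cB)\rtarr B(\cA\sma\cB)$, the second map induced by the quotient functor $\cA\times\cB\rtarr\cA\sma\cB$, sends the subspace $B\cA\vee B\cB$ to the basepoint and therefore factors through a based map $B\cA\sma B\cB\rtarr B(\cA\sma\cB)$. That is the paper's entire proof; the coherence checks (naturality, associativity, unitality, symmetry) then reduce, as you indicate, to the product-preservation isomorphism and functoriality of the quotient maps. The cofibration/nondegeneracy issues you invoke are not needed for this point-set statement; they enter only later, in \autoref{nondegen}, for homotopical purposes.
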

\begin{proof}
The map
\[B\cC\times B\cD \iso B(\cC\times \cD) \rtarr B(\cC\sma \cD)\]
sends the subspace $B\cC \vee B\cD$ to the basepoint and therefore induces a based map 
\[B\cC \sma B\cD \rtarr B(\cC \sma \cD).   \qedhere\]
\end{proof}

\begin{defn}\label{DGGU}
Let $\sD_G$ be a $\GUCatp$-category of operators over $\sF_G$, as defined in \autoref{reducedGCO/FG}. Let $\sD_G^{top}$ denote the category enriched in $G\Top_*$ obtained by applying $B$ to morphism based categories to change the enrichment.  When  $\sD_G = \sF_G$, the morphism categories are discrete (identity morphisms only).  Since the classifying space of a discrete category is isomorphic to itself, we can identify $\sF_G^{top}$ with $\sF_G$.  It follows that $\sD_G^{top}$ is a category of operators over $\sF_G$ in the sense of \cite[\S 4.2]{MMO}.
\end{defn}

\begin{notn}
Since the category $G\Top_*$ is closed monoidal, it is enriched over itself. We denote 
this enriched category by $\enGU$. Its based $G$-spaces of morphisms are given by the spaces of all nonequivariant based maps,  based at the constant maps at the basepoint, with $G$ acting by conjugation.  We denote by  $\GDU$ the category of $\GTop_*$-enriched functors $\aX \colon \sD_G^{top} \rtarr \enGU$. The enrichment over based $G$-spaces implies that $\aX(0) = \ast$ \cite[Lemma 1.13]{MMO}. 
To emphasize that these are just (enriched) functors to $G$-spaces, we call them $\sD_G^{top}$-$G$-spaces.   In particular, 
an \ourdefn{$\sF_G$-$G$-space} will mean an object of $\GFU$.

\end{notn}

Recall that we write $\sD_G$-$\mathbf{Alg}$ as shorthand for the category of $\sD_G$-algebras and strict maps in $\GUCatp$. 

\begin{prop}\label{Blevelwise}
Applying the classifying space functor levelwise induces a functor
\[B\colon \sD_G\text{-}\mathbf{Alg} \rtarr \GDU.\]
\end{prop}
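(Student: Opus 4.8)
The plan is to verify that the levelwise classifying space construction, which sends a $\sD_G$-algebra $\aX$ to the assignment $\bn^\al \mapsto B(\aX(\bn^\al))$, indeed lands in $\GDU$ and is functorial. First I would observe that a $\sD_G$-algebra, unpacked as in \autoref{strict}, is precisely a $\GUCatp$-enriched functor $\aX\colon \sD_G \rtarr \CatGVp$, equivalently given in adjoint form by action $G\sU_\bpt$-functors $\tha\colon \sD_G(\bm^\al,\bn^\be)\sma \aX(\bm^\al)\rtarr \aX(\bn^\be)$ satisfying the unit and associativity diagrams. Applying $B$ levelwise to the hom-categories of $\sD_G$ produces $\sD_G^{top}$ (\autoref{DGGU}), and applying $B$ to each $\aX(\bn^\al)$ produces based $G$-spaces $B(\aX(\bn^\al))$ with $B(\aX(\bzo))\cong \ast$ since $\aX(\bzo)=\ast$ by reducedness.

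The key step is to produce the action maps of $B\aX$ as a $\sD_G^{top}$-$G$-space. Here I would invoke \autoref{BLaxMon}: the classifying space functor $B\colon \GUCatp \rtarr G\Top_*$ is lax symmetric monoidal, so there is a natural based $G$-map $B\cC \sma B\cD \rtarr B(\cC\sma \cD)$. Applying $B$ to $\tha$ and composing with this lax structure map gives
\[
B(\sD_G(\bm^\al,\bn^\be)) \sma B(\aX(\bm^\al)) \rtarr B\big(\sD_G(\bm^\al,\bn^\be)\sma \aX(\bm^\al)\big) \xrtarr{B\tha} B(\aX(\bn^\be)),
\]
i.e. the required action $G$-map $\sD_G^{top}(\bm^\al,\bn^\be)\sma B\aX(\bm^\al)\rtarr B\aX(\bn^\be)$. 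The unit and associativity axioms for this action follow from the corresponding axioms for $\tha$ together with the coherence (unit and associativity) of the lax monoidal structure on $B$, which is exactly what \autoref{BLaxMon} packages; since $B$ is lax \emph{symmetric} monoidal, the relevant hexagon/naturality constraints are automatic. Equivalently and more slickly, $B$ being lax symmetric monoidal means it induces a $2$-functor on enriched functor categories $\Cat(\GUCatp)$-$\mathbf{Alg} \rtarr \Cat(G\Top_*)$-$\mathbf{Alg}$, and one simply notes $\Cat(G\Top_*)=\enGU$ under the identification of discrete-valued functors, giving an enriched functor $\sD_G^{top}\rtarr \enGU$, which is precisely an object of $\GDU$.

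For functoriality on strict maps: a strict map $f\colon \aX\rtarr \aY$ of $\sD_G$-algebras is a $\GUCatp$-$2$-natural transformation, so consists of $G\sU_\bpt$-functors $f(\bn^\al)\colon \aX(\bn^\al)\rtarr \aY(\bn^\al)$ strictly commuting with the $\tha$'s; applying $B$ levelwise and using naturality of the lax structure map of $B$ shows that $B f(\bn^\al)\colon B\aX(\bn^\al)\rtarr B\aY(\bn^\al)$ assembles into a map of $\sD_G^{top}$-$G$-spaces, and $B$ preserves identities and composites since it is a functor. I do not expect any real obstacle here; the only point requiring a little care is the bookkeeping that the coherence diagrams for the $B$-action reduce to those for $\tha$ plus lax monoidal coherence, and this is exactly the content of \autoref{BLaxMon}. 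Thus $B$ defines the claimed functor $\sD_G\text{-}\mathbf{Alg}\rtarr \GDU$, completing the proof.
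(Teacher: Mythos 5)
Your proposal is correct and follows essentially the same route as the paper: the action maps of $B\aX$ are obtained by precomposing $B\tha$ with the lax monoidal constraint of $B$ from \autoref{BLaxMon}, the unit and associativity diagrams reduce to those for $\tha$ plus lax monoidal coherence, and functoriality on strict maps is levelwise application of $B$. The only cosmetic remark is that symmetry of the lax monoidal structure plays no role here; plain lax monoidality suffices, exactly as the paper's argument uses it.
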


\begin{proof}
By \autoref{BLaxMon} the classifying space functor $B$ is lax symmetric monoidal. It follows formally that it induces a map on functor categories.
Explicitly, if $\aX$ is a $\sD_G$-algebra in $\GUCatp$, we obtain the $\sD_G^{top}$-$G$-space $B\aX$ by applying $B$ levelwise, with action maps given by the composites
\[ B\sD_G(\bm^\al,\bn^\be) \sma B\aX(\bm^\al) \rtarr B(\sD_G(\bm^\al,\bn^\be) \sma \aX(\bm^\al)) \xrightarrow{B\tha} B\aX(\bn^\be), \]
where the first map is the monoidal constraint for $B$.  The commutativity of the composition and unit diagrams follows from their analogs for $\aX$ (see \autoref{strict}) and the axioms for a lax monoidal functor.   The functoriality of $B$ on strict algebra maps is obtained by applying $B$ levelwise.
\end{proof}

We now concentrate on the case of $\sF_G$. The categories $\GFU$ and $\FGAlg$ are symmetric monoidal via Day convolution.

\begin{prop}\label{BMultiFun}
The functor
\[ B \colon \FGAlg \rtarr \GFU\]
of \autoref{Blevelwise} is lax symmetric monoidal.
\end{prop}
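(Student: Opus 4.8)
The statement is that $B\colon \FGAlg \rtarr \GFU$ is lax symmetric monoidal, where both sides carry the Day convolution symmetric monoidal structure induced by the smash product $\sma$ on $\sF_G$. The plan is to deduce this formally from the fact, already established in \autoref{BLaxMon}, that $B\colon \GUCatp \rtarr G\Top_\bpt$ is a lax symmetric monoidal functor between the two base monoidal categories, together with the general principle that a lax symmetric monoidal functor between closed symmetric monoidal bases induces a lax symmetric monoidal functor on the associated Day convolution categories of enriched functors out of a fixed symmetric monoidal indexing category (here $\sF_G$, regarded as symmetric monoidal under $\sma$ and enriched discretely in either base). First I would recall that $\FGAlg = \GFT$-algebras are precisely $\GUCatp$-enriched functors $\sF_G \rtarr \CatGVp$, equivalently (by \autoref{ReducedAlg}) reduced such functors, and that $\GFU$ consists of $G\Top_\bpt$-enriched functors $\sF_G^{top} \rtarr \enGU$; under the identification $\sF_G^{top}\iso\sF_G$ of \autoref{DGGU}, applying $B$ levelwise sends the former to the latter, as in \autoref{Blevelwise}.

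The key steps, in order, are as follows. (1) Recall the explicit description of Day convolution: for $\sF_G$-$G$-spaces $\aX,\aY$, the product $\aX \boxtimes \aY$ is the left Kan extension along $\sma\colon \sF_G \times \sF_G \rtarr \sF_G$ of the external product $(\bm^\al,\bn^\be)\mapsto \aX(\bm^\al)\sma \aY(\bn^\be)$, and similarly one level up in $\CatGVp$. (2) Produce the monoidal constraint: the levelwise lax monoidal structure maps $B\aX(\bm^\al)\sma B\aY(\bn^\be) \rtarr B(\aX(\bm^\al)\sma \aY(\bn^\be))$ of \autoref{BLaxMon} assemble, by the universal property of the Day convolution as a coend/left Kan extension, into a natural transformation $B\aX \boxtimes B\aY \rtarr B(\aX\boxtimes \aY)$. (3) Produce the unit constraint: the unit of Day convolution on $\GFU$ is (the $\sF_G$-$G$-space corepresented by $\mathbf{1}$, i.e.) $\sF_G(\mathbf{1},-)$ with discrete values $\amalg_*$; $B$ of the corresponding unit $\sF_G$-algebra is canonically this same object since $B$ preserves the relevant discrete objects, giving the unit map. (4) Verify the coherence axioms (associativity, left/right unitality, and symmetry) for $B$ as a lax symmetric monoidal functor: each reduces, via the universal properties in (2)–(3), to the corresponding coherence axiom for the levelwise lax symmetric monoidal functor $B\colon \GUCatp \rtarr G\Top_\bpt$ together with naturality, since the Day convolution structure on functor categories is built functorially from the base. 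I would phrase (4) as an appeal to the general theorem (e.g.\ \cite{Day} or the account in \cite{Riehl}, which the paper already cites for Day convolution) that $[\cI,-]_{\mathrm{Day}}$ is a $2$-functor on the $2$-category of closed symmetric monoidal categories and lax symmetric monoidal functors, for a fixed symmetric monoidal $\cI$.

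Alternatively, and perhaps more in keeping with the paper's style, one can give a slicker argument: by \autoref{symmonmulti} a lax symmetric monoidal functor is the same data as a symmetric multifunctor between the associated multicategories, and by the road map the functor $B$ figures as one of the composites whose multiplicative behavior is being tracked; so one may simply observe that $B$ on $\FGAlg$ is obtained by restricting the levelwise $B$ along the inclusion of algebras, and that the monoidal (Day convolution) structure on $\FGAlg$ and $\GFU$ is determined by the enrichment, so lax symmetric monoidality is inherited from \autoref{BLaxMon} by naturality. I expect the main obstacle to be purely bookkeeping: checking that the levelwise constraint maps of \autoref{BLaxMon} are compatible with the coend formula defining $\boxtimes$ on both sides, i.e.\ that $B$ commutes past the relevant colimits enough to induce the comparison map, and that the symmetry isomorphism $\tau_{m,p}$ used in the Day convolution on $\sF_G$ interacts correctly with the symmetry of $B$; since $B$ preserves colimits (it is a left adjoint after passage to the relevant realization) this is a routine but slightly lengthy verification, which I would indicate and then leave to the reader, as elsewhere in the paper.
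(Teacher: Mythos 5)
Your proposal is correct and takes essentially the same route as the paper: the paper likewise deduces the result formally from \autoref{BLaxMon}, identifies the unit on both sides with the representable $\sF_G(\mathbf{1},-)$, and builds the comparison map $B\aX \sma B\aY \rtarr B(\aX\sma\aY)$ by applying the universal property of Day convolution to the levelwise lax constraints composed with ($B$ applied to) the unit of the Day convolution adjunction, leaving the coherence checks as routine. One small caveat: your parenthetical that $B$ preserves colimits because it is a left adjoint is both unnecessary and not true ($B$ is realization of the nerve, and the nerve is a right adjoint); no colimit-preservation is needed, since the comparison map is induced purely by the universal property of the Day convolution on the source, exactly as in the paper.
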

\begin{proof}
This follows formally from \autoref{BLaxMon},  but we sketch the argument. The induced functor $B$ on our categories of algebras preserves the monoidal unit, which in both the source and the target is given by the representable functor $\sF_G(\bf{1},-)$. Given $\aX$ and $\aY$ in $\FAlg$, we construct a map
\[B\aX \sma B\aY \rtarr B(\aX \sma \aY)\]
in $\GFU$ by applying the universal property of Day convolution to the map of $(\sF_G\esma \sF_G)$-$G$-spaces with components given by the composites
\[B\aX(\bm^\al) \sma B\aY(\bn^\be) \rtarr B(\aX(\bm^\al)  \sma \aY(\bn^\be)) \rtarr B(\aX \sma \aY)(\bm\bn^{\al \otimes \be}).
\]
Here the first map is the lax monoidal constraint for $B$ and the second map is obtained by applying $B$ to the components of the unit of the Day convolution adjunction. It is routine to check that this map satisfies the required compatibilities with the unit, associativity and symmetry isomorphisms. 
\end{proof}

\begin{rem}
Note that if $\sD_G$ is a $\GUCatp$-category of operators over $\sF_G$ equipped with a pseudo-commutative structure (\autoref{pseudocomDG}), this does {\it not} give rise to a symmetric monoidal structure on $\sD_G^{top}$. As a result, we do not have a monoidal structure on the category of $\sD_G^{top}$-algebras, and so we cannot expect an analogue of \autoref{BMultiFun} for $\sD_G$-algebras.
\end{rem}

\subsection{From $\sF_G$-$G$-spaces to $G$-spectra}
\label{sec:HaveSegal}

In this section, we first recall the properties of the equivariant Segal machine, whose construction is given in detail in \cite{MMO}. A treatment that deals with  multiplicative properties can be found in \cite{GMMO}. In this paper, we treat the Segal machine as a black box, and we  refer the reader to those sources for details. 

All homotopical versions of the Segal machine come in the form of bar constructions, which are only homotopically well-behaved when the input functors
 $\aX\colon \sF_G\rtarr  \enGU$ take values in nondegenerately based $G$-spaces.   However, in the previous subsection, we concentrated on formal properties of our constructions.   Write  $G\sT$ and $\sT_G$ for the full subcategories of nondegenerately based $G$-spaces in $G\sU_{\ast}$ and in $\enGU$.  Since these categories are not bicomplete, they are less useful for formal purposes.  We introduce notations and definitions to help deal with the resulting dichotomy. 
 
 \begin{notn}  
Let $\sD_G^{top}$  be a $G\Top_*$-category of operators over $\sF_G$, such as the one in \autoref{DGGU}.   A $\sD_G^{top}$-$G$-space is  \ourdefn{levelwise nondegenerately based} if each $\aX(\bn^{\al})$ is nondegenerately based.  We write $\GDT$ for the full subcategory of $\GDU$ whose objects are levelwise nondegenerately based.  In particular, starting with the commutativity operad, whose terms are one-point $G$-spaces, this defines the full subcategory  $\GFT$ of $\GFU$.
 \end{notn}
   
\begin{defn}  Define $\bT$ to be the composite functor
\[\bT =  \St_{\sF_G} \circ \oursectionG^* \circ  \bR_G: \OAlg \rtarr \FGAlg. \] 
Then define  $\OAT$ to be the full subcategory of  $\OAlg$ consisting of those $\oO$-algebras $\cA$ such that $B \bT \cA$ is in $\GFT$.
Thus, by definition, the composite  $B\bT$ restricts to a functor $\OAT \rtarr \GFT$.  
\end{defn}

The functor $\bT$ collates the categorical functors studied in previous sections, $B$ passes from categorical data to space level data, and the Segal machine passes from there to spectra.  
 That machine will be well-behaved when we restrict it to $\GFT$, and $\OAT$ specifies those $\oO$ algebras that feed into $\GFT$.
We will show in the next subsection that most $\oO$-algebras of interest are in $\OAT$.

To define the notion of a Segal machine $\bS_G$, we need the key notion of a \emph{special}  \ourdefn{$\sF_G$-$G$-space}.  To give a conceptual setting for this notion, observe first that, just as we had on categories, we have a composite functor $\bR_G^{top} = \bP^{top} \bR^{top}$ from $\oO^{top}$-algebras in $G\sT$ 
to  $\sD_G^{top}$-$G$-spaces, where $\oO^{top}$ is a operad in $\GTop$ with associated category of operators $\sD_G^{top}$.
We specialize this to the initial operad $\oO^{top}$, which has $\oO^{top}(0)=\oO^{top}(1)=\ast$ and all other $\oO^{top}(j)=\emptyset$.  Its associated category of operators is $\PI_G$.  Applying $\bR_G^{top}$ to this operad,  we obtain a functor $\bR_G^{top}\colon G\sT \rtarr \GPT$.   

For a based $G$-space $X$, $\bR_G^{top}(X)$ sends $\bn^\al$ to the $G$-space $X^{\bn^{\al}} = G\sT(\bn^{\al}, X)$. More explicitly, this is 
$X^n$ with $G$-action given by
\begin{equation}
\label{RGaction}
  g(a_1,\dots,a_n) = (ga_{\alpha(g^{-1})(1)},\dots, ga_{\alpha(g^{-1})(n)}).
  \end{equation}
The functor $\bR_G^{top}$ is right adjoint to the functor $\bL_G^{top}$ that evaluates 
a $\PI_G$-$G$-space at $\bf{1}$. For a $\PI_G$-$G$-space $\aY$, 
the unit 
\[\de\colon \aY\rtarr \bR_G^{top}(\aY(\mb{1}))\]
of the adjunction is  a map of $\PI_G$-$G$-spaces 
 called the \ourdefn{Segal map}. At level $\bn^\al$, it is induced
by the  $n$ projections $\bn^\al\to \mathbf{1}$ \cite[Definition 2.28]{MMO}. 

\begin{defn}\label{defnSpecial}
We say that  a $\PI_G$-$G$-space  $\aY$ is  \ourdefn{special} if $\de$ is a levelwise weak $G$-homotopy equivalence.  
 We say that 
a $\sD^{top}_G$-$G$-space, and in particular an $\sF_G$-$G$-space,
is special if  
 its underlying $\PI_G$-$G$-space 
is special. 
\end{defn}

Recall that an orthogonal $G$-spectrum $E$ is a positive $\Omega$-$G$-spectrum if its adjoint structure maps 
\[ E_V \rtarr \Omega^W E_{V\oplus W}  \]
are weak $G$-equivalences when $V^G\neq 0$ and is connective if the negative homotopy groups of its fixed point spectra are all zero.

\begin{defn}\label{segalmachine} 
A \ourdefn{Segal machine} is a functor  $\bS_G \colon \GFU \rtarr \SpG$ together with a natural map of $G$-spaces 
\[\nu\colon \aX({\bf 1}) \rtarr (\bS_G\aX)_0\] 
such that the following properties hold when $\aX$ is a special $\sF$-$G$-space in $\GFT$. 
\begin{enumerate}[(i)]
\item $\bS_G\aX$ is a connective positive $\Omega$-$G$-spectrum. 

\item  The composite of $\nu$ with the adjoint structure map
\[  (\bS_G\aX)_0 \rtarr  \Omega^{V}(\bS_G\aX)_V\]  
is a group completion for all $V$ such that  $V^G\neq 0$.
\end{enumerate}
\end{defn}

\begin{rem}  It is equivalent to replace general $V$ by $V = \bR$ in (ii).
\end{rem}

The notion of a group completion of a Hopf $G$-space is defined as a group completion on all fixed point maps (see \cite[Definition 1.9]{GMPerm}).  The nonequivariant construction of the Segal machine was introduced in \cite{Seg}. The equivariant construction is due to Shimakawa \cite{Shim}, who started from  an unpublished version that is also due to Segal.  It is given a {self-contained} modernized  treatment in \cite{MMO}. A multiplicative version is given in \cite{GMMO}. We  refer the reader to those sources for details.

From now on, we set $\bS_G$ to be the Segal machine from \cite{MMO}, which is lax monoidal by \cite{GMMO}. 
We could just as well use the equivalent symmetric monoidal version from \cite{GMMO}, but that would not be of any benefit since we lost symmetry with the multifunctor 
$\St_{\sF_G}$.   Moreover, using the machine from \cite{MMO} will be convenient in \autoref{sec:BPQ}, where we will use several results from \cite{MMO}. 
We repeat that we mostly treat the Segal machine $\bS_G$ as a black box.  The only detail from \cite{MMO} that we will need to make explicit is a partial description of the construction that allows us to define the natural map {$\nu \colon \aX(1) \rtarr  (\bS_G\aX)_0$} required in \autoref{segalmachine}.   That will be given where it is used  in \autoref{sec:BPQ}.

We now restate and prove \autoref{IntroMultiKG}.  

\begin{thm}\label{KGMultiFun} 
Let $\oO$ be a chaotic $E_{\infty}$ $G$-operad in $\GUCat$. The functor
\begin{equation}\label{roadkill}
\bK_G = \bS_G \com B \com  \St_{\sF_G} \com \oursectionG^* \com \bR_G\colon \OAlg \rtarr  \SpG.
\end{equation}
from \autoref{RoadMap} extends to a multifunctor
\[ \bK_G\colon  \Mult(\oO) \rtarr {\Mult}(\SpG).\]
For an $\oO$-algebra $\cA\in \OAT$, $\bK_G \cA$ is a connective positive $\Omega$-$G$-spectrum 
with a group completion map 
\[B\cA \rtarr \Omega^V(\bK_G\cA)_V \] 
for all $V$ such that $V^G\neq 0$.
\end{thm}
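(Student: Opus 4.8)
\textbf{Proof plan for \autoref{KGMultiFun}.}

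The plan is to assemble $\bK_G$ as a composite of multifunctors, each of which has been (or will be) established in the preceding sections, and then to read off the stated homotopical properties from the properties of the Segal machine together with the input restriction encoded in $\OAT$. First I would recall the sequence of functors in \autoref{roadkill} and observe that all but the last two live over categorical inputs while the last two pass to $G$-spectra. For the multifunctor assertion, the strategy is simply to compose: by \autoref{RGMulti} the functor $\bR_G = \bP\bR$ extends to a symmetric multifunctor $\Mult(\oO) \rtarr \Mult(\sD_G)$ (here using that $\oO$ chaotic implies $\oO$ is pseudo-commutative by \autoref{ChaoticPseudoCom}, so $\sD_G = \sD_G(\oO)$ is a pseudo-commutative category of operators by the corollary following \autoref{multithm2}); by \autoref{sectMulti}, pullback along the section $\oursectionG$ gives a symmetric multifunctor $\oursectionG^*\colon \Mult(\sD_G) \rtarr \Mult(\psFGAlg)$ (using \autoref{TheSection} and \autoref{DGFGsmaD}, which require $\oO$ to be a chaotic $E_\infty$ $G$-operad); by \autoref{StMulti}, strictification gives a (non-symmetric) multifunctor $\St_{\sF_G}\colon \Mult(\psFGAlg) \rtarr \Mult(\FGAlg)$, where we use that $\sF_G$ is a permutative $\sV_\bpt$-2-category so that the monoidal product is a strict $\sV_\bpt$-2-functor; by \autoref{BMultiFun}, the levelwise classifying space functor $B\colon \FGAlg \rtarr \GFU$ is lax symmetric monoidal and hence induces a symmetric multifunctor $\Mult(\FGAlg)\rtarr \Mult(\GFU)$; and finally, since $\bS_G$ is lax monoidal by \cite{GMMO}, it induces a multifunctor $\Mult(\GFU)\rtarr \Mult(\SpG)$. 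Composing these five multifunctors yields the desired multifunctor $\bK_G\colon \Mult(\oO) \rtarr \Mult(\SpG)$; it is not symmetric because $\St_{\sF_G}$ is not (see \autoref{sec:nono}), but all the others are.

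Next I would address the homotopical statement for $\cA \in \OAT$. By definition of $\bT = \St_{\sF_G}\circ \oursectionG^*\circ \bR_G$ and of $\OAT$, the $\sF_G$-$G$-space $B\bT\cA = B\,\St_{\sF_G}\,\oursectionG^*\,\bR_G\cA$ lies in $\GFT$, i.e.\ is levelwise nondegenerately based, so the Segal machine is homotopically well-behaved on it. The key remaining point is that $B\bT\cA$ is \emph{special} in the sense of \autoref{defnSpecial}. For this I would trace the underlying $\PI_G$-algebra through the composite: by the remark following the proposition defining $\oursectionG^*$, the underlying $\PI_G$-algebra of $\oursectionG^*\aX$ is that of $\aX$, since $\oursectionG$ restricts to $\iota_G$ on $\PI_G$; the underlying $\PI_G$-algebra of $\St_{\sF_G}(-)$ is level-equivalent (via the unit $i$ of \autoref{ConjOut1}, a level equivalence) to the underlying $\PI_G$-algebra before strictification; and the underlying $\PI_G$-algebra of $\bR_G\cA = \bP\bR\cA$ sends $\bn^\al$ to $\cA^n$ with the action \autoref{RGaction}, which is precisely $\bR_G^{top}$ applied at the space level after taking $B$, since $B$ preserves finite products. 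Hence the underlying $\PI_G$-$G$-space of $B\bT\cA$ is levelwise equivalent to $\bn^\al \mapsto (B\cA)^{\bn^\al}$, and the Segal map $\delta$ is then (levelwise) the canonical equivalence, so $B\bT\cA$ is special. Applying \autoref{segalmachine}, parts (i) and (ii), to the special, levelwise nondegenerately based $\sF_G$-$G$-space $B\bT\cA$ gives that $\bK_G\cA = \bS_G(B\bT\cA)$ is a connective positive $\Omega$-$G$-spectrum, and the composite of $\nu$ with the adjoint structure map
\[ (\bK_G\cA)_0 \rtarr \Omega^V(\bK_G\cA)_V \]
is a group completion for all $V$ with $V^G \neq 0$; precomposing $\nu$ with the equivalence $B\cA \htp (\bK_G\cA)_0$ (which is $\nu$ itself at level $\bf 1$, $\aX(\bf 1) = B\bT\cA(\bf 1) \iso B\cA$) yields the asserted group completion map $B\cA \rtarr \Omega^V(\bK_G\cA)_V$.

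The main obstacle I anticipate is the verification that $B\bT\cA$ is special, i.e.\ that the various functors in $\bT$ do not disturb the underlying $\PI_G$-structure up to level equivalence. The strictness-over-$\PI$ (respectively $\PI_G$) conditions that have been built into every construction in the paper — $(\sD,\PI)$-pseudomorphisms, the section $\oursectionG$ being strict on $\PI_G$, the strictification unit $i$ being a level equivalence whose components are identities over $\PI_G$, and the compatibility of $\bP$ with $\PI$ noted in the proof of \autoref{PMultiFun} — are exactly what make this work, so the argument is really a matter of carefully invoking each of these. A secondary technical point is that one wants $B$ to commute with the relevant finite products and with the evaluation $\bR_G^{top}$; this is standard but should be stated. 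I would close by noting that the identification of $\bK_G\cA$ on underlying infinite loop spaces with the equivariant algebraic $K$-theory of $\cA$ produced by the operadic machine of \cite{GMPerm} is what will be needed in \autoref{sec:BPQ}, but that comparison is deferred.
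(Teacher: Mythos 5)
Your proposal is correct and follows essentially the same route as the paper: the multifunctor claim is obtained by composing \autoref{RGMulti}, \autoref{sectMulti}, \autoref{StMulti}, \autoref{BMultiFun}, and the lax monoidality of $\bS_G$ from \cite{GMMO}, and the homotopical claim is reduced to showing $B\bT\cA$ is special via the level equivalence $\bT\cA \simeq \bR_G\cA$ from \autoref{ConjOut1} together with $B\bR_G \iso \bR_G^{top}B$. Only a small imprecision: $B\bT\cA(\mb{1})$ is not isomorphic to $B\cA$ but level equivalent to it via $Bi$, so the group completion map is the composite of $Bi$, $\nu$, and the adjoint structure map, as made explicit later in \autoref{alSect}.
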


\begin{proof} By \autoref{RGMulti}, \autoref{sectMulti}, \autoref{StMulti}, \autoref{BMultiFun}, and \cite[Section~5.2]{GMMO}, $\bK_G$ is a composition of multifunctors and is thus a multifunctor.  
When $\cA$ is in $\OAT$, $B\bT\cA$ is in $\GFT$, and we claim that it is special.  That  will imply the second statement.
Since $\bT\cA$  is level $G$-equivalent to $\bR_G\cA$, by  \autoref{ConjOut1}, the claim follows from the fact that $B$ takes equivalences of $G$-categories to 
homotopy equivalences of $G$-spaces and commutes with $\bR_G$, in the sense that  $B\bR_G\cong \bR_G^{top}B$.
\end{proof}

\subsection{The identification of objects in $\OAT$}
\label{nondegen}

When the operad $\oO$ and an $\oO$-algebra  $\cA$ are topologically discrete, in the sense that they are categories internal to $G\text{Set}$, $\cA$ is in $\OAT$  since all of our categorical constructions retain discreteness and the geometric realization of a based simplicial set is nondegenerately based. We show here that many topologically non-trivial examples, such as those that appear in \autoref{sec:BPQ}, are also in $\OAT$.  

We require the following definition.  Nonequivariantly, its use goes back at least to Milnor's classical paper \cite{MilCW}, and it was studied in more detail by Dyer and Eilenberg \cite{DE} and later Lewis \cite{LewisOMSIX}. Details of equivariant cofibrations are in \cite[Section A.2]{BVbook}.

\begin{defn}
 A $G$-space $X$ is \ourdefn{$G$-locally equiconnected} ($G$-LEC for short) if the diagonal map $\Delta \colon X \rtarr X \times X$ is a $G$-cofibration.
\end{defn}

Examples of $G$-LEC $G$-spaces include $G$-CW-complexes \cite{DE, LewisOMSIX}. Every basepoint of a $G$-LEC $G$-space is nondegenerate \cite[Corollary II.8]{DE}.
The following lemma gives sufficient conditions for the classifying space of a chaotic category to be nondegenerately based.

\begin{lem}
\label{ChaoticLEC}
Suppose that $\aC\in \GUCatp$ is chaotic and that $\ob\aC$ is $G$-LEC. Then $B\aC$ has a nondegenerate basepoint.
\end{lem}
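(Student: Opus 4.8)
Since $\aC$ is chaotic, its nerve $N\aC$ is the simplicial $G$-set whose $q$-simplices are $(\ob\aC)^{q+1}$, with face and degeneracy maps given by the projections and diagonals of $\ob\aC$. Thus $N\aC$ is, up to isomorphism of simplicial $G$-spaces, the \emph{simplicial cobar-type object} $[q]\mapsto (\ob\aC)^{q+1}$, which is precisely $E(\ob\aC)_\bullet$, the simplicial bar construction on the set $\ob\aC$ viewed as a $G$-space; its realization is the ``Milnor space'' $B(\ob\aC)$ of the chaotic category. The basepoint is the image of the $0$-simplex $0_\aC \in \ob\aC$. So the claim reduces to: if $Y$ is a $G$-LEC $G$-space with basepoint $y_0$, then the realization of the simplicial $G$-space $[q]\mapsto Y^{q+1}$ has a nondegenerate basepoint.

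\textbf{Key steps.} First, I would reduce nondegeneracy of the basepoint of $|E Y_\bullet|$ to a statement about the simplicial $G$-space being \emph{proper} (Reedy cofibrant / good), i.e.\ that each degeneracy operator $s_i\colon Y^{q+1}\into Y^{q+2}$ is a $G$-cofibration, plus the condition that the inclusion of the basepoint $0$-simplex $\{y_0\}\into Y$ is a $G$-cofibration. Indeed, there is a standard fact (equivariant version of \cite{LewisOMSIX} or the discussion in \cite[Section A.2]{BVbook}) that if $X_\bullet$ is a proper simplicial $G$-space and the inclusion of the basepoint into $X_0$ is a $G$-cofibration, then $|X_\bullet|$ has a nondegenerate basepoint. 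Second, I would verify the two cofibration conditions for $Y_\bullet = E Y_\bullet$. The basepoint inclusion $\{y_0\}\into Y$ is a $G$-cofibration because $Y$ is $G$-LEC: by \cite[Corollary II.8]{DE} every basepoint of a $G$-LEC space is nondegenerate. For properness, the degeneracy $s_i\colon Y^{q+1}\to Y^{q+2}$ is (a permutation of coordinates composed with) $\id_{Y^{q+1}}\times \Delta_i$ where $\Delta_i$ duplicates the $i$th coordinate; since $\Delta\colon Y\to Y\times Y$ is a $G$-cofibration by the $G$-LEC hypothesis, and $G$-cofibrations are closed under products with identity maps and under the coordinate permutations here, each $s_i$ is a $G$-cofibration. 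Third, assemble: $E Y_\bullet$ is proper with nondegenerately based $0$-simplices, so $B\aC = |N\aC| = |E Y_\bullet|$ has a nondegenerate basepoint, taking $Y=\ob\aC$, $y_0 = 0_\aC$.

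\textbf{Main obstacle.} The delicate point is the equivariant bookkeeping around ``$G$-cofibration closed under products and pushouts'' and the precise form of the criterion ``proper simplicial $G$-space with $G$-cofibrant basepoint inclusion $\Rightarrow$ nondegenerately based realization.'' Nonequivariantly this is classical, but I would want to cite the equivariant treatment in \cite[Section A.2]{BVbook} (or reprove the relevant NDR-pair manipulations $G$-equivariantly): that products of $G$-NDR pairs are $G$-NDR pairs, that the latching-object inclusions for $E Y_\bullet$ built from the diagonal are $G$-cofibrations, and that geometric realization of a proper simplicial $G$-space carries the ``levelwise nondegenerate basepoint'' to a nondegenerate basepoint. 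These are all standard but must be invoked correctly; everything else is a direct unwinding of the definition of chaotic category and its nerve.
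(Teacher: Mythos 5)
Your proposal is correct, but it takes a genuinely different route from the paper. The paper's proof is two lines: since $\aC$ is chaotic, the $q$th level of the nerve is $(\ob\aC)^{q+1}$, hence the nerve is levelwise $G$-LEC, and then it invokes \cite[Corollary~2.4(b)]{LewisOMSIX} to conclude that the realization $B\aC$ is itself $G$-LEC, so in particular nondegenerately based (via \cite[Corollary II.8]{DE}). You instead prove only the basepoint statement directly, via the skeletal filtration: properness of the simplicial $G$-space $[q]\mapsto Y^{q+1}$ with $Y=\ob\aC$ plus the $G$-cofibration $\{y_0\}\into Y$ (which is exactly nondegeneracy of the basepoint of a $G$-LEC space) gives a $G$-cofibration $\{y_0\}\into Y = X_0 \into |X_\bullet| = B\aC$. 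This buys independence from Lewis's realization theorem at the cost of redoing the equivariant NDR bookkeeping, and it yields a weaker conclusion (nondegenerate basepoint rather than $G$-LEC, though that is all the lemma asserts). One caution: in your ``key steps'' you reduce properness to each individual degeneracy $s_i$ being a $G$-cofibration, which is not the right condition — properness requires the inclusion of the whole degenerate subspace (the latching map, i.e.\ the union of the images of the $s_i$) to be a $G$-cofibration. For $EY_\bullet$ this does hold, because the images of the degeneracies and all their intersections are multi-diagonal subspaces, hence $G$-cofibrations by $G$-LEC and closure of $G$-NDR pairs under products, so the equivariant form of Lillig's union theorem \cite{Lillig} applies; you gesture at exactly this in your ``main obstacle'' paragraph, so the point is acknowledged rather than a gap, but it should be stated as the latching condition from the start.
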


\begin{proof}
Since $\ob\aC$ is $G$-LEC and $\aC$ is chaotic,  the nerve of $\aC$ is levelwise $G$-LEC. 
Then $B\aC$ is $G$-LEC by \cite[Corollary~2.4(b)]{LewisOMSIX}, and in particular it has a nondegenerate basepoint.
\end{proof}

We also need the following two general results about $G$-LEC $G$-spaces. 

\begin{lem}\label{RLEC}
 Let $X$ be a $G$-LEC based $G$-space and 
 $\bn^\al$ be a finite based $G$-set. Then $X^{\bn^\al} = \enGU(\bn^\al, X)$ is $G$-LEC. 
 
\end{lem}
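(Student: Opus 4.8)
The statement to prove is: if $X$ is a $G$-LEC based $G$-space and $\bn^\al$ is a finite based $G$-set, then $X^{\bn^\al} = \enGU(\bn^\al, X)$ is $G$-LEC, i.e., the diagonal $\Delta\colon X^{\bn^\al} \rtarr X^{\bn^\al}\times X^{\bn^\al}$ is a $G$-cofibration. The plan is to first identify the underlying space of $X^{\bn^\al}$: since $\bn^\al$ has $n+1$ elements with basepoint $0$, a based (nonequivariant) map $\bn^\al\to X$ is the same as an element of $X^n$, so $X^{\bn^\al}\cong X^n$ as a space, with $G$ acting by the twisted permutation action described in \autoref{RGaction} (i.e.\ $g\cdot(a_1,\dots,a_n) = (g a_{\al(g^{-1})(1)},\dots, g a_{\al(g^{-1})(n)})$). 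In other words, $X^{\bn^\al}$ is the $n$-fold power $X^n$ regarded with the $G$-action built from the original action on $X$ together with the $\SI_n$-action coming from $\al$; equivalently it is obtained by inducing/restricting the $(G\times\SI_n)$-space $X^n$ along the graph homomorphism $(\id,\al)\colon G\rtarr G\times\SI_n$.

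The key step is then to observe that the diagonal of $X^n$ is just the $n$-fold product of the diagonal of $X$: under the canonical identification $(X^n)\times(X^n)\cong (X\times X)^n$, the map $\Delta_{X^n}$ corresponds to $(\Delta_X)^n\colon X^n\rtarr (X\times X)^n$. Since $\Delta_X\colon X\rtarr X\times X$ is a $G$-cofibration by hypothesis, I would invoke the standard fact that a finite product of $G$-cofibrations is a $G$-cofibration (for compactly generated weak Hausdorff $G$-spaces; this is in \cite[Section A.2]{BVbook} or follows from the analogous nonequivariant statement applied to fixed points, as $G$-cofibrations are detected by the NDR-pair condition levelwise) to conclude that $(\Delta_X)^n$ is a $G$-cofibration when $X^n$ carries the naive $G$-action. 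Finally, I would note that all the $G$-equivariant structure maps in sight — the identification $X^{\bn^\al}\cong X^n$, the diagonal, and the identification of the codomain — are compatible with replacing the naive $G$-action by the twisted one coming from $\al$, because conjugating by a fixed homeomorphism (here the permutation $\al(g)$ of coordinates) carries $G$-cofibrations to $G$-cofibrations; more precisely, restriction along the homomorphism $(\id,\al)\colon G\to G\times\SI_n$ preserves $(G\times\SI_n)$-cofibrations, and $(\Delta_X)^n$ is visibly $(G\times\SI_n)$-equivariant where $\SI_n$ permutes the factors. Pulling back along $(\id,\al)$ then yields that $\Delta\colon X^{\bn^\al}\rtarr X^{\bn^\al}\times X^{\bn^\al}$ is a $G$-cofibration, as required.

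\textbf{Main obstacle.} The genuine content is the closure of $G$-cofibrations under finite products, together with the bookkeeping that the twisted action is just a restriction of the evident $(G\times\SI_n)$-action and hence inherits the cofibration property; neither is deep, but one must be careful that "product of cofibrations is a cofibration" holds in the equivariant compactly generated setting, which is why I would cite \cite[Section A.2]{BVbook} (or reduce to the nonequivariant Dyer–Eilenberg/Lewis results \cite{DE, LewisOMSIX} on each fixed-point space using that a $G$-map is a $G$-cofibration iff it is an NDR pair equivariantly, checked via fixed points). I expect the write-up to be short: state the homeomorphism $X^{\bn^\al}\cong X^n$ with its twisted action, identify the diagonal with $(\Delta_X)^n$, invoke product-closure of $G$-cofibrations for the $(G\times\SI_n)$-action, and restrict along $(\id,\al)$.
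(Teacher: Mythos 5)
Your proof is correct and takes essentially the same route as the paper: the paper simply observes that $X^{\bn^\al}$ is the restriction along the homomorphism $G\rtarr G\wr\Sigma_n$ of the $(G\wr\Sigma_n)$-space $X^n$ and cites \cite[Proposition~A.2.6]{BVbook}, which is exactly the content you unpack (identifying the diagonal of the power with the power of the diagonal, equivariant product-closure of cofibrations, and restriction along a group homomorphism). One minor caution: your parenthetical alternative that $G$-cofibrations are detected levelwise on fixed points is not a safe general principle, so the justification should rest on the equivariant NDR-pair product argument or the cited result rather than on that remark.
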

\begin{proof}
 The $G$-space $X^{\bn^\al}$ can be viewed as the restriction along the homomorphism $G\rtarr G\wr \Sigma_n$ of the $G\wr \Sigma_n$-space $X^n$. 
It then follows from \cite[Proposition~A.2.6]{BVbook} that $X^{\bn^\al}$ is $G$-LEC.
\end{proof}

\begin{lem}\label{HtoGLEC}
Let $H$ be a subgroup of $G$, and let $Y$ be an $H$-LEC space. Then $G\times_H Y$ is $G$-LEC.
\end{lem}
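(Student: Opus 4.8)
The statement to prove is \autoref{HtoGLEC}: if $H\leq G$ and $Y$ is an $H$-LEC space, then $G\times_H Y$ is $G$-LEC. Unwinding the definition, I must show that the diagonal $\Delta\colon G\times_H Y \rtarr (G\times_H Y)\times(G\times_H Y)$ is a $G$-cofibration. The plan is to reduce this to the analogous property for $Y$ together with the standard fact that induction $G\times_H(-)$ from $H$-spaces to $G$-spaces preserves cofibrations (it is a left adjoint and preserves the relevant pushout-product / NDR structure), and then to identify the target as an induced object so that the induced map is itself an induction of the $H$-diagonal.

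The key observation is a decomposition of the target. There is a natural $G$-homeomorphism
\[
(G\times_H Y)\times (G\times_H Y) \;\cong\; G\times_H\bigl( Y\times (G\times_H Y)\bigr),
\]
coming from the fact that $(G\times_H A)\times B\cong G\times_H(A\times \res^G_H B)$ for a $G$-space $B$ and an $H$-space $A$; here one takes $A=Y$ and $B=G\times_H Y$. Under this identification, the $G$-diagonal of $G\times_H Y$ corresponds to $G\times_H$ applied to the $H$-map $Y\rtarr Y\times \res^G_H(G\times_H Y)$ sending $y\mapsto (y,[e,y])$. Now $\res^G_H(G\times_H Y)\cong Y\amalg Z$ as $H$-spaces, where $Z$ is the part coming from cosets other than the trivial one (more precisely $Z=\coprod_{HgH\neq H}$-style pieces; the exact description is not needed). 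So the $H$-map above factors as $Y\xrtarr{\Delta_Y} Y\times Y \into Y\times(Y\amalg Z)$, the composite of the $H$-diagonal of $Y$ with the inclusion of a summand. Since $Y$ is $H$-LEC, $\Delta_Y$ is an $H$-cofibration, and the inclusion of a summand (a clopen inclusion) is always an $H$-cofibration; hence the composite is an $H$-cofibration. Applying $G\times_H(-)$, which preserves cofibrations (see e.g. \cite[Section A.2]{BVbook} or the standard closed-model/NDR arguments), we conclude that $\Delta\colon G\times_H Y\rtarr (G\times_H Y)\times(G\times_H Y)$ is a $G$-cofibration, as required.

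I expect the main obstacle to be purely bookkeeping: getting the natural $G$-homeomorphism $(G\times_H Y)\times(G\times_H Y)\cong G\times_H(Y\times\res^G_H(G\times_H Y))$ to be compatible with the two diagonals on the nose, and carefully tracking the restriction $\res^G_H(G\times_H Y)$ as a disjoint union of $H$-spaces indexed by double cosets. None of this is hard, but it requires care with the $H$-actions. An alternative, perhaps cleaner, route avoids the explicit decomposition: one can instead note that $G\times_H(-)$ is a left adjoint (to $\res^G_H$) which, being given by the orbit/pushout construction along the free $H$-action on $G$, preserves $G$-cofibrations in the strong sense that it takes $H$-NDR pairs to $G$-NDR pairs (again \cite[Section A.2]{BVbook}), and then to deduce LEC-ness it suffices to know that the diagonal of $G\times_H Y$ is, up to the homeomorphism above, induced from the diagonal of $Y$. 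Either way, the essential inputs are (i) induction preserves cofibrations and (ii) the compatibility of induction with products in one variable, both of which are standard and available from the cited references, so the proof is short.
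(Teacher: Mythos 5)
Your proposal is correct and is essentially the paper's argument: factor the diagonal as the induction $G\times_H(-)$ of the $H$-cofibration $\Delta_Y$ followed by the inclusion of a clopen coproduct summand, using that induction preserves cofibrations and that $G$ is discrete. The only difference is packaging — you identify the target via the projection formula $(G\times_H Y)\times B\cong G\times_H(Y\times\mathrm{res}^G_H B)$ and the double-coset decomposition of $\mathrm{res}^G_H(G\times_H Y)$, whereas the paper writes the target as $(G\times G)\times_{H\times H}(Y\times Y)$ and splits off the $(G,H\times H)$-invariant subset $\{(g,gh)\}$ of $G\times G$; these are the same factorization through $G\times_H(Y\times Y)$.
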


\begin{proof}
The diagonal on $G\times_H Y$ factors as
\[ \xymatrix{
G\times_H Y \ar[r]^(0.35)\Delta \ar[dr]_{\id\times \Delta} & (G\times_H Y) \times (G\times_H Y) \ar[r]^\iso & (G\times G)\times_{H\times H} (Y\times Y) \\
 & G\times_H (Y\times Y). \ar[ur]_{\Delta\times \id}
}\]
The map $\id\times \Delta$ is the induction from $H$ to $G$ of the $H$-cofibration $\Delta_Y$, and it follows that it is a $G$-cofibration. On the other hand, we claim that the map $\Delta\times \id$ is the inclusion of a coproduct summand and is therefore a $G$-cofibration. To see this, note that the subset $\{ (g,gh) \mid g\in G, h\in H\} \subset G\times G$ is a $(G,H\times H)$-invariant subset, and it is precisely the image under the right $(H\times H)$-action of $\Delta(G)\subset G\times G$. Since $G$ is discrete, it follows that we may decompose $G\times G$ as a $(G,H\times H)$-equivariant disjoint union of this subset and its complement. Crossing with $Y\times Y$ and passing to $(H\times H)$-orbits gives a $G$-equivariant decomposition of $(G\times G)\times_{H\times H}(Y\times Y)$ into the image of $\Delta\times \id$ and its complement. 
\end{proof}  

In the remainder of this section, we let $\oO$ be an operad in $\Cat(G\Top)$, and $\cA$ be an $\oO$-algebra.

\begin{prop}\label{LECprop}
 If  $\ob\cA$ is $G$-LEC and has a disjoint basepoint, then $\cA$ is in $\OAT$. 
\end{prop}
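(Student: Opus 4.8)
The plan is to unwind the definition of $\OAT$ and chase the $G$-LEC hypothesis through each of the categorical constructions that assemble into $\bT = \St_{\sF_G}\circ\oursectionG^*\circ\bR_G$, and then through $B$, to verify that $B\bT\cA$ is levelwise nondegenerately based. By definition, $\cA$ lies in $\OAT$ precisely when $B\bT\cA$ is in $\GFT$, i.e.\ when each $G$-space $(B\bT\cA)(\bn^\al)$ is nondegenerately based. So the task is purely a bookkeeping argument in point-set (equivariant) topology, tracking what happens to the $G$-LEC condition at each stage.

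First I would recall the value of $\bR_G\cA$ at an object $\bn^\al$ of $\sD_G(\oO)$: it is a copy of $\cA^n$ (with object $G$-space $(\ob\cA)^{\bn^\al}$, the conjugation-twisted $n$th power as in \eqref{RGaction}), since $\bR$ sends $\aA$ to the $\sD$-algebra $\bn\mapsto\aA^n$ and $\bP$ only reindexes and re-twists the action. By \autoref{RLEC}, if $\ob\cA$ is $G$-LEC then so is $(\ob\cA)^{\bn^\al}$. Next, $\oursectionG^*$ is pullback along the section $\oursectionG$, which is the identity on objects; hence $(\oursectionG^*\bR_G\cA)(\bn^\al)$ has the same underlying $G$-category as $(\bR_G\cA)(\bn^\al)$, so its object $G$-space is still $G$-LEC. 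Then I would apply the strictification functor $\St_{\sF_G}$: by \eqref{ObSt}, the object $G$-space of $(\St_{\sF_G}\aX)(\bn^\al)$ is the wedge $\bigvee_{\bm^\be}\ob\sF_G(\bm^\be,\bn^\al)\sma\ob\aX(\bm^\be)$. Since $\oO$ and hence $\sD_G(\oO)$ is chaotic in the relevant sense, $\aX = \oursectionG^*\bR_G\cA$ has each $\aX(\bm^\be)$ chaotic; the key point is that because $\cA$ has a \emph{disjoint} basepoint, only finitely many wedge summands are nontrivial for a given $\bn^\al$ (indeed the relevant $\bm^\be$ range over $G$-sets mapping into $\bn^\al$, with the object spaces being finite disjoint unions of smash products of the $G$-LEC spaces above), and a finite wedge of based $G$-spaces with nondegenerate basepoints built from $G$-LEC data is again nicely based. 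Concretely I would combine \autoref{RLEC}, \autoref{HtoGLEC} (to handle the induced-orbit structure coming from the $\sF_G(\bm^\be,\bn^\al)$ factors and the $\Sigma$-free structure of the operad), and the fact that smash products and finite wedges of $G$-LEC based $G$-spaces are $G$-LEC, to conclude that $\ob\bigl((\bT\cA)(\bn^\al)\bigr)$ is $G$-LEC. Finally, since $\bT\cA$ is an $\sF_G$-algebra whose levels are chaotic $G$-categories with $G$-LEC object spaces, \autoref{ChaoticLEC} gives that $B\bigl((\bT\cA)(\bn^\al)\bigr) = (B\bT\cA)(\bn^\al)$ has a nondegenerate basepoint, which is exactly the assertion that $B\bT\cA\in\GFT$, i.e.\ $\cA\in\OAT$.

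I expect the main obstacle to be the bookkeeping at the strictification step: one must check that the object $G$-space of $(\St_{\sF_G}\aX)(\bn^\al)$ is genuinely $G$-LEC and not merely levelwise nondegenerately based, which requires (a) using the disjoint-basepoint hypothesis to reduce an a priori infinite wedge to a finite one (or at least to a wedge over which the $G$-LEC property is preserved — here one uses that $\ob\sF_G(\bm^\be,\bn^\al)$ is a finite $G$-set, so the smash $\ob\sF_G(\bm^\be,\bn^\al)\sma\ob\aX(\bm^\be)$ is a finite wedge of $G$-translates of $G$-LEC spaces, governed by \autoref{HtoGLEC}), and (b) tracking how the chaoticity of $\aX(\bm^\be)$ interacts with \autoref{ChaoticLEC} — one needs the nerve of $(\St_{\sF_G}\aX)(\bn^\al)$ to be levelwise $G$-LEC, which follows because a chaotic category's nerve in each simplicial degree is a power of its object space, and powers of $G$-LEC spaces are $G$-LEC by \autoref{RLEC}. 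Once these point-set facts are in hand, the conclusion via \cite[Corollary~2.4(b)]{LewisOMSIX} (as in the proof of \autoref{ChaoticLEC}) is immediate, and the rest of the argument is routine functoriality.
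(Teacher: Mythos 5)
There is a genuine gap at the final step. You assert that because $\oO$ is chaotic, each level $\aW(\bk^\be)$ of $\aW=\oursectionG^*\bR_G\cA$, and hence each level of $\bT\cA$, is a chaotic $G$-category, and you then apply \autoref{ChaoticLEC} directly to $B\bigl(\bT\cA(\bn^\al)\bigr)$. This is false: chaoticity of the operad does not transfer to its algebras. Indeed $\aW(\bk^\be)$ has underlying category $\cA^{k}$ (with twisted $G$-action), and $\cA$ is an arbitrary $\oO$-algebra --- for instance a free algebra $\bOp X$ on a discrete $G$-category $X$ is very far from chaotic. Since \autoref{ChaoticLEC} only applies to chaotic categories, your concluding sentence does not establish nondegeneracy of the basepoint of $(B\bT\cA)(\bn^\al)$, and this is exactly the nontrivial point of the proposition. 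The paper's proof circumvents this by first showing that $\ob\bigl(\bT\cA(\bn^\al)\bigr)$ is $G$-LEC, and then exploiting the counit-type equivalence $m\colon \St_{\sF_G}\aW\rtarr\aW$ from \autoref{ConjOut1}: the subcategory $\aY(\bn^\al)=m_{\bn^\al}^{-1}(\ast)$ \emph{is} chaotic (being equivalent to the trivial category), its object space splits off from $\ob\bigl(\bT\cA(\bn^\al)\bigr)$ because the basepoint of $\ob\aW(\bn^\al)$ is disjoint, and the basepoint of $B\bigl(\bT\cA(\bn^\al)\bigr)$ lies in $B\aY(\bn^\al)$, to which \autoref{ChaoticLEC} does apply. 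That auxiliary chaotic subcategory is the missing idea in your argument.

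A secondary error: your claim that the disjoint basepoint forces ``only finitely many wedge summands'' in $\ob\St_{\sF_G}\aW(\bn^\al)=\bigvee_{\bk^\be}\sF_G(\bk^\be,\bn^\al)\sma\ob\aW(\bk^\be)$ to be nontrivial is not true --- the wedge runs over all objects $\bk^\be$ of $\sF_G$ and essentially all summands are nontrivial. The correct use of the disjoint-basepoint hypothesis (as in the paper) is that it makes each summand disjointly based, so the infinite wedge is an honest coproduct with an adjoined disjoint basepoint, and an arbitrary coproduct of $G$-LEC $G$-spaces is $G$-LEC; no finiteness and no appeal to \autoref{HtoGLEC} (which belongs to the free-algebra statement \autoref{gloat}) is needed here. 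The early portion of your argument --- identifying $\ob\aW(\bn^\al)$ with $(\ob\cA)^{\bn^\al}$ and invoking \autoref{RLEC} --- does agree with the paper.
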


\begin{proof}
We first prove that each object $G$-space $\bT\cA(\bn^\al)$ of $\bT\cA$ is $G$-LEC.  Write $\aW$ for $\oursectionG^* \bR_G \cA$, so that $\bT \cA=  \St_{\sF_G}\aW$.  The $G$-space $\ob\aW(\bn^\al)$ can be identified with $(\ob\cA)^{\bn^\al}$, and is thus $G$-LEC by \autoref{RLEC}.  Moreover, it has a disjoint basepoint.  Recall from \autoref{ConjOut1} that 
\[ \ob \St_{\sF_G} \aW(\bn^\al) = \bigvee_{\bk^\be}  \sF_G(\bk^\be,\bn^\al) \sma  \ob\aW(\bk^\be).\]
Since $\ob\aW(\bk^\be)$ is $G$-LEC, it follows that  $ \sF_G(\bk^\be,\bn^\al) \sma \ob\aW(\bk^\be)$ is $G$-LEC.
Since the basepoint of  $ \sF_G(\bk^\be,\bn^\al) \sma \ob\aW(\bk^\be)$ is disjoint, the infinite wedge is in fact an infinite disjoint union, with an adjoined disjoint basepoint. 
Since an arbitrary coproduct of $G$-LEC $G$-spaces is again $G$-LEC,  $\ob\bT\cA(\bn^\al)$ is $G$-LEC.

In the proof of \autoref{ConjOut1}, we defined a pseudomorphism 
\[\squiggly{m\colon \bT\cA=\St_{\sF_G} \aW}{\aW},\]
 each of whose components is an equivalence of categories.
Let $\aY(\bn^\al) \subset \bT\cA(\bn^\al)$ be the subcategory $\aY(\bn^\al) = m_{\bn^\al}^{-1}(\ast)$. Since $m_{\bn^\al}$ is an equivalence of categories, $\aY(\bn^\al)$ is equivalent to the trivial category and is therefore chaotic. 
The fact that the basepoint splits off of $\ob\aW(\bn^\al)$ implies that $\ob\aY(\bn^\al)$ splits off from $\ob\bT\cA(\bn^\al)$ and is therefore $G$-LEC since $\ob\bT\cA(\bn^\al)$ is $G$-LEC.
Since the basepoint of $B \bT\cA(\bn^\al)$ lies in $B\aY(\bn^\al)$ and $B\aY(\bn^\al)$ has a nondegenerate basepoint by  \autoref{ChaoticLEC}, this gives the conclusion.
\end{proof}

The following example will be used in \autoref{BPQtoo}.   

\begin{rem}\label{embed}  We embed $G\sU$ in $\GUCat$ by regarding an unbased $G$-space $X$ as an object of $\GUCat$ with $X$ as both the object and the morphism $G$-space and with the source, target, identity and composition maps all the identity.  Similarly, we regard $X_+$ as an object of  $\GUCatp$.  The free 
$\oO$-algebra generated by $X_+$ is  the disjoint union of the categories $\oO(j)\times_{\Sigma_j} X^j$ with base object $\ast$ the $0$th term.
\end{rem}

\begin{prop}\label{gloat}
If $X\in G\sU$ is $G$-LEC and $\ob\oO(j)$ is a (discrete) free $\Sigma_j$-set for each $j$, then $\bOp(X)$ is in $\OAT$.
\end{prop}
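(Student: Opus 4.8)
The goal is to verify the hypotheses of \autoref{LECprop} for the free $\oO$-algebra $\bOp(X)$ on $X_+$, so that it lies in $\OAT$. By \autoref{embed}, $\bOp(X)$ is the $\oO$-algebra whose underlying $\GUCatp$-object is the disjoint union $\coprod_{j\geq 0} \oO(j)\times_{\Sigma_j} X^j$ with disjoint basepoint the $0$th term $\oO(0)\times_{\Sigma_0} X^0 = \ast$. Thus the basepoint is automatically disjoint, and the only thing to check is that the object $G$-space
\[ \ob\bOp(X) = \coprod_{j\geq 0} \ob\oO(j)\times_{\Sigma_j} X^j \]
is $G$-LEC.

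First I would reduce to a single summand: since a coproduct of $G$-LEC $G$-spaces is again $G$-LEC (as used already in the proof of \autoref{LECprop}), it suffices to show that $\ob\oO(j)\times_{\Sigma_j} X^j$ is $G$-LEC for each $j$. Here $\ob\oO(j)$ is a discrete free $\Sigma_j$-set by hypothesis, with a $G$-action making $\ob\oO(j)\times X^j$ a $(G\times\Sigma_j)$-space, and we are forming the $\Sigma_j$-orbits. Because $\ob\oO(j)$ is $\Sigma_j$-free and discrete, we may choose a set of orbit representatives and write $\ob\oO(j)$ as a $(G\times\Sigma_j)$-equivariant disjoint union $\coprod_\alpha (G\times\Sigma_j)/\Lambda_\alpha$ where each $\Lambda_\alpha$ is a subgroup of $G\times\Sigma_j$ with $\Lambda_\alpha\cap\Sigma_j=\{e\}$; equivalently, $\Lambda_\alpha$ is the graph of a homomorphism $H_\alpha\to\Sigma_j$ for some subgroup $H_\alpha\leq G$, and $(G\times\Sigma_j)/\Lambda_\alpha$ is $(G\times\Sigma_j)$-homeomorphic to $G\times_{H_\alpha}\Sigma_j$ where $H_\alpha$ acts on $\Sigma_j$ through $H_\alpha\to\Sigma_j$ and left translation. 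Again using that coproducts preserve the $G$-LEC property and that $(-)\times_{\Sigma_j} X^j$ commutes with coproducts in the first variable, it suffices to show each $(G\times_{H_\alpha}\Sigma_j)\times_{\Sigma_j} X^j$ is $G$-LEC. But $(G\times_{H_\alpha}\Sigma_j)\times_{\Sigma_j}X^j \cong G\times_{H_\alpha} X^j$, where $H_\alpha$ acts on $X^j$ by permuting coordinates through $H_\alpha\to\Sigma_j$ together with the given $G$-action restricted to $H_\alpha$ — in other words, $X^j$ here is exactly the $H_\alpha$-space $X^{\bj^{\gamma_\alpha}}$ for the homomorphism $\gamma_\alpha\colon H_\alpha\to\Sigma_j$ defining $\Lambda_\alpha$.

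Now the two general lemmas already proved finish the argument. By \autoref{RLEC} applied with the group $H_\alpha$ in place of $G$, the $H_\alpha$-space $X^{\bj^{\gamma_\alpha}}$ is $H_\alpha$-LEC, since $X$ is $G$-LEC and hence $H_\alpha$-LEC by restriction. Then by \autoref{HtoGLEC}, $G\times_{H_\alpha} X^{\bj^{\gamma_\alpha}}$ is $G$-LEC. Assembling over all $\alpha$ and all $j$, $\ob\bOp(X)$ is $G$-LEC with disjoint basepoint, so \autoref{LECprop} applies and $\bOp(X)\in\OAT$.

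\textbf{Main obstacle.} The one genuinely non-formal point is the bookkeeping in the second paragraph: identifying the $\Sigma_j$-orbit decomposition of $\ob\oO(j)$ with the family of $G\times_{H_\alpha}\Sigma_j$'s in a way that is simultaneously $G$- and $\Sigma_j$-equivariant, and then checking that passing to $\Sigma_j$-orbits of $\ob\oO(j)\times X^j$ really does produce $\coprod_\alpha G\times_{H_\alpha} X^{\bj^{\gamma_\alpha}}$ with the correct $G$-action. This is essentially the standard ``untwisting'' of a free action via graph subgroups, but one must be careful that the $G$-action on $\ob\oO(j)$ is allowed to permute the orbit representatives, which is why the $H_\alpha$ and $\gamma_\alpha$ depend on $\alpha$ and the whole thing must be reassembled as a $G$-equivariant (not merely $H_\alpha$-equivariant) disjoint union before invoking \autoref{HtoGLEC}. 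Everything else is a direct citation of \autoref{RLEC}, \autoref{HtoGLEC}, \autoref{LECprop}, and the fact that coproducts preserve $G$-LEC.
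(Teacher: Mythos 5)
Your proposal is correct and follows essentially the same route as the paper's proof: reduce via \autoref{LECprop} to showing $\ob\bOp(X)$ is $G$-LEC, decompose each discrete free $\Sigma_j$-set $\ob\oO(j)$ into $(G\times\Sigma_j)$-orbits indexed by graph subgroups $\Lambda$ of homomorphisms $H\rtarr\Sigma_j$, identify $\big((G\times\Sigma_j)/\Lambda\big)\times_{\Sigma_j}X^j\iso G\times_H X^{j^\al}$, and conclude by \autoref{RLEC} and \autoref{HtoGLEC}. The "obstacle" you flag is handled exactly as you suggest (decomposing into $(G\times\Sigma_j)$-orbits, which are automatically $G$-invariant), so there is no gap.
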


\begin{proof}
By \autoref{LECprop}, it suffices to show that $\ob\bOp(X)$ is $G$-LEC.  This holds if each $G$-space 
$$\ob\Big( \oO(j)\times_{\Sigma_j} X^j \Big) \iso \ob\oO(j)\times_{\Sigma_j} X^j$$ 
is $G$-LEC.  Since $\ob\oO(j)$ is discrete with free $\Sigma_j$-action, we can write it as a disjoint union of $(G\times\Sigma_j)$-sets $(G\times\Sigma_j)/\Lambda$, where $\Lambda \subset G\times \Sigma_j$ is a subgroup such that $\Lambda \cap e\times \Sigma_j = e$. In other words, the subgroup $\Lambda$ is the graph of a homomorphism $\al\colon H \rtarr \Sigma_j$ for some subgroup $H\leq G$.  For each such $\LA$, we have an isomorphism of $G$-spaces
\[ \big((G\times\Sigma_j)/\Lambda\big) \times_{\Sigma_j} X^j \iso G\times_H X^{j^\al}.
\]
Since $X$ is $G$-LEC, \autoref{RLEC} implies that $X^{j^\al}$ is $H$-LEC. Then by \autoref{HtoGLEC}, we have that $G\times_H X^{j^\al}$ is $G$-LEC as wanted.
\end{proof}

Consider the category of operators $\sD_G = \sD_G(\oO)$. The following analogue of \autoref{LECprop}, with $\sF_G$ replaced by $\sD_G$, will be needed 
in \autoref{HtpySect}. 
 There we use comparisons between infinite loop space machines $\bS_G$ defined on $\sF_G$-$G$-spaces and $\bS_G^{\sD_G}$ defined on $\sD_G^{top}$-$G$-spaces, where $\sD^{top}_G$ is the topological version of $\sD_G$, as specified in \autoref{DGGU}.   The machine $\bS_G^{\sD_G}$ has good properties when its domain is restricted to $\GDT$.  Just as for $\sF_G$, we have categories $\DGAlg$ of strict $\sD_G$-algebras and pseudomorphisms and a subcategory $\DGA$ of strict $\sD_G$-algebras and strict morphisms. \autoref{sec:PowerLack}
specializes to give a strictification functor
\[\St_{\sD_G} \colon {\DGAlg} \rtarr \DGA.\] 
The composite 
\[ \St_{\sD_G} \bR_G\colon  \OAlg  \rtarr \DGA\]
plays a role analogous to that of $\bT$ in the earlier results of this subsection, and we let $\OADT$ be the full subcategory of  $\OAlg$ consisting of those $\oO$-algebras 
$\cA$ such that $B  \St_{\sD_G} \bR_G \cA$ is in $\GDT$.
Thus, by definition, $B \St_{\sD_G} \bR_G$ restricts to a functor $\OADT \rtarr \GDT$.  

\begin{prop}\label{LECDprop}
If $\ob\oO(j)$ for each $j$ and $\ob\cA$ are $G$-LEC, and  $\ob\cA$ has a disjoint basepoint, then $\cA$ is in $\OADT$.  
\end{prop}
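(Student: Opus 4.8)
The plan is to mimic the proof of \autoref{LECprop} almost verbatim, replacing $\sF_G$ with $\sD_G$ throughout, but paying careful attention to the one place where the argument genuinely uses more than discreteness of $\sF_G$: the morphism $G$-spaces of $\sD_G$ need not be discrete. Write $\aW$ for $\bR_G\cA$, so that $\St_{\sD_G}\bR_G\cA = \St_{\sD_G}\aW$, and recall from \autoref{ConjOut1} that
\[ \ob\bigl(\St_{\sD_G}\aW(\bn^\al)\bigr) = \bigvee_{\bk^\be}\ob\sD_G(\bk^\be,\bn^\al)\sma \ob\aW(\bk^\be).\]
First I would observe that $\ob\aW(\bk^\be)$ is identified with $(\ob\cA)^{\bk^\be}$, which is $G$-LEC by \autoref{RLEC} and has a disjoint basepoint by hypothesis. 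Next, since $\sD_G=\sD_G(\oO)$, the $G$-space $\ob\sD_G(\bk^\be,\bn^\al)$ is a disjoint union over $\phi\in\sF(\bk,\bn)$ of the (object $G$-spaces of) products $\prod_{1\le j\le n}\oO(\phi_j)$, with a prolonged $G$-action of the type used in \autoref{DtoDG}. A finite product of $G$-LEC $G$-spaces is $G$-LEC, and by hypothesis each $\ob\oO(j)$ is $G$-LEC; the prolongation only reindexes the action by pre- and postcomposition with permutations and so preserves the $G$-LEC property (this is the analogue of \autoref{RLEC}, applied to the wreath-product reinterpretation as in \autoref{gloat}). Hence each $\ob\sD_G(\bk^\be,\bn^\al)$ is $G$-LEC with a disjoint basepoint, so the smash $\ob\sD_G(\bk^\be,\bn^\al)\sma\ob\aW(\bk^\be)$ is an honest disjoint union of $G$-LEC $G$-spaces with a disjoint basepoint adjoined. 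Since an arbitrary coproduct of $G$-LEC $G$-spaces is $G$-LEC, the wedge over $\bk^\be$ is $G$-LEC, and therefore $\ob\bigl(\St_{\sD_G}\bR_G\cA(\bn^\al)\bigr)$ is $G$-LEC.

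The remaining step transfers $G$-LEC-ness of the object $G$-space to nondegeneracy of the basepoint of the classifying space. As in the proof of \autoref{LECprop}, the pseudomorphism $\squiggly{m\colon \St_{\sD_G}\aW}{\aW}$ of \autoref{ConjOut1} has components that are equivalences of $G$-categories, so $\aY(\bn^\al):= m_{\bn^\al}^{-1}(\ast)$ is equivalent to the trivial category and hence chaotic; since the basepoint splits off $\ob\aW(\bn^\al)$, the object $G$-space $\ob\aY(\bn^\al)$ splits off the $G$-LEC $G$-space $\ob\St_{\sD_G}\aW(\bn^\al)$ and is therefore itself $G$-LEC. The basepoint of $B\St_{\sD_G}\bR_G\cA(\bn^\al)$ lies in $B\aY(\bn^\al)$, which has a nondegenerate basepoint by \autoref{ChaoticLEC}, and a basepoint lying in a subcomplex with nondegenerate basepoint is nondegenerate. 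This shows $B\St_{\sD_G}\bR_G\cA(\bn^\al)$ is nondegenerately based for every $\bn^\al$, i.e.\ $B\St_{\sD_G}\bR_G\cA\in\GDT$, so $\cA\in\OADT$.

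\textbf{Main obstacle.} The one point requiring care — and the only genuine difference from \autoref{LECprop} — is verifying that $\ob\sD_G(\bk^\be,\bn^\al)$ is $G$-LEC with the \emph{prolonged} $G$-action of \autoref{DtoDG}, rather than simply discrete as it is for $\sF_G$. I expect this to reduce to the same wreath-product bookkeeping used in \autoref{gloat} and \autoref{RLEC}: realize the action as the restriction along a homomorphism $G\to G\wr\Sigma$ of a product action on a product of the $\ob\oO(j)$, and invoke \cite[Proposition~A.2.6]{BVbook}; the pre/postcomposition twists by elements of $\PI$ are built from permutations and hence are absorbed into this reindexing. Everything else — the coproduct decomposition, the splitting of disjoint basepoints, and the final appeal to \autoref{ChaoticLEC} via the equivalence $m$ — is formally identical to the $\sF_G$ case.
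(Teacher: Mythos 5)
Your proof is correct and follows essentially the same route as the paper: the paper likewise modifies only the first step of \autoref{LECprop}, observing that $\ob\sD_G(\bk^\be,\bn^\al)$ is a finite coproduct of finite products of the $G$-LEC $G$-spaces $\ob\oO(j)$ (with disjoint basepoint), so that $\ob\St_{\sD_G}\bR_G\cA(\bn^\al)$ is $G$-LEC, and then declares that the rest of the proof of \autoref{LECprop} goes through unchanged. Your extra care about the prolonged $G$-action (reducing it to the wreath-product bookkeeping of \autoref{RLEC}, \autoref{HtoGLEC}, and \autoref{gloat}) is a reasonable amplification of a point the paper leaves implicit, not a different argument.
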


\begin{proof}
We need a modification of the first step of the proof of \autoref{LECprop}  to account for strictification over $\sD_G$ rather than $\sF_G$. Writing $\aZ = \bR_G \cA$, we have
\[ \ob \St_{\sD_G} \aZ(\bn^\al) = \bigvee_{\bk^\be}  \ob\sD_G(\bk^\be,\bn^\al) \sma \ob\aZ(\bk^\be).\]
But $\ob\sD_G(\bk^\be,\bn^\al)$ is a finite coproduct of finite products of $G$-spaces $\ob \oO(j)$, each of which has a disjoint basepoint and is assumed to be $G$-LEC. Therefore  $\ob \St_{\sD_G} \aZ(\bn^\al)$ is $G$-LEC.  The rest of the proof of \autoref{LECprop} goes through unchanged.
\end{proof}

The proof of \autoref{gloat} applies directly to give the following analog.

\begin{prop}
\label{OXgoodD}  If $X\in G\sU$ is $G$-LEC and $\ob\oO(j)$ is a (discrete) free $\Sigma_j$-set for each $j$, then $\bO_+(X)$ is in $\OADT$.
\end{prop}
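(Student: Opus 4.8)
The plan is to deduce this exactly as \autoref{gloat} was deduced, but invoking \autoref{LECDprop} in place of \autoref{LECprop}. Concretely, I would apply \autoref{LECDprop} to the $\oO$-algebra $\cA = \bO_+(X)$, so it suffices to check that $\ob\oO(j)$ is $G$-LEC for each $j$, that $\ob\bO_+(X)$ is $G$-LEC, and that $\ob\bO_+(X)$ has a disjoint basepoint. The first holds because $\ob\oO(j)$ is assumed discrete: the diagonal of a discrete $G$-space is the inclusion of a clopen $G$-subspace and hence a $G$-cofibration. The third is immediate from the description recalled in \autoref{embed}: $\bO_+(X) = \coprod_{j\geq 0}\oO(j)\times_{\Sigma_j} X^j$, with basepoint the summand $\oO(0)\times X^0 = \ast$ sitting as a coproduct summand, so the basepoint is disjoint.

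It remains to show that $\ob\bO_+(X)$ is $G$-LEC, and here the argument is verbatim that of \autoref{gloat}. Since $\ob$ preserves coproducts and orbit constructions, $\ob\bO_+(X) \iso \coprod_{j\geq 0} \ob\oO(j)\times_{\Sigma_j} X^j$; as an arbitrary coproduct of $G$-LEC $G$-spaces is again $G$-LEC (as used in the proof of \autoref{LECprop}), it is enough to prove each $\ob\oO(j)\times_{\Sigma_j} X^j$ is $G$-LEC. Since $\ob\oO(j)$ is a discrete free $\Sigma_j$-set, decompose it $(G\times\Sigma_j)$-equivariantly as a disjoint union of orbits $(G\times\Sigma_j)/\Lambda$, where each $\Lambda$ satisfies $\Lambda\cap (e\times\Sigma_j) = e$ and is therefore the graph of a homomorphism $\al\colon H\rtarr\Sigma_j$ for a subgroup $H\leq G$. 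For each such $\Lambda$ there is an isomorphism of $G$-spaces $\big((G\times\Sigma_j)/\Lambda\big)\times_{\Sigma_j} X^j \iso G\times_H X^{j^\al}$. By \autoref{RLEC} the $H$-space $X^{j^\al}$ is $H$-LEC, since $X$ is $G$-LEC, and then \autoref{HtoGLEC} gives that $G\times_H X^{j^\al}$ is $G$-LEC; assembling these orbit pieces shows $\ob\oO(j)\times_{\Sigma_j} X^j$, and hence $\ob\bO_+(X)$, is $G$-LEC.

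I expect no real obstacle here. The only substantive difference from \autoref{gloat} is that the target is $\OADT$ rather than $\OAT$, and this is absorbed entirely by citing \autoref{LECDprop} --- whose proof already carried out the required modification for strictification over $\sD_G$ in place of $\sF_G$ --- instead of \autoref{LECprop}. Everything else is a matter of checking that the same $G$-LEC verifications apply, which they do unchanged.
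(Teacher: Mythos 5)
Your proposal is correct and is exactly the argument the paper intends: the paper proves this proposition by simply remarking that the proof of \autoref{gloat} applies directly, i.e.\ one invokes \autoref{LECDprop} in place of \autoref{LECprop} (the extra hypothesis that each $\ob\oO(j)$ is $G$-LEC being automatic for a discrete $G$-set) and repeats the same orbit decomposition via \autoref{RLEC} and \autoref{HtoGLEC} to see that $\ob\bOp(X)$ is $G$-LEC with disjoint basepoint. No differences worth noting.
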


\subsection{Nondegenerate basepoints and Day convolution}\label{DayTop}
This brief parenthetical section highlights a question that seems to have been overlooked in all previous papers dealing with the use of Day convolution in topology, even nonequivariantly, whether for spectra or for categories of operators.  We concentrate on the latter and restrict attention to $\sF$, thinking nonequivariantly for simplicity. 

Of course, smash products are constructed as quotient spaces   $X\times Y/X\vee Y$ in $\sU_{\ast}$.    It is essential to be working in compactly generated spaces since otherwise the smash product is not even associative \cite[Theorem 1.7.1]{MaySig}. It follows from Lillig's union theorem \cite{Lillig} that $X\sma Y$ is nondegenerately based if $X$ and $Y$ are.  Therefore both $\sU_{\ast}$ and its full subcategory $\sT$ are symmetric monoidal under the smash product.   By \autoref{symmonmulti}, we have associated multicategories $\Mult(\sU_*)$ and $\Mult(\sT)$. 

An $\sF$-space is an (enriched) functor $\sF\rtarr \sU_{\ast}$ and the category $\sF$-$\sU_{\ast}$ 
of $\sF$-spaces  is symmetric monoidal under the internal smash product given by Day convolution.   By \autoref{symmonmulti}, it also has an associated multicategory $\Mult(\sF\text{-}\sU_{\ast})$. That can be defined using either the internal smash product as in \autoref{symmonmulti} or using the external smash product as in \autoref{MultiD}.  These definitions give isomorphic multicategories by the universal property of Day convolution.

Now consider the category $\sF$-$\sT$ of (enriched) functors $\sF\rtarr \sT$.  It has been asserted in many places, including our own \cite{GMMO}, that $\sF$-$\sT$ is symmetric monoidal under the internal smash product.  We do not know whether or not that is true, and we believe that it is not.  The external smash product
$X\barwedge Y \colon \sF\sma \sF\rtarr \sU_*$ of functors $X, Y\colon \sF\rtarr \sT$ clearly takes values in $\sT$, but it does not follow that the internal smash product
$X\sma Y\colon \sF\rtarr \sU_*$ takes values in $\sT$.  That is, we do not believe that Day convolution preserves levelwise nondegeneracy of basepoints.  We cannot use the universal property to prove that it does, and we have not succeeded in proving that it does by direct inspection of the construction.

That problem does not affect applications since, {\em using the external smash product} as in  \autoref{MultiD}, we have the full submulticategory  $\Mult(\sF\text{-}\sT)$ of 
$\Mult(\sF\text{-}\sU_{\ast})$, {whose objects are }
levelwise nondegenerately based functors.  When we reinterpret {\em internally}, using Day convolution, we may leave that world.  The same holds for the functors from $\sF$ or, equivariantly, $\sF_G$ to topological $G$-categories that are the focus of this paper.

\subsection{From $\oO$-transformations to homotopies of maps of $G$-spectra}\label{homotopy}
 
It is classical that the classifying space functor takes $G$-categories, $G$-functors and $G$-natural transformations to $G$-spaces, $G$-maps, and $G$-homotopies. For the last, $G$-natural transformations are functors $\cC\times \sI\rtarr \cD$, where $\sI$ is the category with two objects $[0]$ and $[1]$ and one non-identity morphism $ [0]\rtarr [1]$.   For based $G$-categories, {based $G$-}transformations are given by {based $G$-}functors $\cC \sma \sI_+ \rtarr \cD$.  

Recall the definition of $\oO$-transformations from \autoref{Otran}. 
 
 \begin{prop}\label{Htpies}
 The functor $\bK_G$ takes $\oO$-transformations to homotopies of maps of $G$-spectra.
 \end{prop}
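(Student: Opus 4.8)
The statement is really a bookkeeping assertion: each multifunctor in the composite \eqref{roadkill} defining $\bK_G$ should take $\cC$-transformations (equivalently, $\oO$-transformations at the operadic source) to suitable $2$-cells, and the terminal Segal machine should convert those into homotopies of maps of $G$-spectra. So the first thing I would do is recall, at each stage, that the functors $\bR_G$, $\oursectionG^*$, $\St_{\sF_G}$, and $B$ are $2$-functors (not merely functors), hence take $2$-cells to $2$-cells. Concretely: $\bR$ sends an $\oO$-transformation $\om\colon E\Rightarrow F$ to the $\sD$-transformation with components $\om^n$ (as spelled out just before \autoref{ORExtend}); $\bP$ sends a $\sD$-transformation to the $\sD_G$-transformation with the same underlying components (by the end of the proof of \autoref{PMultiFun}); $\oursectionG^*$ preserves $2$-cells by pullback along the pseudofunctor $\oursectionG$; and $\St_{\sF_G}$ preserves $2$-cells by \autoref{ConjOut1}, which asserts $\St$ is a left $2$-adjoint, hence in particular a $2$-functor. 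Thus $\bT=\St_{\sF_G}\com\oursectionG^*\com\bR_G$ is a $2$-functor, and it sends an $\oO$-transformation to an $\sF_G$-transformation, i.e.\ an enriched transformation between the relevant $\sF_G$-$G$-space maps.

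Next I would translate the $2$-categorical input into a based homotopy. An $\oO$-transformation $\om\colon E\Rightarrow F$ between $\oO$-pseudomorphisms is, by \autoref{Otran}, a $\sV$-transformation $\om\colon \ob\aA\to\mor\aB$ satisfying the pasting identity; in particular, as recalled at the start of \autoref{homotopy}, a based $\sV$-transformation is the same data as a based $\sV$-functor $\aA\sma \sI_+\to \aB$ (using that $\sI$ is the walking arrow). Applying $\bT$ levelwise and then $B$, and using that $B$ is a lax symmetric monoidal functor (\autoref{BLaxMon}) with $B(\sI_+)\cong [0,1]_+$, the $\sF_G$-transformation $B\bT\om$ gives a map of $\sF_G$-$G$-spaces
\[ B\bT\aA \sma [0,1]_+ \rtarr B\bT\aB \]
restricting to $B\bT E$ and $B\bT F$ at the two endpoints — that is, a homotopy of maps of $\sF_G$-$G$-spaces through maps of $\sF_G$-$G$-spaces. (One should check this lands in the right subcategory: if $\cA$ is in $\OAT$ then so are the relevant inputs, but in fact the homotopy statement does not require the special/nondegenerate hypotheses, only functoriality, so I would phrase it at the level of $\GFU$.)

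Finally I would feed this into the Segal machine. Since $\bS_G\colon \GFU\to\SpG$ is a (lax monoidal) \emph{functor} of topologically enriched categories, it preserves based homotopies: applying $\bS_G$ to the homotopy $B\bT\aA\sma[0,1]_+\to B\bT\aB$ and using the lax monoidal constraint $\bS_G(\aX)\wedge[0,1]_+\to\bS_G(\aX\wedge[0,1]_+)$ (equivalently, that $\bS_G$ is $G\Top_*$-enriched so acts on mapping spaces) yields a homotopy of maps of orthogonal $G$-spectra
\[ \bK_G\aA \sma [0,1]_+ \rtarr \bK_G\aB \]
between $\bK_G E$ and $\bK_G F$. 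Assembling the four steps gives the claim. The only place that needs genuine care — and the step I expect to be the main obstacle — is verifying that $\St_{\sF_G}$ really does preserve $2$-cells with the correct pasting data, i.e.\ that the $\cC$-transformation structure is respected by the explicit strictification construction in the proof of \autoref{ConjOut1}; the remaining steps are pure formality about $2$-functors, lax monoidal functors, and the cotensor with $\sI$.
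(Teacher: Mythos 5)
Your argument is correct and follows essentially the same route as the paper: $2$-functoriality of $\bR_G$, $\oursectionG^*$, and $\St_{\sF_G}$ turns $\oO$-transformations into $\sF_G$-transformations, $B$ turns these into levelwise homotopies, and the Segal machine preserves homotopies. The only divergence is your justification of that last step: the paper simply cites \cite[Proposition 6.16]{GMMO} for the fact that $\bS_G$ preserves homotopies, whereas your appeal to the lax monoidal constraint does not literally apply (the interval $[0,1]_+$ is not an object of $\GFU$), so what you are really invoking is the enriched/homotopy-preserving property of the bar construction, i.e.\ exactly the cited result.
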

 
\begin{pf}
We showed in \cite[Proposition 6.16]{GMMO} that the topological Segal machine $\bS_G$ preserves homotopies. If we start with 
$\sF_G$-algebras in $\GUCatp$, which of course are themselves $G$-functors, then maps between them are $G$-natural transformations and maps between those are $G$-modifications.   These are given levelwise by $G$-categories, $G$-functors, and $G$-natural transformations.   
Since $B$ commutes with products, it takes $\sF_G$-$G$-algebras in $\GUCatp$, $\sF_G$-functors, and $\sF_G$-transformations between them to
$\GTop_*$-enriched functors $\sF_G \rtarr \enGU$, enriched natural transformations, and homotopies between those.
As $\bR_G$, $\oursectionG^*$, and $\St_{\sF_G}$ are all $2$-functors, with $\St_{\sF_G}$ converting pseudostructure to strict structure, their composite takes $\oO$-transformations to $\sF_G$-transformations, which are levelwise $G$-natural transformations.
 \end{pf}
 
 The result above is used in \cite[Remark 2.9]{GM}, but it will surely find other uses.

\section{The multiplicative Barratt-Priddy-Quillen Theorem}
 \label{sec:BPQ}

In this section, we prove \autoref{IntroBPQ}. We begin by producing the transformation $\alpha$ in \autoref{alSect}. We show that $\alpha_X$ is a  stable equivalence of orthogonal $G$-spectra for all $G$-LEC $G$-spaces $X$ in \autoref{HtpySect} and we finish by showing that  $\alpha$ is monoidal in \autoref{MonSect}.

\subsection{The construction of $\al$} \label{alSect}

We restate and begin the proof of \autoref{IntroBPQ}.  

\begin{thm}\label{BPQtoo} Let $\oO$ be a topologically discrete chaotic $E_\infty$ $G$-operad in $\GUCat$. 
Then there is a lax monoidal natural transformation 
\[ \alpha \colon \Sigma^\infty_{G+}  \rtarr \bK_G \bOp\]
of functors $\GTop\rtarr \SpG$
 such that $\al_X$ is a stable equivalence of orthogonal $G$-spectra for all 
$G$-LEC $G$-spaces $X$.
\end{thm}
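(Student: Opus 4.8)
The construction of $\al$ proceeds by comparing the suspension $G$-spectrum functor with the operadic infinite loop space machine whose equivariant form is recorded in \cite{GMPerm}, and then transporting that comparison through the chain \eqref{roadkill} defining $\bK_G$. First I would recall from \cite{GMPerm} that the operadic machine $\bE_G$ applied to the free $\bOp$-algebra $\bOp^{top}(X_+)$ on a based $G$-space $X_+$ receives a natural stable equivalence from $\SI^\infty_{G+} X$; this is the additive equivariant Barratt–Priddy–Quillen theorem, valid for $G$-CW complexes and more generally for $G$-LEC $X$ (using \autoref{LECprop} and \autoref{gloat} to guarantee the relevant nondegeneracy, via \autoref{OplusX} and \eqref{BcommutesO}). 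The content needed from \cite{MMO} is that the operadic machine $\bE_G$ and the Segal-type machine $\bS_G^{\sD_G}$ on $\sD_G^{top}$-$G$-spaces agree up to natural stable equivalence, and the content from \autoref{sec:CattoTop} (especially the equivalences $\chi$ of \autoref{TheSection} and \autoref{StrictEquiv}, the strictification equivalence $i$ of \autoref{ConjOut1}, and $B\bR_G\iso\bR_G^{top}B$, $B\bOp\iso\bOp^{top}B$ for $\SI$-free $\oO$) is that $\bK_G\bOp$ is naturally stably equivalent, on $G$-LEC inputs, to the result of feeding the topological free algebra $\bOp^{top}(X_+)$ into a Segal machine on $\sD_G^{top}$-$G$-spaces. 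Chaining these identifications yields a zigzag of natural transformations between $\SI^\infty_{G+}$ and $\bK_G\bOp$, each arrow of which is a stable equivalence on $G$-LEC $G$-spaces; one then picks the correct direction (the forward map $\al$) by first defining $\al_X$ on the level of $G$-spaces as the composite of $\nu\colon \aX(\mathbf{1})\rtarr(\bS_G\aX)_0$ from \autoref{segalmachine} with the unit map $X_+\rtarr \bOp^{top}(X_+)(\mathbf{1})$, adjointing into $G$-spectra, and checking this agrees with the zigzag.

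Concretely, the steps in order are: (1) define $\al$ at the space level, $X_+\rtarr B\bT\bOp(X)(\mathbf{1})\rtarr (\bK_G\bOp X)_0$, using the unit of the free algebra, the unit $i$ of strictification, and the structure map $\nu$ of $\bS_G$, then take the adjoint $\SI^\infty_{G+}X\rtarr \bK_G\bOp X$ in $\SpG$; (2) verify naturality in $X$, which is formal since every ingredient is natural; (3) identify this $\al_X$, for $G$-LEC $X$, with the additive BPQ equivalence of \cite{GMPerm} under the comparison of machines from \cite{MMO} — this uses that $\bT\bOp\cong$ (up to level $G$-equivalence) $\bR_G^{top}\bOp^{top}B(-)_+$ together with $\oursectionG^*$ and $\chi$; (4) conclude $\al_X$ is a stable equivalence for $G$-LEC $X$, in particular for $G$-CW complexes as in the statement of \autoref{IntroBPQ}. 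The monoidality of $\al$ is deferred to \autoref{MonSect} and I would only indicate here that it follows from the lax monoidality of each functor in \eqref{roadkill} (Propositions \ref{BMultiFun}, \ref{BLaxMon}, \cite[\S5.2]{GMMO}), the fact that $\bOp$ is a lax monoidal functor via $\om$ (\autoref{freeinput}, \autoref{om-assoc}), and the compatibility of the unit maps with $\om$ (\autoref{om-nat}), so that $\al$ is built entirely from monoidal-natural data.

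The main obstacle is step (3): reconciling the Segal-type construction implicit in $\bK_G$ (which runs $\bR_G$, the section $\oursectionG^*$, Power–Lack strictification, $B$, and then $\bS_G$) with the operadic machine of \cite{GMPerm} that powers the additive BPQ theorem. This requires assembling several comparison equivalences from \cite{MMO} — between operadic algebras, $\sD_G^{top}$-$G$-spaces, and $\sF_G$-$G$-spaces — and checking that the composite equivalence identifies the two candidate maps out of $\SI^\infty_{G+}X$, not merely that both targets are equivalent. The homotopical bookkeeping (ensuring all intermediate $G$-spaces are levelwise nondegenerately based so the bar constructions behave, via Propositions \ref{LECDprop} and \ref{OXgoodD}) is where the technical care concentrates; the chaotic and topologically discrete hypotheses on $\oO$ are exactly what make the section $\oursectionG$ available and make $B\bT\bOp X$ automatically land in $\GFT$, so that \autoref{KGMultiFun} applies and $\bK_G\bOp X$ is a connective positive $\Omega$-$G$-spectrum with the requisite group completion property — which, combined with $\pi_0$ of the free algebra being the sphere, forces $\al_X$ to be a stable equivalence after group completion is seen to be an isomorphism on $G$-LEC inputs.
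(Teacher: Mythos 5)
Your proposal follows essentially the same route as the paper: define $\al_X$ space-level as $\nu\circ Bi\circ B\eta$ and adjoint over, then prove it is a stable equivalence by comparing (on zeroth spaces) with the additive BPQ equivalence of \cite{GMPerm} through the operadic/category-of-operators/Segal machine comparisons of \cite{MMO}, with strictification and the section contributing the zig-zag of level equivalences and the $G$-LEC results supplying the nondegeneracy needed for the bar constructions, and with monoidality handled separately via the lax monoidality of the constituent multifunctors. The only substantive work you leave implicit is exactly what the paper spends its effort on—the explicit diagram chase identifying $\al_X$ with that zig-zag (the paper's adjoint diagram with the auxiliary maps into the $0$-simplices of the bar constructions)—and your closing appeal to group completion plus $\pi_0$ is not needed once that identification is made.
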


Recall that we write $\bT$ for the composite 
\[\bT =  \St_{\sF_G} \circ \oursectionG^* \circ  \bR_G: \OAlg \rtarr \FGAlg, \] 
so that $\bK_G$ is given by $\bK_G = \bS_G \circ B\circ \bT$. We shall exploit the fact that $\Sigma^\infty_{G+}\colon \GTop \rtarr \SpG$ is left adjoint to the zeroth 
$G$-space functor $(-)_0$, with basepoint forgotten, to construct $\al$. 
Therefore,
 to define $\alpha: \Sigma^\infty_{G+}  \rtarr \bK_G  \bOp$ in \autoref{BPQtoo},  it suffices
 to define a map of unbased $G$-spaces
\[ \tilde{\alpha}_X: X \rtarr {\bS}_G\big( B\bT \bOp X \big)_0 \] 
for each unbased $G$-space $X$. 
We define $\tilde\al_X$ to be the composite displayed in the diagram
\[\xymatrix{
X \ar[d]_{\cong} \ar[rr]^{\tilde{\al}_X}  & & {\bS}_G\big( B\bT \bOp X\big)_0 \\ 
B  X \ar[d]_-{B \eta} & & \\
B  \bOp X\ar@{=}[r] & \big{(}B\bR_G\, \bOp X\big{)} (  \mb{1}) \ar[r]_-{B i } 
&  \big( B \bT \bOp X \big)  (  \mb{1}) . \ar[uu]_{\nu} 
\\}\] 
Regarding $X$ as an object of $\GUCat$ as in \autoref{embed},  the top left isomorphism is immediate;
$\et$ on the left is the unit of the monad $\bOp$.  
For the bottom left equality, it is  true by definition that $\cA= (\bR_G \cA)(\mb{1})$ for any $\oO$-algebra $\cA$ in $\GUCat$, such as $\cA = \bOp X$.
Next, for any strict $\sD_G$-algebra $\aY$, such as $\aY = \bR_G\bOp X$, the map
\[  i  \colon \aY(  \mb{1}) \rtarr \St_{\sF_G} \oursectionG^* \aY (  \mb{1})\]
is given by
\[ \aY(  \mb{1}) = \oursectionG^* \aY (  \mb{1}) \xrtarr{ i } \St_{\sF_G} \oursectionG^*\aY (  \mb{1}),\]
where $i$ is a component of the unit of the adjunction of \autoref{ConjOut1} and is an equivalence.  Finally, 
the map $\nu$ is the natural map required by \autoref{segalmachine}; it will be specified in \autoref{HtpySect}.

The adjoints of the maps $\tilde\al_X$ define the natural transformation $\al$,  and we must verify that $\al$  is monoidal 
and homotopical, the latter meaning that $\al$ is a stable $G$-equivalence.

\subsection{The proof that $\al$ is a stable equivalence} \label{HtpySect}

We will show that $\al_X$ is an equivalence by comparing it with the equivariant Barratt-Priddy-Quillen equivalence for the equivariant operadic machine proven in \cite{GMPerm} and the equivalence between the equivariant operadic and Segal machines proven in \cite{MMO}. Consider the following diagram of $G$-spectra, in which $\bOp^{top}$ denotes the monad on unbased $G$-spaces associated to the operad 
$B\oO$, $\bE_G$ denotes the operadic infinite loop space machine of \cite[Definition 2.7]{GMPerm}, $\bE_G^{\sD_G}$ denotes the category of operators infinite loop space machine of \cite[Definition 6.19]{MMO},\footnote{We repeat that numbered references to \cite{MMO} refer to the first version posted on ArXiv;  they will be changed when the published version appears.}  and $\SGD$ denotes the Segal machine on $\sD_G^{top}$-$G$-spaces of \cite[\S4.5]{MMO}.   The dotted arrows signify zig-zags of maps. 

\begin{equation} \label{home} 
\xymatrix{
\SI^\infty_G X_+ \ar[d]_{\fbox{1}} \ar[rr]^-{\al_X}  & &  \bS_G B \bT \bOp X \\
\bE_G\bOp^{top}X \ar[d]_{\fbox{2}}^{\iso} & &  \SGD B\bR_G \bOp X \ar@{-->}[u]_(.47){\fbox{5}} \\
\bE_G^{\sD_G} \bR_G^{top} \bOp^{top}X   \ar@{-->}[rr]_(.47){\fbox{3}}  & &  \SGD\bR_G^{top}\bOp^{top}X \ar[u]^{\iso}_{\fbox4} \\} 
\end{equation}

The  arrows $\fbox{1}$ through $\fbox{5}$ are specified as follows.
\begin{enumerate}
\item The map $\fbox{1}$ is the stable equivalence of orthogonal $G$-spectra given in \cite[Theorem 6.1]{GMPerm}, which is an operadic version of the Barratt-Priddy-Quillen Theorem.

\vspace{1mm}

\item The isomorphism of $\fbox{2}$ is a comparison between operad level and category of operators level infinite loop space machines given by  \cite[Corollary 6.22]{MMO}.

\vspace{1mm}

\item The dashed arrow $\fbox{3}$ is a zig-zag of stable equivalences between the generalized operadic and Segal machines, both defined on $\sD_G^{top}$-$G$-spaces. This is given by  \cite[Theorem 7.1]{MMO}

\vspace{1mm}

\item The isomorphism \fbox{4} is induced by the isomorphism $\bR_G^{top} B\iso B\bR_G$. 

\vspace{1mm}

\item The zig-zag of level equivalences \fbox{5} is described in \autoref{ZigZagFive} below.
\end{enumerate}

Write $\aY$ for the $\sD_G$-algebra $\bR_G\bOp X$.
\autoref{ConjOut1}, \autoref{StrictEquiv}, and
\autoref{zeta1} give a zig-zag of strict maps of $\sD_G$-algebras in $\GUCatp$ that are level equivalences
\[  \xymatrix{ \aY &  \St_{\sD_G}\aY \ar[l]_\sim^{m} \ar[r]^-{\sim}_-{\St_{\sD_G} \chi} & \St_{\sD_G}\xi_G^*  \oursectionG^*\aY \ar[r]^-{\sim}_-{\ps} 
&  \xi_G^* \St_{\sF_G}\oursectionG^*\aY.\\}\]
We apply $B$ to this and use that $\xi^*_G$ commutes with $B$ to obtain a zigzag of level equivalences of
 $\sD_G^{top}$-$G$-spaces. 
 \[  \xymatrix{ B\aY &  B\St_{\sD_G}\aY \ar[l]_-\sim^{Bm} \ar[rr]^-{\sim}_-{B\ps\com B\St_{\sD_G} \chi}   & &  \xi_G^* B\St_{\sF_G}\oursectionG^*\aY.\\}\]
The $\sD_G^{top}$-$G$-space $B\aY$ has levelwise disjoint basepoints, whereas the basepoints of $B\St_{\sD_G}\aY$ and $ \xi_G^* B\St_{\sF_G}\oursectionG^*\aY$ are levelwise nondegenerate according to Propositions \ref{OXgoodD} and \ref{gloat}, respectively.
By \cite[Proposition~4.31]{MMO}, the Segal machine $\SGD$ on $\sD_G^{top}$-$G$-spaces
converts this to a zig-zag of stable equivalences of orthogonal $G$-spectra.  By  \cite[Theorem~4.32]{MMO},
we have a natural stable equivalence $\SGD\xi^*_G \rtarr \bS_G$
relating the Segal machine on $\sD_G^{top}$-$G$-spaces to the Segal machine on $\sF_G$-$G$-spaces.  Applying this
to the last term in the sequence above and remembering that $\bT = \St_{\sF_G} \oursectionG^* \bR_G$ and $\aY=\bR_G\bOp X$,
we obtain the final zig-zag \fbox5 of stable equivalences of orthogonal $G$-spectra:

\begin{equation} \label{ZigZagFive}
\xymatrix{
\SGD  B   \bR_G \bOp X & \SGD B \St_{\sD_G} \bR_G \bOp X  \ar[r]^-{\sim} \ar[l]_-{\sim} & \SGD \xi^*_G B\bT  \bOp X  \ar[r]^-{\sim} &  \bS_G B\bT \bOp X. \\}
\end{equation}

In order to deduce that $\al_X$ is a stable equivalence, we need only show that \autoref{home} yields a commutative diagram in the homotopy category.  As we are mapping out of a suspension spectrum, we may instead consider the diagram on zeroth spaces, by adjunction.
Chasing the required diagrams is tedious but routine.  We give a few details for the skeptical reader.  For this, we only need to know that the operadic and Segal machines are given by two-sided monadic and categorical bar constructions with easily described zeroth spaces and maps between them. We use the notations from \cite{MMO}, and we refer the reader to that source for more details of the definitions of the constructions and maps between them. 
Abbreviating by writing $q=B\ps\com B\St_{\sD_G} \chi$, the adjoint of \autoref{home} is a diagram of  
unbased $G$-spaces that  takes the following form:

\begin{equation}\label{homeadjoint}
\scalebox{0.67}{
\xymatrix @C=1em {
 X   \ar[dd]_{\fbox{1}} \ar[r]^-{B\eta}   \ar@{-->}[ddrrr]^{\fbox8} \ar@{-->}@/^2em/[dddddr]_{\fbox6} \ar@{-->}[ddddrrr]_{\fbox7}
 &(B\bR_G\bOp X)(\mb{1}) \ar[r]^-{B i } & (B \St_{\sF_G}\oursectionG^*\bR_G \bOp X)(  \mb{1}) \ar[r]^-\nu   &  B((S^0)^\bullet,\sF_G, B \bT \bOp  X) \\
& & &  B((S^0)^\bullet,\sD_G^{top}, \xi_G^* B \bT \bOp  X) \ar[u]_\sim \\
\bOp^{top}X \ar[ddd]_{\fbox{2}}^{\iso}  & & & B((S^0)^\bullet,\sD_G^{top}, B \St_{\sD_G}\bR_G\bOp X) \ar[u]_\sim^q \ar[d]^\sim_{Bm} \\
  & & & B((S^0)^\bullet,\sD^{top}_G, B\bR_G \bOp X) \\
  &  & & B((S^0)^\bullet, \sD_G^{top},\bR_G\bOp^{top}X)  \ar[u]^{\iso}_{\fbox4} \ar[d]^{\io_1}  \\
\bOp^{top}X & B({}^\bullet(S^0),\sD_G^{top}, \bR_G\bOp^{top}X) \ar[l]^-{\om_0} \ar[r] & B((S^0)^\bullet, \sD_G^{top},\bR_G\bOp^{top}X) \ar[r]_-{\io_0}     & B(I_+\sma (S^0)^\bullet, \sD_G^{top},\bR_G\bOp^{top}X).
\\} }
\end{equation}
Here the numbers \fbox{n} labelling solid arrows are the induced maps on $0$th $G$-spaces from
the maps \fbox{n} in \autoref{home}.  
The zig-zag of morphisms starting from the bottom left of the diagram and ending at the source of \fbox{4} is the  
zig-zag of maps of $0$th $G$-spaces induced by
\fbox{3} in \autoref{home},  while the top three vertical maps on the right are the zig-zag 
induced by
 \fbox{5}.  The top horizontal composite is the adjoint of $\al_X$.   In the middle left entry, we use an identification for the operadic machine that is explained in \cite[Remark 2.9]{GMPerm}; the map  \fbox{1} is just $\et^{top}\colon X \rtarr \bOp^{top}X \cong B\bOp X$, the isomorphism that of \autoref{BcommutesO}.   At the top,  $(B\bR_G\bOp X)(\mb{1}) =B \bOp X$,  and  $B\et=\et^{top}$.

We will define the dotted arrow maps \fbox6, \fbox7, and \fbox8 and show that each subdiagram commutes, at least up to homotopy.   The maps \fbox6, \fbox7, and \fbox8 each map into the summand labeled by $\mb{1}$ in the space of 0-simplices 
of its corresponding (categorical) bar construction.  These two-sided bar constructions are of the form $B(Y,\sE,Z)$, where $\sE$ is a $G\Top_*$-enriched category, and $Y\colon \sE^{op} \rtarr \enGU$ and $Z\colon \sE \rtarr \enGU$ are $G\Top_*$-enriched functors. The space of 0-simplices is given by\footnote{We here use a choice of bar construction that is slightly modified from but equivalent to the choice in the ArXiv version of \cite{MMO}; it will appear in the published version and in a revision to be posted on ArXiv.  It is the choice labelled  $B^{\bN_G}$ in \cite{GMMO}.}
\[\bigvee_n Y(n) \sma Z(n),\]
where $n$ ranges over the objects of $\sE$.
In all but one of the bar constructions in the diagram (the bottom right corner), $Y({\bf 1})=S^0$, hence the summand 
 $Y({\bf 1}) \sma Z({\mb 1})$
is isomorphic to $Z({\mb 1})$.  Taking $\sE = \sF_G$, the Segal machine $\bS_G Z$ is constructed as 
\[ (\bS_GZ)_V = B((S^V)^\bullet,\sF_G,Z). \]
Taking $V=0$, the map $\nu\colon  Z(\mb 1) \rtarr (\bS_G Z)_0$ is given by the inclusion of $Z(\mb 1)$ in the space of $0$-simplices of the bar construction.  The map $\om_0$ in \autoref{homeadjoint} is given by projection to the relevant component, as in \cite[Section 7.6]{MMO}.

Replacing bar constructions in \autoref{homeadjoint} by the components of their zero simplices that serve as targets for the maps with domain $X$, the diagram can be written as
\begin{equation}
 \label{LevelOneDiagram}
\scalebox{0.9}{
\xymatrix{
 X \ar[dd]_{\eta} \ar[r]^-{B\eta}   \ar@{-->}[ddrrr]^{\fbox8} \ar@{-->}@/^3ex/[dddddr]_{\fbox6} \ar@{-->}[ddddrrr]_{\fbox7}
 &B \bOp X \ar[r]^-{B i } & B \bT \bOp X(  \mb{1}) \ar@{=}[r]^\nu   &  B \bT \bOp X(  \mb{1}) \\
& & \hspace{6em} \fbox{D}  &  \xi_G^* B \bT \bOp X(  \mb{1}) \ar@{=}[u] \\
\bOp^{top}X \ar@{=}[ddd] & & \hspace{4em} \fbox{C} &  B \St_{\sD_G} \bR_G\bOp X(  \mb{1}) \ar[u]^\sim_q \ar[d]_\sim^{Bm} \\
  & & &  B \bR_G\bOp X(  \mb{1}) \\
  & \hspace{-6em} \fbox{A} & \hspace{1em} \fbox{B} &  \bOp^{top}X  \ar[u]^{\iso}_{\fbox4} \ar[d]^{\io_1}  \\
 \bOp^{top}X &  \bOp^{top}X \ar@{=}[l]^{\om{_0}} \ar@{=}[r] &  \bOp^{top}X \ar[r]_-{\io_0}     & I_+ \sma \bOp^{top}X
\\ } }
\end{equation}
We take \fbox6 and \fbox7 both to be $\eta$, making diagram \fbox{A} commute and diagram \fbox{B} commute up to homotopy.
We take \fbox8 to be the composite
\[ X \xrtarr{B\eta} B\bOp X  = B\bR_G \bOp X(  \mb{1}) \xrtarr{B i_{\sD}} B \St_{\sD_G} \bR_G\bOp X(  \mb{1}) .\]
Region \fbox{C} of the diagram commutes by the triangle identity, since $ i $ and $ m$ are the unit and counit for the adjunction in \autoref{ConjOut1}. 
Region \fbox{D} is obtained by applying the classifying space functor $B$ to the following diagram.
\[ 
\xymatrix{
X \ar[d]_{ \eta} \ar[r]^{\eta} &  \bOp X \ar[r]^{ i _\sF} &  \bT \bOp X(  \mb{1}) \ar@{=}[d]
\\
  \bR_G \bOp X(  \mb{1}) 
 \ar[d]_{  i _\sD} \ar[r]^-{\chi} &   \xi_G^* \oursectionG^* \bR_G \bOp X(  \mb{1}) \ar[r]^{ \xi_G^*  i _\sF} \ar[d]_{ i _\sD} &  \xi_G^* \St_{\sF_G} \oursectionG^* \bR_G \bOp X(  \mb{1}) \ar@{=}[u] 
\\
 \St_{\sD_G} \bR_G \bOp X(  \mb{1}) \ar[r]_-{\St_{\sD_G} \chi} &  \St_{\sD_G} \xi_G^* \oursectionG^* \bR_G \bOp X(  \mb{1}) \ar[ur]_{\psi} 
}\]
The top rectangle commutes because the components of $\chi$ (\autoref{StrictEquiv}) are identity maps. The lower rectangle commutes by the naturality of $ i $, and the triangle commutes by \autoref{zeta1}.

\subsection{The proof that $\al$ is monoidal} \label{MonSect} 

Since the adjoint of $\widetilde{\al}_{\{*\}}$ 
is easily seen to be the unit map of the lax monoidal functor $\bK_G\, \bOp$, it suffices to verify that the following diagram commutes for $G$-spaces $X$ and $Y$. Recall again that $(X\times Y)_+\iso X_+\sma Y_+$.
\[ \xymatrix{
\SI^\infty_G X_+\sma \SI^\infty_G Y_+ \ar[r]^-{\al\sma \al} \ar[d]_{\iso} & \bK_G \bOp X\sma \bK_G  \bOp Y\ar[d]^{\varphi} \\
\SI^\infty_G(X\times Y)_+ \ar[r]_-\al & \bK_G \bOp(X\times Y)
}\]
The map $\varphi$ is constructed by applying the composite of the multifunctors from
Theorems \ref{freeinput} and  \ref{roadkill} to the identity map $X\times Y \rtarr X \times Y$ considered as a 2-ary morphism in $\Mult(G\Top)$.

By passage to adjoints, and since the adjunction is monoidal, we  are reduced to showing that the following diagram of maps of $G$-spaces commutes.
 \[ \xymatrix @C=8ex{
X \times Y
\ar[r]^-{\tilde{\al}_X\times \tilde{\al}_Y} \ar[dd]_{=} & (\bK_G \bOp X)_0 \times (\bK_G \bOp Y )_0 \ar[d] \\
& (\bK_G \bOp X \sma \bK_G \bOp Y )_0 \ar[d]^{\varphi_0}\\
X\times Y \ar[r]_-{\tilde{\al}_{X\times Y}} & (\bK_G \bOp(X\times Y))_0 }\] 
Here, the top right map is the lax monoidal constraint for the zeroth space functor.

The proof is concluded by checking that the following diagram commutes.
\[ 
\def\objectstyle{\scriptstyle}
\xymatrix @C=4.5ex{
X \times Y \ar[r]^-{B\eta\times B\eta} \ar[d]_{B\eta }   &  B \bOp X   \times  B \bOp Y \ar[r] ^-{B i \times B i } 
&    B \bT \bOp X(  \mb{1})  \times  B \bT \bOp Y(  \mb{1}) \ar[d]^{\nu\times \nu} \\   
B \bOp(X\times Y)   \ar[d]_{B i } & 
&( {\bS}_G B\bT \bOp X)_0 \times ({\bS}_G B\bT  \bOp Y)_0 \ar[d] \\
 B \bT \bOp(X\times Y)(  \mb{1}) \ar[r]_-{\nu} & (\bS_G B \bT \bOp(X\times Y))_0 &  \big(({\bS}_G B \bT \bOp X )\sma ({\bS}_GB \bT \bOp Y)\big)_0 \ar[l]^-{\varphi_0}
} \]

\section{Coherence axioms}\label{sec:coh}

We collect the coherence axioms we need in this section. Those for pseudo-commutative operads, deferred from \autoref{pseudocom},  appear in \autoref{cohpseudo};
those for  $\Mult(\oO)$, deferred from \autoref{MultiO}, appear in \autoref{cohMultO}; and those for  $\Mult(\sD)$, deferred from the unpacking of \autoref{MultiD} in \autoref{MdpaDef}, are gathered in \autoref{cohMultD}.

\subsection{Coherence axioms for pseudo-commutative operads}\label{cohpseudo}

Here we complete \autoref{pseudocom} by specifying coherence axioms for the $\al_{j,k}$  of diagram \autoref{alphas}.

\begin{enumerate}[(i)]
\item\label{pseudocomid} The component of $\alpha_{1,n}$ at $(\oid,y)$ is the identity map.
\item\label{symmpseudocom} The 
composite
\[
\xymatrix{
\oO(j) \times \oO(k) \ar[r]^-{\opair} \ar[d]_{t}
\drtwocell<\omit>{<0>\qquad \alpha_{j,k}}      & \oO(jk)
 \ar[d]^{\tau_{k,j}}  \\
\oO(k) \times \oO(j) \ar[r]^-{\opair} \ar[d]_{t}
\drtwocell<\omit>{<0>\qquad \alpha_{k,j}}      & \oO(kj)
 \ar[d]^{\tau_{j,k}}  \\
\oO(j) \times \oO(k)  \ar[r]_-{\opair} & \oO(jk).\\}
\]
is the identity $\sV$-transformation. 
\item\label{symmpseudocomequiv} For permutations $\rho_1 \in \Sigma_k$ and $\rho_2\in \Sigma_j$,
\[
\xymatrix{
\oO(j) \times \oO(k) \ar[r]^-{\opair} \ar[d]_{t}
\drtwocell<\omit>{<0>\qquad \alpha_{j,k}}      & \oO(jk)
 \ar[d]^{\tau_{k,j}}  \\
\oO(k) \times \oO(j) \ar[r]^-{\opair} \ar[d]_{\rho_1\times\rho_2}
     & \oO(kj)
 \ar[d]^{\rho_1 \spair \rho_2}  \\
\oO(j) \times \oO(k)  \ar[r]_-{\opair} & \oO(jk).\\}
\ \ \ \ 
\raisebox{-9ex}{=}
\ \ \ \ 
\xymatrix{
\oO(j) \times \oO(k) \ar[r]^-{\opair} \ar[d]_{\rho_2 \times \rho_1}
    & \oO(jk)
 \ar[d]^{\rho_2\spair\rho_1}  \\
\oO(j) \times \oO(k) \ar[r]^-{\opair} \ar[d]_{t}
\drtwocell<\omit>{<0>\qquad \alpha_{j,k}}      & \oO(jk)
 \ar[d]^{\tau_{k, j}}  \\
\oO(k) \times \oO(j)  \ar[r]_-{\opair} & \oO(kj).\\}
\] 
\item\label{pseudocommhex1} 
Let 
\[ 
 \Prod_{i=1}^j \oO(k_i) \times \oO(\ell) \xrtarr{\Delta_\ell} \Prod_{i=1}^j \Big( \oO(k_i) \times \oO(\ell) \Big)
  \]
and
 \[
 \oO(\ell) \times \Prod_{i=1}^j \oO(k_i) \xrtarr{\Delta_\ell'} \Prod_{i=1}^j \Big( \oO(\ell) \times \oO(k_i)  \Big)
 \]
 be the morphisms whose $i$th components are the products $p_i\times \id$ and $\id \times p_i$, respectively.
We require the 2-cell
\[
\scalebox{0.9}{
\xymatrix @C=3em {
\oO(j) \times \Prod_{i=1}^j \oO(k_i) \times \oO(\ell) \ar[d]_{\iso} \ar[r]^-{\id\times \Delta_\ell}
&
\oO(j) \times \Prod_{i=1}^j \Big( \oO(k_i) \times \oO(\ell) \Big)
\ar[r]^-{\id \times \prod \opair} 
\ar[d]_\iso
\drtwocell<\omit>{<0>\qquad\qquad \id \times \prod \alpha_{k_i,\ell}}    
&
\oO(j) \times \Prod_{i=1}^j \oO(k_i\ell) \ar[d]^{\id \times \prod \tau_{\ell,k_i}}
 \\
\oO(j) \times \oO(\ell) \times \Prod_{j=1}^j \oO(k_i) \ar[d]_{\id \times \prod \Delta_\ell}
\ar[r]_-{\id\times \Delta_\ell'} 
&
\oO(j) \times \Prod_{j=1}^j  \Big( \oO(\ell) \times \oO(k_i) \Big)
\ar[r]_-{\id \times \prod \opair}
& 
\oO(j) \times \Prod_{i=1}^j \oO(\ell k_i) \ar[d]^\ga
  \\
\oO(j) \times \oO(\ell) \times \Prod_{j=1}^j \oO(k_i)^\ell  \ar[r]^-{\opair \times \id} 
\ar[d]_\iso
\drtwocell<\omit>{<0>\qquad \alpha_{j,\ell}\times\id }    
& 
\oO(j \ell) \times \Prod_{i=1}^j \oO(k_i)^\ell \ar[r]^-\ga
\ar[d]^{\tau_{\ell,j} \times \id}
&
\oO(\ell k) \ar[dd]^{D_{\ell,k_*}}
\\
\oO(\ell) \times \oO(j) \times  \Prod_{i=1}^j \oO(k_i)^\ell   \ar[r]_-{\opair \times \id} \ar[d]_\iso
&
\oO(\ell j)  \times \Prod_{i=1}^j \oO(k_i)^\ell \ar[d]^\iso
& 
\\
\oO(\ell) \times \oO(j) \times \Big( \Prod_{i=1}^j \oO(k_i) \Big)^\ell   \ar[r]_-{\opair \times \id}
&
\oO(\ell j)  \times \Big( \Prod_{i=1}^j \oO(k_i) \Big)^\ell \ar[r]_-\ga
& 
\oO( \ell k)
} }
\]
to be equal to the 2-cell
\[
\scalebox{0.95}{
\xymatrix{
\oO(j) \times \Prod_{i=1}^j \oO(k_i) \times \oO(\ell) \ar[d]_{\ga\times \id} \ar[r]^-{\id\times \Delta_\ell}
&
\oO(j) \times \Prod_{i=1}^j \Big( \oO(k_i) \times \oO(\ell) \Big)
\ar[r]^-{\id \times \prod \opair} 
&
\oO(j) \times \Prod_{i=1}^j \oO(k_i\ell) \ar[d]^\ga \\
\oO(k) \times \oO(\ell) \ar[rr]^-\opair \ar[d]_{\iso}
\drrtwocell<\omit>{<0>\qquad \alpha_{k,\ell}}    
&  
& 
\oO(k\ell)
 \ar[d]^{\tau_{\ell,k}}  
 \\
 \oO(\ell) \times \oO(k) \ar[rr]_-\opair 
 & 
 & 
 \oO(k\ell).
} }
\]
\end{enumerate}
Here, in the first $2$-cell above, $D_{\ell, k_*}$ is the distributivity isomorphism specified in the following definition.

\begin{defn}\label{dist}
Given $\ell$, $j$,  and $k_1,\dots,k_j$, let $k=\sum k_i$ and  define 
\[ D_{\ell,k_*} \colon \ul{\ell}\spair(\ul{k_1} \oplus \dots \oplus \ul{k_j}) \rtarr (\ul{\ell}\spair \ul{k_1}) \oplus \dots \oplus (\ul{\ell} \spair \ul{k_j})\]
in the bipermutative category $\Sigma$. It is given explicitly as the permutation 
\[ D_{\ell,k_*} = \left( \tau_{k_1,\ell} \oplus \dots \oplus \tau_{k_j,\ell}\right) \tau_{\ell,k}.\]
Alternatively, using the operad structure on $\Ass$, we can identify $D_{\ell,k_*}$ as
 \[D_{\ell,k_*}=\gamma(\tau_{\ell,j};\Delta^{\ell}(e_{k_1},\dots,e_{k_j})).\]
It is the permutation in $\SI_{\ell k}$ that permutes blocks of sizes $k_1,\dots, k_j, \dots, k_1,\dots,k_j$ according to $\tau_{\ell,j}$.
\end{defn}

\begin{rem}
In the language of Corner and Gurski \cite[Theorem 4.6]{CG}, axiom \eqref{symmpseudocom}
states that we require pseudo-commutativity structures to be symmetric. Axiom \eqref{symmpseudocomequiv} is an equivariance axiom that is necessary in order for $\al$ to induce a map at the monad level (\autoref{prop:alphaNatTran}), but which was unfortunately  omitted in \cite{CG}. 

Axiom \eqref{pseudocommhex1} encodes the compatibility of $\al$ with operadic composition, and is given in \cite[Theorem 4.4]{CG}. 
Unpacking axiom \eqref{pseudocommhex1}, it states that
given $x \in \oO(j)$, $y_i \in \oO(k_i)$, and $z\in \oO(\ell)$, the following diagram commutes:
\[
\scalebox{0.82}{
\xymatrix @R=4ex
{
\gamma(x; y_1 \opair z,\dots , y_j\opair z ) 
{
\big( \tau_{\ell,k_1}\oplus \dots \oplus \tau_{\ell,k_j} \big) D_{\ell,k_*}}
\ar@{=}[r] \ar@{=}[d] 
&
\gamma(x; y_1 \opair z,\dots , y_j\opair z ) 
{
\tau_{\ell,k} }
\ar@{=}[dd]
\\
\gamma\Big(x; (y_1 \opair z)
\tau_{\ell,k_1},\dots , (y_j\opair z)
\tau_{\ell,k_j} \Big) 
{
D_{\ell,k_*}}
\ar[d]_{\gamma(\id;\alpha, \dots, \alpha)
{
D_{\ell,k_*}}}
&
\\
\gamma(x; z\opair y_1, \dots, z\opair y_j ) 
{
D_{\ell,k_*}}
\ar@{=}[d]
& 
\Big( \gamma(x; y_1,\dots , y_j) \opair z \Big) 
{ 
\tau_{\ell,k} }
\ar[ddd]^\alpha
\\
\gamma(x\opair z; \Delta^\ell(y_1), \dots, \Delta^\ell( y_j) ) 
{
D_{\ell,k_*}}
\ar@{=}[d]
& 
\\
\gamma((x\opair z)
\tau_{\ell,j}; \Delta^\ell(y_1, \dots,  y_j) ) 
\ar[d]_{\gamma(\alpha;\id,\dots,\id)}
& 
\\
\gamma(z\opair x; \Delta^\ell(y_1, \dots,  y_j) ) 
\ar@{=}[r]
&
z \opair  \gamma(x; y_1,\dots , y_j).
}}
\]
This formulation is closer to what is stated in \cite{CG}. 
We can summarize it by the equation
\[ (\alpha_{x,z} \opair \id_y) \circ [( \id_x \opair \alpha_{y,z} )D_{\ell,k_*}] =  \alpha_{xy,z}.
\]
This axiom plays the role of a hexagon axiom in our context.  (There is a second axiom 
relating $\al$ to operadic composition in \cite[Theorem 4.4]{CG}, but the two axioms are equivalent in the presence of the symmetry axiom \eqref{symmpseudocom}.)
\end{rem}

\subsection{Coherence axioms for $\Mult(\oO)$}\label{cohMultO}

Here we return to the diagram \autoref{deltan}, which defines the invertible $\sV$-transformations $\de_i$ central to \autoref{MultiO}, and complete that definition.  
In axioms \eqref{OperEquivAxiom}  and \eqref{OperCompAxiom},  we will use
the shorthand notation $\aA_{i, j}$ for the product $\aA_i\times \aA_{i+1}\times\dots\times \aA_{j}.$ 
The map $\mu$ appearing in \eqref{OperCompAxiom} and \eqref{DeiDejCommute} is defined 
immediately following the axioms.  
The axioms require certain pasting diagrams to be equal, and in some cases, it will not be immediately apparent that the boundaries are equal; we address that issue after the axioms as well.
We use $\times$ rather than $\sma$ throughout since the $\oO(n)$ are unbased and their algebras are defined using powers rather than smash powers.

\begin{enumerate}[(i)]

\item\label{OperUnitAxiom} (Unit Object) The $\sV$-transformation $\de_i(0)$ is the identity.

\smallskip

\item\label{OperEquivAxiom} (Equivariance)
For any permutation $\rho \in \SI_n$, we require the $2$-cell
\[
\scalebox{0.85}{
\xymatrix @C=6em{
\aA_{1,i-1} \times \oO(n) \times \aA_i^n \times \aA_{i+1,k}
\ar[r]^{\id \times (\rho \times \id) \times \id}
\ar[d]_{\id \times (\id\times \rho) \times \id}
&
\aA_{1,i-1}\times \oO(n) \times \aA_i^n \times \aA_{i+1,k}
\ar[d]^{\id \times \tha(n) \times \id}
\\
\aA_{1,i-1}\times \oO(n) \times \aA_i^n \times \aA_{i+1,n} 
\ar[r]^{\id \times \tha(n) \times \id}
\ar[d]_{s_i}
&
\aA_{1,i-1} \times \aA_i \times \aA_{i+1,k}
\ar[dd]^F
\\
\oO(n) \times ( \aA_{1,i-1} \times \aA_i \times \aA_{i+1,k} )^n 
\ar[d]_{\id \times F^n}
\\
\oO(n) \times \aB^n 
\ar[r]_{\tha(n)}
&
\aB
\uultwocell<\omit>{<0> \qquad\qquad \de_i(n)}
} }
\]
to be equal to the $2$-cell
\[
\scalebox{0.75}{
\xymatrix @C=3.8em{
\aA_{1,i-1}\times \oO(n) \times \aA_i^n \times \aA_{i+1,k} 
\ar[r]^{\id \times (\rho\times \id) \times \id}
\ar[d]_{s_i}
&
\aA_{1,i-1}\times \oO(n) \times \aA_i^n \times \aA_{i+1,k}
\ar[r]^(0.55){\id \times\tha(n) \times \id}
\ar[d]_{s_i}
&
\aA_{1,i-1}\times \aA_i \times \aA_{i+1,k}
\ar[dd]^F
\\
\oO(n) \times (\aA_{1,i-1}\times \aA_i \times \aA_{i+1,k} )^n
\ar[r]^{\rho\times \id}
\ar[d]_{\id\times F^n}
&
\oO(n) \times (\aA_{1,i-1}\times \aA_i \times \aA_{i+1,k} )^n
\ar[d]_{\id\times F^n}
\\
\oO(n) \times \aB^n
\ar[r]_{\rho\times \id}
& 
\oO(n) \times \aB^n
\ar[r]_{\tha(n)}
&
\aB.
\uultwocell<\omit>{<2> \qquad\qquad \de_i(n)}
} }
\]

\smallskip

\smallskip

\item\label{OperIdentAxiom} (Operadic Identity): The component of $\delta_i(1)$ at an object 
\[(a_1,\dots,a_{i-1},(\oid,a_i),a_{i+1},\dots,a_k)\]
is the identity map, where $\oid\in \oO(1)$ is the unit of the operad $\oO$.
\smallskip

\item\label{OperCompAxiom} (Operadic Composition): 
We require $\de_i$ to be compatible with composition in the operad. 
In order to save space, we choose to display only the case of $i=k$, but the general case is analogous.

The composite $2$-cell
\[
\scalebox{0.75}{
\xymatrixrowsep{1.5cm}\xymatrix@C=5em{
 \aA_{1,k-1}\times \oO(n)\times \Prod_r \bigg( \oO(m_r) \times \cA_k^{m_r}\bigg) \ar[d]_{s_k} \ar[r]^-{\id \times \Prod_r \tha(m_r)} 
 &
  \aA_{1,k-1} \times \oO(n) \times \aA_k^n \ar[d]^{s_k} \ar[r]^-{\id \times \tha(n)}
 & 
 \aA_{1,k-1} \times \aA_k \ar[ddd]^{F}
 \\
 \oO(n) \times \Prod_r \bigg( \aA_{1,k-1} \times \oO(m_r)\times \aA_k^{m_r} \bigg) \ar[d]_{\id \times \Prod_r s_k} \ar[r]^-{\id \times \Prod_r(\id \times \tha(m_r))}
 &
  \oO(n)\times (\aA_{1,k-1} \times \aA_k)^n \ar[dd]^{\id \times F^n}
  \\
 \oO(n)\times \Prod_r \bigg(\oO(m_r)\times (\aA_{1,k-1} \times \aA_k)^{m_r}\bigg) \ar[d]_{\id \times \Prod_r (\id \times F^{m_r})}
  && \ultwocell<\omit>{ \qquad \qquad  \delta_k(n)  }\\
  \oO(n)\times \Prod_r \bigg(\oO(m_r)\times \aB^{m_r}\bigg) \ar[r]_-{\id \times \Prod_r \tha(m_r)} 
  & \oO(n) \times \aB^n \ar[r]_-{\tha(n)} \uultwocell<\omit>{ \qquad \qquad \qquad \id \times \Prod_r \delta_k(m_r)  }
  & \aB
}}
\]
is equal to the $2$-cell
\[
\scalebox{0.75}{
\xymatrixrowsep{1.5cm}\xymatrix@C=5em{
 \aA_{1,k-1}\times \oO(n)\times \Prod_r \bigg( \oO(m_r) \times \cA_k^{m_r}\bigg) \ar[d]_{s_k} \ar[r]^-{\id \times \mu} 
 &
  \aA_{1,k-1} \times \oO(m) \times \aA_k^m \ar[dd]^{s_k} \ar[r]^-{\id \times \tha(m)}
 & 
 \aA_{1,k-1} \times  \aA_k \ar[ddd]^{F}
 \\
 \oO(n) \times \Prod_r \bigg( \aA_{1,k-1} \times \oO(m_r)\times \aA_k^{m_r} \bigg) \ar[d]_{\id \times \Prod_r s_k}  
 &
 \\
 \oO(n)\times \Prod_r \bigg(\oO(m_r)\times (\aA_{1,k-1} \times
 \aA_k)^{m_r}\bigg) \ar[d]_{\id \times \Prod_r (\id \times F^{m_r})} \ar[r]^-{\mu}
  &
   \oO(m)\times (\aA_{1,k-1} \times \aA_k)^m \ar[d]^{\id \times F^m}& \ultwocell<\omit>{ \qquad \qquad  \delta_k(m)  }\\
  \oO(n)\times \Prod_r \bigg(\oO(m_r)\times \aB^{m_r}\bigg) \ar[r]_-{\mu} 
  & \oO(m) \times \aB^m \ar[r]_-{\tha(m)} 
  & \aB
}}
\]

\item\label{DeiDejCommute} (Commutation of $\de_i$ and $\de_j$): 
For $i < j$, and omitting the inactive variables $\aA_h$ for $h\neq i$ or $j$ in order to save space,
the $2$-cell
\[
\scalebox{0.75}{
\xymatrixrowsep{1.5cm}
\xymatrix @C=2.6em{
\oO(m)\times \aA_i^m \times \oO(n) \times \aA_j^n \ar[rr]^-{\id \times \theta(n)} \ar[d]_-{s_i} 
& & \oO(m) \times \aA_i^m \times \aA_j \ar[r]^-{\theta(m) \times \id} \ar[d]_{s_i}
& \aA_i \times \aA_j \ar[ddd]^{F}
\\
\oO(m) \times (\aA_i \times \oO(n) \times \aA_j^n)^m \ar[d]_-{\id \times s_j^m} \ar[rr]^-{\id \times (\id \times \theta(n))^m} & & \oO(m) \times (\aA_i \times \aA_j)^m \ar[d]_{\id \times F^m}
\\
\oO(m) \times (\oO(n)\times (\aA_i\times \aA_j)^n)^m \ar[d]_{\mu} \ar[r]_-{\rule{0pt}{2ex} \id\times (\id \times F^n)^m}& \oO(m) \times (\oO(n)\times \aB^n)^m \ar[d]_{\mu} \ar[r]_-{\rule{0pt}{2ex} \id \times \theta(n)^m}
& \oO(m)\times \aB^m \ar[dr]_{\theta(m)} \ulltwocell<\omit>{<0> \id \times  \delta_j(n)^m \qquad \qquad} 
& \ultwocell<\omit>{ \delta_i(m) \qquad }
\\ 
\oO(mn) \times_{\Sigma_{mn}} (\aA_i \times \aA_j)^{mn} \ar[r]_-{\id \times F^{mn}}& \oO(mn) 
\times_{\Sigma_{mn}} \aB^{mn} \ar[rr]_-{\theta(mn)} 
&&  \aB  \\
}}
\]
is equal to the $2$-cell obtained by pasting the $2$-cell
\[
\scalebox{0.75}{
\xymatrix @C=2em{
& \oO(m) \times \aA_i^m \times \oO(n) \times \aA_j^n \ar[dr]^{s_j} \ar[dl]_{s_i} & \\
\oO(m) \times (\aA_i \times \oO(n) \times \aA_j^n)^m  \ar[d]_-{\id \times (s_j)^m} & &  
\oO(n) \times (\oO(m)\times \aA_i^m \times \aA_j)^n \ar[d]^-{\id \times (s_i)^n}\\
\oO(m)\times (\oO(n) \times (\aA_i \times \aA_j)^n)^m \ar[dr]_{\mu} 
 & \utwocell<\omit>{<0> \alpha_{m,n} \qquad  } & 
\oO(n)\times (\oO(m) \times (\aA_i \times \aA_j)^m)^n \ar[dl]^{\mu} \\
&   \oO(mn) \times_{\Sigma_{mn}} (\aA_i \times \aA_j)^{mn} &  \\
} }
\]
\noindent to the left of the pasting diagram
\[
\scalebox{0.75}{
\xymatrixrowsep{1.5cm}\xymatrix @C=2.6em{
\oO(m)\times \aA_i^m \times \oO(n) \times \aA_j^n \ar[rr]^-{\theta(m) \times \id} \ar[d]_-{s_j} &&  \aA_i \times \oO(n) \times \aA_j^n \ar[r]^-{\id \times \theta(n)} \ar[d]_{s_j}& \aA_i \times \aA_j \ar[ddd]^{F}\\
\oO(n) \times (\oO(m) \times \aA_i^m \times \aA_j)^n \ar[d]_-{\id \times s_i^n} \ar[rr]^-{\id \times (\theta(m) \times \id )^n} & & \oO(n) \times (\aA_i \times \aA_j)^n \ar[d]_{\id \times F^n}\\
\oO(n) \times (\oO(m)\times (\aA_i\times \aA_j)^m)^n \ar[d]_{\mu} \ar[r]_-{\rule{0pt}{2ex} {\id\times (\id \times F^m)^n} }& \oO(n) \times (\oO(m)\times \aB^m)^n \ar[d]_{\mu} 
\ar[r]_-{\rule{0pt}{2ex} \id \times \theta(m)^n} & \oO(n)\times \aB^n \ar[dr]_{\theta(n)} 
\ulltwocell<\omit>{<0> \id \times  \delta_i(m)^n \qquad \qquad} & \ultwocell<\omit>{ \delta_j(n) \qquad}\\
\oO(nm) \times_{\Sigma_{nm}} (\aA_i \times \aA_j)^{nm} \ar[r]_-{\id \times F^{nm}}& \oO(nm) \times_{\Sigma_{nm}} \aB^{nm} \ar[rr]_-{\theta(nm)} & & \aB. 
 \\
} }
\]
\end{enumerate}

\medskip

These  axioms require explanation. They encode the idea that whenever the $\de_i$ 
combine to give two transformations with the same source functor and the same target functor, both with target 
category $\aB$, then they are equal. There is an implicit coherence theorem saying that the diagrams we display generate 
all others. In all of our diagrams, the interior subdiagrams unoccupied by a $2$-cell commute either by
the definition of an operad or by a naturality diagram. 

Axioms \eqref{OperUnitAxiom} and \eqref{OperEquivAxiom} give the compatibilities with basepoints and equivariance necessary for these multimorphisms to give rise to multimorphisms of $\bO$-algebras, as defined by Hyland and Power \cite{HP}.
In \eqref{OperEquivAxiom}, we must check that the source and target functors of the two diagrams displayed are equal. The target 
functors agree trivially. The source functors  agree by the Equivariance Axiom for $\theta$, the naturality of $s_i$,
and the fact that $F^n\com \rh = \rh\com F^n$. 
Axiom \eqref{OperIdentAxiom} corresponds to the Operadic Identity Axiom and
requires no explanation.

In \eqref{OperCompAxiom}, we define the map $\mu$ 
to be the map that shuffles the operad variables to the left and applies the structure map of the operad in those variables.
The source and target functors of the two diagrams agree by the compatibility axioms for $\oO$-algebras.

In \eqref{DeiDejCommute}, 
we abuse notation and again write $\mu$ for the effect of passing to orbits from the $\mu$ used above.
Here the target functors of the first and third diagrams agree trivially but their source functors do not; their left 
vertical composites differ.  After pasting the second diagram to the third, the source
functors of the first diagram and the composite agree.
We note that passage to $\Sigma_{mn}$-orbits in the second diagram is essential, as in \autoref{prop:alphaNatTran}; 
without that, the pseudo-commutativity isomorphism $\al_{m,n}$ would not mediate between
its source and target functors.

\begin{rem}
 These axioms imply further compatibilities of the $\delta_i$ with the unit object 0. In particular, it follows that the component of $\de_i(n)$ at an object 
 \[(a_1,\dots,(x;a_{i,1},\dots,a_{i,n}),\dots,a_k)\]
 is $\id_0$ if  either $a_j\in \aA_j$ is $0$ for some $j\neq i$ or all coordinates $a_{i,r}$ of the $i$th object $a_i\in \aA_i^n$ are $0$.  
Moreover, for $1\leq r\leq n$, the $2$-cell
\[
\scalebox{0.85}{
\xymatrix @C=6em{
\aA_{1,i-1} \times \oO(n) \times \aA_i^{n-1} \times \aA_{i+1,k}
\ar[r]^{\id \times (\si_r \times \id) \times \id}
\ar[d]_{\id \times (\id\times \si_r) \times \id}
&
\aA_{1,i-1}\times \oO(n-1) \times \aA_i^{n-1} \times \aA_{i+1,k}
\ar[d]^{\id \times \tha(n-1) \times \id}
\\
\aA_{1,i-1}\times \oO(n) \times \aA_i^n \times \aA_{i+1,n} 
\ar[r]^{\id \times \tha(n) \times \id}
\ar[d]_{s_1}
&
\aA_{1,i-1} \times \aA_i \times \aA_{i+1,k}
\ar[dd]^F
\\
\oO(n) \times ( \aA_{1,i-1} \times \aA_i \times \aA_{i+1,k} )^n 
\ar[d]_{\id \times F^n}
\\
\oO(n) \times \aB^n 
\ar[r]_{\tha(n)}
&
\aB
\uultwocell<\omit>{<0> \qquad\qquad \de_i(n)}
} }
\]
is equal to the $2$-cell
\[
\scalebox{0.75}{
\xymatrix @C=2.5em{
\aA_{1,i-1}\times \oO(n) \times \aA_i^{n-1} \times \aA_{i+1,k} 
\ar[r]^(.475){\raisebox{-1ex}{\rule{0pt}{1ex}} \id \times (\si_r\times \id) \times \id}
\ar[d]_{s_i}
&
\aA_{1,i-1}\times \oO(n-1) \times \aA_i^{n-1} \times \aA_{i+1,k}
\ar[r]^(.6){\raisebox{-1ex}{\rule{0pt}{1ex}} \id \times\tha(n-1) \times \id}
\ar[d]_{s_i}
&
\aA_{1,i-1}\times \aA_i \times \aA_{i+1,k}
\ar[dd]^F
\\
\oO(n) \times (\aA_{1,i-1}\times \aA_i \times \aA_{i+1,k} )^{n-1}
\ar[r]^-{\si_r\times \id}
\ar[d]_{\id\times F^{n-1}}
&
\oO(n-1) \times (\aA_{1,i-1}\times \aA_i \times \aA_{i+1,k} )^{n-1}
\ar[d]_{\id\times F^{n-1}}
\\
\oO(n) \times \aB^{n-1}
\ar[r]_{\si_r\times \id}
& 
\oO(n-1) \times \aB^{n-1}
\ar[r]_{\tha(n-1)}
&
\aB.
\uultwocell<\omit>{<3> \quad\qquad\qquad \de_i(n-1)}
} }
\]
\end{rem}

\subsection{Coherence axioms for $\Mult(\sD)$}\label{cohMultD}

Here we return to the diagram \autoref{delta}, which defines the invertible $\sV_\bpt$-transformations $\de$ in the $k$-ary morphisms of \autoref{MultiD}, and give the necessary coherence conditions.  The condition on $\PI$ in that definition already incorporates conditions on basepoints and identity morphisms. These are the analogues of axioms \eqref{OperUnitAxiom} and \eqref{OperIdentAxiom} of \autoref{cohMultO}. We require the following condition on composition in $\sD$,  which is analogous to the operadic composition axiom \eqref{OperCompAxiom} there.\\

\noindent
(Categorical Composition Axiom)  We write $\tha^k$ for the left vertical composite
\[\xymatrix@1{ \bigwedge\limits_i \sD(\bm_i,\bn_i)\sma \bigwedge\limits_i \aX_i(\bm_i) \ar[r]^-{t}_-{\cong} & \bigwedge\limits_i\big(\sD(\bm_i,\bn_i)\sma \aX_i(\bm_i)\big)\ar[r]^-{\bigwedge\limits_i\tha}
& \bigwedge\limits_i \aX(\bn_i) \\}\]
in \autoref{delta}. We write $C$ for the 
composition in $\sD^{\esma k}$
\[ \xymatrix@1{ \bigwedge\limits_i \sD(\bn_i,\bp_i) \sma \bigwedge\limits_i \sD(\bm_i,\bn_i) \ar[r]^-{t}_-{\cong} & 
\bigwedge\limits_i \big{(}\sD(\bn_i,\bp_i) \sma \sD(\bm_i,\bn_i)\big{)} \ar[r]^-{\bigwedge\limits_i\com} & \bigwedge\limits_i \sD(\bm_i,\bp_i).\\} \]
The right vertical composite 
\[ \xymatrix@1{\bigwedge\limits_i \sD(\bm_i,\bn_i)\sma \aY(\bm) \ar[r]^-{\smaD^k\sma \id} 
& \sD(\bm, \bn)\sma \aY({\bm}) \ar[r]^-{\tha} & \aY({\bn})\\} \]
in \autoref{delta} is the action $\THA_k$ 
of $\sD^{\esma k}$ on $\aY \com \smaD^{k}$.

With these notations, the following pasting diagrams are required to be equal. 
\[
\scalebox{0.6}{
\xymatrixcolsep{-1pc}\xymatrixrowsep{4pc}\xymatrix{
& \bigwedge\limits_i \sD(\bn_i,\bp_i)\esma \bigwedge\limits_i \sD(\bm_i,\bn_i) \esma \bigwedge\limits_i \aX_i(\bm_i) \ar[rr]^-{\id \esma \id \esma F} \ar[dr]_-{\id \esma \tha^k} \ar[dl]_-{C \esma \id} 
& \drtwocell<\omit>{<0>  \quad { \id \esma \de}} & \bigwedge\limits_i \sD(\bn_i,\bp_i)\esma \bigwedge\limits_i \sD(\bm_i,\bn_i) \esma \aY(\bm) \ar[dr]^-{\id \esma \THA_k}\\
 \bigwedge\limits_i \sD(\bm_i,\bp_i) \esma \bigwedge\limits_i \aX_i(\bm_i) \ar[dr]_-{\tha^k} &  & \bigwedge\limits_i \sD(\bn_i,\bp_i) \esma \bigwedge\limits_i \aX_i(\bn_i) \ar[rr]_-{\id \esma F} \ar[dl]_-{\tha^k}  \drtwocell<\omit>{<0>  \quad { \de}}
 &  & \bigwedge\limits_i \sD(\bn_i,\bp_i) \esma \aY(\bn) \ar[dl]^{\THA_k}\\
& \bigwedge\limits_i \aX_i(\bp_i) \ar[rr]_-{F} && \aY(\bp)
}}
\]

\[
\scalebox{0.6}{
\xymatrixcolsep{-1pc}\xymatrixrowsep{4pc}\xymatrix{
& \bigwedge\limits_i \sD(\bn_i,\bp_i)\esma \bigwedge\limits_i \sD(\bm_i,\bn_i) \esma \bigwedge\limits_i \aX_i(\bm_i) \ar[rr]^-{\id \esma \id \esma F} \ar[dl]_-{C \esma \id} &  & \bigwedge\limits_i \sD(\bn_i,\bp_i)\esma \bigwedge\limits_i \sD(\bm_i,\bn_i) \esma \aY(\bm) \ar[dr]^-{\id \esma \THA_k} \ar[dl]_-{C\esma \id}  \ddtwocell<\omit>{<0>  \quad }\\
 \bigwedge\limits_i \sD(\bm_i,\bp_i) \esma \bigwedge\limits_i \aX_i(\bm_i) \ar[dr]_-{\tha^k} \ar[rr]^-{\id \esma F} &   \drtwocell<\omit>{<0>  \quad { \de}} & \bigwedge\limits_i \sD(\bm_i,\bp_i) \esma \aY(\bm) \ar[dr]_-{\THA_k} & & \bigwedge\limits_i \sD(\bn_i,\bp_i) \esma \aY(\bn) \ar[dl]^{\THA_k}\\
& \bigwedge\limits_i \aX_i(\bp_i) \ar[rr]_-{F} && \aY(\bp)
}}
\]

The unmarked regions in these diagrams commute. For instance, the rhombus in the first diagram commutes because $\aX_1,\dots,\aX_k$ are strict $\sD$-algebras, and hence $\aX_1 \overline{\esma} \dots \overline{\esma} \aX_k$ is a strict $\sD^{\esma k}$-algebra as well. The unlabeled $\sV_\bpt$-transformation in the rhombus in the second diagram is the constraint for the $\sD^{\esma k}$-pseudoalgebra $\aY \circ \smaD^k$. More precisely, it is given by the following whiskering of $\vartheta$, which denotes an iterated version of the coherence $\sV_\bpt$-pseudotransformation from \autoref{beta0}.

\[
\scalebox{0.7}{
 \xymatrixcolsep{3.5pc}\xymatrix{
 \bigwedge\limits_i \sD(\bn_i,\bp_i)\esma \bigwedge\limits_i \sD(\bm_i,\bn_i) \esma \aY(\bm) \ar[r]^-{\id \esma \smaD^k \esma \id }\ar[d]_{t}   \ddrtwocell<\omit>{<0>  \quad \vartheta}
 & \bigwedge\limits_i \sD(\bn_i,\bp_i)\esma \sD(\bm,\bn) \esma \aY(\bm) \ar[r]^-{\id \esma \tha} \ar[d]^{\smaD^k \esma \id}
 & \bigwedge\limits_i \sD(\bn_i,\bp_i)\esma \aY(\bn) \ar[d]^{\smaD^k \esma \id}\\
 \bigwedge\limits_i \big(\sD(\bn_i,\bp_i)\esma  \sD(\bm_i,\bn_i)\big) \esma \aY(\bm) \ar[d]_-{\bigwedge\limits_i \com \esma \id} 
 &  \sD(\bn,\bp)\esma  \sD(\bm,\bn) \esma \aY(\bm) \ar[r]^-{\id \esma \tha} \ar[d]^{\com \esma \id}
 & \sD(\bn,\bp) \esma \aY(\bn) \ar[d]^{\tha}\\
 \bigwedge\limits_i \sD(\bm_i,\bp_i)\esma \aY(\bm) \ar[r]_-{\smaD^k \esma \id} 
 & \sD(\bm,\bp)\esma \aY(\bm) \ar[r]_-{\tha} 
 & \aY(\bp).
}}
\]
Note that the bottom right rectangle commutes because $\aY$ is a strict $\sD$-algebra.

\section{The pseudo-commutativity of $\sD(\oO)$}\label{Rpf}  

We prove \autoref{multithm} here.  Thus let $\oO$ be a pseudo-commutative 
operad in $\VCat$
and $\sD=\sD(\oO)$ the associated category of operators, as in \autoref{DODefn}.
  We must construct a  $\sV_\bpt$-pseudofunctor $\squiggly{\smaD \colon \sD \esma \sD }{\sD}$ and prove that it gives a pseudo-commutative structure. For the sake of clarity, we work with $\times$ rather than $\sma$ in this section; the statements about $\PI$ build in basepoint conditions that imply that all the constructions descend to the smash product.  We break the proof into several parts. Recall that for a morphism $\ph\colon \bm \rtarr \bn$ of $\sF$ and $1\leq j\leq n$,  we  write $\ph_j = |\ph^{-1}(j)|$.  
  
When restricted to $\PI$, $\smaD$ must be $\sma$.  Thus, on objects, $\bm\smaD \bp=\bm\bp$. At the level of Hom categories, the map 
\[ \smaD \colon \sD(\bm,\bn) \times \sD(\bp,\bq) \rtarr \sD(\bm\bp,\bn\bq)\]
sends the summand in the source labeled by $\phi\colon \bm \rtarr \bn$ and $\psi \colon \bp \rtarr \bq$ to the one labeled by $\phi \sma \psi \colon \bm\bp \rtarr \bn\bq$ in the target. Therein, the $\sV$-functor
\[  \Prod_{1\leq j\leq n} \oO(\ph_j) \times
  \Prod_{1\leq k\leq q} \oO(\ps_k) \rtarr \Prod_{1\leq \ell\leq nq} \oO((\ph\sma \ps)_{\ell}),\]
is such that its projection onto the $\ell$th factor is given by first projecting onto 
$\oO(\phi_j)\times\oO(\phi_k)$, where $\ell$ maps to the pair $(j,k)$ under the lexicographic ordering of $\bn\sma \bq$, and then applying the pairing $\opair$ of $\oO$. 
The definition makes sense since 
\[(\ph\sma\ps)_{\ell} = |(\ph\sma \ps)^{-1}(\ell)| = |\ph^{-1}(j)||\ps^{-1}(k)| = \ph_j\ps_k. \] It is immediate from the definition that $\smaD$ restricts to $\sma$ on $\PI$ (along $\io$) and projects to $\sma$ on $\sF$ (via $\xi$), as required.

To complete the construction of the $\sV_\bpt$-pseudofunctor $\smaD$, we must prove the following result, which is the
heart of the proof that $\sD$ is pseudo-commutative.

\begin{prop}\label{isapseudo} The following diagram of $\sV$-functors relating $\smaD$ to composition 
commutes up to an invertible $\sV$-transformation $\vartheta$.  

\[\xymatrix{
\sD(\mb{n},\mb{p})\times \sD(\mb{r},\mb{s})\times \sD(\mb{m}, \mb{n})\times \sD(\mb{q}, \mb{r}) 
\ar[rr]^-{\smaD\times \smaD} \ar[d]_{\id\times t\times \id}^{\iso}   \ddrrtwocell<\omit>{<0>\     \vartheta} & &
\sD(\mb{nr}, \mb{ps})\times \sD(\mb{mq}, \mb{nr})\ar[dd]^\circ\\
\sD(\mb{n},\mb{p})\times \sD(\mb{m}, \mb{n})\times \sD(\mb{r},\mb{s})\times\sD(\mb{q}, \mb{r}) 
\ar[d]_{\circ \times \circ}  & & \\
\sD(\mb{m}, \mb{p}) \times\sD(\mb{q}, \mb{s}) \ar[rr]_-{\smaD}  & & \sD(\mb{mq}, \mb{ps}) 
\\} \]
The collection of such $\sV$-transformations descends to the smash product and makes $\squiggly{\smaD \colon \sD \esma \sD}{\sD}$ into a $\sV_\bpt$-pseudofunctor.
\end{prop}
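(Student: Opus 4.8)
\textbf{Proof plan for \autoref{isapseudo}.}
The plan is to define $\vartheta$ componentwise on the summands indexed by quadruples of morphisms of $\sF$ and to check, on each such summand, that the needed $\sV$-transformation is assembled out of instances of the pseudo-commutativity $\al$ of $\oO$ together with the associativity isomorphisms for $\ga$. Fix morphisms $\psi\colon \bn\rtarr\bp$, $\chi\colon \br\rtarr\bs$, $\ph\colon \bm\rtarr\bn$, $\om\colon \bq\rtarr\br$ of $\sF$. On this summand both composites of the diagram land in the summand of $\sD(\bmq,\bps)$ indexed by $(\psi\circ\ph)\sma(\chi\circ\om) = (\psi\sma\chi)\circ(\ph\sma\om)$, so the underlying morphisms of $\sF$ agree, as required. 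What must be compared are two $\sV$-functors with values in $\prod_{1\le t\le ps}\oO\bigl((\psi\sma\chi)_t(\ph\sma\om)\cdot\text{(relevant blocks)}\bigr)$; first I would unwind both composites explicitly, using \autoref{Dcomposition} for composition in $\sD$ and the definition of $\smaD$ just given on Hom categories. The upper-right route composes in $\sD$ the object over $\psi\sma\chi$ obtained from $\opair$-ing the $\oO(\psi_a)$'s and $\oO(\chi_b)$'s with the object over $\ph\sma\om$ obtained similarly, inserting the reordering permutations $\rho_t(\psi\sma\chi,\ph\sma\om)$. The lower-left route first composes over $\psi,\ph$ and over $\chi,\om$ separately (inserting $\rho_a(\psi,\ph)$ and $\rho_b(\chi,\om)$) and then $\opair$-s the results, inserting $\rho_c(\psi\sma\chi)$ data only through the lexicographic bookkeeping of $\smaD$.

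The comparison $\vartheta$ is then built as follows. On each factor $\oO(\text{something})$ of the target, the two routes differ by (a) a permutation discrepancy between $\rho_t(\psi\sma\chi,\ph\sma\om)$ and a suitable $\spair$-product/compound of the $\rho_a(\psi,\ph)$ and $\rho_b(\chi,\om)$, which is handled by the equivariance formulas for an operad (\autoref{rem:opair-equivariant}, \autoref{ttt}) together with the explicit identities for $\ta_{j,k}$ and $D_{\ell,k_*}$ in \autoref{dist}; and (b) the genuinely noncommutative discrepancy of the form ``$\opair$ then $\ga$'' versus ``$\ga$ then $\opair$'', which is exactly what the operadic-compatibility axiom \eqref{pseudocommhex1} for $\al$ (the hexagon, in the unpacked form displayed at the end of \autoref{cohpseudo}) resolves, with the symmetry axiom \eqref{symmpseudocom} ensuring consistency. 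I would therefore write $\vartheta$ on each summand as the composite of: the relevant associativity isomorphism for $\ga$ (which is strict, i.e.\ an identity diagram, for the underlying operad structure but records the block reordering), followed by a pasting of instances of $\al_{j,k}$ indexed by the pairs $(a,b)$ of fibers, followed by another associativity/equivariance isomorphism. Invertibility of $\vartheta$ is immediate since $\al$ is invertible and all the equivariance maps are iso.

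Next I would verify the three conditions that make $(\smaD,\vartheta)$ a $\sV_\bpt$-pseudofunctor: unitality of $\vartheta$ (its restriction along identity $1$-cells, i.e.\ when $\ph=\id$ or $\psi=\id$, is the identity), which follows from axiom \eqref{pseudocomid} and the unit formula $e_1\spair\nu=\nu$ for $\spair$; strictness relative to $\PI$, i.e.\ condition \eqref{PPDDcond} of \autoref{pairprod}, which holds because when the relevant morphisms lie in $\PI$ every $\oO(\ph_j)$ involved is $\oO(0)$ or $\oO(1)$, so every instance of $\al$ that appears is an identity by \eqref{pseudocomid} and the reorderings become identities; and the associativity/cocycle condition for $\vartheta$ with respect to triple composition in $\sD$, which reduces, summand by summand, to the hexagon axiom \eqref{pseudocommhex1} for $\al$ applied in two nested ways plus the associativity coherence of $\ga$. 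Finally, since on every summand the basepoint conditions built into the $\PI$-strictness force $\vartheta$ to be the identity on the subcategories $0\times\sD$ and $\sD\times 0$, the whole collection descends to the smash product $\sD\esma\sD$ by the universal property discussed after \autoref{ObCommutesSma} and \autoref{ReducedVCatpEnrPseudo}, giving the desired $\sV_\bpt$-pseudofunctor $\squiggly{\smaD\colon \sD\esma\sD}{\sD}$.

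The main obstacle I expect is purely bookkeeping rather than conceptual: tracking the exact permutations $\rho_t(\psi\sma\chi,\ph\sma\om)$ against the $\spair$-composites of $\rho_a(\psi,\ph)$ and $\rho_b(\chi,\om)$ through the lexicographic identifications $\la_{j,k}$, and checking that the pasted instances of $\al$ are indexed compatibly so that the hexagon \eqref{pseudocommhex1} applies verbatim. This is the ``painstaking rather than hard'' combinatorial verification alluded to before \autoref{multithm}; I would organize it by first treating the case where all four morphisms of $\sF$ are the canonical surjections $\bn\rtarr\mathbf 1$ (so that only a single $\al_{j,k}$ and a single $D_{\ell,k_*}$ appear), then reducing the general case to this one by naturality in $\sF$ and the equivariance axiom \eqref{symmpseudocomequiv}.
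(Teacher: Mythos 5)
Your plan matches the paper's proof in all essentials: both restrict to the summand indexed by a fixed quadruple of $\sF$-morphisms (noting the underlying $\sF$-composites agree), rewrite the two composites elementwise via the operad's associativity and equivariance formulas with the $D_{\ell,k_*}$ and $\rho$ bookkeeping, construct $\vartheta$ as a pasting of instances of $\al$, deduce strictness relative to $\PI$ from the degeneration of $\al$ when one argument lies in $\oO(0)$ or $\oO(1)$, reduce compatibility with composition to axiom \eqref{pseudocommhex1}, and descend to the smash product via the basepoint conditions. The one slight imprecision is in item (b), where you credit the hexagon axiom \eqref{pseudocommhex1} with resolving the ``$\opair$ then $\ga$'' versus ``$\ga$ then $\opair$'' discrepancy: in the paper that discrepancy is bridged by the $\al$'s themselves (plus operad equivariance), and the hexagon enters only in the coherence check for composition — but since you later describe $\vartheta$ explicitly as a pasting of $\al_{j,k}$'s, your construction is the paper's.
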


\begin{proof}
The essential combinatorial claim is that the pseudo-commutativity isomorphisms
$\al$ of $\oO$ from \autoref{pseudocom} assemble to give the required invertible $\sV$-transformations $\vartheta$. 
This is not obvious since the $\al$  give maps that are not obviously relevant to the diagram.
The strategy is to express the results of the source and target of the diagram
in such a way that the invertible $\sV$-transformation
between them becomes obvious. In the following equations, we will use the associativity and equivariance formulas from the definition of an operad to massage the two composites into comparable form.

Since $\PI$ and $\sF$ are permutative categories regarded as $\sV$-$2$-categories,
the diagram clearly commutes when $\sD = \PI$ or $\sD =\sF$.  That is, fixing morphisms
\[ \ps\colon \mb{n}\rtarr \mb{p}, \ \ \nu\colon \mb{r}\rtarr \mb{s}, \ \ \ph\colon \mb{m}\rtarr \mb{n}, \ \text{and}\   \mu\colon \mb{q}\rtarr \mb{r} 
 \]
in $\sF$, we have 
\begin{equation}\label{expected}
(\ps\sma \nu)\circ (\ph\sma\mu) = (\ps\com \ph) \sma (\nu\com \mu). 
\end{equation} 

Thus, for the summand labeled by our fixed morphisms $\ps$, $\nu$,
$\ph$, and $\mu$ in $\sF$, the clockwise and counterclockwise directions land in the same summand of the target. It follows that it suffices to restrict the diagram to these summands.
Let
\[ 1\leq j\leq n, \ \ 1\leq k\leq p, \ \ 1\leq h\leq r, \ \text{and} \ 1\leq i \leq s.\]
Looking at the definitions of the composition $\com$ of $\sD$ in \autoref{Dcomposition}
and of $\smaD$, we see that to chase the diagram
starting at the top left with 
\[ \prod_k \oO(\ps_k) \times \prod_i \oO(\nu_i) \times  \prod_j \oO(\ph_j) \times \prod_h\oO(\mu_h), \]
it suffices to consider its projection onto each term of the product in the target. Thus, we fix $k$ and $i$, 
project to 
\[ \oO(\ps_k) \times \oO(\nu_i)\times  \prod_{j \in \psi^{-1}(k) } \oO(\ph_j) \times
\prod_{h \in \nu^{-1}(i)}  \oO(\mu_h), \]
and then chase. Going around both ways, we land in the term $\oO\Big(\big((\ps \sma \nu)\circ(\ph \sma \mu)\big)_{\ell}\Big)$ of the target, where $(k,i)\in \bp\sma\bs$ corresponds to $\ell \in \bp\bs$ under lexicographical ordering.

Writing in terms of elements to better 
apply formulas rather than chase large diagrams, let
\[ c\in \oO(\ps_k),\ \ a\in \oO(\nu_i), \ \ d_j\in \oO(\ph_j),  \ \text{and}\ b_h\in \oO(\mu_h),\] 
and recall the permutations $\rho$ defined in \autoref{Dcomposition}.
Going clockwise, the tuple $(c, a, \prod_j d_j,\prod_h b_h)$ gets sent to
\begin{equation}\label{expression1}
 \textstyle \ga\Big(c\opair a; \prod\limits_{(\ps\sma \nu) (j,h)=(k,i)} d_j\opair b_h\Big)\,
 \rh_{(k,i)}(\ps\sma\nu,\ph\sma \mu),
\end{equation}
and going counterclockwise, it gets sent to
\begin{equation}\label{expression2}
{\textstyle
\Big(  \ga(c; \prod\limits_{\ps(j)=k} d_j)\rh_k(\ps,\ph)\Big) \opair 
\Big(\ga(a; \prod\limits_{\nu(h)=i} b_h)\rh_i(\nu,\mu) \Big).
}
\end{equation}

In what follows, we use the notation $\diag{x}{\ell}$ to denote the $\ell$-tuple $(x,\dots,x)$. Using the definition of $\opair$, the associativity from the definition of an operad twice and abbreviating 
$\rh_{(k,i)}(\ps\sma\nu,\ph\sma \mu) = \rh_{(k,i)}$, we identify the expression \autoref{expression1} as follows:
\begin{align}
& \hspace{-1em}
\textstyle \ga\Big(c\opair a; \prod\limits_{(\ps\sma \nu) (j,h)=(k,i)} d_j\opair b_h\Big)\,
 \rh_{(k,i)} \nonumber \\
 & \textstyle=\ga\Big(\ga(c; \diag{a}{\ps_k}); \prod\limits_{(\ps\sma \nu) (j,h)=(k,i)} \ga(d_j; \diag{b_h}{\ph_j})\Big)\,
 \rh_{(k,i)}  \nonumber
 \\
  &=  \textstyle \ga\Big(c; \ \prod\limits_{\ps(j)=k}  \ga\big(a; \prod\limits_{\nu(h)=i}  \ga(d_j; \diag{b_h}{\ph_j})\big) \Big)\rh_{(k,i)}  \nonumber \\
  &= \textstyle  \ga \Big( c;\  \prod\limits_{\ps(j)=k} \ga\big(\ga(a; \diag{d_j}{\nu_i}); \prod\limits_{\nu(h)=i}\diag{b_h}{\ph_j}\big) 
 \Big)\rh_{(k,i)} \nonumber \\
 &= \textstyle \ga \Big( c;\  \prod\limits_{\ps(j)=k} \ga\big(a \opair d_j; \prod\limits_{\nu(h)=i}\diag{b_h}{\ph_j}\big) 
 \Big)\rh_{(k,i)}.  \label{newexpression1}
\end{align} 

We abbreviate
$\rh_k(\ps,\ph) = \rh_k$ and $\rh_i(\nu,\mu) = \rh_i$. 
Using \autoref{rem:opair-equivariant} and the associativity axiom of the operad, we identify the expression \autoref{expression2} as follows:
\begin{align}
& \hspace{-1em}
\textstyle
\Big(  \ga(c; \prod\limits_{\ps(j)=k} d_j)\rh_k\Big) \opair 
\Big(\ga(a; \prod\limits_{\nu(h)=i} b_h)\rh_i \Big)
\nonumber \\
&= \textstyle
\Big(  \ga(c; \prod\limits_{\ps(j)=k} d_j)\opair 
\ga(a; \prod\limits_{\nu(h)=i} b_h) \Big)(\rho_k\spair\rho_i)
\nonumber \\
& \textstyle = \ga\Big(  \ga(c; \prod\limits_{\ps(j)=k} d_j); \, 
\diag{\ga(a; \prod\limits_{\nu(h)=i} b_h) }{(\ps\com\ph)_k} \Big)(\rho_k\spair\rho_i) \nonumber \\
& \textstyle =
\ga\Big( c; \ \prod\limits_{\ps(j)=k} \ga(d_j; \diag{\ga(a; \prod\limits_{\nu(h)=i} b_h)}{\ph_j}) \Big) 
(\rho_k\spair\rho_i) \nonumber \\
& \textstyle = \ga \Big( c;\ \prod\limits_{\ps(j)=k} \ga \big(\ga(d_j; \diag{a}{\ph_j}); \diag{\prod\limits_{\nu(h)=i} b_h}{\ph_j}
 \big)\Big) (\rho_k\spair\rho_i) \nonumber \\
 & \textstyle = \ga \Big( c;\ \prod\limits_{\ps(j)=k} \ga \big(d_j\opair a; \diag{\prod\limits_{\nu(h)=i} b_h}{\ph_j}
 \big)\Big) (\rho_k\spair\rho_i) \label{newexpression2}
\end{align}

The similarity between the reinterpretations \autoref{newexpression1} and \autoref{newexpression2} of \autoref{expression1} and \autoref{expression2} is clear.
We use the $\al$'s from \autoref{pseudocom} to build an invertible $\sV$-transformation from the expression \autoref{newexpression1} 
to the expression \autoref{newexpression2}. This will specify $\vartheta$.  For legibility, we omit 
the indices on the $\al$'s. From the pseudo-commutativity of $\oO$, for fixed $i$ and $j$ we have an isomorphism
\[ \xymatrix{
\al\colon (a\opair  d_j)\tau_{\ph_j,\nu_i} \rtarr d_j\opair a,
}\]
where $\tau_{\ph_j,\nu_i}$ is as in \autoref{tau}. It induces the second isomorphism in the  composite
\[ \xymatrix{
\ga\Big(a \opair d_j;\, \prod\limits_{\nu(h) = i}\diag{b_h}{\ph_j} \Big) D_{\ph_j,\mu_*} \ar@{=}[d]
\\
\ga\Big((a \opair d_j)\tau_{\ph_j,\nu_i}; \,  {\diag{\prod\limits_{\nu(h)=i} b_h}{ \ph_j} }\Big) 
\ar[d]^-{\ga(\al;\diag{\id}{\ph_j})}
\\
\ga\Big(d_j \opair a; \,  {\diag{\prod\limits_{\nu(h)=i} b_h}{\ph_j}} \Big). 
\\}
\] 
The first equality is given by the equivariance formula for $\ga$; the permutation 
$D_{\ph_j,\mu_*}$ is as in \autoref{dist}, with $\ast$ running through the set $\nu^{-1}(i)$. This is precisely the permutation of 
$\ph_j\cdot\sum_{\nu(h)=i} \mu_{h} = \ph_j\cdot (\nu\circ \mu)_i$ 
elements which permutes according to $\tau_{\phi_j,\nu_i}$ the $\ph_j\cdot \nu_i$ blocks of lengths given by the tuple $\diag{\prod_{\nu(h)=i} \mu_h}{\phi_j}$. 

By applying $\ga(c;-)$ to the product over $j\in \psi^{-1}(k)$ of the composite above, we obtain the second isomorphism in the composite
\begin{equation}
\begin{gathered}
 \xymatrix{
\ga\Big(c;\, \prod\limits_{\ps(j)=k}\ga\big(a \opair d_j; \, \prod\limits_{\nu(h) = i} \diag{ b_h}{\ph_j} \big) \Big)\Big( \bigoplus\limits_{\ps(j) = k} D_{\ph_j,\mu_*}\Big) \ar@{=}[d]
\\
\ga\Big(c;\prod\limits_{\ps(j)=k} \ga\big(a \opair d_j; 
\, \prod\limits_{\nu(h) = i} \diag{ b_h}{\ph_j} \big)D_{\ph_j,\mu_*} \Big) \ar[d]^{\ga\left(\id;\Prod \ga(\al;\diag{\id}{\phi_j})\right)}
\\
\ga \Big(c; \prod\limits_{\ps(j)=k} \ga \big(d_j \opair a; \, { \diag{\prod\limits_{\nu(h) = i} b_h}{\ph_j} }
\big)\Big).
\\}
\end{gathered}
\label{SecondIso}
\end{equation}
The equality is again given by the equivariance of $\ga$.\\

A straightforward  computation, which we omit, gives that 
\[\rh_{(k,i)} =  \Big(\bigoplus_{\ps(j) = k} D_{\ph_j,\mu_*} \Big)\cdot (\rho_k\spair\rho_i).\] 
Therefore, multiplying the  isomorphism \autoref{SecondIso} by $\rho_k\spair \rho_i$ yields the 
desired isomorphism
\[ \xymatrix{
\ga\Big(c;\, \prod\limits_{\ps(j)=k}\ga\big(a \opair d_j;\prod\limits_{\nu(h) = i}\diag{b_h}{\ph_j}
\big) \Big)\rh_{(k,i)}\ar[d]^{\ga\left(\id;\Prod \ga(\al;\diag{\id}{\phi_j})\right) ( \rho_k \spair \rho_i) }
\\
\ga \Big(c;\ \prod\limits_{\ps(j)=k} \ga \big(d_j \opair a; \,   {\diag{\prod\limits_{\nu(h)=i} b_h}{\ph_j} }
\big)\Big)(\rho_k\spair\rho_i)}
\]
from \autoref{expression1} to \autoref{expression2}. Interpreting the proof diagrammatically shows that we have an invertible $\sV$-transformation $\vartheta$ as needed. Compatibility of $\vartheta$ with identity morphisms and the fact that it descends to the smash product follow from \autoref{be_pi} below. Compatibility with composition in $\sD \esma \sD$ is tedious to check, but boils down to repeated use of Axiom \eqref{pseudocommhex1} of \autoref{pseudocom}. This completes the proof that $\squiggly{\smaD \colon \sD \esma \sD}{\sD}$ is a $\sV_\bpt$-pseudofunctor.
\end{proof}

Condition  \eqref{PPDDcond} of \autoref{pairprod} holds as a result of the following more general lemma.

\begin{lem}\label{beta_PI}
After restricting the domain of the functors in the diagram of \autoref{isapseudo} to 
\[ \sD(\mb{n},\mb{p})\times \sD(\mb{r},\mb{s})\times \Pi(\mb{m}, \mb{n})\times \sD(\mb{q}, \mb{r})  \rtarr \sD(\mb{mq}, \mb{ns}) \]
or
\[ \sD(\mb{n},\mb{p})\times \Pi(\mb{r},\mb{s})\times \sD(\mb{m}, \mb{n})\times \sD(\mb{q}, \mb{r})  \rtarr \sD(\mb{mq}, \mb{ns}), \]
the transformation $\vartheta$ is the identity.
\end{lem}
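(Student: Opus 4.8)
The plan is to trace through the explicit formula for $\vartheta$ constructed in the proof of \autoref{isapseudo} and observe that it collapses to an identity under the stated restrictions. Recall that $\vartheta$ was built componentwise out of the pseudo-commutativity isomorphisms $\al = \al_{\ph_j,\nu_i}$ of $\oO$, whiskered by instances of $\ga(c; -)$ and by the reordering permutations $\rho_k \spair \rho_i$; the only source of non-triviality is the $\al$'s. The key input is the identity axiom \eqref{pseudocomid} of \autoref{cohpseudo} together with the fact, recorded in \autoref{tau}, that $\tau_{1,n} = e_n = \tau_{n,1}$. So the first step is to isolate exactly which components of $\al$ enter $\vartheta$ in each of the two restricted cases.

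First I would handle the restriction to $\sD(\bn,\bp) \times \sD(\br,\bs) \times \PI(\bm,\bn) \times \sD(\bq,\br)$, i.e. the case where the morphism $\ph\colon \bm \rtarr \bn$ lies in $\PI$. Then $\ph_j = |\ph^{-1}(j)| \leq 1$ for each $j \in \{1,\dots,n\}$. In the formula for $\vartheta$, the relevant instance of $\al$ at the pair $(j,i)$ is $\al_{\ph_j,\nu_i}\colon (a \opair d_j)\tau_{\ph_j,\nu_i} \rtarr d_j \opair a$. If $\ph_j = 0$ then the entire factor $\oO(\ph_j) = \oO(0) = \bpt$ is trivial and there is nothing to compare; more precisely, since $\oO$ is reduced and the corresponding $d_j = \ast \in \oO(0)$, both sides reduce to $\ga(a;\ast^0) = \ast$ and the constraint is forced to be the identity. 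If $\ph_j = 1$ then $\tau_{1,\nu_i} = e_{\nu_i}$, and $\al_{1,\nu_i}$ is, by the identity axiom \eqref{pseudocomid} applied with the distinguished component $\oid \in \oO(1)$, the identity $\sV$-transformation at $d_j \opair a = \ga(a;\diag{d_j}{\nu_i})$ after using $d_j = \oid$. Wait — I need to be careful that $d_j$ need not equal $\oid$ when we restrict only the \emph{domain category} of $\ph$ to $\PI$ but keep $\sD$-valued decorations; however, the point is that in a category of operators over $\sF$ arising from an operad, a morphism of $\PI$ decorated by an element of $\sD$ still has its $j$-th decoration in $\oO(\ph_j) = \oO(1)$, and the inclusion $\io\colon \PI \rtarr \sD$ sends $\ph$ to the decoration with all $c_j = \oid$; so after we restrict $\ph$ to lie in the image of $\io$, indeed $d_j = \oid$. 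Then $\al_{1,\nu_i}$ is the identity by axiom \eqref{pseudocomid}, so each whiskered constituent of $\vartheta$ is an identity, and since $\rho_k \spair \rho_i$ is applied uniformly on both sides (and $\rho_k = \rho_k(\ps,\ph)$ is the identity permutation when $\ph \in \PI$, as each block has size $\leq 1$), $\vartheta$ is the identity. The argument for the restriction to $\sD(\bn,\bp) \times \PI(\br,\bs) \times \sD(\bm,\bn) \times \sD(\bq,\br)$, where $\nu\colon \br \rtarr \bs$ lies in $\PI$, is symmetric: now $\nu_i \leq 1$, so $\tau_{\ph_j,1} = e_{\ph_j}$ and the relevant $\al_{\ph_j,1}$ is again an identity by axiom \eqref{pseudocomid} (using symmetry axiom \eqref{symmpseudocom} to see that $\al_{\ph_j,1}$ is determined by $\al_{1,\ph_j}$), and the bookkeeping permutations match on both sides.

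Then condition \eqref{PPDDcond} of \autoref{pairprod} is the special case of \autoref{beta_PI} obtained by further restricting so that \emph{both} $\PI$-valued slots are present, so it follows immediately once \autoref{beta_PI} is proved; that is what \autoref{beta_pi} (referenced in the proof of \autoref{isapseudo}) is invoking, and it also gives the compatibility of $\vartheta$ with identity morphisms, since the identity $1$-cells of $\sD$ lie in $\PI$. The main obstacle I anticipate is purely notational rather than conceptual: one must match the reordering permutation $\rho_{(k,i)}(\ps\sma\nu,\ph\sma\mu)$ appearing in the clockwise composite \autoref{expression1} with the permutation $\big(\bigoplus_{\ps(j)=k} D_{\ph_j,\mu_*}\big)\cdot(\rho_k\spair\rho_i)$ appearing in \autoref{expression2}, and verify that when $\ph$ (resp. $\nu$) is in $\PI$ the summed distributivity permutations $D_{\ph_j,\mu_*}$ (resp. the corresponding ones on the other side) degenerate to identities, so that the two composites literally coincide and the mediating $\sV$-transformation is $\id$. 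This is the same combinatorial identity already used at the end of the proof of \autoref{isapseudo}, specialized; no new ideas are required.
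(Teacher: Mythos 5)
Your proof is correct in substance and takes essentially the same route as the paper's: under either restriction the relevant decorations (the $d_j$, resp.\ the element of $\oO(\nu_i)$) are forced by $\io$ to be $\ast$ or $\oid$, the corresponding components of $\al$ are then identities (for $\ast$ because $\oO$ is reduced, for $\oid$ by axiom \eqref{pseudocomid} of \autoref{cohpseudo} together with the symmetry axiom for the slot not literally covered), and hence the whiskered transformation $\vartheta$ is the identity. One caveat: your parenthetical claim that $\rh_k(\ps,\ph)$ is the identity permutation whenever $\ph\in\PI$ is false (take $\ph$ a nontrivial permutation and $\ps$ the map collapsing everything to $1$); fortunately it is not needed, since the bookkeeping permutations enter the source and target composites identically and whiskering and translation by permutations preserve identity $2$-cells, so the identity components of $\al$ alone already force $\vartheta=\id$.
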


\begin{proof}
We use the notation of the construction of $\vartheta$ in \autoref{isapseudo}. The first restriction is the case when the
$d_j$ 
are all $\ast$ or $\oid$, and the second restriction is the case when $e$ is  $\ast$ or $\oid$.
The key is that if either $e$ or $d_j$ is $\ast$, then 
\[ \xymatrix{
\al\colon (e\opair d_j)\tau_{\ph_j,\nu_i} \rtarr d_j \opair e
}\] 
is the identity map of $\ast$. Similarly, by \autoref{pseudocom} \eqref{pseudocomid}, if  $e=\oid$, then $\al$ is the identity map 
of $d_j$, whereas if $d_j=\oid$, then $\al$ is the identity map of $e$.
\end{proof}

 Note that this result in particular implies that as a $\sV_\bpt$-pseudofunctor, $\smaD$ restricts to $\sma$ on $\Pi$.

 \begin{lem}
 The $\sV_\bpt$-pseudofunctor $\smaD$ is strictly associative in the sense that the following diagram of $\sV_\bpt$-pseudofunctors commutes.
 \[
 \xymatrix{
 \sD^{\esma 3} \ar@{~>}[r]^-{\id \esma \smaD} \ar@{~>}[d]_-{\smaD\esma\id} & \sD^{\esma 2} \ar@{~>}[d]^-{\smaD}\\
 \sD^{\esma 2} \ar@{~>}[r]_-{\smaD} & \sD  }
 \]
 \end{lem}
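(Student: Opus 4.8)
The claim is that $\smaD$, as a $\sV_\bpt$-pseudofunctor, is \emph{strictly} associative, meaning that the two composite pseudofunctors $\smaD \circ (\smaD \esma \id)$ and $\smaD \circ (\id \esma \smaD)$ from $\sD^{\esma 3}$ to $\sD$ agree on the nose: same action on objects, same $\sV_\bpt$-functors on hom-categories, and same pseudofunctoriality constraint $\vartheta$. The strategy is to check each of these in turn, reducing everything to the already-established associativity of the intrinsic pairing $\opair$ on $\oO$ (\autoref{intrinsicmon}) and the compatibility axiom \eqref{pseudocommhex1} on the $\al_{j,k}$.

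\textbf{Step 1: objects and hom-categories.} On objects both composites send $(\bm,\bp,\br)$ to $\bm\bp\br$, since $\smaD$ is $\sma$ on objects and $\sma$ is strictly associative on $\sF$. On hom-categories, a triple composite of morphisms in $\sD^{\esma 3}$ labelled by $\ph\colon \bm\rtarr \bn$, $\ps\colon \bp\rtarr\bq$, $\chi\colon\br\rtarr\bs$ lands, under either bracketing, in the summand labelled by $\ph\sma\ps\sma\chi$ (strict associativity of $\sma$ on $\sF$). Within that summand, the $\sV$-functor
\[
\Prod_{j}\oO(\ph_j)\times\Prod_k\oO(\ps_k)\times\Prod_\ell\oO(\chi_\ell)\rtarr \Prod_{(j,k,\ell)}\oO(\ph_j\ps_k\chi_\ell)
\]
is, by the definition of $\smaD$ in \autoref{Rpf}, obtained in one bracketing by first forming $(d\opair e)$ and then $\opair\, f$, and in the other by first forming $(e\opair f)$ and then $d\opair(-)$; the projection onto the $(j,k,\ell)$-factor uses the lexicographic identification of $\bn\bq\bs$. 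These agree exactly by the associativity of $\opair$, which is the content of \autoref{intrinsicmon} applied levelwise, together with the strict associativity of the lexicographic reindexing maps $\la$ of \autoref{lexico}. Hence the underlying $\sV_\bpt$-functors of the two composites coincide.

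\textbf{Step 2: the pseudofunctoriality constraints.} It remains to verify that the two constraints agree. Each is assembled, exactly as in the proof of \autoref{isapseudo}, from instances of the $\al_{j,k}$ of $\oO$: the constraint of a composite of pseudofunctors is the pasting of the constituent constraints (whiskered appropriately) together with the coherence data of $\CatVp$, here trivial since all the reindexing functors involved are strict. So one must show that the pasted $2$-cell built from $\al$'s associated to the left bracketing equals the one built from the right bracketing. Unwinding, this reduces to an equality of composites of $\al_{j,k}$'s of the form ``$\al$ on $d\opair e$ then on $(d\opair e)\opair f$'' versus ``$\al$ on $e\opair f$ then on $d\opair(e\opair f)$'', after matching source and target via the lexicographic permutations. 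This is precisely axiom \eqref{pseudocommhex1} of \autoref{pseudocom} (the ``hexagon'' compatibility of $\al$ with operadic composition $\ga$), applied in the same bookkeeping-heavy fashion used to check compatibility of $\vartheta$ with composition in $\sD\esma\sD$ at the end of the proof of \autoref{isapseudo}. The degenerate cases, where one of the three factors is restricted to $\PI$, follow from \autoref{beta_PI} and axiom \eqref{pseudocomid}, so the constraint is the identity exactly where it must be.

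\textbf{Main obstacle.} The substantive difficulty is entirely in Step 2: organizing the pasting of $\al$-cells so that the two bracketings visibly land on the same $2$-cell. As in the rest of \autoref{Rpf}, the permutations $\rho_j$ and the distributivity permutations $D_{\ell,k_*}$ of \autoref{dist} must be tracked carefully, and one must confirm that the instance of \eqref{pseudocommhex1} being invoked has the correct arities $j$, $k_i$, $\ell$. This is painstaking combinatorial bookkeeping rather than a conceptual obstruction, and it is of exactly the same character as the verification already deferred (``tedious to check, but boils down to repeated use of Axiom \eqref{pseudocommhex1}'') in \autoref{isapseudo}; I would present it by pointing to that verification and indicating the one additional instance of \eqref{pseudocommhex1} needed, rather than writing out every diagram.
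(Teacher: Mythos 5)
Your proof follows the paper's argument: objects agree by strict associativity of $\sma$, the hom-level $\sV_\bpt$-functors agree by the associativity of the intrinsic pairing $\opair$ (\autoref{intrinsicmon}), and the pseudofunctoriality constraints, each a pasting of instances of $\vartheta$ built from the $\al$'s, are identified via axiom \eqref{pseudocommhex1} of \autoref{pseudocom}. This is essentially the same proof as in the paper, with your Step 2 bookkeeping matching the paper's deferred "standard simplifications."
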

 
 \begin{proof}
 Since this is an equality of $\sV_\bpt$-pseudofunctors, we need to check equality of the level of assignments on objects, $\sV_\bpt$-functors on morphisms, and pseudofunctoriality constraints. The equality of assignments on objects follows from the strict associativity of $\sma$ in $\Pi$. The equality at the level of morphisms follows from the strict associativity of the pairing of $\oO$ (\autoref{intrinsicmon}).  {For each composite, the pseudofunctoriality constraint is given by a pasting of two instances of the $\sV_\bpt$-transformation $\vartheta$ of \autoref{isapseudo}. As such they are each constructed using instances of $\al$ and the operadic structure. After some standard simplifications, the equality of these constraints reduces to axiom \eqref{pseudocommhex1} of \autoref{pseudocom}. }
 \end{proof}
 
 \begin{lem}\label{be_pi}
The $\sV_\bpt$-pseudofunctor $\squiggly{\smaD\colon \sD\esma \sD}{\sD}$ has a symmetry $\sV_\bpt$-pseudo\-transfor\-mation
$\ta$ such that the strict monoidal $\sV_\bpt$-$2$-functors $\io\colon \PI\rtarr \sD$ and 
$\xi\colon \sD\rtarr \sF$ preserve the symmetry.
\end{lem}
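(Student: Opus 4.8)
The plan is to construct the symmetry $\sV_\bpt$-pseudotransformation $\ta$ at the object level using the permutations $\ta_{m,p} \in \SI_{mp}$ of \autoref{tau}, regarded as $1$-cells in $\sD$ via the inclusion $\io\colon\PI\rtarr\sD$, and to construct its pseudonaturality constraint out of the pseudo-commutativity isomorphisms $\al$ of $\oO$. First I would set the $1$-cell component of $\ta$ at $(\bm,\bp)$ to be $\io(\ta_{m,p})\colon \bm\bp\rtarr\bm\bp$, sitting over the symmetry isomorphism $\ta_{m,p}$ of $\sF$ under $\xi$; this immediately forces $\xi$ to preserve the symmetry, and since the $1$-cell components of $\ta$ lie in $\PI$ it also forces $\io$ to preserve the symmetry, once we check that the pseudonaturality constraint of $\ta$ restricts to the identity on $\PI\sma\PI$. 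Next I would define the pseudonaturality constraint as the invertible $\sV_\bpt$-transformation $\hat\ta$ of \autoref{symmetry}: on the summand of $\sD(\bm,\bn)\times\sD(\bp,\bq)$ indexed by $\ph\colon\bm\rtarr\bn$ and $\ps\colon\bp\rtarr\bq$, both composites $\ta_{n,q}\com(\ph\smaD\ps)$ and $(\ps\smaD\ph)\com\ta_{m,p}$ land in the summand indexed by $\ta_{n,q}\com(\ph\sma\ps) = (\ps\sma\ph)\com\ta_{m,p}$ of the target, so the constraint is assembled factorwise from isomorphisms of the form $\al\colon(c\opair d)\,\ta_{?,?}\Rightarrow d\opair c$ in the relevant $\oO(\ell)$, using the equivariance formula \autoref{ttt} to line up the permutations.

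The key steps, in order, are: (1) record the object-level formula $\bm^{}\smaD\bp = \bm\bp$ and the $1$-cell component $\io(\ta_{m,p})$, and verify using equation \autoref{ttt} that these data are $\xi$- and $\io$-compatible with the symmetry; (2) write out $\hat\ta$ explicitly on each indexing summand as a product of instances of $\al$, checking via the equivariance axiom \eqref{symmpseudocomequiv} of \autoref{cohpseudo} that the resulting maps are well-defined (i.e.\ compatible with the actual $\SI$-actions twisting the summands); (3) verify that $\hat\ta$ is the identity when restricted to $\PI(\bm,\bn)\sma\PI(\bp,\bq)$, which follows from the identity axiom \eqref{pseudocomid} of \autoref{pseudocom} exactly as in the proof of \autoref{beta_PI} — if either argument is $\oid$ or $\ast$ the relevant $\al$ is an identity; (4) check the compatibility of $\hat\ta$ with composition in $\sD$ and with the pseudofunctoriality constraint $\vartheta$ of \autoref{isapseudo}, which is the coherence condition built into the notion of $\sV_\bpt$-pseudotransformation; and (5) verify the symmetry axiom \eqref{ta_sym_axiom} and the hexagon axiom \eqref{ta_hex_axiom} of \autoref{permv2cat}, namely that $\ta$ composed with itself is the identity of $\smaD$ and that $\ta$ interacts correctly with the associativity, which reduce respectively to the symmetry axiom \eqref{symmpseudocom} and the operadic-compatibility (hexagon) axiom \eqref{pseudocommhex1} of \autoref{pseudocom}, together with the elementary identities $\ta_{j,k}^{-1}=\ta_{k,j}$ and the block-permutation description of the $D_{\ell,k_*}$ in \autoref{dist}.

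I expect the main obstacle to be step (4): verifying that $\hat\ta$ is a genuine $\sV_\bpt$-pseudotransformation, i.e.\ that its components are compatible with the composition in $\sD$ and with the pseudofunctoriality constraint $\vartheta$. This is the same kind of painstaking combinatorial bookkeeping that underlies \autoref{isapseudo} — one must track how the permutations $\rho_j(\ps,\ph)$, the distributivity permutations $D_{\ell,k_*}$, and the twists $\ta_{?,?}$ interleave when a triple of morphisms in $\sD$ is composed, and then match two large pasting diagrams built from $\al$ and $\vartheta$. As elsewhere in this paper, I would argue that after the standard simplifications (reordering operad variables to the left, applying the equivariance and associativity formulas for $\ga$) this equality of pasting diagrams collapses to a single application of axiom \eqref{pseudocommhex1} of \autoref{pseudocom}, and I would leave the remaining bookkeeping to the reader, as is done for the analogous verifications in \autoref{isapseudo}. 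The genuinely new content beyond \autoref{isapseudo} is the symmetry and hexagon axioms in step (5), but these are short once one has the explicit block-permutation description of $D_{\ell,k_*}$ in hand. With steps (1)–(5) in place, $(\sD,\mathbf{1},\smaD,\ta)$ is a pseudo-permutative $\sV_\bpt$-$2$-category satisfying the conditions of \autoref{pairprod}, which together with \autoref{beta_PI} completes the proof of \autoref{multithm}.
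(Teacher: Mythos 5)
Your proposal matches the paper's proof: the $1$-cell components are the permutations $\ta_{m,p}$ viewed in $\PI\subset\sD$, the constraint $\hat\ta$ is assembled summand-by-summand from the pseudo-commutativity isomorphisms $\al$ of $\oO$ (after identifying the two composites as $\prod(c_j\opair d_k)\ta_{\bullet,\bullet}$ and $\prod(d_k\opair c_j)$ via \autoref{ttt}), the restriction to $\PI$ is the identity exactly as in \autoref{beta_PI}, and the remaining coherence checks are deferred just as the paper leaves them to the reader. Your extra remarks identifying which operadic axioms those deferred checks rest on are consistent with the paper and do not change the argument.
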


\begin{proof}
The $\sV_\ast$-pseudofunctors $\smaD$ and $\squiggly{ \smaD\circ t\colon \sD\esma \sD}{\sD}$
we are comparing have the same object functions. Given objects $\mb{m}$ and $\mb{p}$, the 
$1$-cell component of $\tau$ is given by the permutation \mbox{$\ta_{m,p} \colon \mb{mp} \rtarr \mb{pm}$} of \autoref{tau},
thought of as a morphism in $\PI\subset \sD$. 
We need invertible $\sV_\bpt$-transformations 
\[ \xymatrix{
\sD(\bm,\bn) \sma \sD(\bp,\bq)  \ar[r]^-{\opair\circ t} \ar[d]_-{\opair} \drtwocell<\omit>{<0>  \hat{\ta}} & \sD(\bp\bm, \bq\bn) \ar[d]^{(\tau_{m,p})^*}\\
\sD(\bm \bp, \bn\bq)  \ar[r]_{(\tau_{n,q})_*} &   \sD( \bm \bp, \bq \bn).
}\]
As in the previous proofs, we can restrict to the components of $\sD(\mb m,\mb n)$ and
$\sD(\mb p,\mb q)$, which are indexed on morphisms $\ph\colon \mb m\rtarr \mb n$ and $\ps\colon \mb p \rtarr \mb q$
of $\sF$. 
Note that both maps send the component of $(\ph,\ps)$ in the source to that of
\[(\ps\sma\ph) \circ \ta_{m,p}=\ta_{n,q} \circ (\ph \sma \ps)\]
in the target (see \autoref{ttt}). We thus fix such $\phi$ and $\psi$ and start with
$\prod_{j} \oO(\ph_j) \times \prod_k \oO(\ph_k)$,
where $1\leq j\leq n$ and $1\leq k\leq q$.  Again for simplicity we work with elements
$c_j\in \oO(\ph_j)$ and $d_k\in \oO(\ps_k)$.

Considering permutations as morphisms of $\PI\subset \sD$ and using the definition of composition in
$\sD$, we find that 
the clockwise composite sends $\big((c_1,\dots,c_n),(d_1,\dots,d_q)\big)$ to
\[
 {\textstyle\prod\limits_{j,k}} (c_j \opair d_k) \ta_{\ps_j,\ph_k},
\] 
and the counterclockwise composite sends it to
\[
 {\textstyle\prod\limits_{j,k}} (d_k \opair c_j),
\]
with both products ordered in reverse lexicographical order.

Applying a product of maps $\al$ gives the invertible $\sV_\bpt$-transformation $\hat{\ta}$ indicated in the diagram. Note that, similar to \autoref{beta_PI}, we have that the 2-cell $\hat{\ta}$ is the identity when either copy of $\sD$ is restricted to $\Pi$. 
We leave to the reader the verification of compatibility with composition, and axioms \eqref{ta_sym_axiom} and \eqref{ta_hex_axiom} of \autoref{permv2cat}.
\end{proof}

\begingroup%
\setlength{\parskip}{\storeparskip}
\bibliographystyle{plain}
\bibliography{references}

\end{document}